\newcommand\cyr
\renewcommand\rmdefault{wncyr}
\renewcommand\sfdefault{wncyss}
\renewcommand\encodingdefault{OT2}
\DeclareTextFontCommand{\textcyr}{\cyr}
  \theoremstyle{plain}
  \newtheorem{theorem}{Theorem}[section]
 \newtheorem{lemma}[theorem]{Lemma}
  \newtheorem{question}[theorem]{Question}
 \newtheorem{example}[theorem]{Example}  
 \newtheorem{corollary}[theorem]{Corollary}
 \newtheorem{conjecture}[theorem]{Conjecture}
\newtheorem{proposition}[theorem]{Proposition}
\crefname{lemma}{Lemma}{Lemma}
  \crefname{corollary}{Corollary}{Corollary}
  \crefname{theorem}{Theorem}{Theorem}
  \crefname{definition}{Definition}{Definition}
   \crefname{proposition}{Proposition}{Proposition}
 \crefname{section}{Section}{Section} 
   \crefname{construction}{Construction}{Construction}
   \crefname{generalization}{Generalization}{Generalization}
  \crefname{construction}{Construction}{Construction}
  \crefname{notation}{Notation}{Notation}
   \crefname{example}{Example}{Example}
  \crefname{remark}{Remark}{Remark}
  \crefname{fact}{Fact}{Fact}
  \crefname{conjecture}{Conjecture}{Conjecture}
  \crefname{motivation}{Motivation}{Motivation}  
  \crefname{figure}{Figure}{Figure}  
  \newtheorem{definition}[theorem]{Definition}
  \newtheorem{remark}[theorem]{Remark}
  \numberwithin{equation}{section}
 \numberwithin{figure}{section}
  \numberwithin{figure}{subsection}
  \renewcommand{\cH}{{\mathcal H}}
  \newcommand{\cA}{{\mathcal A}}
  \newcommand{\cF}{{\mathcal F}}
  \renewcommand{\cD}{{\mathcal D}}
  \newcommand{\cC}{{\mathcal C}}
  \newcommand{\cO}{{\mathcal O }}
  \newcommand{\cK}{{\mathcal K }}
  \newcommand{\cS}{{\mathcal S }}
  \newcommand{\cB}{{\mathcal B }}
    \newcommand{\cZ}{{\mathcal Z }}
         \newcommand{\calR}{{\mathcal R }}
       \newcommand{\cU}{{\mathcal U }}
  \renewcommand{\cR}{\mathcal{R}}
  \newcommand{\sH}{\mathscr{H}}
  \newcommand{\Hom}{\text{Hom}}
  \newcommand{\HOM}{\text{HOM}}
  \newcommand{\Com}{\text{Com}}
  \newcommand{\Kom}{\text{Kom}}
  \newcommand{\cone}{\text{cone}}
  \newcommand{\id}{\text{id}}
  \newcommand{\Ba}{\mathscr{B}}
  \newcommand{\Aa}{\mathscr{A}}
  \newcommand{\bigrdim}{\text{bigrdim}}
  \newcommand{\sgn}{\text{sgn}}
   \newcommand{\ba}{\begin{eqnarray}}
   \newcommand{\na}{\end{eqnarray}}
   \newcommand{\ban}{\begin{eqnarray*}}
   \newcommand{\nan}{\end{eqnarray*}}
 \newcommand{\m}{\mathfrak{m}}
  \newcommand{\fc}{\mathfrak{c}}
  \newcommand{\fC}{\mathfrak{C}}
    \newcommand{\fD}{\mathfrak{D}}
     \newcommand{\fq}{\mathfrak{q}}
     \newcommand{\fp}{\mathfrak{p}}
         \newcommand{\ff}{\mathfrak{f}}
            \newcommand{\fB}{\mathfrak{B}}
  \newcommand{\B}{\mathbb B}
  \newcommand{\C}{{\mathbb C}}
  \newcommand{\R}{{\mathbb R}}
  \newcommand{\Z}{{\mathbb Z}}
   \newcommand{\Q}{{\mathbb Q}}
       \newcommand{\bS}{\mathbb S}
         \newcommand{\D}{\mathbb D}
  \newcommand{\A}{{\mathbb A}}
  \renewcommand{\d}{\delta}
  \newcommand{\sB}{\mathscr{B}}
\newcommand{\sD}{\mathscr{D}}
  \newcommand{\sF}{\mathscr{F}}
       \newcommand{\sG}{\mathscr{G}}
\newcommand{\ra}{\rightarrow}
\newcommand{\xra}{\xrightarrow}
\newcommand{\hra}{\hookrightarrow}
\newcommand{\wt}{\widetilde}
\newcommand\rb[1]{\textcolor{red}{\textbf{#1}}}
\newcommand{\DA}{\D^A_{2n}}
\newcommand{\DAz }{\D^A_{2n} \setminus \Delta_0}
\newcommand{\DB}{\D^B_{n+1}}
\newcommand{\bs}{\backslash}
\newcommand{\bsz}{\backslash \{ 0 \} }
\newcommand{\MCG}{\text{MCG}}
\newcommand{\Diff}{\text{Diff}}
\newcommand{\RP}{\mathbb{R} \text{P}^1}
\newcommand{\udfC}{\undertilde{\widetilde{\mathfrak{C}}}}
\newcommand{\RDA}{\fD^A_{\Delta}}
\newcommand{\RDB}{\fD^B_{\Lambda}}
\newcommand{\RDAz}{\fD^A_{\Delta_{0}}  }
\newcommand{\wRDA}{\wt{\fD}^A_{\Delta}}
\newcommand{\wRDB}{\check{\fD}^B_{\Lambda}}
\newcommand{\wRDAz}{\wt{\fD}^A_{\Delta_{0}}}
\newcommand{\MCGB}{\text{MCG} \left( \D^B_{n+1}, \{ 0 \} \right)}
\newcommand{\MCGA}{\text{MCG} \left( \D^A_{2n} \right)}
\newcommand{\DiffB}{\text{Diff} \left( \D^B_{n+1}, \{0\} \right)}
\newcommand{\ut}{\undertilde}
\newcommand{\wuc}{\undertilde{\widetilde{c}}}
\newcommand{\ol}{\overline}
     \def\uv{\underline{v}}
       \def\uw{\underline{w}}
\newcommand{\bit}[1]{\textit{\textbf{#1}}}
  \newcommand{\<}{\langle}
  \renewcommand{\>}{\rangle}
\newcommand\reallywidehat[1]{%
\savestack{\tmpbox}{\stretchto{%
  \scaleto{%
    \scalerel*[\widthof{\ensuremath{#1}}]{\kern-.6pt\bigwedge\kern-.6pt}%
    {\rule[-\textheight/2]{1ex}{\textheight}}
  }{\textheight}%
}{0.5ex}}%
\stackon[1pt]{#1}{\tmpbox}%
}
\newtheorem*{theorem*}{Theorem}
\newtheoremstyle{named}{}{}{\itshape}{}{\bfseries}{.}{.5em}{\thmnote{#3's }#1}
\theoremstyle{named}
\newtheoremstyle{name}{}{}{\itshape}{}{\bfseries}{.}{.5em}{\thmnote{#3}#1}
\theoremstyle{name}
\newcommand\xrightleftarrows[2][]{\ext@arrow 0099{\longrightleftarrowsfill@}{#1}{#2}}
\def\longrightleftarrowsfill@{\arrowfill@\leftarrow\relbar\rightarrow}
\title{
Curves in the Disc, the Type B Braid Group, and a Type B Zigzag Algebra.}
\author{Edmund Heng}
\author{Kie Seng Nge}
\address{Mathematical Sciences Institute, The Australian National University, Hanna Neumann, Building 145, Science Road, Acton ACT 2601, Australia.}
\email{u5476890@alumni.anu.edu.au}
\address{Mathematical Sciences Institute, The Australian National University, Hanna Neumann, Building 145, Science Road, Acton ACT 2601, Australia.}
\email{kieseng.nge@anu.edu.au}
\address{School of Mathematics and Physics, Xiamen University Malaysia, Block A4, Jalan Sunsuria, Bandar Sunsuria, 43900 Sepang, Selangor Darul Ehsan, Malaysia.}
\email{kieseng.nge@xmu.edu.my}
\begin{document}
\maketitle
\begin{abstract}
We construct a finite dimensional quiver algebra from the non-simply-laced type $B$ Dynkin diagram, which we call type $B$ zigzag algebra.
This leads to a faithful categorical action of the type $B$ Artin (braid) group $\cA(B)$, acting on the  homotopy category of its projective modules.
This categorical action is also closely related to the topological action of $\cA(B)$, viewed as mapping class group of the punctured disc -- hence our exposition can be seen as a type $B$ analogue of Khovanov--Seidel's work.
\end{abstract}

\section{Introduction}
In the seminal work \cite{KhoSei}, Khovanov--Seidel introduce a  categorical action of the type $A_m$ Artin group $\cA(A_m)$, where the group acts faithfully by exact autoequivalences on the bounded homotopy category  $\Kom^b(\Aa_m$-$\text{p$_{r}$g$_{r}$mod})$ of projective (graded) modules over the type $A_m$ zigzag algebra $\Aa_m$.
Moreover, they show that this Artin group action on $\Kom^b(\Aa_m$-$\text{p$_{r}$g$_{r}$mod})$ is deeply related to the mapping class group action on curves on the punctured disc. 
More precisely, they construct a map $L_A$ that associates  complexes of projective $\Aa_m$-modules to isotopy classes of curves in the disc, and show that $L_A$ intertwines the categorical action of $\cA(A_m)$ on complexes with the mapping class action of $\cA(A_m)$ on curves.
Furthermore, the geometric intersection number between two curves $c_1$ and $c_2$ can be computed from the dimension of their corresponding total Hom space $\HOM^*(L_A(c_1), L_A(c_2))$ in $\Kom^b(\Aa_m$-$\text{p$_{r}$g$_{r}$mod})$.
This may be seen as a bridge connecting two appearances of the same group: the former as the Artin group associated to the type $A$ Coxeter group, and the latter as the mapping class group of the punctured disc.

Another family of Artin groups which also appear as mapping class groups are the type $B_n$ Artin groups  $\cA(B_n)$.
To this end, Gadbled--Thiel--Wagner develop a ``type $B$'' analogue of the Khovanov--Seidel story in \cite{GTW}, where they bypass the non-simply-laced structure through viewing the type $B_n$ Artin group $\cA(B_n)$ as the extended Artin group $\widehat{\cA}(\hat{A}_{n-1})$ of affine type $A$.
Although they are both mapping class groups of a $(n+1)$-punctured disc (fixing one of the punctures), their corresponding natural affine configurations of the disc are different (see \cref{fig: diff affine configuration}).

\begin{figure}[h]  
\begin{subfigure}{0.4 \textwidth}
\centering
\begin{tikzpicture}[scale = 0.45]

\draw (5.2,2.5) ellipse (6.5cm and 2.6cm); 
\draw[fill] (1.75,2.5) circle [radius=0.1] ;
\draw[fill] (3.6,2.5) circle [radius=0.1]  ;
\draw[fill] (7.6,2.5) circle [radius=0.1]  ;
\draw[fill] (9.35,2.5) circle [radius=0.1]  ;
\filldraw[color=black!, fill=yellow!, thick]  (0,2.5) circle [radius=0.1];

\node  at (5.6,2.5) {$ \boldsymbol{\cdots}$};
\node[below right] at  (0,2.5) {0};
\node[below right] at  (1.75,2.5) {1};
\node[below right] at  (3.6,2.5) {2};
\node[below] at  (7.6,2.5) {$n-1$};
\node[below right] at  (9.35,2.5) {$n$};

\end{tikzpicture}
\caption{For type $B$.} 
\end{subfigure}
\quad
\begin{subfigure}{0.49 \textwidth}
\centering
\begin{tikzpicture} [scale = 0.35]

\draw (0,0) circle (5.4cm);

\filldraw[color=black!, fill=yellow!, thick] (0,0) circle [radius=0.1] ;


\draw[fill] (3,-.5) circle [radius=0.1] ;
\draw[fill] (-3,-.5) circle [radius=0.1] ;

\draw[fill] (2.2,-1.8) circle [radius=0.1] ;
\draw[fill] (-2.2,-1.8) circle [radius=0.1] ;

\draw[fill] (2.8,1.3) circle [radius=0.1] ;
\draw[fill] (-2.8,1.3) circle [radius=0.1] ;

\draw[fill] (-1.8,2.75) circle [radius=0.1] ;

\draw[fill] (0,-2.9) circle [radius=0.03] ;
\draw[fill] (-0.7,-2.8) circle [radius=0.03] ;
\draw[fill] (0.7,-2.8) circle [radius=0.03] ;

\draw[fill] (0.3,3) circle [radius=0.03] ;
\draw[fill] (0.9,2.9) circle [radius=0.03] ;
\draw[fill] (1.45,2.55) circle [radius=0.03] ;

\node[below] at (0,0) {\scriptsize{$0$}} ;
\node[below right] at (2.2,-1.8) {\scriptsize{$ i$}} ;
\node[below left] at (-2.2,-1.8) {\scriptsize{$2$}} ;
\node[below right] at (3,-.5) {\scriptsize{$ i + 1$}} ;
\node[below left] at (-3,-.5) {\scriptsize{$1$}} ;

\node[below right] at (2.8,1.3) {\scriptsize{$ i + 2$}} ;
\node[below left] at (-2.8,1.3) {\scriptsize{$n$}} ;

\node[above] at (-1.8,2.75) {\scriptsize{$n-1$}} ;

\end{tikzpicture}
\caption{For extended affine type $A$.}
\end{subfigure}
\caption{Two different affine configurations of the $(n+1)$-punctured disc corresponding to the action of $\cA(B_n)$ and $\widehat{\cA}(\hat{A}_{n-1})$, where the puncture labelled `0' is fixed.}
\label{fig: diff affine configuration}
\end{figure}

The goal of the present paper is to develop a proper (non-simply-laced) type $B$ analogue of the stories given by Khovanov--Seidel and Gadbled--Thiel--Wagner.
We introduce a (finite dimensional) quotient of a quiver algebra  $\Ba_n$ over $\R$, which we call type $B_n$ zigzag algebra.
Since the type $B$ root system is no longer simply-laced, the definition of $\Ba_n$ will be somewhat subtle -- the indecomposable projective $\Ba_n$-module whose class in the Grothendieck group is a long simple root will only have the structure of a $\mathbb{R}$-vector space, while all the other indecomposable projective $\Ba_n$-modules whose classes are short simple roots will actually be $\mathbb{C}$-vector spaces.  
Note that this is somewhat reminiscent of the following non-simply-laced extension in quiver theory: by studying representations of $K$-species \cite{GabIndecompII} instead of quivers (the base field are allowed to be different at each vertex), the finite type $K$-species are characterised by \emph{all} (including non-simply-laced) Dynkin diagrams \cite[Theorem B]{DR}.

The relevance of $\Ba_n$ to Coxeter theory is provided by the following theorem:
\begin{theorem}[\cref{Cat B action} and \cref{faithful action}]\label{faithfulintro}
The homotopy category $\Kom^b(\Ba_n$-$\text{p$_{r}$g$_{r}$mod})$ of projective (bigraded) modules  carries a faithful (weak) action of the type $B_n$ Artin group $\cA(B_n).$
\end{theorem}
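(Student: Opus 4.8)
The categorical action of \cref{Cat B action} is produced in the usual way: to each Coxeter generator of $\cA(B_n)$ one attaches a Rickard-type twist functor built from the projective bimodules of $\Ba_n$ and a natural transformation to the identity bimodule, and one then verifies the finite type $B$ braid relations among these functors. The genuinely new check compared with \cite{KhoSei} is the length-four relation, and it is precisely in order to make this relation hold (at the level of explicit homotopy equivalences, as in \cite{KhoSei}) that the $\R$/$\C$ asymmetry between the long and short projectives of $\Ba_n$ is built in. Granting \cref{Cat B action}, it remains to prove faithfulness (\cref{faithful action}), and the plan is to deduce it from the topological side exactly as in Khovanov--Seidel: through the map $L_B$ that sends an admissible curve in $\DB$ to a complex of projective $\Ba_n$-modules, together with its equivariance $L_B(\beta\cdot c)\simeq \beta_*\,L_B(c)$ for $\beta\in\cA(B_n)$, up to homological and internal degree shifts.

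With $L_B$ and its equivariance in hand, faithfulness follows in three steps. (i) The topological action of $\cA(B_n)\cong\MCGB$ on isotopy classes of admissible curves is faithful: a mapping class fixing $\partial\DB$ pointwise and the distinguished point $0$, which moreover fixes the isotopy classes of a filling family of arcs --- for instance the standard arcs $c_1,\dots,c_n$ together with one arc meeting $\partial\DB$ --- is isotopic to the identity by the Alexander method; alternatively, this can be read off from the branched double cover $\DA\to\DB$, which identifies $\cA(B_n)$ with the centraliser of the order-two deck involution inside $\cA(A_{2n-1})$, and then invokes faithfulness of the type $A$ action, this being the topological half of the comparison with \cite{KhoSei}. (ii) The map $L_B$ detects isotopy classes up to shift: for admissible curves $c,c'$ the total graded dimension of $\Hom^\bullet_{\Kom^b}(L_B(c),L_B(c'))$ recovers the geometric intersection number $I(c,c')$ up to an explicit correction term, so that $L_B(c)\simeq L_B(c')$ --- even after arbitrary shifts --- forces $I(c,c_j)=I(c',c_j)$ for every standard arc $c_j$, hence $c\simeq c'$ since these numbers pin down an admissible curve. (iii) Combining the two: if $\beta\in\cA(B_n)$ acts by a functor isomorphic to the identity, then $L_B(\beta c_j)\simeq\beta_*\,L_B(c_j)\simeq L_B(c_j)$ up to shift for each $j$, so $\beta c_j\simeq c_j$ by (ii), and therefore $\beta=1$ by (i); thus the homomorphism $\cA(B_n)\to\mathrm{Aut}(\Kom^b(\Ba_n\text{-prgrmod}))$ has trivial kernel.

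The main obstacle is step (ii) together with the construction of $L_B$ and the proof of its equivariance, which is also where the non-simply-laced features make the difference. One must define ``admissible curve'' in $\DB$ and the recipe for $L_B(c)$ --- reading the complex off from how $c$ traverses the elementary regions of a reference configuration, including how it winds around the orbifold point $0$ --- so that $L_B(c)$ comes out as a \emph{minimal} complex, with no invertible component in its differential; only then does the Hom-pairing in the homotopy category literally compute intersection number. Since the indecomposable projectives indexed by long simple roots are merely $\R$-vector spaces whereas the short ones are $\C$-vector spaces, the multiplicity bookkeeping that has to reproduce $I(c,c')$ is no longer uniform over the vertices, and one has to check that the extra $\C$-structure neither introduces spurious cancellation nor loses the winding data. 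Once $L_B$ has been built and shown to be $\cA(B_n)$-equivariant and to send admissible curves to indecomposable minimal complexes, steps (i) and (iii) are essentially formal and follow \cite{KhoSei} \emph{mutatis mutandis}.
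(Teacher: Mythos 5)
Your plan for faithfulness is correct but follows a genuinely different route from the paper's own proof. You take the Khovanov--Seidel-style geometric path: build the map $L_B$ from admissible trigraded curves in $\DB$ to complexes of projective $\Ba_n$-modules, prove its $\cA(B_n)$-equivariance, show that the graded dimensions of the Hom-spaces $\HOM^*(L_B(\check c), L_B(\check c'))$ recover the trigraded intersection number $I^{trigr}(\check c, \check c')$, and then run the usual argument: a braid acting by (a shift of) the identity functor must fix every $\check b_j$, hence is trivial by faithfulness of the mapping-class-group action on curves. The paper acknowledges in a closing remark (\cref{poin poly equals tri int}) that this route works, but it is not the proof actually given: the paper proves faithfulness by a purely algebraic reduction, with no reference to $L_B$, intersection numbers, or the Alexander method. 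The key ingredients are the isomorphism $\C\otimes_\R \ddot{\Ba}_n \cong \Aa_{2n-1}$ (\cref{isomorphic algebras}), the resulting exact functor $\Aa_{2n-1}\otimes_{\Ba_n} -$ from $\Kom^b(\Ba_n$-$\text{p$_{r}$g$_{r}$mod})$ to $\Kom^b(\Aa_{2n-1}$-$\text{p$_{r}$g$_{r}$mod})$, and the $\cA(B_n)$-equivariance of this functor (\cref{tensor equivariant}). If $\sigma\in\cA(B_n)$ acts trivially, applying the functor shows $\Psi(\sigma)(\oplus_j P_j^A)\cong\oplus_j P_j^A$; Khovanov--Seidel's type-$A$ faithfulness then forces $\Psi(\sigma)=\id$, and injectivity of $\Psi$ gives $\sigma=\id$. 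This is shorter and avoids the two technical burdens you flagged (constructing $L_B$ with the minimality property, and controlling the $\R$/$\C$ bookkeeping in the Hom-pairing), at the cost of first having to prove \cref{tensor equivariant}.

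Two small substantive points on your version. First, you frame step (iii) as ``$\beta$ acts by a functor isomorphic to the identity implies $L_B(\beta c_j)\simeq L_B(c_j)$ \emph{up to shift},'' and then discard shifts. The paper's remark explicitly calls out that one must separately handle the centre of $\cA(B_n)$ --- the full twist acts by an overall $[\cdot]\{\cdot\}\langle\cdot\rangle$-shift and one must observe this is not isomorphic to the identity --- so ``up to shift'' cannot be discarded silently; a separate check that shifts of the identity functor are never naturally isomorphic to the identity (unless trivial) is needed before step (iii) closes. Second, you describe the construction and equivariance of $L_B$ as ``essentially \emph{mutatis mutandis}'' from Khovanov--Seidel. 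The paper in fact proves $L_B$-equivariance (\cref{L_B equivariant}) not by redoing the direct case-by-case type-$A$ computation, but by deducing it from the commutativity of the square $L_A\circ\m = (\Aa_{2n-1}\otimes_{\Ba_n}-)\circ L_B$ (\cref{diagram commutes}) together with the equivariance of the other three sides, and the content of \cref{diagram commutes} is substantial (the explicit change-of-basis maps $\mu_i$ for each type of $1$-string). So if you do want your topological route to be self-contained, that part is not free.
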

\noindent
Similar to the works of Khovanov--Seidel and Gadbled--Thiel--Wagner,  we establish the following result that relates the categorical notions to low-dimensional topology:
\begin{theorem}[\cref{L_B equivariant} and \cref{poin poly equals tri int}] \label{L_Bintro} There exists a map $L_B$ that associates complexes in $\Kom^b(\Ba_n$-$\text{p$_{r}$g$_{r}$mod})$ to curves in the $(n+1)$-punctured disc. 
This map $L_B$ is $\cA(B_{n})$-equivariant, intertwining the $\cA(B_n)$-action on curves and the $\cA(B_n)$-action on complexes in  $\Kom^b(\Ba_n$-$\text{p$_{r}$g$_{r}$mod}).$
Moreover, the (trigraded) intersection number between two curves $c_1$ and $c_2$ is given by the Poincar\'e polynomial of the total Hom space between $L_B(c_1)$ and $L_B(c_2)$.
\end{theorem}
%

One main feature of our work contrasting that of Gadbled--Thiel--Wagner is an explicit realisation of a well-known connection between type $B$ and type $A$.
To explain this, recall that the type $B_n$ Artin group is known to be a (proper) subgroup of the type $A_{n-1}$ Artin group: algebraically the embedding is induced from a  folding of Coxeter diagrams \cite{brieskorn_1973};
topologically the embedding is obtained by lifting through the double-branched cover
of a $(n+1)$-punctured disc $\DB$ by a $2n$-punctured disc $\DA$ \cite{BH}.
The topological interpretation induces a $\mathcal{A}(B_n)$-equivariant map $\mathfrak{m}$ that takes curves in the $(n+1)$-punctured disc to \emph{multicurves} in the $2n$-punctured disc, defined by taking the preimage of the covering map.
Our work includes a categorical interpretation of this map $\mathfrak{m}$, given by a scalar extension functor:
\begin{theorem}[\cref{isomorphic algebras} and \cref{fullmaintheorem}]\label{intromaintheorem}
The type $B$ zigzag algebra $\Ba_n$ algebra (over $\R$) is isomorphic to Khovanov--Seidel type $A$ zigzag algebra $\Aa_{2n-1}$ after extending scalars to $\C$, namely 
\[
\C\otimes_\R \Ba_n \cong \Aa_{2n-1} \quad \text{as $\C$-algebras}.
\]
This induces a scalar extension functor $\Aa_{2n-1} \otimes_{\Ba_n} -$, which renders the diagram in \cref{fig: full picture} commutative, with all four maps on the square $\cA(B_n)$-equivariant.
\end{theorem}

\begin{figure}[h]
\centering
\begin{tikzpicture} [scale=0.85]
\node (tbB) at (-3,1.75) 
	{$\mathcal{A}(B_n)$};
\node (cbB) at (-3,-3.5) 
	{$\mathcal{A}(B_n)$};
\node (tbA) at (10.5,1.75) 
	{$\mathcal{A}(A_{2n-1}) \hookleftarrow \mathcal{A}(B_n) $}; 
\node (cbA) at (10.5,-3.5) 
	{$\mathcal{A}(A_{2n-1}) \hookleftarrow \mathcal{A}(B_n) $};

\node[align=center] (cB) at (0,0) 
	{Isotopy classes of trigraded \\ admissible curves $\check{\fC}^{adm}$  in $\DB$};
\node[align=center] (cA) at (7,0) 
	{Isotopy classes of bigraded \\ admissible multicurves $\ddot{\undertilde{\wt{\fC}}}^{adm}$  in $\DA$};
\node (KB) at (0,-2)
	{$\Kom^b(\Ba_n$-$\text{p$_{r}$g$_{r}$mod})$};
\node (KA) at (7,-2) 
	{$\Kom^b(\Aa_{2n-1}$-$\text{p$_{r}$g$_{r}$mod})$};

\coordinate (tbB') at ($(tbB.east) + (0,-1)$);
\coordinate (cbB') at ($(cbB.east) + (0,1)$);
\coordinate (tbA') at ($(tbA.west) + (0,-1)$);
\coordinate (cbA') at ($(cbA.west) + (0,1)$);

\draw [->,shorten >=-1.5pt, dashed] (tbB') arc (245:-70:2.5ex);
\draw [->,shorten >=-1.5pt, dashed] (cbB') arc (-245:70:2.5ex);
\draw [->, shorten >=-1.5pt, dashed] (tbA') arc (-65:250:2.5ex);
\draw [->,shorten >=-1.5pt, dashed] (cbA') arc (65:-250:2.5ex);

\draw[->] (cB) -- (KB) node[midway, left]{$L_B$};
\draw[->] (cB) -- (cA) node[midway,above]{$\mathfrak{m}$}; 
\draw[->] (cA) -- (KA) node[midway,right]{$L_A$};
\draw[->] (KB) -- (KA) node[midway,above]{$\Aa_{2n-1} \otimes_{\Ba_n} -$};
\end{tikzpicture}
\caption{The commutative diagram of \cref{intromaintheorem}. 
The map $\mathfrak{m}$ is obtained by lifting curves in $\DB$ to multicurves in $\DA$ through the double-branched cover $\DA \twoheadrightarrow \DB$.
The map $L_B$ (resp. $L_A$) is as in \cref{L_Bintro} (resp. \cite[Theorem 4.3]{KhoSei}).}
\label{fig: full picture}
\end{figure}

We would like to mention here that the construction of type $B$ zigzag algebra in this paper can be easily modified to allow for other \emph{Lie-type Dynkin diagrams}, particularly for types $C,F_4$ and $G_2$; for the Dynkin diagrams involving edge label 6 one uses a field extension of degree 3 instead\footnote{In the early writing of this paper, we have made the (arbitrary) choice of using the field extension $\R \subset \C$ for edge label 4. To be able to deal with both edge labels 4 and 6, it may be more natural to use $\Q$ as the base field to allow for both field extensions of degree 2 and 3.}.
Together with the simply-laced constructions \cite{HueKho, LicQuef}, this covers all of the (Lie-type) Dynkin diagrams.
To the best of our knowledge, there is no easy generalisation of this construction via finite-dimensional algebras that encapsulates all \emph{Coxeter diagrams}; not even for the finite types $H$ and $I_2(k)$.
A different construction via algebra objects in fusion categories that allows for arbitrary Coxeter diagrams can be found in the first author's thesis \cite{Heng_PhDthesis}.

  Finally, recall that the type $A_n$ zigzag algebra has a geometric origin: it is quasi-isomorphic as a differential graded algebra (dga) with zero differential to the dga associated to an $A_n$-chain of spherical objects \cite{SeiTho}.
   In particular, it is quasi-isomorphic to the Fukaya $A_\infty$-algebra of a distinguished collection of objects in the Fukaya-Seidel category corresponding to the Milnor fibre of type $A$ singularities \cite{Sei}.
   Type $B$ singularities have also been studied; from the symplectic point of view by Arnold  \cite{Arn1,  Arn2} as boundary singularities and from the algebraic geometry point of view by Slodowy \cite{SlowFour, SlowSim} as simple singularities associated with a $\Z /2 \Z$-group action.
  We expect our type $B$ zigzag algebra to have a similar geometric origin as in type $A$ case.
This is an ongoing work of the second author with Shuaige Qiao.

\subsection*{Outline of the paper}
\cref{topology} contains the topological story of this paper -- the top row of the commutative diagram in \cref{fig: full picture}.
We describe the double-branched cover of $\D^B_{n+1}$ by $\D^A_{2n}$, which induces an injection of groups $\Psi: \mathcal{A}(B_n) \hookrightarrow \mathcal{A}(A_{2n-1})$.
We state the precise definition of curves and admissible curves in this section, and also introduce the notion of trigraded curves -- a type $B$ analogue of bigraded curves for type $A$.
The construction of the $\cA(B_n)$-equivariant map $\mathfrak{m}$, which lifts trigraded curves to bigraded multicurves can be found in \cref{lift section}.

\cref{define zigzag} and  \cref{relating categorical b a action} tell the algebraic story instead -- the bottom row of the commutative diagram in \cref{fig: full picture}.
The definition of type $B$ zigzag algebra $\Ba_n$ and the proof of the corresponding (weak) categorical action of $\mathcal{A}(B_n)$ on $\Kom^b(\Ba_n$-$\text{p$_{r}$g$_{r}$mod})$ can be found in \cref{define zigzag}.
We then relate our type $B$ zigzag algebra $\Ba_n$ to the type $A$ zigzag algebra $\Aa_{2n-1}$ in \cref{relating categorical b a action}, which allows us to obtain the scalar extension functor $\Aa_{2n-1}\otimes_{\Ba_n} -$ and also derive the faithfulness of the $\cA(B_n)$ categorical action.

\cref{main theorem} is where we complete the full picture in \cref{fig: full picture} -- connecting the top and bottom rows.
We recall the $\cA(A_{2n-1})$-equivariant map $L_A$ defined in \cite{KhoSei} and construct the analogous map $L_B$ for type $B$.
This section also contains the proofs that $L_B$ is $\cA(B_n)$-equivariant and that the diagram in \cref{intromaintheorem} commutes, where the latter is the most technical proof of this paper.

\cref{categorification hom rep} contains a ``decategorified'' version of the main theorem (see \cref{decat main theorem} for the corresponding diagram).
Just as the $\cA(A_m)$ action on $\Kom^b(\Aa_m$-$\text{p$_{r}$g$_{r}$mod})$ categorifies the Burau representation (which can be described as a representation on the first homology of an explicit covering space of $\DA$), we show that the categorical action of $\cA(B_n)$ on $\Kom^b(\Ba_n$-$\text{p$_{r}$g$_{r}$mod})$ categorifies a representation on (a submodule of) the first homology of an explicit covering space of $\DB$.

In \cref{int num and hom}, we relate the trigraded intersection numbers of (admissible) curves to the Poincar\'e polynomial of the total Hom spaces of their corresponding complexes.

\subsection*{Acknowledgements}  We would like to thank our supervisor, Anthony Licata, for suggesting this problem and guidance throughout. We would like to acknowledge Peter McNamara for suggesting the construction of type $B$ zigzag algebra $\Ba_n$ during the Kiola Conference 2019. 
%
We would also like to thank Hoel Queffelec and Daniel Tubbenhauer for their helpful comments on the early draft(s) of this paper.
Finally, we would like to thank the referees for their patience in reading this lengthy paper and also for their wonderful suggestions.

\section{Artin Groups of Type $B_n$ and Type $A_{2n-1}$ as Mapping Class Groups}\label{topology}
	
In this section, we will first describe type $A$ and type $B$ Artin groups using generators and relations. 
  After that, we associate these two Artin groups to mapping class groups of surfaces.
  We then introduce trigraded curves and trigraded intersection numbers as  trigraded analogues of bigraded curves and bigraded intersection numbers in \cite[Section 3b]{KhoSei}.
  Finally, we construct a $\cA(B_n)$-equivariant lift of the isotopy classes of trigraded curves to the isotopy classes of bigraded multicurves.

\subsection{Artin groups by generators and relations} \label{genbraid}
An \emph{Artin group} associated to a Coxeter graph $\Gamma$ is a group defined by generators and relations according to the data of the graph $\Gamma$.
In this paper, we shall only concern ourselves with the Artin groups associated to the type $A$ and type $B$ Coxeter graphs.
As such, we shall explicitly define them below, and refer the reader to \cite{BraidGroups} and \cite{ComCox} for a more extensive theory on Artin groups.

For $m \geq 2,$ the type $A_m$ Artin group $\cA({A_m})$ associated to the type $A_m$ Coxeter graph
\begin{figure}[H]
\begin{tikzpicture}
\draw[thick] (0,0) -- (1,0) ;
\draw[thick] (1,0) -- (2,0) ;
\draw[thick] (2,0) -- (3.2,0) ;
\draw[thick,dashed] (3.2,0) -- (4,0) ;
\draw[thick,dashed] (4,0) -- (4.8,0) ;
\draw[thick] (4.8,0) -- (6,0) ;
\draw[thick] (6,0) -- (7,0) ;
\filldraw[color=black!, fill=white!]  (0,0) circle [radius=0.1];
\filldraw[color=black!, fill=white!]  (1,0) circle [radius=0.1];
\filldraw[color=black!, fill=white!]  (2,0) circle [radius=0.1];
\filldraw[color=black!, fill=white!]  (3,0) circle [radius=0.1];
\filldraw[color=black!, fill=white!]  (5,0) circle [radius=0.1];
\filldraw[color=black!, fill=white!]  (6,0) circle [radius=0.1];
\filldraw[color=black!, fill=white!]  (7,0) circle [radius=0.1];

\node[below] at (0,-0.1) {1};
\node[below] at (1,-0.1) {2};
\node[below] at (2,-0.1) {3};
\node[below] at (3,-0.1) {4};
\node[below] at (5,-0.1) {$m$-2};
\node[below] at (6,-0.1) {$m$-1};
\node[below] at (7,-0.1) {$m$};
\end{tikzpicture}
\end{figure}
\noindent is the group generated by
$$ \sigma_1^A,\sigma_2^A,\ldots, \sigma_{m}^A $$
\noindent subject to the relations
\begin{align}
      \sigma_j^A \sigma_k^A &= \sigma_k^A \sigma_j^A,  & \text{for} \ |j-k|> 1;\\    
      \sigma_j^A \sigma_{j+1}^A \sigma_j^A &=  \sigma_{j+1}^A \sigma_{j}^A \sigma_{j+1}^A, & \text{for} \ j= 1, 2, \ldots, m-1.
\end{align}
Note that $\cA(A_m)$ is the usual $(m+1)$-strand braid group $\cB r_{m+1}$.

For $n \geq 2,$ the type $B_n$ Artin group $\cA({B_n})$ associated to the type $B_n$ Coxeter graph 

\begin{figure}[H]
\begin{tikzpicture}
\draw[thick] (0,0) -- (1,0) ;
\draw[thick] (1,0) -- (2,0) ;
\draw[thick] (2,0) -- (3.2,0) ;
\draw[thick,dashed] (3.2,0) -- (4,0) ;
\draw[thick,dashed] (4,0) -- (4.8,0) ;
\draw[thick] (4.8,0) -- (6,0) ;
\draw[thick] (6,0) -- (7,0) ;
\filldraw[color=black!, fill=white!]  (0,0) circle [radius=0.1];
\filldraw[color=black!, fill=white!]  (1,0) circle [radius=0.1];
\filldraw[color=black!, fill=white!]  (2,0) circle [radius=0.1];
\filldraw[color=black!, fill=white!]  (3,0) circle [radius=0.1];
\filldraw[color=black!, fill=white!]  (5,0) circle [radius=0.1];
\filldraw[color=black!, fill=white!]  (6,0) circle [radius=0.1];
\filldraw[color=black!, fill=white!]  (7,0) circle [radius=0.1];

\node[below] at (0,-0.1) {1};
\node[below] at (1,-0.1) {2};
\node[below] at (2,-0.1) {3};
\node[below] at (3,-0.1) {4};
\node[below] at (5,-0.1) {$n$-2};
\node[below] at (6,-0.1) {$n$-1};
\node[below] at (7,-0.1) {$n$};
\node[above] at (.5,0.1) {4};
\end{tikzpicture}
\end{figure}
\noindent is the group generated by
$$ \sigma_1^B,\sigma_2^B,\ldots, \sigma_{n}^B $$
\noindent subject to the relations
\begin{align}
\sigma_1^B \sigma_2^B \sigma_1^B \sigma_2^B & = \sigma_2^B \sigma_1^B \sigma_2^B \sigma_1^B;   \\
      \sigma_j^B  \sigma_k^B  &= \sigma_k^B  \sigma_j^B,   & \text{for} \ |j-k|> 1;\\    
      \sigma_j^B  \sigma_{j+1}^B  \sigma_j^B  &=  \sigma_{j+1}^B  \sigma_{j}^B  \sigma_{j+1}^B, & \text{for} \ j= 2,3, \ldots, n-1.
\end{align}

\subsection{Mapping class groups of discs with marked points}  \label{gendefmap}
Suppose $\cS$ is a compact, connected, oriented surface, possibly with boundary $\partial \cS$, and $\Delta \subset \cS \bs \partial \cS $ a finite set of marked points. 
 We denote such a surface as $(\cS,\Delta)$ and we will just write $\cS$ if the associated $\Delta$ is clear from the context. 
 Let $\Delta^{id} \subset \Delta$ be a subset.
 Denote by Diff$(\cS, \partial \cS ; \Delta^{id} )$ as the group of orientation-preserving diffeomorphisms $f:\cS \ra \cS$ with $f|_{\partial \cS \cup \Delta^{id}} = id$ and $f(\Delta) = \Delta$.
 If $\Delta^{id} = \emptyset,$ then we write Diff$(\cS, \partial \cS ) :=$ Diff$(\cS, \partial \cS; \emptyset)$ for simplicity.
 We then define the mapping class group MCG$(\cS, \Delta^{id})$ of the surface $\cS$ with a set $\Delta$ of marked points fixing elements in $\Delta^{id}$ pointwise by 
 $$ \text{MCG} (\cS, \Delta^{id}) := \pi_0 \big( \text{Diff}(\cS, \partial \cS;\Delta^{id}) \big). $$
\noindent In a similar fashion, if $\Delta^{id} = \emptyset,$ we denote the mapping class group of $\cS$ by $ \MCG(\cS) := \text{MCG} (\cS, \emptyset)$.
	We will just write $\MCG(\cS)$ if those conditions are clear from the context.
	The elements of $\MCG(\cS)$ are called \textit{mapping classes}.
%
   We shall see that both Artin groups from \cref{genbraid} appear as mapping class groups, where we refer the reader to \cite{PrimerMCG} for a more detailed exposition on this.
   
\subsubsection{Branched covering of $\D^B_{n+1}$ by $\D^A_{2n}$} \label{brcover}
Consider the following closed disc
$\D^A_{2n} := \{ z \in \C : \|z \| \leq n+1 \}$ embedded in $\C$, equipped with the set $\Delta := \{-n, \hdots,-1,1, \hdots,n\}$  of $2n$ marked points,
as drawn in \cref{Disk}.

Let $r:\D_{2n}^A \ra \D_{2n}^A$ be the half rotation of the disc $\D^A_{2n}$ defined by $r(x) = -x$ for $x \in \D^A_{2n}.$ 
Consider the group $\calR \cong \Z/2\Z$ generated by $r$
and its action on $\D^A_{2n}$.
It is clear that each $x \in \D^A_{2n} \backslash \{0 \}$ has a neighbourhood $U_x$ such that $r(U_x)\cap U_x = \emptyset$.
  In this way, the quotient map $q_{br}: \DA \ra \DA / (\Z/2\Z)$ to its orbit space is a normal branched covering with branched point $\{0\}$ \cite{Piek}.
	From now on, we will denote $\DB$ as the orbit space $\left(\D^A_{2n} \right)/ \left(\Z/2\Z \right)$, and $\Lambda = \{[0], [1], [2], \cdots, [n]\}$ as the set of $n+1$ marked points in $\DB$.
	To simplify notation and to help us picture the orbit space $\DB$, to each equivalence class in $\DB$ we always pick the element with positive real part as the representative whenever possible (i.e. as long as the equivalence class does not contain points on the imaginary line).
This way, we shall abuse notation and denote the set of marked points $\Lambda$ as $\{0, 1, 2, \cdots, n\}$.
\cref{orbitspace} illustrates how we will be picturing $\DB$, where the two oriented green lines are identified.

\begin{figure}[H]  
\begin{subfigure}{0.45 \textwidth}
\centering
\begin{tikzpicture} [scale= 0.5]
\draw (0,0) ellipse (5cm and 2.5cm);
\draw[fill] (1,0) circle [radius=0.045];
\draw[fill] (-1,0) circle [radius=0.045];
\draw[fill] (-4,0) circle [radius=0.045];
\draw[fill] (4,0) circle [radius=0.045];

\node  at (2.5,0) {$ \boldsymbol{\cdots}$};
\node  at (-2.5,0) {$ \boldsymbol{\cdots}$};
\node  [below] at (1,0) {$1$} ;
\node [below] at (-1,0) {$-1$};
\node [below] at (-4,0) {$-n$};
\node [below] at (4,0) {$n$};
\end{tikzpicture}
\caption{{\small The disc $\D^A_{2n}$ with marked points $\Delta$.}} \label{Disk}
\end{subfigure}
\quad
\begin{subfigure}{0.45 \textwidth}
\centering
\begin{tikzpicture}[scale = 0.38]

\draw[thick] plot[smooth, tension=2.25]coordinates {(0,-.5) (12,2.5) (0,5.5)};
\draw[thick,green, dashed]  (0,5.5)--(0,3.875);
\draw[thick, green, dashed]   (0,-.5)--(0,1.125);
\draw[thick, green, dashed, ->]   (0,2.5)--(0,3.875);
\draw[thick,  green, dashed, ->]   (0,2.5)--(0,1.125);
\draw[fill] (1.75,2.5) circle [radius=0.1] ;
\draw[fill] (3.6,2.5) circle [radius=0.1]  ;
\draw[fill] (7.6,2.5) circle [radius=0.1]  ;
\draw[fill] (9.35,2.5) circle [radius=0.1]  ;
\filldraw[color=black!, fill=yellow!, thick]  (0,2.5) circle [radius=0.1];

\node  at (5.6,2.5) {$ \boldsymbol{\cdots}$};
\node[below] at  (0,2.5) {0};
\node[below] at  (1.75,2.5) {1};
\node[below] at  (3.6,2.5) {2};
\node[below] at  (7.6,2.5) {$n-1$};
\node[below] at  (9.35,2.5) {$n$};

\end{tikzpicture}
\caption{{\small The orbit space $\DB := \left(\D^A_{2n}  \right)/ \left(\Z/2\Z \right)$ with marked points $\Lambda$.}} \label{orbitspace}
\end{subfigure}
\caption{The affine configurations of the two discs.}
\end{figure}

\subsubsection{Artin groups as mapping class groups}
By construction, the marked points on $\DA$ and $\DB$ are subsets of $\Z.$  
  Therefore, we enumerate the marked points on the disc by increasing sequences of points.
  Let $\varrho_j$ (resp. $b_j$) be the horizontal curve connecting the $j$-th marked point and $(j+1)$-th marked point in $\DA$ (resp. $\DB$) for $1 \leq j\leq 2n$ (resp. $1 \leq j\leq n+1$).

 The group $\cA(A_{2n-1})$ is isomorphic to the mapping class group MCG($\DA$) of a closed disc $\DA$ with $2n$ marked points.  
The generator $\sigma^A_j$ corresponds to the half twist $[t^A_{\varrho_j}]$ along the arc $\varrho_j.$
 Here, $t^A_{\varrho_j}$ is a diffeomorphism in $\DA$ rotating a small open disc enclosing the $j$-th and $(j+1)$-th marked points anticlockwise by an angle of $\pi$, permuting the two enclosed marked points, whilst leaving all other marked points fixed; see \cref{halftwist}.

Similarly, the group $\cA({B_n})$ is isomorphic to the mapping class group MCG($\D^B_{n+1}, \{0\}$) of a closed disc $\DB$ with $n+1$ marked points, fixing the point $\{0\}$ pointwise.  
The generator $\sigma^B_1$ corresponds to the full twist $[(t^B_{b_1})^2]$ along the arc $b_1$,
 and for $2 \leq j \leq n,$ each generators $\sigma^B_j$ correspond to the half twist $[t^B_{b_j}]$ along the arc $b_j.$
Here, $t^B_{b_j}$ is a diffeomorphism in $\DB$ rotating a small open disc enclosing the $j$-th and $(j+1)$-th marked points by an angle of $\pi$ anticlockwise as illustrated in \cref{halftwist}.  
 As a result, it interchanges the $j$-th and $(j+1)$-th marked points and leaves the other points fixing pointwise.
 	Consequently, $(t^B_{b_1})^2$  is a diffeomorphism rotating a small open disc enclosing the marked points 0 and 1 anticlockwise by an angle of $2 \pi$  leaving all the marked points fixed, as shown in \cref{fulltwist}.

\begin{figure}[H]
\begin{subfigure}{0.45 \textwidth}
\centering
\begin{tikzpicture} [scale= 1] 
\draw (-3,0) circle (1cm);
\draw (1,0) circle (1cm);
\draw [|->] (-1.2,0) -- (-0.6,0);
\draw[->,thick] (-3.5,0)--(-2.5,0);
\draw[fill] (-3.5,0) circle [radius=0.045];
\draw[fill] (-2.5,0) circle [radius=0.045];
\draw[densely dotted, blue] (-4.5,0) -- (-3.5,0);
\draw[densely dotted,red] (-2.5,0) -- (-1.5,0);

\draw [densely dotted, blue] (0.05,0) arc (180:360:0.725);
\draw[densely dotted, blue] (0.05,0) -- (-0.51,0);

\draw [densely dotted, red] (1.95,0) arc (0:180:0.725);
\draw[densely dotted,red] (1.95,0) -- (2.49,0);

\draw [<-|](0.5,0)--(1.5,0);
\draw[fill] (1.5,0) circle [radius=0.045];
\draw[fill] (0.5,0) circle [radius=0.045];

\node [above] at (-3,0) {};
\node [below] at (-3.5,0) { {\scriptsize $j$}};
\node [below] at (-2.5,0) {{\scriptsize $j+1$}};

\node [above] at (1.5,0) { {\scriptsize $j+1$}};
\node [below] at (0.5,0) {{\scriptsize $j$}};

\end{tikzpicture}
\caption{A half twist $t^A_{\varrho_j}$ (similarly $t^B_{b_j}$).} \label{halftwist}
\end{subfigure}
\qquad \quad
\begin{subfigure}{0.45 \textwidth}
\begin{tikzpicture} [scale=0.6]
\draw[thick,green, dashed]  (2.525,7)--(2.525,5.5);
\draw plot[smooth, tension=2.25]coordinates {(2.525,4) (5.25,5.5) (2.525,7)};
\draw[thick, green, dashed]   (2.525,5.5)--(2.525,4);
\draw[thick, green, dashed, ->]   (2.525,5.5)--(2.525,4.5);
\draw[thick,  green, dashed, ->]   (2.525,5.5)--(2.525,6.5);
\draw[->,thick] (2.525,5.5)--(4.25,5.5);
\draw[fill] (4.25,5.5) circle [radius=0.1]  ;
\filldraw[color=black!, fill=yellow!, thick]  (2.525,5.5) circle [radius=0.1];
\draw[densely dotted,red] (4.25,5.5) -- (6,5.5);

\node [above] at (3.3875,5.5) {$b_1$};
\node[below] at  (2.7,5.5) {0};
\node[below] at  (4.4,5.5) {1};

\draw [|->] (7,5.5) -- (8,5.5);

\draw[thick,green, dashed]  (9.525,7)--(9.525,5.5);
\draw plot[smooth, tension=2.25]coordinates {(9.525,4) (12.25,5.5) (9.525,7)};
\draw[thick, green, dashed]   (9.525,5.5)--(9.525,4);
\draw[thick, green, dashed, ->]   (9.525,5.5)--(9.525,4.5);
\draw[thick,  green, dashed, ->]   (9.525,5.5)--(9.525,6.5);
\draw[->,thick] (9.525,5.5)--(11.25,5.5);
\draw[fill] (11.25,5.5) circle [radius=0.1]  ;
\filldraw[color=black!, fill=yellow!, thick]  (9.525,5.5) circle [radius=0.1];
\draw[densely dotted,red] (12,5.5) -- (13,5.5);

\draw[densely dotted,red] plot[smooth, tension=1]coordinates {(12,5.5) (11.4, 6.5) (9.525,6.7)};

\draw[densely dotted,red] plot[smooth, tension=1]coordinates {(9.525,5.5) ( 10.55 ,6.4) (11.75,5.7) (11.1,4.4) (9.525,4.3)};

\node [above] at (10.3875,5.5) {$b_1$};
\node[below] at  (9.7,5.5) {0};
\node[below] at  (11.4,5.5) {1};

\end{tikzpicture}
\caption{{\small A full twist $(t^B_{b_1})^2$.}}\label{fulltwist}
\end{subfigure}
\caption{The twists in $\DA$ and $\DB$.}
\end{figure}

\subsubsection{Injection of $\MCGB$ into $\MCGA$ } \label{InjSec}

A diffeomorphism $f^B$ in $\Diff(\DB,\{0\})$ can be lifted to a unique fiber-preserving diffeomorphism $f^A$ in $\Diff(\DA)$ via the branched covering map $q_{br}.$
Similarly,  an isotopy in $\DB $ can be lifted to an isotopy in $\DA \bsz.$ 
   As such, we have a well-defined map $\Psi$ on the mapping class groups from MCG$\left(\DB, \{0\} \right)\ra$ MCG$ \left(\DA\right)$ defined by lifting the mapping class of $f^B$ to the mapping class of $f^A$. 
   More concretely, using the standard presentation of the groups, $\Psi$ is given by $\sigma^B_1$ mapping to $\sigma^A_n$ and $\sigma^B_{j}$ mapping to $\sigma^A_{n+j-1} \sigma^A_{n-(j-1)}$ for $j \geq 2$.
   In fact, the image of the map $\Psi$ is generated by  fiber-preserving mapping classes in $\MCG_p \left({\DA} \right).$ 
   By \cite[Theorem 1]{BH}, we know that any fibre-preserving diffeomorphism $f^A$ which is isotopic to the identity possesses a fiber-preserving isotopy to the identity, which can then be projected to $\DB$ to get the isotopy $f^B \simeq id$. 
   Therefore, we have the following well-known result:

\begin{proposition} \label{injB}
The homomorphism $\Psi: \MCG \left(\D^B_{n+1}, \{0\} \right)\ra$ MCG$\left(\D^A_{2n} \right)$ defined by
$$ \Psi([t^B_{b_i}]) = \begin{cases} 
      [t^A_{\varrho_n}], & \text{for }i = 1; \\
      \left[t^A_{\varrho_{n+i-1}} t^A_{\varrho_{n-(i-1)}} \right], & \text{for } i \geq 2 \\
   \end{cases}
   $$
is injective.
\end{proposition}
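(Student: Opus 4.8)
The plan is to establish injectivity of $\Psi$ by the standard lifting argument for branched covers, using the cited result of Birman–Hilden \cite{BH} as the key technical input. The map $\Psi$ is already well-defined: given $[f^B] \in \MCG(\DB, \{0\})$, choose a representative $f^B \in \Diff(\DB, \{0\})$, lift it through the branched cover $q_{br}$ to the unique fiber-preserving $f^A \in \Diff(\DA)$ (uniqueness of the lift, together with the fact that isotopies lift to isotopies, shows the mapping class $[f^A]$ depends only on $[f^B]$), and set $\Psi([f^B]) = [f^A]$. That this is a homomorphism is immediate since composition of diffeomorphisms commutes with lifting. The formula on generators follows by tracking where the half-twist $t^B_{b_i}$ sends the marked points and observing that its lift is the indicated product of half-twists in $\DA$ — I would verify this by a direct picture, noting that $b_1$ lifts to the single arc $\varrho_n$ (since $b_1$ touches the branch point $\{0\}$) and $b_i$ for $i \geq 2$ lifts to the pair of arcs $\varrho_{n+i-1}$ and $\varrho_{n-(i-1)}$ swapped by $r$.

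For injectivity, suppose $[f^B] \in \ker \Psi$, i.e. the lift $f^A$ is isotopic to the identity in $\Diff(\DA)$. Since $f^A$ is fiber-preserving (equivariant with respect to the deck group $\calR = \Z/2\Z$), Theorem 1 of \cite{BH} applies: a fiber-preserving diffeomorphism of $\DA$ that is isotopic to the identity is fiber-preservingly isotopic to the identity, i.e. there is an isotopy $f^A_t$ from $f^A$ to $\id$ through fiber-preserving diffeomorphisms. Each $f^A_t$ descends through $q_{br}$ to a diffeomorphism $f^B_t$ of $\DB$ fixing $\{0\}$, and $t \mapsto f^B_t$ is an isotopy from $f^B$ to $\id_{\DB}$ in $\Diff(\DB, \{0\})$. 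Hence $[f^B]$ is trivial in $\MCG(\DB, \{0\})$, which proves $\ker \Psi$ is trivial.

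The one point requiring care — and the main obstacle — is the descent of the isotopy: one must check that the fiber-preserving isotopy $f^A_t$ descends to a \emph{continuous} (indeed smooth) isotopy downstairs, and that it genuinely fixes the branch point $\{0\} \in \Lambda$ at all times and preserves $\Lambda$. Continuity of the descended family is automatic from the fact that $q_{br}$ is a quotient map and each $f^A_t$ is $\calR$-equivariant; smoothness follows since $q_{br}$ is a local diffeomorphism away from $\{0\}$ and one checks the local model at the branch point separately (the half-rotation model $z \mapsto z^2$ on a small disc). That the branch point is fixed throughout follows because $f^A_t$ being fiber-preserving forces it to permute the fiber over $[0]$, which is the single point $\{0\}$, so it must fix it; hence $f^B_t \in \Diff(\DB, \{0\})$ for all $t$. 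With these verifications the argument is complete.
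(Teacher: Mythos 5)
Your proposal is correct and follows essentially the same route as the paper: well-definedness via the uniqueness of fiber-preserving lifts, and injectivity via Birman–Hilden Theorem 1 giving a fiber-preserving isotopy to the identity that descends through $q_{br}$. The extra care you take about smoothness at the branch point and about $f^B_t$ fixing $\{0\}$ is worthwhile detail that the paper glosses over but does not change the structure of the argument.
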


\subsection{Curves and geometric intersection numbers} \label{diskcurves}

Here we collect the definitions of curves and geometric intersection numbers as defined in \cite[Section 3a]{KhoSei}.
Let $(\cS,\Delta)$ be a surface with marked points as in \cref{gendefmap}.
	A \textit{curve} $c$ in $(\cS,\Delta)$ is a subset of $\cS$ that is either a simple closed curve in the interior $\cS^o := \cS \bs (\partial \cS \cup \Delta)$ of $\cS$ and essential (non-nullhomotopic in $\cS^o$), or the image of an embedding $\gamma:[0,1] \ra \cS$ which is transverse to the boundary $\partial \cS $ of $\cS   $ 
with its endpoint lying in $\partial \cS \cup \Delta$, that is, $\gamma^{-1}( \partial \cS \cup \Delta) = \{0,1\}$.
	In this way, our defined curves are smooth and unoriented.
	A \textit{multicurve} in $(\cS,\Delta)$ is the union of a finite collection of disjoint curves in $(\cS,\Delta)$.
	We say two curves $c_0$ and $c_1$ are \emph{isotopic} if there exists an isotopy in $\Diff(\cS, \partial \cS; \Delta)$ deforming one into the other, denoted by $c_0 \simeq c_1.$
	Note that the points on $\partial \cS \cup \Delta$ cannot move during an isotopy.
	Therefore, we can partition all curves in $(\cS,\Delta)$ into isotopy classes of curves. 
	Two multicurves $\fc_0, \fc_1$ are isotopic if they have the same number of disjoint curves, and each curve in $\fc_0$ is isotopic to one and only one curve in $\fc_1$.   
	Two curves $c_0,c_1$ are said to have \emph{minimal intersection} if given two intersection points $z_- \neq z_+$ in $c_0 \cap c_1,$ the two arcs $\alpha_0 \subset c_0$,  $\alpha_1 \subset c_1$  with endpoints $z_- \neq z_+$ such that $\alpha_0 \cap \alpha_1 = \{z_-, z_+\}$ do not form an empty bigon (the bigon contains no marked points) unless $z_-,z_+$ are marked points.
	Two multicurves $\fc_0, \fc_1$ are said to have \emph{minimal intersection} if any two curves $c_0 \subseteq \fc_0$ and  $c_1 \subseteq \fc_1$ have minimal intersection (see \cref{fig: minimal intersection}).

\begin{figure}[H]

\begin{subfigure}{0.2 \textwidth}
\centering
\begin{tikzpicture} [scale= 1] 
\draw[thick] (-.75,-.75) .. controls (-.075, 0) and (.575,0) .. (1.25,-.75) ;
\draw[thick,dotted] (-.5,-.75) .. controls (-.075, -.25) and (.575,-.25) .. (1,-.75)  ;

\draw[thick,dotted] (-1,.75) .. controls (-.5, 0) and (.5,0) .. (1,.75) ;
\draw[thick] (-.5,.75) .. controls (-.5, -.35) and (.55,-.35) .. (.5,.75) ;
\end{tikzpicture}
\caption{}
\end{subfigure}
~
\begin{subfigure}{0.2 \textwidth}
\centering
\begin{tikzpicture} 
\draw[fill] (1,0) circle [radius=0.075];
\draw[thick] (1,0) .. controls (0.05,.65) and (0,.45) .. (-1,-.5) ;


\draw[thick,dotted] (1,0) .. controls (0.5,-.85) and (0,-.65) .. (-1,.5) ;


\end{tikzpicture}
\caption{}
\end{subfigure}
~
\begin{subfigure}{0.2 \textwidth}
\centering
\begin{tikzpicture} 
\draw[fill] (1,0) circle [radius=0.075];
\draw[fill] (-1,0) circle [radius=0.075];
\draw[thick] (1,0) .. controls (.25,.85) and (-.25,.85) .. (-1,0) ;
\draw[thick,dotted] (1,0) .. controls (.25,-.85) and (-.25,-.85) .. (-1,0) ;

\end{tikzpicture}
\caption{}
\end{subfigure}
~
\begin{subfigure}{0.2 \textwidth}
\centering
\begin{tikzpicture} 
\draw[fill] (0,0) circle [radius=0.075];

\draw[thick] (-.75,-0.5) .. controls (0.65,-.25) and (0.65,.25) .. (-.75,0.5) ;
\draw[thick, dotted] (.75,-0.5) .. controls (-0.65,-.25) and (-0.65,.25) .. (.75,0.5) ;


\end{tikzpicture}
\caption{}
\end{subfigure}
\caption{The dotted curves and solid curves belong to different multicurves.
The multicurves in (A) and (B) do not have minimal intersection, whereas the multicurves in (C) and (D) do.}
\label{fig: minimal intersection}
\end{figure}

Let $c_0,c_1$ be curves in $(\cS,\Delta)$ with $c_0 \cap c_1 \cap \partial\cS = \emptyset.$
 Note that we can always find a curve $c'_1 \simeq c_1$ such that $c_0$ and $c'_1$ have minimal intersection.
We define the \emph{geometric intersection number} $I(c_0,c_1) \in \frac{1}{2} \Z$ as follows:

\begin{equation}\label{geoint}
 I(c_0,c_1) =  \left\{
\begin{array}{ll}
      2, & \text{if $c_0,c_1$ are simple closed curves and isotopic;} \\
      |(c_0 \cap c_1')\backslash \Delta | + \frac{1}{2}|(c_0 \cap c_1')\cap \Delta |, & \text{if $c_0 \cap c_1'\cap \partial \cS = \emptyset $.}  
\end{array} 
\right. 
\end{equation} 
\noindent
By \cite[Lemma 3.2]{KhoSei} and \cite[Lemma 3.3]{KhoSei}, the definition is indeed independent of the choice of $c_1'$.
Moreover, note that  the definition above doesn't depend on the orientation of $\cS$ and is symmetric.
    We extend the definition of geometric intersection numbers for multicurves (which do not intersect at $\partial\cS$) by just adding up the geometric intersection numbers of each pair of curves $c_0\subseteq \fc_0$ and $c_1 \subseteq \fc_1.$

      

\subsection{Trigraded curves in $\DB$} \label{tribundle}

In this subsection, we shall extend the notion of bigraded curves and bigraded intersection numbers defined in \cite[Section 3d]{KhoSei} to \emph{trigraded curves} and \emph{trigraded intersection numbers} (see also \cref{bigraded curves}).
 
   Let us remind the reader that we equipped the disc $\DA$ with the set of marked points $\Delta = \{-n, \hdots,-1,1, \hdots,n\}$ and the disc $\DB$ with the set of marked points $\Lambda = \{0,1, \hdots,n\}.$
   Consider another set of marked points $ \Delta_0 = \Delta \cup \{0\}$ in the disc $\DA$.
   Fix the notation as follows: $ \RDB := PT \left(\DB \setminus \Lambda \right),$ and $\RDAz := PT \left(\DA \setminus \Delta_0 \right) $ where $PT(\cdot)$ denotes the real projectivisation of the tangent bundle of the respective discs.
    By taking an oriented trivialisation of its tangent bundle, we can then identify $\RDAz \cong \RP \times \left( \DA\setminus \Delta_0 \right)$.
    In $\DA \setminus \Delta_0$, pick a small loop $\lambda_j$ winding positively around each puncture $j \in \Delta_0$.
  In this way, the classes $[point \times \lambda_j]$ and $[\RP \times point]$ form a basis of $H_1 (\RDAz; \Z).$
 Using the universal coefficient theorem for cohomology \cite[Theorem 3.2]{Hatcher}, we consider the covering space $\wRDAz$ of $\RDAz$ classified by the cohomology class $C_0 \in H^1(\RDAz; \Z \times \Z)$ defined as follows:
	\begin{align}
\label{10} C_0([point \times \lambda_0]) &= (0,0); \\ 
\label{20} C_0([point \times \lambda_j]) &= (-2,1), \quad \text{ for } j= -n, \ldots, -1, 1, \ldots, n; \\
\label{30} C_0([\R \text{P}^1 \times point]) &= (1,0).
    \end{align}

In fact, $\wRDAz$ is a covering for $\RDB$ with group of deck transformation $\Z \times \Z \times \Z / 2\Z$, as explained in the following lemma.

\begin{lemma} \label{finalcovering} ~ 
\begin{enumerate}
\item  Under the action of the rotation group $\calR$ generated by the half rotation $r,$ the quotient map 
$q : \DAz   \ra   \DB \setminus \Lambda  $
 is a normal covering space with deck transformation group $\calR \cong \Z/2 \Z$.
\item The composite $\wRDAz \xra{\fp} \RDAz \xra{\fq} \RDB $
 is a normal covering, where $\fq$ is the normal covering map induced by  the quotient map $q$ on the disc component and the identity map on the $\RP$ component.

\item The group of deck transformations for the covering space $\wRDAz \xra{\fq \circ \fp}  \RDB $ is 
$\Z \times \Z \times \Z / 2\Z.$
\end{enumerate}
\end{lemma}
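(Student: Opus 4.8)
The plan is to establish the three statements in sequence, using at each stage that a composite of normal covering maps with a compatible action is again normal, and that deck groups fit into a short exact sequence. For part (1), I would observe that $\calR = \langle r\rangle \cong \Z/2\Z$ acts freely on $\DAz = \DA\setminus\Delta_0$: the only fixed point of $r$ on $\DA$ is $0$, which has been removed, and the marked-point set $\Delta$ is $r$-invariant so $\DAz$ is genuinely $\calR$-invariant. A free action of a finite group on a (nice, Hausdorff, locally compact) space is properly discontinuous, hence the quotient map $q\colon \DAz \to \DAz/\calR$ is a normal covering with deck group $\calR\cong\Z/2\Z$; and by construction $\DAz/\calR = \DB\setminus\Lambda$ since $q_{br}$ identified $\DA/(\Z/2\Z)$ with $\DB$ and the branch point $0$ maps to the marked point $[0]\in\Lambda$ which is also removed. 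This is essentially a restriction of the branched cover $q_{br}$ to the complements of the marked points.

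For part (2), I would note that $\fp\colon \wRDAz\to\RDAz$ is by definition the covering classified by the cohomology class $C_0$, hence is normal with deck group $\Z\times\Z$ (the class is $(\Z\times\Z)$-valued and $\RDAz$ is connected). The map $\fq\colon \RDAz\to\RDB$ is $q$ on the disc factor (crossed with the identity on the $\RP$ factor, using $\RDAz\cong\RP\times(\DA\setminus\Delta_0)$ and the analogous splitting for $\RDB$ — here one should check $r$ is orientation-reversing or, more carefully, that it acts on the $PT$-bundle compatibly so that the splitting descends; since $r(x)=-x$ is orientation-preserving on the disc, it acts trivially on $\RP$, so the product structure is preserved), so $\fq$ is normal with deck group $\calR\cong\Z/2\Z$ by part (1). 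For the composite $\fp\circ\fq$ to be a normal covering, the standard criterion is that the deck group $\Z/2\Z$ of $\fq$ lifts to act on $\wRDAz$ through covering transformations, equivalently that $r_*$ preserves the subgroup $\ker(C_0)\le \pi_1(\RDAz)$ that classifies $\fp$; I would verify this by computing the action of $r$ on $H_1$: $r$ sends the small loop $\lambda_i$ around $i$ to a loop around $-i$, and by \eqref{20} both have $C_0$-value $(-2,1)$, while \eqref{30} the $\RP$-loop is fixed, so $C_0\circ r_* = C_0$ on homology, giving $r_*\ker(C_0)=\ker(C_0)$. Hence the $\Z/2\Z$ action lifts and $\fp\circ\fq$ is normal.

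For part (3), with normality of $\fp\circ\fq$ in hand, the deck group $G$ sits in a short exact sequence $1\to \Z\times\Z \to G \to \Z/2\Z \to 1$, where the kernel is the deck group of $\fp$ (the intermediate covering) and the quotient is the deck group of $\fq$. I would then argue the sequence splits: since $r$ has order $2$ and (by the homology computation above) lifts to a transformation of $\wRDAz$, one can choose a lift $\tilde r$; a priori $\tilde r^2\in\Z\times\Z$, but because the $C_0$-values are symmetric under $r$ one can normalize the lift so that $\tilde r^2 = \id$, giving a genuine order-$2$ lift and hence a splitting. Finally I would check the extension is a direct product rather than a nontrivial semidirect product: the conjugation action of $\tilde r$ on $\Z\times\Z$ is again read off from $r_*$ on homology, which fixes each basis class $[\text{pt}\times\lambda_i]$-image and $[\RP\times\text{pt}]$-image setwise with the same $C_0$-value, so conjugation is trivial. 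Therefore $G\cong \Z\times\Z\times\Z/2\Z$.

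The main obstacle I anticipate is making the lifting/splitting argument in part (3) fully rigorous — specifically, verifying that a lift of $r$ can be chosen of order exactly $2$ and that it centralizes $\Z\times\Z$, rather than merely that $r_*$ fixes $\ker C_0$. This reduces to a careful bookkeeping of how $r$ permutes the winding loops $\lambda_i\leftrightarrow\lambda_{-i}$ and to checking, via \eqref{l0}–\eqref{30}, that the induced map on the covering-space fiber $\Z\times\Z$ is the identity and not merely an automorphism preserving the lattice; the symmetry built into the definition of $C_0$ is exactly what makes this work, but it needs to be spelled out.
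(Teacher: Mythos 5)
Your plan for parts (1) and (2) matches the paper in substance (the paper simply declares these straightforward), and for part (3) you arrive at the same short exact sequence $1\to \Z\times\Z\to Q\to\Z/2\Z\to 1$ the paper uses, so the overall structure is the same. The presentation of the final step differs in flavour: the paper first shows $Q$ is abelian (by lifting commutators of $\pi_1(\RDB)$ into $\pi_1(\RDAz)$, where $C_0$ kills them) and then produces the splitting through the element $[\lambda_0]$ — this is what the arrows $\lambda_0\mapsto 1$ and the curved arrow back into $Q$ in the paper's diagram encode — whereas you argue via a deck-transformation lift $\tilde r$ of $r$, prove the conjugation action is trivial using $C_0\circ r_*=C_0$, and then try to normalize $\tilde r$ to have order $2$. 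These are two dressings of the same underlying facts, and your version has the virtue of making explicit the normalization step that the paper's proof glosses over with "splits."

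One correction to your self-diagnosis: the obstacle you flag is real, but the reason the normalization works is not the $r$-symmetry of $C_0$. That symmetry ($C_0$ takes the same value $(-2,1)$ on $\lambda_i$ and $\lambda_{-i}$, and fixes the $\RP$ class) is what gives you $C_0\circ r_*=C_0$, hence trivial conjugation of $\tilde r$ on the normal subgroup $\Z\times\Z$; it does \emph{not} by itself control $\tilde r^2$. What makes $\tilde r$ choosable of order exactly $2$ is the specific assignment $C_0([\mathrm{pt}\times\lambda_0])=(0,0)$ in \eqref{l0}. Indeed the loop $\lambda_0^B$ around the orbifold point $0$ of $\DB$ does not lift to a loop in $\RDAz$ (it represents the nontrivial coset of the index-$2$ subgroup), but $(\lambda_0^B)^2$ lifts to the small loop $\lambda_0$ around $0$ in $\DAz$, and \eqref{l0} says this further lifts to a loop in $\wRDAz$. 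Hence $[\lambda_0^B]$ is an element of $Q$ of order exactly $2$ whose image generates $\Z/2\Z$, giving the splitting; the deck transformation it determines is the correct choice of $\tilde r$. With the conjugation action already known to be trivial, this yields $Q\cong\Z\times\Z\times\Z/2\Z$. So your proof goes through once you replace "symmetry of $C_0$" by "the value of $C_0$ on $\lambda_0$" in the normalization step.
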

\begin{proof} The proofs of $(1)$ and $(2)$ are straightforward and we leave them to the reader. 

We will now prove $(3).$
 Since the covering $\fq \circ \fp$ is normal, its deck transformation group $G$ is given by
$G \cong \frac{\pi_1 \left(\RDB \right)} {(\fq_* \circ \fp_*) \left(\pi_1 (\wRDAz ) \right)}.$
Recall $C_0: H_1(\RDAz) \ra \Z \times \Z$ as defined by \eqref{10}--\eqref{30}.
 Let $\overline{C_0} :  \pi_1(\RDAz) \ra \Z \times \Z$ be the map defined by precomposing $C_0$ with the natural quotient map $\pi_1(\RDAz) \twoheadrightarrow H_1(\RDAz).$
 Observe that we have the following commutative diagram of short exact sequences:
 \begin{equation} \label{comsos}
 \begin{tikzcd}[column sep = 7mm,  row sep = 5mm] 
 & 1 \ar[d] &  1 \arrow[d] & 1 \arrow[d] &  \\
 1 \arrow[r] & \pi_1 ( \wRDAz ) \arrow[r, "\fp_*"] \ar[equal]{d}  & \pi_1(\RDAz) \arrow[r, "\overline{C_0}"] \arrow[d, "\fq_*"] & {\Z \times \Z} \arrow[r] \arrow[d, "\wt{\fq_*}"] & 1\\
 1 \arrow[r] & \pi_1 ( \wRDAz ) \arrow[r, "\fq_* \circ \fp_* "] \ar[d] & \pi_1(\RDB) \arrow[r] \arrow[d, "\ell_0 \mapsto 1"'] &  G \arrow[r] \arrow[d] & 1 \\
1 \ar[r] & 0 \ar[r] \ar[d] & \Z / 2 \Z \arrow[d] \ar[equal]{r} & \Z / 2 \Z \arrow[d] \ar[r] & 1\\
& 1 & 1 & 1 &
 \end{tikzcd},
 \end{equation}
where $\Z \times \Z$ is the deck transformation group of the covering $\fp$, $\Z/2\Z$ is the deck transformation group of the covering $\fq$, and $\wt{\fq_*}$ is the map induced by $\fq_*$.
We shall show that the  rightmost column of short exact sequence is left-split; namely we shall construct a map $\wt{\varphi}:G \ra \Z\times \Z$ such that $\wt{\varphi} \circ \wt{\fq_*} = \id$:
\begin{equation} \label{splitsos}
 \begin{tikzcd}[column sep = 7mm,  row sep = 5mm] 
 1 \ar[r] &  \Z\times \Z \ar[r, "\wt{\fq_*}"] & G \ar[l, "\wt{\varphi}", bend left, dashed] \ar[r] & \Z/2\Z \ar[r] & 1
 \end{tikzcd},
\end{equation}
which shows that $G \cong \Z \times \Z \times \Z/2\Z$ as required.

We shall first define a map $\varphi: \pi_1(\RDB) \ra \pi_1(\RDAz)$ and show that
$\overline{C_0}\circ\varphi$ factors uniquely through the quotient $G$, which we will define to be our map $\wt{\varphi}: G \ra \Z\times \Z$.
We pick loops $\lambda_i \subset \DA \setminus \Delta_0$ and $\ell_i \subset \DB \setminus \Lambda$ as in \cref{fig: loops in disk}.

\begin{figure}[H]  
\begin{subfigure}{0.45 \textwidth}
\centering
\begin{tikzpicture} [scale= 0.5]
\draw (0,0) ellipse (7cm and 2.5cm);
\draw[fill] (1.25,0) circle [radius=0.045];
\draw[fill] (-1.25,0) circle [radius=0.045];
\draw[fill] (-5.5,0) circle [radius=0.045];
\draw[fill] (5.5,0) circle [radius=0.045];
\draw[fill] (2.5,0) circle [radius=0.045];
\draw[fill] (-2.5,0) circle [radius=0.045];
\draw[fill] (0,0) circle [radius=0.045];

\node  at (4,0) {$ \boldsymbol{\cdots}$};
\node  at (-4,0) {$ \boldsymbol{\cdots}$};
\node  [above] at (2.5,0.35) {$2$} ;
\node [above] at (-2.5,0.35) {$-2$};
\node  [above] at (1.25,0.35) {$1$} ;
\node  [above] at (0,0.35) {$0$} ;
\node [above] at (-1.25,0.35) {$-1$};
\node [above] at (-5.5,0.35) {$-n$};
\node [above] at (5.5,0.35) {$n$};

\node at (0, -2.5) {$\times$};

\node at (5.85, 0) {{\tiny $\wedge$}};
\node at (2.85, 0) {{\tiny $\wedge$}};
\node at (1.6, 0) {{\tiny $\wedge$}};
\node at (.35, 0) {{\tiny $\wedge$}};
\node at (-.9, 0) {{\tiny $\wedge$}};
\node at (-2.15, 0) {{\tiny $\wedge$}};
\node at (-5.15, 0) {{\tiny $\wedge$}};

\draw (-5.15,0) arc(0:180:0.35);
\draw (-5.85,0) .. controls (-5.7, -1.5) and (-.5,-2.4) .. (0,-2.5);
\draw (-5.15,0) .. controls (-5, -1.5) and (.2,-2.4) .. (0,-2.5) ;

\draw (-2.15,0) arc(0:180:0.35);
\draw (-2.85,0) .. controls (-2.7, -1.5) and (-.5,-2.4) .. (0,-2.5) ;
\draw (-2.15,0) .. controls (-2, -1.5) and (-.5,-2.4) .. (0,-2.5) ;

\draw (-.9,0) arc(0:180:0.35);
\draw (-.9,0) .. controls (-.85, -1.3) and (-.5,-2.4) .. (0,-2.5) ;
\draw (-1.6,0) .. controls (-1.25, -1.5) and (-.5,-2.4) .. (0,-2.5) ;

\draw  (.35,0) arc(0:180:0.35);
\draw (.35,0) .. controls (.25, -1.5) and (.05,-2.4) .. (0,-2.5) ;
\draw (-.35,0) .. controls (-.25, -1.5) and (.05,-2.4) .. (0,-2.5) ;

\draw  (1.6,0) arc(0:180:0.35);
\draw (.9,0) .. controls (.85, -1.3) and (.5,-2.4) .. (0,-2.5) ;
\draw (1.6,0) .. controls (1.25, -1.5) and (.5,-2.4) .. (0,-2.5) ;

\draw  (2.85,0) arc(0:180:0.35);
\draw  (2.85,0) .. controls (2.7, -1.5) and (.5,-2.4) .. (0,-2.5) ;
\draw (2.15,0) .. controls (2, -1.5) and (.5,-2.4) .. (0,-2.5) ;

\draw (5.85,0) arc(0:180:0.35);
\draw (5.85,0) .. controls (5.7, -1.5) and (.5,-2.4) .. (0,-2.5) ;
\draw (5.15,0) .. controls (5, -1.5) and (-.2,-2.4) .. (0,-2.5) ;

\end{tikzpicture}
\end{subfigure}
\quad
\begin{subfigure}{0.45 \textwidth}
\centering
\begin{tikzpicture}[scale = 0.38]

\draw[thick] plot[smooth, tension=2.25]coordinates {(0,-.5) (14,2.5) (0,5.5)};
\draw[thick,green, dashed]  (0,5.5)--(0,3.875);
\draw[thick, green, dashed]   (0,-.5)--(0,1.125);
\draw[thick, green, dashed, ->]   (0,2.5)--(0,3.875);
\draw[thick,  green, dashed, ->]   (0,2.5)--(0,1.125);
\draw[fill] (3,2.5) circle [radius=0.1] ;
\draw[fill] (6,2.5) circle [radius=0.1]  ;

\draw[fill] (11.5,2.5) circle [radius=0.1]  ;
\filldraw[color=black!, fill=yellow!, thick]  (0,2.5) circle [radius=0.1];

\draw [thin] (0,4) .. controls (2, 4.25) and (2,.7) .. (0,-.5)  ;
\draw [thin] (0,1) .. controls (.75, 0.85) and (.5,.15) .. (0,-.5)  ;

\draw (12.25,2.5) arc(0:180:.75);
\draw (12.25,2.5) .. controls (12.1, .15) and (.15,-.45) .. (0,-.5) ;
\draw (10.75,2.5) .. controls (10.6, .15) and (0,-.45) .. (0,-.5) ;

\draw (6.75,2.5) arc(0:180:.75);
\draw (6.75,2.5) .. controls (6.6, .15) and (.15,-.45) .. (0,-.5) ;
\draw (5.25,2.5) .. controls (5.1, .15) and (0,-.45) .. (0,-.5) ;

\draw (3.75,2.5) arc(0:180:.75);
\draw (3.75,2.5) .. controls (3.6, .15) and (.15,-.45) .. (0,-.5) ;
\draw (2.25,2.5) .. controls (2.4, .15) and (0,-.45) .. (0,-.5) ;





\node at (3.75, 2.5) {{\tiny $\wedge$}};
\node at (6.75, 2.5) {{\tiny $\wedge$}};
\node at (12.25, 2.5) {{\tiny $\wedge$}};
\node at (1.5, 2.5) {{\tiny $\wedge$}};

\node  at (9,2.5) {$ \boldsymbol{\cdots}$};
\node[below] at  (0,2.5) {0};
\node[below] at  (3,2.5) {1};
\node[below] at  (6,2.5) {2};
\node[below] at  (11.5,2.5) {$n$};

\end{tikzpicture}
\end{subfigure}
\caption{The loops chosen for the fundamental groups of $\DA \setminus \Delta_0$ (left) and $\DB \setminus \Lambda$ (right).}\label{fig: loops in disk}
\end{figure}

The induced map $q_*: \pi_1(\DA \setminus \Delta_0) \ra \pi_1(\DB \setminus \Lambda)$ on the fundamental groups  satisfies
\[
q_*([\lambda_j]) = 
\begin{cases}
[\ell_0\circ\ell_0], &\text{for } j=0; \\
[(\ell_0 \ell_1 \cdots \ell_{j-1} )\ell_{|j|}(\ell_{j-1}^{-1} \cdots \ell_1^{-1}\ell_0^{-1})], &\text{for } -n \leq j \leq -1; \\
[\ell_j], &\text{for } 1 \leq j \leq n.
\end{cases}
\]
Now define $\varphi: \pi_1(\RDB) \ra \pi_1(\RDAz)$ by sending 
\[
\begin{cases}
[point \times \ell_0] \mapsto [point \times \lambda_0]; \\
[point \times \ell_j] \mapsto [point \times \lambda_j], &\text{ for all } j\in \{1,...,n\}, \text{ and};\\
[\RP \times point] \mapsto [\RP \times point].
\end{cases} 
\]
We claim that $\overline{C_0}\circ\varphi\circ\fq_* = \overline{C_0}$.
Firstly, note that $\overline{C_0}\circ\varphi\circ\fq_*([point \times \lambda_0]) = 0 = \overline{C_0}([point \times \lambda_0])$, and $\varphi\circ\fq_*([\RP \times point]) = [\RP \times point]$ by construction.
Moreover, for $j \in \{-n,...,-1,1,...,n\}$,
\begin{align*}
&(\varphi \circ \fq_* )[point \times \lambda_j] = \\
&\begin{cases}
[point \times \lambda_0\lambda_1\cdots \lambda_{j-1}][point \times \lambda_{|j|}][point \times \lambda_0\lambda_1\cdots \lambda_{j-1}]^{-1}, &\text{for } -n \leq j \leq -1; \\
[point \times \lambda_j], &\text{for } 1 \leq j \leq n.
\end{cases}
\end{align*}
Since $\overline{C_0}$ maps to $\Z \times \Z$, which is abelian, we have that, from \eqref{20},
\[
(\overline{C_0} \circ \varphi \circ \fq_* )[point \times \lambda_j] = \overline{C_0}[point \times \lambda_{|j|}] = \overline{C_0}[point \times \lambda_{j}].
\]
This shows that $\overline{C_0} \circ \varphi \circ \fq_*$ and $\overline{C_0}$ agrees on all generators of $\pi_1 (\RDAz)$, and so are equal.
This implies that
\[
(\overline{C_0} \circ \varphi \circ \fq_*\circ\fp_*) \left(\pi_1(\wRDAz) \right) = (\overline{C_0}\circ\fp_*)\left( \pi_1(\wRDAz) \right) = 0.
\]
As such, $\overline{C_0} \circ \varphi$ factors uniquely through the quotient $G$, and we denote this map by $\wt{\varphi}: G \ra \Z \times \Z$.
By definition, $\wt{\fq_*}$ is uniquely determined by the images of $[point \times \lambda_1],[\R P^1 \times point] \in \pi_1(\RDAz)$ under $\fq_*$.
It is now easy to see that $\wt{\varphi}\circ \wt{\fq_*} = \id$ by the construction of $\wt{\varphi}$.
\end{proof}

	\begin{remark} \label{rmk: classifying cohomology type B}
	Following the proof of \cref{finalcovering} (3), it is easy to see that the covering space  $\wRDAz$ of  $\RDB$ is classified by the cohomology class $C^B \in H^1(\RDB; \Z \times \Z \times \Z / 2\Z)$ defined as follows:
	\begin{align}
 C^B([point \times \ell_0]) &= (0,0,1); \\ 
 C^B([point \times \ell_j]) &= (-2,1,0), \quad \text{ for } 1, \ldots, n; \\
 C^B([\R \text{P}^1 \times point]) &= (1,0,0).
    \end{align}
%
	\end{remark}

   Note that every $f \in$ Diff$ \left(\DB, \{0\} \right)$ preserves the class $C^B$ and therefore can be lifted to a unique equivariant diffeomorphism $\check{f}$ of $\wRDAz$ that acts trivially on the fibre of $\wRDAz$over all points in $T_z\DB$ for $z \in {\partial \DB}$.
   We shall call $\check{f}$ the \emph{preferred lift} of $f$.
   Furthermore, every curve $c$ in $\DB$ admits a canonical section $s_c : c \backslash \Lambda \ra \RDB$ defined by $s_c(z) = T_z c$. 
     We define a \textit{trigrading} of $c$ to be a lift $\check{c}$ of $s_c$ to $\wRDAz$  and a \textit{trigraded curve} to be a pair $(c, \check{c})$ consisting a curve and its trigrading; we will often just write $\check{c}$ instead of $(c, \check{c})$ when the context is clear.
    We denote the $\Z \times \Z \times \Z / 2\Z$-action on $\wRDAz$ by $\chi^B.$
     On top of that, we can easily extend the notion of isotopy to the set of trigraded curves where $\chi^B$ and $ \MCG \left( \DB, \{0\} \right) $ have induced actions on the set of isotopy classes of trigraded curves.
    In particular, for $[f] \in  \MCG\left(\DB, \{0\} \right)$ and a trigraded curve $\check{c}$, $ [\check{f}] (\check{c}) :=  \check{f} \circ \check{c} \circ f^{-1} : f(c) \setminus \Lambda \ra \wRDAz. $

\begin{lemma}\label{freeact} ~
\begin{enumerate}
\item A curve $c$ admits a trigrading if and only if it is not a simple closed curve. 

\item  The $\Z \times \Z \times \Z / 2\Z$-action on the set of isotopy classes of trigraded curves is free. 
	Equivalently, a trigraded curve $\check{c}$ is never isotopic to $\chi^B(r_1,r_2,r_3) \check{c}$ for any $(r_1,r_2,r_3)\neq 0.$
\end{enumerate}

\end{lemma}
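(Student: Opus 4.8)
The plan is to prove both parts by covering‑space theory applied to the normal covering $\wRDAz \to \RDB$ of \cref{finalcovering}, whose (abelian) deck group is $\Z \times \Z \times \Z/2\Z$ and which is classified by a cohomology class $C \in H^1(\RDB; \Z\times\Z\times\Z/2\Z)$ sending each small loop $\lambda_i$ (for $i \neq 0$) to $(-2,1,0)$, the loop $\lambda_0$ to $(0,0,1)$, and the fibre class $[\RP \times \mathrm{pt}]$ to $(1,0,0)$ (with respect to the fixed trivialisation used to write $\RDB \cong \RP \times (\DB\setminus\Lambda)$).

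For part (1), recall that a trigrading of $c$ is a lift through $\wRDAz$ of the canonical section $s_c : c\setminus\Lambda \to \RDB$, so by the lifting criterion it exists on each connected component $J$ of $c\setminus\Lambda$ iff $s_c^\ast C$ vanishes on $\pi_1(J)$. If $c$ is an arc, every component of $c\setminus\Lambda$ is a contractible arc, so the obstruction is automatically zero (the set of trigradings of $c$ being a torsor over one copy of $\Z\times\Z\times\Z/2\Z$ per component); this is the ``if'' direction. If $c$ is a simple closed curve, then $c\setminus\Lambda = c \simeq S^1$ and the obstruction is the single class $C(s_c([c]))$. I would compute it via the product splitting: the $(\DB\setminus\Lambda)$‑component of $[s_c(c)]$ equals $\sum_{i\in E}[\lambda_i]$, where $E\subseteq\Lambda$ is the set of marked points enclosed by $c$, while the $\RP$‑component equals twice the turning number of the embedded circle $c$, i.e.\ $\pm 2$ by the Umlaufsatz. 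Hence the first coordinate of $C(s_c([c]))$ is $\pm 2 - 2\,\#(E\setminus\{0\})$ and the second is $\#(E\setminus\{0\})$, and these cannot both vanish; so a simple closed curve admits no trigrading, completing part (1).

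For part (2), the action of $\Z\times\Z\times\Z/2\Z$ on $\wRDAz$ itself is free, being the deck group of a connected normal covering; the content is passing to isotopy classes. Suppose $\check c \simeq \chi^B(r_1,r_2,r_3)\,\check c$. By the definition of isotopy of trigraded curves this is realised by some $\phi$ in the identity component of $\Diff(\DB,\partial\DB;\{0\})$ with $\phi(c)=c$ and $\check\phi\circ\check c\circ\phi^{-1} = \chi^B(r_1,r_2,r_3)\circ\check c$, where $\check\phi$ is the canonical lift. Such $\phi$ fixes $\partial\DB$ pointwise and also fixes $\Lambda$ pointwise, being isotopic to the identity through diffeomorphisms preserving the finite set $\Lambda$. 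If $c$ has an endpoint $p$ on $\partial\DB$, then $\phi(p)=p$, and since $\phi(c)=c$ we get $d\phi_p([T_pc])=[T_pc]$; as $\check\phi$ acts trivially on every fibre of $\wRDAz$ over a point of $\partial\DB$, we conclude $\check\phi(\check c(p)) = \check c(p)$. Evaluating the displayed identity at $p$ gives $\check c(p) = \chi^B(r_1,r_2,r_3)(\check c(p))$, whence $(r_1,r_2,r_3)=0$ by freeness of the deck action.

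The hard case — and the step I expect to be the main obstacle — is when $c$ is an arc with \emph{both} endpoints in $\Lambda$, since then no point of $c$ is a priori fixed by the canonical lift $\check\phi$. My plan here is a limiting version of the boundary argument: after isotoping $c$ to be radial near each marked‑point endpoint, $\check c$ extends continuously to an endpoint $m\in\Lambda$, with value in the blow‑up of $\wRDAz$ along the fibre circle over $m$; one then checks that $\check\phi$ fixes this limiting value. The only possible discrepancy is the winding number of the loop $t\mapsto [d\phi_t|_m]\,[T_mc]$ in $PT_m\DB\cong\RP$ computed along an isotopy $\{\phi_t\}$ from the identity to $\phi$, and this vanishes because $\Diff(\DB,\partial\DB;\{0\})$ has contractible identity component (Smale's theorem together with the asphericity of configuration spaces of the disc), so the path $t\mapsto d\phi_t|_m$ in $GL_2^+(\R)$ carries no net rotation; granting this, evaluating ``at $m$'' again forces $(r_1,r_2,r_3)=0$. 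As a sanity check, this last case also follows a posteriori from the type‑$A$ statement of \cite{KhoSei} via the $\cA(B_n)$‑equivariant lift $\mathfrak{m}$ of \cref{lift section}, which is why I expect the direct argument above to be the delicate part rather than an essentially new difficulty.
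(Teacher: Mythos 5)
Your covering-space approach matches the one the paper defers to (it cites \cite[Lemmas 3.12, 3.13]{KhoSei} and gives no details), and part~(1) together with the $\partial\DB$-endpoint subcase of part~(2) are correct. The genuine gap is exactly where you flag it, in the case of an arc with both endpoints in $\Lambda$: the claim that the loop $t\mapsto[d\phi_t|_m]\,[T_mc]$ has zero winding ``because $\Diff_0(\DB,\partial\DB;\Lambda)$ is contractible'' is not a valid inference. Contractibility (hence simple connectedness) only tells you that the winding number is independent of the chosen path $\phi_t$ from $\id$ to $\phi$; it does \emph{not} force it to vanish. Concretely, take a small disc $D_\epsilon$ around $m$, set $\phi_t$ to be rotation by $2\pi t$ on $D_\epsilon$, interpolated to the identity on an annulus and extended by the identity. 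Then each $\phi_t$ lies in $\Diff(\DB,\partial\DB;\Lambda)$, $\phi_0=\id$, $\phi_1$ is a Dehn twist $T_\gamma$ about a curve $\gamma$ encircling only $m$ (which is in the identity component), and $d\phi_t|_m$ is rotation by $2\pi t$, so the loop $[d\phi_t|_m]\,[T_mc]$ winds twice around $\RP^1$. Thus elements of $\Diff_0$ can and do carry net rotation at $m$; the hypothesis $\phi(c)=c$ must be used to rule this out, but your justification never invokes it.

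The cleanest repair is to upgrade the isotopy: show that the stabiliser $\{\phi\in\Diff_0(\DB,\partial\DB;\Lambda): \phi(c)=c\}$ is connected, so that $\phi$ can be joined to $\id$ by a path $\phi_t$ with $\phi_t(c)=c$ for \emph{all} $t$ (this is an Alexander-method/isotopy-extension argument, not a consequence of contractibility alone). Once the isotopy preserves $c$, the ray $[T_mc]_+$ in the oriented blow-up circle at $m$ is fixed by every $\hat\phi_t$, so the canonical lifts $\check\phi_t$ act on the fibre of (the extended) $\wRDAz$ over that point by a deck transformation varying continuously in $t$; by discreteness it is constant, hence trivial, and the winding-number bookkeeping becomes unnecessary. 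Your ``a posteriori via $\mathfrak m$'' fallback is a reasonable sanity check but is not a substitute here: the $\Z/2\Z$ coordinate of the trigrading has no type-$A$ counterpart and would need a separate argument, and the $\cA(B_n)$-equivariance of $\mathfrak m$ is developed later in the paper, risking circularity.
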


 \begin{proof}
    This is essentially the same proof as in \cite[Lemma 3.12 and 3.13]{KhoSei}.
     \end{proof}

\begin{lemma} \label{action} ~
\begin{enumerate}
\item Let $c$ be a curve in $\DB$ which joins two points of $\Lambda \backslash \{0\},$ $t_c \in \MCG \left(\DB, \{0\} \right)$ the half twist along it, and $\check{t}_c$ its preferred lift to $\wRDAz.$
	Then, $\check{t}_c(\check{c}) = \chi^B(-1,1,0) \check{c}$ for any trigrading $\check{c}$ of $c$.
	\item Let $c$ be a curve in $\DB$ which joins two points of $\Lambda$ with one of them being $\{0\},$ $t_c \in \MCG \left(\DB, \{0\} \right)$ the full twist along it, and $\check{t}_c$ its preferred lift to $\wRDAz.$
	Then, $\check{t}_c(\check{c}) = \chi^B(-1,1,1) \check{c}$ for any trigrading $\check{c}$ of $c$.
	
\end{enumerate}     
\end{lemma}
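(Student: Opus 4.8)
The plan is to reduce both parts to a local computation of the winding of the tangent line through the twisted region, following the proof of the analogous type $A$ statement in \cite{KhoSei}, and then to identify the extra $\Z/2\Z$-coordinate by tracking whether the twisted region contains the branch point $\{0\}$.

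First I would observe that the statement is well posed: since the half twist $t_c$ (resp.\ the full twist $t_c^2$) maps the unoriented curve $c$ to itself, $\check{t}_c(\check c)$ is again a trigrading of $c$, and by \cref{freeact}(2) the set of trigradings of $c$ is a torsor under $\Z\times\Z\times\Z/2\Z$. Hence $\check{t}_c(\check c)=\chi^B(g)\,\check c$ for a unique triple $g$, and, because the preferred lift is natural with respect to diffeomorphisms, $g$ is independent of the chosen trigrading $\check c$; so it suffices to compute $g$ for one convenient trigrading of one convenient representative. Moreover $g$ is \emph{local}: the preferred lift $\check{t}_c$ is the identity outside the preimage of a disc neighbourhood $D$ of $c$ that carries the underlying twist, so $\check{t}_c(\check c)$ and $\check c$ agree over $\DB\setminus D$ and $g$ is read off entirely inside $D$, either as the winding of the tangent line induced by the twist or, equivalently, via the cohomology class of the loop swept by an isotopy straightening $t_c(c)$ (resp.\ $t_c^2(c)$) back to $c$.

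For the $\Z\times\Z$-part of $g$ I would invoke \cite{KhoSei} directly. In part $(1)$ both endpoints of $c$ lie in $\Lambda\setminus\{0\}$, so $D$ may be chosen disjoint from $\{0\}$; over $PT(D\setminus\Lambda)$ the covering $\wRDAz\to\RDB$ restricts to (a trivial extension of) the type $A$ bigraded covering, since the $\Z/2\Z$-factor is generated by a small loop around $\{0\}$ which is nullhomotopic in $D$. The type $A$ half-twist computation of \cite{KhoSei} then gives that the $\Z\times\Z$-shift is $(-1,1)$, and -- because the $\Z/2\Z$-subcover is trivial over $D$ -- the third coordinate of $g$ vanishes, yielding $(-1,1,0)$. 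In part $(2)$ the point $\{0\}$ is an endpoint of $c$, so every such $D$ contains $\{0\}$; here I would pass to the double branched cover of \cref{brcover}: $c$ lifts to a single arc $\tilde c$ through the branch point and $t_c^2$ lifts to a single half twist along $\tilde c$ (this is the content of the relation $\Psi([t^B_{b_1}])=[t^A_{b_n}]$ of \cref{injB}). Applying the type $A$ half-twist formula upstairs to $\tilde c$ and descending gives $(-1,1)$ for the $\Z\times\Z$-part, while the straightening isotopy inside $D$ now sweeps a loop encircling $\{0\}$ exactly once; its $\Z\times\Z$-contribution is trivial by \eqref{l0}, so it contributes precisely the generator of the $\Z/2\Z$-factor (the monodromy of the covering $q$ of \cref{finalcovering}(1) around $\{0\}$). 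This yields $g=(-1,1,1)$.

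The main obstacle is the bookkeeping in part $(2)$: one must check carefully that, after passing through the branched cover, the $\Z\times\Z$-part of the descended shift is that of a \emph{single} type $A$ half twist (and not of a full twist), and that the loop swept by the straightening isotopy contributes \emph{exactly} the nontrivial element of $\Z/2\Z$, i.e.\ with coefficient $1$ (the sign being irrelevant). Making the translation explicit between the deck group $\Z\times\Z\times\Z/2\Z$ of $\wRDAz\to\RDB$ and the deck group $\Z\times\Z$ of $\wRDAz\to\RDAz$ -- so that a trigraded curve in $\DB$ corresponds to a bigraded multicurve in $\DA$ together with a choice of sheet of $q$ -- is what pins these coefficients down, after which both parts follow from \cite{KhoSei}.
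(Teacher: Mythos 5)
Your proposal is correct: it reaches $(-1,1,0)$ and $(-1,1,1)$ by sound reasoning, and part (1) is essentially the paper's argument (defer to Khovanov–Seidel after observing the twisting disc $D$ avoids $\{0\}$, so the $\Z/2\Z$ monodromy is trivial). For part (2), however, you take a genuinely different route. The paper proves $(2)$ by a direct local computation \emph{in $\RDB$}: it picks a path $\beta$ from $\partial \D^B_{n+1}$ to the fixed point of $t_c$ on $c$, splices $Dt_c(\R\beta')$ with $\R\beta'$ reversed to get a closed loop $\kappa$, and evaluates the classifying cohomology class on $[\kappa] = [\RP\times pt] + [pt\times\lambda_0] + [pt\times\lambda_j]$, obtaining $(1,0,0)+(0,0,1)+(-2,1,0) = (-1,1,1)$ outright (cf.\ Figure 1.5.2 in the source). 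You instead pass to the double branched cover, apply the type $A$ half-twist formula to the lifted arc $\tilde c$ (using $\Psi([t^B_{b_1}]) = [t^A_{b_n}]$), descend to get the $\Z\times\Z$-part $(-1,1)$, and compute the $\Z/2\Z$-coordinate as the mod-$2$ winding of the straightening loop around the branch point. Your route is conceptually cleaner in that it exhibits each coordinate of the shift as coming from an identifiable source, but it relies on the unstated compatibility of the $\Z\times\Z$-part of the deck action of $\wRDAz\to\RDB$ with the bigraded deck action of $\wRDA\to\RDA$ under the lift; this is precisely what the lifting map $\m$ and \cref{tribilocin} formalize, and those appear later in the paper. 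The paper's direct evaluation of $C$ on $\kappa$ avoids that dependency and is self-contained where the lemma is stated, so if you want your version to stand alone at that point you would need to extract the comparison between the two covers (essentially, that $\fp\circ\check c$ picks one of the two lifts of $c\setminus\Lambda$, that $t^A_{\tilde c}$ swaps the two lifts so $g_3=1$, and that after correcting by $\chi^B(0,0,1)$ the residual $\Z\times\Z$-shift is a type $A$ shift) rather than assert ``descending gives''.
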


\begin{proof}
The proof of $(1)$ is as in \cite[Lemma 3.14]{KhoSei}.
 We will now prove $(2).$ 
 Let $\beta: [0,1] \ra \DB \setminus \Lambda$ be an embedded smooth path from a point $\beta(0) \in \partial\DB$ to the fixed point $\beta(1) \in c$ of $t_c.$
  Note that we have $\check{t}_c(\check{c}) = \chi(r_1, r_2, r_3) \check{c}$ as $t_c(c)= c.$
  Consider the closed path $\kappa:[0,2] \ra \RDB$ given by 
  $$ \kappa(t) = \begin{cases} 
      Dt_c(\R\beta'(t)), & \text{ if } t \leq 1; \\
      \R\beta'(2-t), & \text{ if } t \geq 1, \\
   \end{cases}
$$
where $\R\beta'(s) \subset T_{\beta(s) }\DB$. The above situation is illustrated in \cref{ft0}.
Let  $C^B \in H^1(\RDB; \Z \times \Z \times \Z / 2\Z)$ be the cohomology class classifying the covering space  $\wRDAz$ of  $\RDB$ as in \cref{rmk: classifying cohomology type B}.
    Then,  we compute 
$(r_1,r_2,r_3) = - C^B([\kappa]) = C^B([\RP \times points]) + C^B([points \times \ell_0] + C^B([points \times \ell_j]  ) = (-1,1,1).$
Note that $[\kappa]$ only picks up one copy of $[\RP\times points]$ due to the disk configuration, or more precisely the oriented trivilisation of $\DB$.
\end{proof}

\begin{figure}[H]  
\begin{tikzpicture} [scale=0.35]

\draw[thick] plot[smooth, tension=2.25]coordinates {(0,-.5) (6.5,2.5) (0,5.5)};
\draw[thick,green, dashed]  (0,5.5)--(0,3.875);
\draw[thick, green, dashed]   (0,-.5)--(0,1.125);
\draw[thick, green, dashed, ->]   (0,2.5)--(0,4);
\draw[thick,  green, dashed, ->]   (0,2.5)--(0,1);
\draw[very thick]   (0,2.5)--(3.6,2.5);
\draw[fill] (3.6,2.5) circle [radius=0.1]  ;
\draw (1.8, -.55) -- (1.8, 2.5);
\draw [dashed] plot[smooth, tension=1.3]coordinates { (3.6,1.35)  (4.8,2.5) (3.6, 3.65)};
\draw [dashed] plot[smooth, tension=1.3]coordinates { (3.6, 3.65) (1.7,3.9) (0, 3.65)};
\draw [dashed] plot[smooth, tension=1.1]coordinates { (0, 1.35) (1.3, 1.65) (1.8, 2.5)};
\draw [dashed] plot[smooth, tension=.85]coordinates { (3.6,1.35)  (2.3,.95) (1.8,-.15)};

\filldraw[color=black!, fill=yellow!, thick]  (0,2.5) circle [radius=0.1];

\node [above] at (0.8, 2.6) {$c$};
\node [left] at (1.8, .9) {{\small $\beta$}};
\node [above] at (1.8,3.9) {{\small $t_c({\beta})$}};
\node [below] at (0.2, 2.5) {$0$};
\node [below] at (3.8, 2.6) {{\small $j$}};
\end{tikzpicture}
\caption{{\small The action of full twist around curve joining $\{0\}$ and another point in $\Lambda.$}} \label{ft0}
\end{figure}

\subsection{Local index and trigraded intersection numbers}\label{subsec: local index trigraded intersect}
Mimicking the definition of local index for bigraded curves in \cite[pg. 225]{KhoSei}, we shall define the local index of an intersection between two trigraded curves. 
 Suppose $(c_0, \check{c_0})$ and $(c_1, \check{c_1})$ are two trigraded curves, and $z \in \DB \setminus \partial \DB$ is a point where $c_0$ and $c_1$ intersect transversally. Take a small circle $\ell \subset \DB \setminus \Lambda$ around $z$ and an embedded arc $\alpha : [0,1] \ra \ell $ which moves clockwise around $\ell$ such that $\alpha(0) \in c_0$ and $\alpha(1) \in c_1$ and $\alpha(t) \notin c_0 \cup c_1$ for all $t \in (0,1).$ 
 If $z \in \Delta,$ then $\alpha$ is unique up to a change of parametrisation, otherwise, there are two choices which can be told apart by their endpoints.
 	Then, take a smooth path $\kappa:[0,1] \ra \RDB$ with $\kappa(t) \in \left(\RDB \right)_{\alpha(t)}$ for all $t,$ going from $\kappa(0)= T_{\alpha(0)}c_0$ to $\kappa(1)= T_{\alpha(1)}c_1$ such that  $\kappa(t) \neq T_{\alpha(t)}\ell$ for every $t$. 
 	One can take $\kappa$ as a family of tangent lines along $\alpha$ which are all transverse to $\ell.$
 	After that, lift $\kappa$ to a path $\check{\kappa}:[0,1] \ra \wRDB$ with $\check{\kappa}(0) = \check{c}_0(\alpha(0));$
 	subsequently, there exists some $(\mu_1, \mu_2, \mu_3) \in \Z \times \Z \times \Z / 2\Z$ such that 
\begin{equation} \label{localpath}
\check{c}_1(\alpha(1)) = \chi^B(\mu_1, \mu_2, \mu_3) \check{\kappa}(1),
\end{equation}
 	 as $\check{c}_1(\alpha(1))$ and $\check{\kappa}(1)$ are the lift of the same point in $\RDB.$ 
 	To this end, we define the local index of $\check{c}_0, \check{c}_1$ at $z$ as 
 	$$ \mu^{trigr}(\check{c}_0, \check{c}_1;z) = (\mu_1, \mu_2, \mu_3) \in \Z \times \Z \times \Z / 2\Z. $$
 	It is easy to see that the definition is independent of all the choices made.
 	
 	The local index has a nice symmetry property similar to \cite[pg. 227]{KhoSei}, namely:

\begin{lemma} \label{locinsym}
If $(c_0,\check{c}_0)$ and $(c_1,\check{c}_1)$ are two trigraded curves such that $c_0$ and $c_1$ have minimal intersection, then 
\[ \mu^{trigr}(\check{c}_1, \check{c}_0;z) = \left\{
\begin{array}{ll}
     (1,0,0) - \mu^{trigr}(\check{c}_0, \check{c}_1;z), & \text{if } z \notin \Delta; \\
    (0,1,0) - \mu^{trigr}(\check{c}_0, \check{c}_1;z), & \text{if } z \in \Delta \backslash \{0\}; \\
     (1,0,1) - \mu^{trigr}(\check{c}_0, \check{c}_1;z), & \text{if } z \in \{0\}.\\
\end{array} 
\right. \]

\end{lemma}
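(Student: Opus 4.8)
The plan is to reduce Lemma~\ref{locinsym} to a purely local computation on a small circle $\ell$ around $z$, following \cite[pg.~21]{KhoSei} essentially verbatim when $z\notin\Delta$ and when $z\in\Delta\setminus\{0\}$, and to single out $z\in\{0\}$ as the one genuinely new case, where the branched cover $q_{br}$ intervenes. Concretely, I would fix the data defining $\mu^{trigr}(\check c_0,\check c_1;z)$ — the clockwise arc $\alpha$ on $\ell$ from $c_0$ to $c_1$, the transverse tangent-line path $\kappa$, and its lift $\check\kappa$ with $\check\kappa(0)=\check c_0(\alpha(0))$ — and then, to compute $\mu^{trigr}(\check c_1,\check c_0;z)$, take the \emph{complementary} clockwise arc $\alpha^{op}$ on $\ell$ that starts at $\alpha(1)\in c_1$ and runs clockwise back to a point of $c_0$. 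Since $c_0$ and $c_1$ have minimal intersection, $\alpha*\alpha^{op}$ is all of $\ell$ when $z$ is a marked point (then $\ell$ meets $c_0$ and $c_1$ once each) and is exactly one half of $\ell$ when $z$ is not a marked point (then $\ell$ meets $c_0$ and $c_1$ in two interleaved points each). Concatenating the tangent-line paths $\kappa$ and $\kappa^{op}$ and closing up along the short sub-arc $\delta$ of $c_0$ joining the two $c_0$-endpoints (a constant path when $z$ is a marked point, an arc through $z$ otherwise) produces a loop $\gamma_z$ in $PT(\DB\setminus\Lambda)$ based at $T_{\alpha(0)}c_0$.

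The key identity, proved exactly as in \cite{KhoSei} by $\chi^B$-equivariance of the lifts together with the facts that $\wRDAz\xra{\fp\circ\fq}\RDB$ is the covering classified by the class $C^B$ (\cref{finalcovering}) and that the $\chi^B$-action is free (\cref{freeact}), is
$$\mu^{trigr}(\check c_0,\check c_1;z)+\mu^{trigr}(\check c_1,\check c_0;z)=C^B([\gamma_z])\in\Z\times\Z\times\Z/2\Z,$$
with $\gamma_z$ suitably oriented; the lemma then reduces to identifying $[\gamma_z]$. For $z\notin\Delta$ the underlying base loop of $\gamma_z$ (half of $\ell$ followed by an arc of $c_0$) bounds a disc in $\DB\setminus\Lambda$, so $\gamma_z$ is a power of the fibre class $[\RP\times point]$; the standard local model of two lines crossing transversally shows the power is $1$, giving $C^B([\gamma_z])=(1,0,0)$. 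For $z\in\Delta\setminus\{0\}$ the base loop is $\ell$ winding once around the honest puncture $z$, whose tangent line turns through $2\pi$, so $[\gamma_z]=[point\times\lambda_z]+2[\RP\times point]$ and $C^B([\gamma_z])=(-2,1,0)+2(1,0,0)=(0,1,0)$.

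The hard part is $z=0$, and the subtlety there is the winding number of the tangent-line family along $\ell$. Here $0$ is the branch point of $q_{br}$, so a small circle around $0$ in $\DB$ lifts under $q$ to a \emph{half}-circle around $0$ in $\DA$ — and it is on $\DA$ that the tangent bundle carries the trivialisation making $\fq$ the identity on the $\RP$-factor. I would therefore run the computation upstairs: the tangent line of that half-circle turns through only $\pi$, contributing winding $1$ rather than $2$, while the nontrivial monodromy of the double cover $q$ around $0$ supplies the generator of the $\Z/2$-factor. This gives $[\gamma_0]=[point\times\lambda_0]+[\RP\times point]$, hence $C^B([\gamma_0])=(0,0,1)+(1,0,0)=(1,0,1)$, the asserted value. (As a consistency check, the same bookkeeping recovers the computation $C([\kappa])=(-1,1,1)$ used in the proof of \cref{action}.) Independence of the choices of $\ell$, $\alpha$, $\kappa$, the lifts, and the minimal-intersection representatives is handled exactly as in the definition of $\mu^{trigr}$ and in \cite{KhoSei}, so I expect no obstacle there.
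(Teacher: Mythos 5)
Your proposal is correct and follows essentially the same route as the paper's proof, which defers the first two cases to \cite{KhoSei} and verifies the third by a local computation at $\{0\}$ (presented in the paper only as a figure). Your explicit observation that the tangent-line winding along a small circle about the branch point is $1$ rather than $2$ — because the trivialisation of $\RDB$ descends from $\DA$ via the covering $\fq$ and a loop around $\{0\}$ in $\DB$ lifts to a half-circle upstairs, with the missing half-turn accounted for by the generator of the $\Z/2\Z$-factor — is precisely the content the paper's figure is meant to convey.
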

 
\begin{proof}
The first two formulae are essentially the same as in \cite[pg. 227]{KhoSei} and can be proven in a similar fashion, which we omit the details.
 The third formula can be verified using \cref{fig: proof of trigrading lemma}, as the blue path $\ell$ in \cref{fig: proof of trigrading lemma} contributes $[\RP \times point] + [point \times \ell_0].$
\end{proof}
\begin{figure} [H]
\begin{tikzpicture} [scale=0.65]
\draw[thick, green, dashed] (0,1.1)--(0,2);
\draw[ thick, green, dashed, ->] (0,0)--(0,1.1);
\draw [thick, green, dashed, ->] (0,0)--(0,-1.1);
\draw[thick, green, dashed] (0,-1.1)--(0,-2);

\draw [thick] plot[smooth, tension=.5]coordinates { (0, 0) (1.3, 1.15) (2.5, 1.3)};
\draw [thick] plot[smooth, tension=.5]coordinates { (0, 0) (1.3, -1.15) (2.5, -1.3)};
\filldraw[color=black!, fill=yellow!, thick]  (0,0) circle [radius=0.1];
\draw [thick, blue] (0,-1.3) arc (-90:90:1.3);

\draw[thick, brown] (1,0) -- (1.6,0); 
\draw[thick, brown] (0.7,.55) -- (1.3,1); 
\draw[thick, brown] (0.8,.49) -- (1.4,.8); 
\draw[thick, brown] (0.9,.4) -- (1.5,.6); 
\draw[thick, brown] (.95,.3) -- (1.55,.4); 
\draw[thick, brown] (1,.15) -- (1.6,.2); 

\draw[thick, brown] (0.7,-.55) -- (1.3,-1); 
\draw[thick, brown] (0.8,-.49) -- (1.4,-.8); 
\draw[thick, brown] (0.9,-.4) -- (1.5,-.6); 
\draw[thick, brown] (.95,-.3) -- (1.55,-.4); 
\draw[thick, brown] (1,-.15) -- (1.6,-.2); 

\node[right] at (2.5,1.3) {$c_0$};
\node[right] at (2.5,-1.3) {$c_1$};
\node[right] at (1.6,0) {$\kappa$};
\node[above] at (.3,1.2) {$\ell$};
\node[below]  at (-.15,0) {$0$};
\end{tikzpicture}
\caption{{\small Two curves $c_0, c_1$ intersecting at $\{0\}.$}}
\label{fig: proof of trigrading lemma}
\end{figure}

Let $\check{c_0}$ and $\check{c_1}$ be two trigraded curves that do not intersect at $\partial \DB.$
   Pick a curve $c_1' \simeq c_1$ which intersects minimally with $c_0.$  
   Then, by \cref{freeact}, $c_1'$ has a unique trigrading $\check{c}_1'$ of $c_1'$ so that $\check{c}_1' \simeq \check{c}_1.$ 
   We define the \emph{trigraded intersection number} $I^{trigr}(\check{c}_0,\check{c}_1) \in \Z[q_1^{\pm 1}, q_2{\pm 1},q_3]/ \<q_3^2-1\>$  of $\check{c}_0$ and $\check{c}_1$ as follows:
\begin{itemize}
\item if $\check{c}_1 \simeq \chi(r_1, r_2, r_3)\check{c}_0$ with $(r_1,r_2,r_3) \in \Z \times \Z \times \Z / 2\Z $ and $c_0 \cap c_1 \cap \{0 \}$ non-empty, then
\begin{equation}\label{eqn: exceptional trigrad int num}
 I^{trigr}(\check{c}_0,\check{c}_1) = q_1^ {r_1} q_2^ {r_2} q_3^ {r_3} (1 + q_2);
\end{equation}
\item otherwise
\begin{equation} 
\begin{split}
I^{trigr}(\check{c}_0,\check{c}_1) &=
  (1+q_3)(1+q_1^{-1}q_2) \sum_{z \in (c_0 \cap c_1') \backslash \Delta} q_1^ {\mu_1(z)} q_2^ {\mu_2(z)} q_3^ {\mu_3(z) } \\
& + (1+q_3) \sum_{z \in (c_0 \cap c_1') \cap \Delta \backslash \{0\}} q_1^ {\mu_1(z)} q_2^ {\mu_2(z)} q_3^ {\mu_3(z) } \\ 
& + (1+ q_1^ {-1} q_2 q_3) \sum_{z \in (c_0 \cap c_1') \cap\{0\}} q_1^ {\mu_1(z)} q_2^ {\mu_2(z)} q_3^ {\mu_3(z) } .\\
\end{split}
\end{equation}
\end{itemize}
 The fact that this definition is independent of the choice of $c_1'$ and is an invariant of the isotopy classes of  $(\check{c}_0,\check{c}_1)$ follows similarly as in the case of ordinary geometric intersection numbers.
 
\begin{remark}
Note that the exceptional case \eqref{eqn: exceptional trigrad int num} in the definition above is motivated by the algebraic correspondence explored in \cref{int num and hom} (graded HOM space between corresponding irreducible projective modules).
We hope to find a more geometric explanation from symplectic geometry in the near future.
\end{remark}

\begin{lemma}  \label{triintpro}
The trigraded intersection number has the following properties:

\begin{enumerate} [(T1)]
 \item For any $f \in \Diff \left( \DB, \{0 \} \right),$
 $I^{trigr}(\check{f}(\check{c}_0),\check{f}(\check{c}_1)) = I^{trigr}(\check{c}_0,\check{c}_1).$
 \item For any $(r_1,r_2,r_3) \in \Z \times \Z \times \Z / 2\Z,$ \newline 
 $ I^{trigr}(\check{c}_0,\chi(r_1,r_2,r_3)\check{c}_1) = I^{trigr}(\chi(-r_1,-r_2,r_3)\check{c}_0,\check{c}_1) = q_1^ {r_1} q_2^ {r_2} q_3^ {r_3} I^{trigr}(\check{c}_0,\check{c}_1).$
 \item If $c_0, c_1$  are not isotopic curves with $\{0\}$ as one of its endpoints, $c_0 \cap c_1 \cap \partial \DB = \emptyset,$ and $I^{trigr}(\check{c}_0, \check{c}_1) = \sum_{r_1,r_2,r_3 } a_{r_1,r_2,r_3} q_1^{r_1} q_2^{r_2} q_3^{r_3},$ then 
 $I^{trigr}(\check{c}_1, \check{c}_0) = \sum_{r_1,r_2,r_3} a_{r_1,r_2,r_3} q_1^{-r_1} q_2^{1-r_2} q_3^{r_3}.$

\noindent If $c_0, c_1$  are isotopic curves with $\{0\}$ as one of its endpoints, $c_0 \cap c_1 \cap \partial \DB = \emptyset,$ then 
 $I^{trigr}(\check{c}_1, \check{c}_0) = I^{trigr}(\check{c}_0, \check{c}_1).$

\end{enumerate}

\end{lemma}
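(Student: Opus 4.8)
The plan is to treat all three properties as trigraded upgrades of the corresponding bigraded statements of \cite{KhoSei}, reducing each to the behaviour of the \emph{local} index $\mu^{trigr}(\cdot,\cdot;z)$ out of which $I^{trigr}$ is built. The only genuinely new ingredients are the third ($z=0$) case of \cref{locinsym} and the diagonal term $q_1^{r_1}q_2^{r_2}q_3^{r_3}(1+q_2)$, so I would first dispose of the diagonal case, in which $I^{trigr}$ is given by that closed formula and every assertion is a one-line manipulation: (T1) follows from $\chi^B$-equivariance of the lift $\check{f}$ together with $f(0)=0$; (T2) from $q_1^{r_1+s_1}q_2^{r_2+s_2}q_3^{r_3+s_3}(1+q_2)=q_1^{r_1}q_2^{r_2}q_3^{r_3}\cdot q_1^{s_1}q_2^{s_2}q_3^{s_3}(1+q_2)$; and (T3) from $\check{c}_1\simeq\chi^B(r_1,r_2,r_3)\check{c}_0\Leftrightarrow\check{c}_0\simeq\chi^B(-r_1,-r_2,r_3)\check{c}_1$ together with $q_2(1+q_2^{-1})=1+q_2$. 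For everything else I would fix a representative $c_1'\simeq c_1$ having minimal intersection with $c_0$, carrying by \cref{freeact} the unique trigrading $\check{c}_1'\simeq\check{c}_1$, and argue intersection-point by intersection-point.

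For (T1), the diffeomorphism $f$ lifts to the $\chi^B$-equivariant diffeomorphism $\check{f}$ of $\wRDAz$ (discussion preceding \cref{freeact}); then $\check{f}(c_1')$ has minimal intersection with $f(c_0)$ --- a diffeomorphism sends empty bigons to empty bigons and, because $f(\Lambda)=\Lambda$ and $f(0)=0$, marked points to marked points, so the trichotomy $z\notin\Lambda$ / $z\in\Lambda\setminus\{0\}$ / $z=0$ is preserved --- and naturality of the circle--arc--tangent-path data under $(f,Df,\check{f})$, with equivariance of $\check{f}$, gives $\mu^{trigr}(\check{f}(\check{c}_0),\check{f}(\check{c}_1);f(z))=\mu^{trigr}(\check{c}_0,\check{c}_1;z)$; summing over $z\in c_0\cap c_1'$ finishes it. For (T2), I would read off from \eqref{localpath} that $\mu^{trigr}(\check{c}_0,\chi^B(r_1,r_2,r_3)\check{c}_1;z)=\mu^{trigr}(\check{c}_0,\check{c}_1;z)+(r_1,r_2,r_3)$ and, by translating the basepoint of the lifted path $\check{\kappa}$, that $\mu^{trigr}(\chi^B(-r_1,-r_2,r_3)\check{c}_0,\check{c}_1;z)=\mu^{trigr}(\check{c}_0,\check{c}_1;z)+(r_1,r_2,r_3)$ as well (here $-r_3=r_3$ in $\Z/2\Z$, which is why the sign of the third coordinate does not flip); since the underlying curves --- hence the trichotomy of intersection points and the prefactors $(1+q_3)(1+q_1^{-1}q_2)$, $(1+q_3)$, $(1+q_1^{-1}q_2q_3)$ --- are untouched, every monomial $q_1^{\mu_1}q_2^{\mu_2}q_3^{\mu_3}$ acquires the factor $q_1^{r_1}q_2^{r_2}q_3^{r_3}$, which is (T2).

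For (T3), since $I^{trigr}$ is independent of the chosen representative, the same $c_1'$ computes both $I^{trigr}(\check{c}_0,\check{c}_1)$ and $I^{trigr}(\check{c}_1,\check{c}_0)$, so I only need to match the contribution of each $z\in c_0\cap c_1'$. Writing $\vec\mu=\mu^{trigr}(\check{c}_0,\check{c}_1;z)$ and inserting $\mu^{trigr}(\check{c}_1,\check{c}_0;z)=\vec s-\vec\mu$ from \cref{locinsym}, with $\vec s=(1,0,0)$, $(0,1,0)$, $(1,0,1)$ in the three cases, the asserted passage $\sum a_{\vec r}q_1^{r_1}q_2^{r_2}q_3^{r_3}\mapsto\sum a_{\vec r}q_1^{-r_1}q_2^{1-r_2}q_3^{r_3}$, i.e.\ $P(q_1,q_2,q_3)\mapsto q_2\,P(q_1^{-1},q_2^{-1},q_3)$, amounts to saying that this substitution multiplies the prefactor attached to $z$ by exactly $q_1^{s_1}q_2^{s_2}q_3^{s_3}$; for $z\in\Lambda\setminus\{0\}$ this is immediate since $1+q_3$ is fixed by the substitution, and for the other two cases it is the pair of identities
\begin{align*}
q_2(1+q_1q_2^{-1}) &= q_1(1+q_1^{-1}q_2), \\
q_2(1+q_1q_2^{-1}q_3) &= q_1q_3(1+q_1^{-1}q_2q_3),
\end{align*}
checked by expansion in $\Z[q_1^{\pm},q_2^{\pm},q_3]/\langle q_3^2-1\rangle$, the second using $q_3^2=1$. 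I expect the hard part to be exactly this last piece of bookkeeping at $z=0$: one must pair the prefactor $1+q_1^{-1}q_2q_3$ with the shift $(1,0,1)$ of \cref{locinsym} while respecting $q_3^2=1$, and an off-by-one in the $q_2$-degree or a sign slip in the $\Z/2\Z$-coordinate would break the identity. This is precisely the step that is not formally identical to \cite{KhoSei}, which has no branch point; everything else transcribes the bigraded argument verbatim.
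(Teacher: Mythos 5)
Your proof takes essentially the same route as the paper's own (very terse) argument: (T1) and (T2) are handled by tracking how the local index $\mu^{trigr}$ transforms under $\check{f}$ and under $\chi^B$, and (T3) is reduced to \cref{locinsym}. Your identification of the two identities
\begin{align*}
q_2(1+q_1q_2^{-1}) &= q_1(1+q_1^{-1}q_2), \\
q_2(1+q_1q_2^{-1}q_3) &= q_1q_3(1+q_1^{-1}q_2q_3)
\end{align*}
as the bookkeeping that makes the $z=0$ prefactor $(1+q_1^{-1}q_2q_3)$ and the shift $(1,0,1)$ of \cref{locinsym} mesh correctly is exactly the point the paper flags in its one-line remark about why the $(1+q_1^{-1}q_2q_3)$ term is "essential," and both identities check out in $\Z[q_1^{\pm},q_2^{\pm},q_3]/\langle q_3^2-1\rangle$.

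One thing to reconcile: your diagonal-case manipulation for (T3) gives $I^{trigr}(\check{c}_1,\check{c}_0) = q_1^{-r_1}q_2^{-r_2}q_3^{r_3}(1+q_2)$ via the biconditional, and the identity $q_2(1+q_2^{-1})=1+q_2$ shows this agrees with the \emph{first} displayed formula of (T3) (with $a_{(r_1,r_2,r_3)}=a_{(r_1,r_2+1,r_3)}=1$). It does \emph{not}, however, literally yield the paper's claim $I^{trigr}(\check{c}_1,\check{c}_0)=I^{trigr}(\check{c}_0,\check{c}_1)$ for isotopic curves ending at $0$ unless $r_1=r_2=0$; expanding both sides gives $q_1^{-r_1}q_2^{-r_2}(1+q_2)$ versus $q_1^{r_1}q_2^{r_2}(1+q_2)$. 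So either the paper's second clause of (T3) carries an implicit normalisation of the trigrading shift, or the first clause in fact holds without the "not isotopic" restriction (which is what your computation shows); you should state explicitly which reading you are proving, since as written your plan asserts the literal second-clause equality as "a one-line manipulation" when your own derivation produces the transformed formula instead.
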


\begin{proof}
For (T1) and (T2) these can be proven using a simple topological argument which we omit. 
For (T3), this is a consequence of \cref{locinsym}. 
We point out that the term $(1+q_1^{-1}q_2q_3)$ in the definition of trigraded intersection numbers for two curves that intersect at the point $\{0\}$ is essential for property (T3).
\end{proof}

\subsection{Admissible curves and normal form in $\DB$} \label{normal DB}
	Following \cite[Section 3b and 3e]{KhoSei}, we introduce the notion of (trigraded)  admissible curves in $\DB$ and their normal forms.
	Other than the extra consideration of trigradings, the main difference lies in the normal forms of (trigraded) admissible curves in the region containing the marked point $0$; see \Cref{2types0}.
	
	We fix the set of basic curves $b_1, \hdots , b_{n}$ and choose vertical curves $d_1, \hdots, d_{n}$ as in \Cref{bidi}, which divide $\DB$ into regions $D_0, \hdots, D_{n+1}.$
	Note that unlike in \cite{KhoSei}, none of our basic curves touches the boundary of the disc $\DB$.
	
 A curve $c$ is called \bit{admissible} if it is equal to $f(b_j)$ for some $0 \leq j \leq {n}$ and some diffeomorphisms $f \in \Diff \left(\DB, \{0\} \right)$.
 Note that the endpoints of $c$ must then lie in $\{0, \hdots, n\}$; conversely, every curves which start and end at $\{0, \hdots, n\}$ are admissible.
 Moreover, the two (distinct) orbits $\mathcal{O}([b_1])$ and $\mathcal{O}([b_2])$ under the action of $\cA(B_n) \cong MCG(\D^B_{n+1}, \{0\})$ partition the set of isotopy classes of admissible curves.

 If an admissible curve $c$ in its isotopy class has minimal intersection with all the $d_j$'s among its other representatives,  then we say that $c$ is in \bit{normal form}. 
 A normal form of $c$ is always achievable by performing an isotopy.

\begin{figure}[h]  

\begin{tikzpicture} [scale = 0.6]

\draw[thick] plot[smooth, tension=2.25]coordinates {(0,-.5) (12,2.5) (0,5.5)};
\draw[thick,green, dashed]  (0,5.5)--(0,3.875);
\draw[thick, green, dashed]   (0,-.5)--(0,1.125);
\draw[thick, green, dashed, ->]   (0,2.5)--(0,3.875);
\draw[thick,  green, dashed, ->]   (0,2.5)--(0,1.125);
\draw[fill] (1.75,2.5) circle [radius=0.1] ;
\draw[fill] (3.6,2.5) circle [radius=0.1]  ;
\draw[fill] (7.6,2.5) circle [radius=0.1]  ;
\draw[fill] (9.35,2.5) circle [radius=0.1]  ;
\filldraw[color=black!, fill=yellow!, thick]  (0,2.5) circle [radius=0.1];

\draw [thick,blue] (0.875, -.55) -- (0.875, 5.55);
\draw [thick,blue](2.625, -.55) -- (2.625, 5.55);
\draw [thick,blue](4.475, -.5) -- (4.475, 5.5);
\draw [thick, blue] (6.725, -.3) -- (6.725, 5.3);
\draw [thick,blue](8.475, -.05) -- (8.475, 5.05);

\node[right,blue] at (0.9, 4.5) {{\scriptsize $d_1$}};
\node[right,blue] at (2.65, 4.5) {{\scriptsize $d_2$}};
\node[right,blue] at (4.5, 4.5) {{\scriptsize $d_3$}};
\node[right,blue] at (6.75, 4.5) {{\scriptsize $d_{n-1}$}};
\node[right,blue] at (8.5, 4.4) {{\scriptsize $d_n$}};

\draw [thick,red] (1.75,2.5)--(0,2.5);
\draw [thick,red] (1.75,2.5)--(3.6,2.5);
\draw [thick,red] (3.6,2.5)--(4.85,2.5);
\draw [thick,red] (6.35,2.5)--(7.6,2.5);
\draw [thick,red](7.6,2.5) -- (9.35,2.5);

\node[above right,red] at (0.25,2.5) {{\scriptsize $b_1$}};
\node[above right,red] at (2,2.5) {{\scriptsize $b_2$}};
\node[above right,red] at (3.85,2.5) {{\scriptsize $b_3$}};
\node[above left,red] at (7.7,2.5) {{\scriptsize $b_{n-1}$}};
\node[above left,red] at (9.35,2.5) {{\scriptsize $b_n$}};

\node  at (5.6,2.5) {$ \boldsymbol{\cdots}$};
\node[below right] at  (0,2.5) {0};
\node[below] at  (1.75,2.5) {1};
\node[below] at  (3.6,2.5) {2};
\node[below] at  (7.6,2.5) {$n$-1};
\node[below] at  (9.35,2.5) {$n$};

\node at (.4375,3.75) {{\footnotesize $D_0$}};
\node at (.4375,1.25) {{\footnotesize $D_0$}};
\node at (1.75,3.75) {{\footnotesize $D_1$}};
\node at (1.75,1.25) {{\footnotesize $D_1$}};
\node at (3.6,3.75) {{\footnotesize $D_2$}};
\node at (3.6,1.25) {{\footnotesize $D_2$}};
\node at (7.6,3.75) {{\scriptsize $D_{n-1}$}};
\node at (7.6,1.25) {{\scriptsize  $D_{n-1}$}};
\node at (10.5,3.5) {{\footnotesize  $D_n$}};

\end{tikzpicture}

\caption{{\small The curves $b_i$ and $d_i$ in the aligned configuration with regions $D_i.$} } \label{bidi} 
\end{figure}


Let $c$ be an admissible curve in normal form. 
We use the same classification as in \cite[Section 3e]{KhoSei} to group every connected components of $c \cap D_j$ into finitely many types. 
For $1 \leq j \leq n$, the classification is exactly the same as in \cite[Section 3e]{KhoSei}: there are six types for the case $1 \leq j \leq n-1$ as depicted in \cref{sixtype}; whereas for $j=n$, there are two types as shown in \cref{2typesn}.
At $j=0$, we have two possible types as depicted in \cref{2types0}, where they are drawn slightly differently due to the nature of $D_0$; compare Type 2' in \cref{sixtype} and Type 2'' in \cref{2types0}\footnote{Technically, the difference between Type 2' and Type 2'' lies in the their trigradings when we consider trigraded curves later on.}. 
Note that an admissible curve $c$ intersecting all the $d_j$ transversely with each connected component of $c \cap D_j$ belonging to \cref{sixtype}, \cref{2typesn}, and \cref{2types0} is already in normal form.

For the rest of this section, $c$ will be an admissible curve in  normal form. 
  We call the intersections of $c$ with the curves $d_i$ \bit{crossings} and denote them 
$cr(c) = c \cap (d_0 \cup d_1 \cup \hdots \cup d_{n-1}).$
  Those intersections $c \cap d_j$ are called \bit{$j$-crossings of $c$}.
   For $0 \leq j \leq n$, the connected components of $c \cap D_j$ are called \bit{segments} of $c$.
 If the endpoints of a segment are both crossings, then it is \bit{essential}.

\begin{figure}[H] 

\begin{tikzpicture} [scale = 0.75]
\draw[thick] (0,0)--(14,0);
\draw[thick] (0,0)--(0,12);
\draw[thick] (14,0)--(14,12);
\draw[thick] (0,12)--(14,12);
\draw[thick] (7,0)--(7,12);
\draw[thick] (0,4)--(14,4);
\draw[thick] (0,8)--(14,8);
\draw[thick] (0,0.5)--(14,0.5);
\draw[thick] (0,4.5)--(14,4.5);
\draw[thick] (0,8.5)--(14,8.5);

\draw[thick]  (2.25,1)--(2.25,3.5);
\draw[thick]  (4.75,1)--(4.75,3.5);
\draw[fill] (3.50,2.25) circle [radius=0.1]  ;
\draw[thick,red] (2.25,2.25) -- (3.50,2.25);
\node at (3.50, 0.25) {Type 3};

\draw[thick]  (2.25,5)--(2.25,7.5);
\draw[thick]  (4.75,5)--(4.75,7.5);
\draw[fill] (3.50,6.25) circle [radius=0.1]  ;
\draw[thick,red] plot[smooth,tension=1.75] coordinates {(2.25,5.55) (4.15,6.25) (2.25,6.85) };
\node at (3.50, 4.25) {Type 2};

\draw[thick]  (2.25,9)--(2.25,11.5);
\draw[thick]  (4.75,9)--(4.75,11.5);
\draw[fill] (3.50,10.25) circle [radius=0.1]  ;
\draw[thick,red] (2.25,10.85) -- (4.75,10.85);
\node at (3.50, 8.25) {Type 1};

\draw[thick]  (9.25,1)--(9.25,3.5);
\draw[thick]  (11.75,1)--(11.75,3.5);
\draw[fill] (10.50,2.25) circle [radius=0.1]  ;
\draw[thick,red] (11.75,2.25) -- (10.50,2.25);
\node at (10.50, 0.25) {Type 3'};

\draw[thick]  (9.25,5)--(9.25,7.5);
\draw[thick]  (11.75,5)--(11.75,7.5);
\draw[fill] (10.50,6.25) circle [radius=0.1]  ;
\draw[thick,red] plot[smooth,tension=1.75] coordinates {(11.75,5.55) (9.85,6.25) (11.75,6.85) };
\node at (10.50, 4.25) {Type 2'};

\draw[thick]  (9.25,9)--(9.25,11.5);
\draw[thick]  (11.75,9)--(11.75,11.5);
\draw[fill] (10.50,10.25) circle [radius=0.1]  ;
\draw[thick,red] (9.25,9.55) -- (11.75,9.55);
\node at (10.50, 8.25) {Type 1'};

\node[left] at (2.25,2.25) {{\scriptsize (r$_1$,r$_2$,r$_3$)}};
\node[left] at (9.35,9.55) {{\tiny  (r$_1$+1,r$_2$-1,r$_3$)}};
\node[right] at (11.75,9.55) {{\scriptsize (r$_1$,r$_2$,r$_3$)}};
\node[right] at (11.65,5.55) {{\scriptsize (r$_1$,r$_2$,r$_3$)}};
\node[right] at (11.65,6.85) {{\tiny (r$_1$+1,r$_2$-1,r$_3$)}};
\node[right] at  (11.75,2.25) {{\scriptsize (r$_1$,r$_2$,r$_3$)}};
\node[left] at (2.35,6.85) {{\scriptsize (r$_1$,r$_2$,r$_3$)}};
\node[left] at (2.35,5.55) {{\tiny (r$_1$+1,r$_2$-1,r$_3$)}};
\node[left] at (2.25,10.85)  {{\scriptsize (r$_1$,r$_2$,r$_3$)}};
\node[right] at (4.75,10.85) {{\tiny  (r$_1$+1,r$_2$,r$_3$)}};

\end{tikzpicture}
\caption{{\small The six possible types of connected components $c \cap D_j,$ for $c$ in normal form and $1 \leq j < n$.}} 
\label{sixtype}
\end{figure}

\begin{figure}[H] 
\begin{tikzpicture} [scale = 0.75]
\draw[thick] (0,0)--(14,0);
\draw[thick] (0,0)--(0,4);
\draw[thick] (14,0)--(14,4);
\draw[thick] (0,4)--(14,4);
\draw[thick]  (7,0)--(7,4);
\draw[thick] (0,0.5)--(14,0.5);

\draw[thick]  (2.25,1)--(2.25,3.5);
\draw[thick]  (2.25,3.5)--(2,3.5);
\draw[thick]  (2.25,1)--(2,1);
\draw[fill] (3.50,2.25) circle [radius=0.1]  ;
\draw[thick] plot[smooth,tension=2] coordinates {(2.25,3.5) (4.75,2.25) (2.25,1)};
\draw[thick,red] plot[smooth,tension=1.75] coordinates {(2.25,1.55) (4.15,2.25) (2.25,2.85) };
\node at (3.50, 0.25) {Type 2};

\draw[thick]  (9.25,1)--(9.25,3.5);
\draw[thick]  (9.25,3.5)--(9,3.5);
\draw[thick]  (9.25,1)--(9,1);
\draw[thick] plot[smooth,tension=2] coordinates {(9.25,3.5) (11.75,2.25) (9.25,1)};
\draw[fill] (10.50,2.25) circle [radius=0.1]  ;
\draw[thick,red] (9.25,2.25) -- (10.50,2.25);
\node at (10.50, 0.25) {Type 3};

\node [left] at (2.35,1.55) {{\tiny (r$_1$+1,r$_2$-1,r$_3$)}};
\node [left] at (2.35,2.85) {{\scriptsize (r$_1$,r$_2$,r$_3$)}};
\node [left] at (9.25,2.25) {{\scriptsize (r$_1$,r$_2$,r$_3$)}};

\end{tikzpicture}
\caption{{\small The two possible types of connected components $c\cap D_n.$}} \label{2typesn}
\end{figure}

\begin{figure}[H] 
\begin{tikzpicture} [scale = 0.75]
\draw[thick] (0,0)--(14,0);
\draw[thick] (0,0)--(0,4);
\draw[thick] (14,0)--(14,4);
\draw[thick] (0,4)--(14,4);
\draw[thick]  (7,0)--(7,4);
\draw[thick] (0,0.5)--(14,0.5);

\draw[thick]  (4.75,3.5)--(5,3.5);
\draw[thick]  (4.75,1)--(5,1);
\draw[thick]  (3.5,3.5)--(5,3.5);
\draw[thick]  (3.5,1)--(5,1);
\draw[thick,red]  (3.5,2.85)--(4.75,2.85);
\draw[thick,red]  (3.5,1.55)--(4.75,1.55);
\draw[thick]  (4.75,1)--(4.75,3.5);

\draw[thick,green, dashed]  (3.50,3.5)--(3.50,2.875);
\draw[thick, green, dashed]   (3.50,1)--(3.50,1.825);
\draw[thick, green, dashed, ->]   (3.50,2.25)--(3.50,2.9);
\draw[thick,  green, dashed, ->]   (3.50,2.25)--(3.50,1.6);
\filldraw[color=black!, fill=yellow!, very thick] (3.50,2.25) circle [radius=0.1]  ;
\node at (3.50, 0.25) {Type 2''};

\draw[thick]  (11.75,3.5)--(12,3.5);
\draw[thick]  (11.75,1)--(12,1);
\draw[thick]  (11.75,1)--(11.75,3.5);
\draw[thick]  (10.5,3.5)--(12,3.5);
\draw[thick]  (10.5,1)--(12,1);

\draw[thick,green, dashed]  (10.50,3.5)--(10.50,2.875);
\draw[thick, green, dashed]   (10.50,1)--(10.50,1.825);
\draw[thick, green, dashed, ->]   (10.50,2.25)--(10.50,2.9);
\draw[thick,  green, dashed, ->]   (10.50,2.25)--(10.50,1.6);
\filldraw[color=black!, fill=yellow!, very thick] (10.50,2.25) circle [radius=0.1]  ;

\draw[thick,red] (11.75,2.25) -- (10.50,2.25) ;
\node at (10.50, 0.25) {Type 3'};


\node [right] at (11.75,2.25) {{\scriptsize (r$_1$,r$_2$,r$_3$)}};
\node [right] at (4.75,2.85) {{\scriptsize (r$_1$,r$_2$,r$_3$+1)}};
\node[right] at (4.75,1.55) {{\tiny (r$_1$,r$_2$,r$_3$)}};

\end{tikzpicture}
\caption{{\small The two possible types of connected components $c\cap D_0.$ }} \label{2types0}
\end{figure}

  Now, we will study the action of half twist $t_{b_k}^B$ on normal forms.
 In gerenal, $t_{b_k}^B(c)$ won't be in normal form even though $c$ is a normal form.
	Nonetheless,  $t_{b_k}^B(c)$ has minimal intersection with all $d_j$ for $j \neq k.$
In order to get $t_{b_k}^B(c)$ into a normal form, one just need to isotope it so that its intersections with $d_k$ is minimal.
 The same argument used in \cite[Proposition 3.17]{KhoSei} gives us the following analogous result:
\begin{proposition} ~
\begin{enumerate} 
\item The normal form of $t_{b_k}^B(c)$ coincides with $c$ outside of $D_{k-1} \cup D_{k}.$
	The curve $t_{b_k}^B(c)$ can be brought into normal form by an isotopy inside   $D_{k-1} \cup D_{k}.$
	
\item Suppose that $t_{b_k}^B(c)$ is in normal form. 
	There is a natural bijection between $j$-crossings of $c$ and the $j$-crossings of $t_{b_k}^B(c)$ for $j \neq k.$
		There is a natural bijection between connected components of intersections of $c$ and $t_{b_k}^B(c)$ inside $D_{k-1} \cup D_{k}.$
\end{enumerate}
\end{proposition}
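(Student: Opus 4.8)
The plan is to run the argument of \cite[Section 3e]{KhoSei} essentially verbatim, the only new ingredient being to check that every diffeomorphism and isotopy used can be kept inside $D_{k-1}\cup D_k$. Fix $2\le k\le n$, so that $\tau:=t^B_{b_k}$ is a genuine half twist in $\Diff\left(\DB,\{0\}\right)$; the full twist $(t^B_{b_1})^2$ is handled identically, reading $D_0\cup D_1$ for $D_{k-1}\cup D_k$.

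First I would fix the support. The arc $b_k$ joins the marked points $k-1\in D_{k-1}$ and $k\in D_k$, crosses $d_k$ once, and misses the other $d_i$, so $\tau$ has a representative supported in an arbitrarily small neighbourhood $N\subset D_{k-1}\cup D_k$ of $b_k$ with $N\cap d_i=\emptyset$ for $i\neq k$. Hence $\tau$ fixes each $d_i$ ($i\neq k$) and a neighbourhood of $\partial(D_{k-1}\cup D_k)$ pointwise, $[\tau](c):=\tau(c)$ agrees with $c$ on $\DB\setminus(D_{k-1}\cup D_k)$, and $\tau(c)\cap d_i=c\cap d_i$ as subsets for $i\neq k$. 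Since $\tau$ is an orientation-preserving, $\Delta$-preserving diffeomorphism it takes empty bigons to empty bigons, so $c$ and $d_i$ have minimal intersection iff $\tau(c)$ and $\tau(d_i)=d_i$ do; as $c$ is in normal form, $\tau(c)$ has minimal intersection with every $d_i$, $i\neq k$.

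Next I would reduce the number of intersection points with $d_k$. If $\tau(c)$ is not already in normal form, then by the bigon criterion (\cite[Lemmas 3.2 and 3.3]{KhoSei}) and the previous step there is an innermost empty bigon $B$ with one side on $d_k$ and one side on $\tau(c)$, clean in the sense that neither side is crossed by the other curve in its interior. The key point is that $B\subset D_{k-1}\cup D_k$. First, $B$ is disjoint from $d_{k-1}$ (and from $d_{k+1}$ when $k<n$): an outermost arc of $d_{k-1}\cap B$ has both its endpoints on the $\tau(c)$-side of $\partial B$, since it cannot meet the $d_k$-side ($d_{k-1}\cap d_k=\emptyset$), and so cuts off an empty bigon between $\tau(c)$ and $d_{k-1}$ inside $B$, contradicting the minimality established above; likewise for $d_{k+1}$. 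Now $B$ is connected, disjoint from $d_{k-1}\cup d_{k+1}$, and its boundary meets $d_k$, so $B$ lies in the component of $\DB\setminus(d_{k-1}\cup d_{k+1})$ containing $d_k$, namely $D_{k-1}\cup D_k$. Removing $B$ by an isotopy supported in a small neighbourhood of $B$ — hence inside $D_{k-1}\cup D_k$ and missing all $d_i$ with $i\neq k$ — decreases the number of intersection points with $d_k$ by two and leaves all other crossings unchanged. After finitely many such steps one reaches a curve with minimal intersection with every $d_i$, i.e.\ a normal form of $[\tau](c)$, obtained from $\tau(c)$ by an isotopy supported in $D_{k-1}\cup D_k$. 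Together with the first step this proves (1).

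Finally, for (2): by (1) this normal form agrees with $c$ on $\DB\setminus(D_{k-1}\cup D_k)$, and every $d_j$ with $j\neq k$ lies in that complement (as $\partial(D_{k-1}\cup D_k)$ sits inside $d_{k-1}\cup d_{k+1}$ together with a boundary arc of $\DB$), so the $j$-crossings of $c$ and of $[\tau](c)$ literally coincide for $j\neq k$; this is the first bijection. For the second, $\tau$ restricts to a diffeomorphism of $D_{k-1}\cup D_k$ fixing its boundary, hence matches the connected components of $c\cap(D_{k-1}\cup D_k)$ with those of $\tau(c)\cap(D_{k-1}\cup D_k)$; the bigon-removal isotopies above are supported in the interior of $D_{k-1}\cup D_k$ and never cross $d_{k-1}\cup d_{k+1}$, so they further match these with the connected components of the normal form of $[\tau](c)$ inside $D_{k-1}\cup D_k$. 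Composing gives the asserted natural bijection. I expect the only step beyond routine bookkeeping to be the confinement $B\subset D_{k-1}\cup D_k$ in the third paragraph — exactly where the minimality of $\tau(c)$ with $d_{k-1}$ and $d_{k+1}$ enters.
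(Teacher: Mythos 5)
Your proof is correct and spells out the Khovanov--Seidel argument that the paper itself invokes only by reference to \cite{KhoSei}. The two key steps --- localising the support of the twist inside $D_{k-1}\cup D_k$ away from $d_i$ for $i\neq k$, and confining an innermost empty bigon with $d_k$ to $D_{k-1}\cup D_k$ via the already-established minimality against $d_{k-1}$ and $d_{k+1}$ --- are exactly what the cited argument uses, so the approach is the same.
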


A connected component of $c \cap (D_{j-1} \cup D_{j})$ is called \bit{j-string of $c$}.
Denote by $st(c,j)$ the set of $j$-string of $c.$ 
In addition, we define a \bit{j-string} as a curve in  $D_{j-1} \cup D_{j}$ which is a $j$-string of $c$ for some admissible curve $c$ in normal form.

Two $j$-strings are isotopic (equivalently belong to the same isotopy class) if there exists a deformation of one into the other via diffeomorphisms $f$ of $D' = D_{j-1} \cup D_{j}$ which fix $d_{j-1}$ and $d_{j+1}$ as well as preserves the marked points in $D'$ pointwise , that is, $f(d_{j-1})= d_{j-1}, f(d_{j+1})= d_{j+1},$ and $f|_{\Delta \cap D'} = id.$

\begin{figure}[h] 

\begin{tikzpicture} [scale=0.7][yscale=1, xscale=0.92]
\draw[thick] (0,0)--(16,0);
\draw[thick] (0,0)--(0,14);
\draw[thick] (16,0)--(16,14);
\draw[thick] (0,14)--(16,14);
\draw[thick] (8,0)--(8,14);
\draw[thick] (0,4)--(16,4);

\draw[thick] (0,0.5)--(16,0.5);
\draw[thick] (0,3.5)--(16,3.5);
\draw[thick] (0,4)--(16,4);
\draw[thick] (0,7)--(16,7);
\draw[thick] (0,7.5)--(16,7.5);
\draw[thick] (0,10.5)--(16,10.5);
\draw[thick] (0,11)--(16,11);
\draw[thick] (0,14)--(16,14);
\draw[thick] (4,14)--(4,17.5);
\draw[thick] (12,14)--(12,17.5);

\draw[thick] (4,-3.5)--(12,-3.5);
\draw[thick] (4,-3)--(12,-3);
\draw[thick] (4,-3.5)--(4,0);
\draw[thick] (12,-3.5)--(12,0);
\draw[thick] (4,14.5)--(12,14.5);
\draw[thick] (4,17.5)--(12,17.5);

\draw[thick]  (2.525,1)--(2.525,3);
\draw[thick]  (5.475,1)--(5.475,3);
\draw[fill] (3.25,2) circle [radius=0.1]  ;
\draw[fill] (4.75,2) circle [radius=0.1]  ;
\draw[thick,red] plot[smooth,tension=2.25] coordinates {(2.525,1.5)  (5.1125,2) (2.525, 2.5)};
\node at (4, .25) {{\small Type V}};

\draw[thick]  (10.525,1)--(10.525,3);
\draw[thick]  (13.475,1)--(13.475,3);
\draw[fill] (11.25,2) circle [radius=0.1]  ;
\draw[fill] (12.75,2) circle [radius=0.1]  ;
\draw[thick,red] plot[smooth,tension=2.25] coordinates {(13.475,2.5)  (10.8675,2) (13.475, 1.5)};
\node at (12, 0.25) {{\small Type V'}};

\draw[thick]  (2.525,4.5)--(2.525,6.5);
\draw[thick]  (5.475,4.5)--(5.475,6.5);
\draw[fill] (3.25,5.5) circle [radius=0.1]  ;
\draw[fill] (4.75,5.5) circle [radius=0.1]  ;
\draw[thick,red]  (2.525,6)--(5.475,6);;
\node at (4, 3.75) {{\small Type IV}};

\draw[thick]  (10.525,4.5)--(10.525,6.5);
\draw[thick]  (13.475,4.5)--(13.475,6.5);
\draw[fill] (11.25,5.5) circle [radius=0.1]  ;
\draw[fill] (12.75,5.5) circle [radius=0.1]  ;
\draw[thick,red]  (10.525,5)--(13.475,5);;
\node at (12, 3.75) {{\small Type IV'}};

\draw[thick]  (2.525,8)--(2.525,10);
\draw[thick]  (5.475,8)--(5.475,10);
\draw[fill] (3.25,9) circle [radius=0.1]  ;
\draw[fill] (4.75,9) circle [radius=0.1]  ;
\draw[thick,red] (2.525,9) -- (3.25,9);
\node at (4, 7.25) {{\small Type III$_0$}};

\draw[thick]  (10.525,8)--(10.525,10);
\draw[thick]  (13.475,8)--(13.475,10);
\draw[fill] (11.25,9) circle [radius=0.1]  ;
\draw[fill] (12.75,9) circle [radius=0.1]  ;
\draw[thick,red] (12.75,9) -- (13.475,9);
\node at (12, 7.25) {{\small Type III'$_0$}};

\draw[thick]  (2.525,11.5)--(2.525,13.5);
\draw[thick]  (5.475,11.5)--(5.475,13.5);
\draw[fill] (3.25,12.5) circle [radius=0.1]  ;
\draw[fill] (4.75,12.5) circle [radius=0.1]  ;
\draw[thick,red] plot[smooth,tension=1.7] coordinates {(2.525,13)  (4.1,12.5) (2.525, 12)};
\node at (4, 10.75) {{\small Type II$_0$}};

\draw[thick]  (10.525,11.5)--(10.525,13.5);
\draw[thick]  (13.475,11.5)--(13.475,13.5);
\draw[fill] (11.25,12.5) circle [radius=0.1]  ;
\draw[fill] (12.75,12.5) circle [radius=0.1]  ;
\draw[thick,red] plot[smooth,tension=1.7] coordinates {(13.475,13)  (12.1,12.5) (13.475, 12)};
\node at (12, 10.75) {{\small Type II'$_0$}};

\draw[thick]  (6.525,15)--(6.525,17);
\draw[thick]  (9.475,15)--(9.475,17);
\draw[fill] (7.25,16) circle [radius=0.1]  ;
\draw[fill] (8.75,16) circle [radius=0.1]  ;
\draw[thick,red] plot[smooth,tension=.7] coordinates {(6.525,16)  (7.4,16.4)  (8.6, 15.6) (9.475,16)};
\node at (8, 14.25) {{\small Type I$_0$}};

\draw[thick]  (6.525,-2.5)--(6.525,-0.5);
\draw[thick]  (9.475,-2.5)--(9.475,-0.5);
\draw[fill] (7.25,-1.5) circle [radius=0.1]  ;
\draw[fill] (8.75,-1.5) circle [radius=0.1]  ;
\draw[thick,red] (7.25,-1.5)--(8.75,-1.5);
\node at (8, -3.25) {{\small Type VI}};

\node [left] at (6.525,16) {{\scriptsize (r$_1$,r$_2$,r$_3$)}};
\node [right] at (9.475,16) {{\tiny (r$_1$,r$_2$+1,r$_3$)}};
\node [left] at (2.525,13) {{\scriptsize (r$_1$,r$_2$,r$_3$)}};
\node [left] at (2.525, 12) {{\tiny (r$_1$+1,r$_2$-1,r$_3$)}};
\node [right] at (13.475,13) {{\tiny (r$_1$+1,r$_2$-1,r$_3$)}};
\node [right] at (13.475, 12) {{\scriptsize (r$_1$,r$_2$,r$_3$)}};
\node [left] at (2.525,9) {{\tiny (r$_1$,r$_2$,r$_3$)}};
\node [right] at (13.475,9) {{\scriptsize (r$_1$,r$_2$,r$_3$)}};
\node [left] at (2.525,6) {{\scriptsize (r$_1$,r$_2$,r$_3$)}};
\node [right] at (5.475,6) {{\tiny (r$_1$+2,r$_2$,r$_3$)}};
\node [right] at (13.475,5) {{\scriptsize (r$_1$,r$_2$,r$_3$)}};
\node [left] at (10.525,5) {{\tiny (r$_1$+2,r$_2$-2,r$_3$)}};
\node [left] at (2.525, 2.5) {{\scriptsize (r$_1$,r$_2$,r$_3$)}};
\node [left] at (2.525,1.5)  {{\tiny (r$_1$+3,r$_2$-2,r$_3$)}};
\node [right] at (13.475, 1.5)  {{\scriptsize (r$_1$,r$_2$,r$_3$)}};
\node [right] at (13.475,2.5)  {{\tiny (r$_1$+3,r$_2$-2,r$_3$)}};

\end{tikzpicture}
\caption{{\small The isotopy classes of $j$-strings, for $1 < j < n$.}}
\label{B j-string}
\end{figure}

For $1 < j < n,$ isotopy classes of $j$-strings can be divided into types as follows: there are five infinite families $I_w, II_w, II'_w, III_w, III'_w(w \in \Z)$ and five exceptional types $IV, IV',V,V'$ and $VI$ (see \cref{B j-string}). 
	When $j = n$, there is a similar list, with two infinite families and two exceptional types (see \cref{B n-string}).
    The rule for obtaining the $(w+1)$-th from the $w$-th is by applying $t^B_{b_j}.$ 
	For $1$-string, there are instead four infinite-family types: $II'_w, II'_{w + \frac{1}{2}},  III'_w,  III'_{w+ \frac{1}{2}}(w \in \Z)$ and two exceptional types $V''$ and $VI$ (see \cref{B 1-string}).
	As for segments of curves, these are drawn slightly different due to the nature of the disc; compare Type $V'$ in \cref{B j-string} and type $V''$ in \cref{B 1-string}.
	Note that for 1-strings, the rule for obtaining the $(w+1)$-th from the $w$-th is instead by applying $(t^B_{b_1})^2.$

Based on our definition, $j$-strings are assumed to be in normal form. 
	As before, we can define \emph{crossings} and \emph{essential segments} of $j$-string as in the case for admissible curves in normal form and denote the set of crossings of a $j$-string $g$ by $cr(g).$

\begin{figure}[h] 

\begin{tikzpicture} [scale= 0.7][yscale=1, xscale=0.92]
\draw[thick] (0,0)--(16,0);
\draw[thick] (0,0)--(0,7);
\draw[thick] (16,0)--(16,7);
\draw[thick] (8,0)--(8,7);
\draw[thick] (0,4)--(16,4);

\draw[thick] (0,0.5)--(16,0.5);
\draw[thick] (0,3.5)--(16,3.5);
\draw[thick] (0,4)--(16,4);
\draw[thick] (0,7)--(16,7);

\draw[thick] plot[smooth, tension=2.25]coordinates {(2.525,1) (5.475,2) (2.525,3)};
\draw[thick]  (2.525,1)--(2.3,1);
\draw[thick]  (2.525,3)--(2.3,3);
\draw[thick]  (2.525,1)--(2.525,3);
\draw[fill] (3.25,2) circle [radius=0.1]  ;
\draw[fill] (4.75,2) circle [radius=0.1]  ;
\draw[thick,red] plot[smooth,tension=2.25] coordinates {(2.525,1.5)  (5.1125,2) (2.525, 2.5)};
\node at (4, .25) {{\small Type V}};
\node [left] at (2.525, 2.5) {{\scriptsize (r$_1$,r$_2$,r$_3$)}};
\node [left] at (2.525,1.5)  {{\tiny (r$_1$+3,r$_2$-2,r$_3$)}};

\draw[thick]  (10.525,4.5)--(10.525,6.5);
\draw[thick] plot[smooth, tension=2.25]coordinates {(10.525,4.5) (13.475,5.5) (10.525,6.5)};
\draw[thick]  (10.525,4.5)--(10.3,4.5);
\draw[thick]  (10.525,6.5)--(10.3,6.5);
\draw[fill] (11.25,5.5) circle [radius=0.1]  ;
\draw[fill] (12.75,5.5) circle [radius=0.1]  ;
\draw[thick,red] (10.525,5.5) -- (11.25,5.5);
\node at (12, 3.75) {{\small Type III$_0$}};
\node [left] at (10.525,5.5) {{\scriptsize (r$_1$,r$_2$,r$_3$)}};

\draw[thick]  (2.525,4.5)--(2.525,6.5);
\draw[thick] plot[smooth, tension=2.25]coordinates {(2.525,4.5) (5.475,5.5) (2.525,6.5)};
\draw[thick]  (2.525,4.5)--(2.3,4.5);
\draw[thick]  (2.525,6.5)--(2.3,6.5);
\draw[fill] (3.25,5.5) circle [radius=0.1]  ;
\draw[fill] (4.75,5.5) circle [radius=0.1]  ;
\draw[thick,red] plot[smooth,tension=1.7] coordinates {(2.525,6)  (4.1,5.5) (2.525, 5)};
\node at (4, 3.75) {{\small Type II$_0$}};
\node [left] at (2.525,6) {{\scriptsize (r$_1$,r$_2$,r$_3$)}};
\node [left] at (2.525, 5) {{\tiny (r$_1$+1,r$_2$-1,r$_3$)}};

\draw[thick]  (10.525,1)--(10.525,3);
\draw[thick] plot[smooth, tension=2.25]coordinates {(10.525,1) (13.475,2) (10.525,3)};
\draw[thick]  (10.525,1)--(10.3,1);
\draw[thick]  (10.525,3)--(10.3,3);
\draw[fill] (11.25,2) circle [radius=0.1]  ;
\draw[fill] (12.75,2) circle [radius=0.1]  ;
\draw[thick,red] (11.25,2)--(12.75,2);
\node at (12, 0.25) {{\small Type VI}};

\end{tikzpicture}
\caption{{\small The isotopy classes of $n$-strings.}}  \label{B n-string}
\end{figure}

\begin{figure}[h] 

\begin{tikzpicture} [scale=0.7] [yscale=1, xscale=0.92]

\draw[thick] (0,0)--(16,0);
\draw[thick] (0,0)--(0,10.5);
\draw[thick] (16,0)--(16,10.5);
\draw[thick] (8,0)--(8,10.5);

\draw[thick] (0,0.55)--(16,0.55);
\draw[thick] (0,3.5)--(16,3.5);
\draw[thick] (0,4.05)--(16,4.05);
\draw[thick] (0,7)--(16,7);
\draw[thick] (0,7.55)--(16,7.55);
\draw[thick] (0,10.5)--(16,10.5);

\draw[thick]  (5.475,1)--(5.7,1);
\draw[thick]  (5.475,3)--(5.7,3);  
\draw[thick]  (5.475,1)--(5.475,3);
\draw[thick,red]  (3.25,2.5)--(5.475,2.5);
\draw[thick,red]  (3.25,1.5)--(5.475,1.5);
\draw[thick]  (3.25,3)--(5.7,3);  
\draw[thick]  (3.25,1)--(5.7,1);
\draw[thick,green, dashed]  (3.25,3)--(3.25,2.5);
\draw[thick, green, dashed]   (3.25,1)--(3.25,1.5);
\draw[thick, green, dashed, ->]   (3.25,2)--(3.25,2.6);
\draw[thick,  green, dashed, ->]   (3.25,2)--(3.25,1.4);\filldraw[color=black!, fill=yellow!, very thick] (3.25,2) circle [radius=0.1]  ;
\draw[fill] (4.75,2) circle [radius=0.1]  ;
\node [right] at (5.475, 2.5) {{\tiny (r$_1$+2,r$_2$-1,r$_3$+1)}};
\node [right] at (5.475, 1.5) {{\scriptsize (r$_1$,r$_2$,r$_3$)}};
\node at (4, .25) {{\small Type V''}};

\draw[thick]  (13.475,1)--(13.7,1);
\draw[thick]  (13.475,3)--(13.7,3);  
\draw[thick]  (13.475,1)--(13.475,3);
\draw[thick]  (11.25,3)--(13.7,3);  
\draw[thick]  (11.25,1)--(13.7,1);
\draw[thick,green, dashed]  (11.25,3)--(11.25,2.5);
\draw[thick, green, dashed]   (11.25,1)--(11.25,1.5);
\draw[thick, green, dashed, ->]   (11.25,2)--(11.25,2.6);
\draw[thick,  green, dashed, ->]   (11.25,2)--(11.25,1.4);
\filldraw[color=black!, fill=yellow!, very thick] (11.25,2) circle [radius=0.1]  ;
\draw[fill] (12.75,2) circle [radius=0.1]  ;
\draw[thick,red] (11.25,2) -- (12.75,2);
\node at (12, 0.25) {{\small Type VI}};

\draw[thick]  (5.475,4.5)--(5.7,4.5);
\draw[thick]  (5.475,6.5)--(5.7,6.5);  
\draw[thick]  (5.475,4.5)--(5.475,6.5);

\draw[thick]  (3.25,6.5)--(5.7,6.5);  
\draw[thick]  (3.25,4.5)--(5.7,4.5);
\draw[thick,green, dashed]  (3.25,6.5)--(3.25,6);
\draw[thick, green, dashed]   (3.25,4.5)--(3.25,5);
\draw[thick, green, dashed, ->]   (3.25,5.5)--(3.25,6.1);
\draw[thick,  green, dashed, ->]   (3.25,5.5)--(3.25,4.9);

\filldraw[color=black!, fill=yellow!, very thick]  (3.25,5.5) circle [radius=0.1]  ;
\draw[fill] (4.75,5.5) circle [radius=0.1]  ;
\draw[thick,red] (4.75,5.5) -- (5.475,5.5);
\node [right] at (5.475,5.5) {{\scriptsize (r$_1$,r$_2$,r$_3$)}};
\node at (4, 3.75) {{\small Type III'$_0$}};

\draw[thick]  (13.475,4.5)--(13.7,4.5);
\draw[thick]  (13.475,6.5)--(13.7,6.5);  
\draw[thick]  (13.475,4.5)--(13.475,6.5);
\draw[thick]  (11.25,6.5)--(13.7,6.5);  
\draw[thick]  (11.25,4.5)--(13.7,4.5);
\draw[thick,green, dashed]  (11.25,6.5)--(11.25,6);
\draw[thick, green, dashed]   (11.25,4.5)--(11.25,5);
\draw[thick, green, dashed, ->]   (11.25,5.5)--(11.25,6.1);
\draw[thick,  green, dashed, ->]   (11.25,5.5)--(11.25,4.9);
\filldraw[color=black!, fill=yellow!, very thick]  (11.25,5.5) circle [radius=0.1]  ;
\draw[fill] (12.75,5.5) circle [radius=0.1]  ;
\draw[thick,red] plot[smooth,tension=1] coordinates {(11.25,5.5)  (12.225,5.95) (13.475, 6)};
\node [right] at (13.475, 6) {{\scriptsize (r$_1$,r$_2$,r$_3$)}};
\node at (12, 3.75) {{\small Type III'$_\frac{1}{2}$}};

\draw[thick]  (5.475,8)--(5.7,8);
\draw[thick]  (5.475,10)--(5.7,10);  
\draw[thick]  (5.475,8)--(5.475,10);

\draw[thick]  (3.25,10)--(5.7,10);  
\draw[thick]  (3.25,8)--(5.7,8);
\draw[thick,green, dashed]  (3.25,10)--(3.25,9.5);
\draw[thick, green, dashed]   (3.25,8)--(3.25,8.5);
\draw[thick, green, dashed, ->]   (3.25,9)--(3.25,9.6);
\draw[thick,  green, dashed, ->]   (3.25,9)--(3.25,8.4);

\filldraw[color=black!, fill=yellow!, very thick]  (3.25,9) circle [radius=0.1]  ;
\draw[fill] (4.75,9) circle [radius=0.1]  ;
\draw[thick,red] plot[smooth,tension=1.7] coordinates {(5.475,9.5)  (4.1,9) (5.475, 8.5)};

\node [right] at (5.475, 9.5) {{\tiny (r$_1$+1,r$_2$-1,r$_3$)}};
\node [right] at (5.475, 8.5) {{\scriptsize  (r$_1$,r$_2$,r$_3$)}};
\node at (4, 7.25) {{\small Type II'$_0$}};

\draw[thick]  (13.475,8)--(13.7,8);
\draw[thick]  (13.475,10)--(13.7,10); 
\draw[thick]  (13.475,8)--(13.475,10);
\draw[thick,red]  (11.25,9.65)--(13.475,9.65);
\draw[thick]  (11.25,10)--(13.7,10);  
\draw[thick]  (11.25,8)--(13.7,8);
\draw[thick,green, dashed]  (11.25,10)--(11.25,9.5);
\draw[thick, green, dashed]   (11.25,8)--(11.25,8.5);
\draw[thick, green, dashed, ->]   (11.25,9)--(11.25,9.6);
\draw[thick,  green, dashed, ->]   (11.25,9)--(11.25,8.4);
\filldraw[color=black!, fill=yellow!, very thick]  (11.25,9) circle [radius=0.1]  ;
\draw[fill] (12.75,9) circle [radius=0.1]  ;

\draw[thick,red] plot[smooth,tension=1] coordinates { (11.25,8.35) (11.7,8.5) (12.5,9.2) (13.475,9.35)};

\node [right] at (13.475,9.65) {{\tiny (r$_1$,r$_2$,r$_3$+1)}};
\node [right] at (13.475,9.3) {{\scriptsize (r$_1$,r$_2$,r$_3$)}};
\node at (12, 7.25) {{\small Type II'$_\frac{1}{2}$}};

\end{tikzpicture}
\caption{{\small The isotopy classes of $1$-strings.}}
\label{B 1-string}
\end{figure}

Now, let us adapt the discussion to trigraded curves. Choose trigradings $\check{b}_j, \check{d}_j$ of $b_j,d_j$ for $1 \leq j \leq n,$ such that
\begin{equation} \label{basic trigrading condition}
I^{trigr}(\check{d}_j,\check{b}_j) = (1+ q_3)(1+q_1^{-1}q_2), \qquad
 I^{trigr}(\check{b}_j,\check{b}_{j+1}) = 1 + q_3.
\end{equation}
These conditions determine the trigradings uniquely up to an overall shift $\chi^B(r_1,r_2,r_3).$

Suppose $\check{c}$ is a trigrading of an admisible curve $c$ in normal form. 
	If $a \subset c$ is  a connected component of $c \cap D_j$ for some $j$, and $\check{a}$ is $\check{c}|_{a\setminus \Lambda}$, then $\check{a}$ is evidently determined by $a$ together with the local index $\mu^{trigr}(\check{d}_{j-1}, \check{a};z)$ or $\mu^{trigr}(\check{d}_{j}, \check{a};z)$ at any point $z \in (d_{j-1} \cup d_j) \cap a.$
	Moreover, if there is more than one such point, the local indices determine each other.
	
	In \cref{sixtype}, \cref{2typesn}, and \cref{2types0}, we classify the types of pair $(a,\check{a})$ with the local indices. 
	For instance, consider the Type $1(r_1, r_2, r_3)$ with $(k-1)$-crossing $z_0 \in d_{k-1} \cap a$ and $k$-crossing $z_1 \in d_{k} \cap a$.
	We have that the local index at $z_0$ and at $z_1$ are $\mu^{trigr}(\check{d}_{k-1}, \check{a};z_0) = (r_1, r_2, r_3)$ and $\mu^{trigr}(\check{d}_{k}, \check{a};z_1) = (r_1 +1, r_2, r_3)$ respectively.

We recall from \cref{tribundle} that there is a preferred lift $\check{f} \in \Diff(\wRDAz)$ of $f \in \DiffB$ which acts as the identity on the boundary.
   Denote by $\check{t}_{b_j}$ the preferred lift of the twist $t_{b_j}$ along the curve $b_j$ in $\DB$ (see \Cref{bidi}).
\begin{proposition}
The diffeomorphisms $\check{t}_{b_j}$ induce a type $B_n$ braid group action on the set of isotopy classes of admissible trigraded curves. Namely, if $\check{c}$ is an admissible trigraded curve, we have the following isotopy relations:
\begin{align*}
\check{t}_{b_1} \check{t}_{b_2} \check{t}_{b_1} \check{t}_{b_2}(\check{c}) &\simeq \check{t}_{b_2} \check{t}_{b_1} \check{t}_{b_2} \check{t}_{b_1} (\check{c})  \\
     \check{t}_{b_j} \check{t}_{b_k}(\check{c})  &\simeq \check{t}_{b_k} \check{t}_{b_j} (\check{c})  & \text{for} \ |j-k|> 1,\\    
     \check{t}_{b_j} \check{t}_{b_{j+1}} \check{t}_{b_j} ( \check{c})  &\simeq \check{t}_{b_{j+1}} \check{t}_{b_j} \check{t}_{b_{j+1}} ( \check{c})  & \text{for} \ j= 2,3, \ldots, n.
\end{align*} 
\end{proposition}

A crossing of $c$ will be also a \emph{crossing of $\check{c}$}, and we denote the set of crossings of $\check{c}$ by $cr(\check{c}).$
 Note that as set, $cr(\check{c}) = cr(c).$ 
 However, a crossing of $\check{c}$ comes with a local index in $\Z \times \Z \times \Z/ 2\Z.$
 
	Moreover, to each crossing $y$ of $\check{c}$ we assign a $4$-tuple $(y_0, y_1, y_2, y_3)$ where $y_0$ denotes the index of the vertical curve which contains the crossing $y \in d_{y_0} \cap c$, and $(y_1, y_2, y_3)$ is the local index $(\mu_1, \mu_2,\mu_3)$ of the crossing $y.$  \label{local index function}
	
	We define the \emph{essential segments} of $\check{c}$ as the essential segments of $c$ together with the trigradings which can be obtained from  local indices assigned to the ends of the segments.
	
	We also define a \emph{$j$-string of $\check{c}$} as a connected component of $\check{c} \cap \left( D_{j-1} \cup D_{j} \right)$ together with the trigrading induced from $\check{c}.$ 
	Denote the set of $j$-string of $\check{c}$ by $st(\check{c},j).$
	
	On top of that, we define a trigraded $j$-string as a trigraded curve in $D_{j-1} \cup D_{j}$ that is a connected component of $\check{c} \cap (D_{j-1} \cup D_{j})$ for some trigraded curve $\check{c}.$ 
	
	In \cref{B j-string}, \cref{B n-string}, and \cref{B 1-string} we depict the isotopy classes of trigraded $j$-strings. 
	Since $j$-strings of type $VI$ do not intersect with $d_{j-1} \cup d_{j+1},$ we say that a trigraded $j$-string $\check{g}$ with the underlying $j$-string $g$ of type $VI$ has type $VI(r_1, r_2, r_3)$ if $\check{g}= \chi^B(r_1, r_2, r_3) \check{b}_j.$

     The next crucial lemma is the type $B$ analogue of \cite[Lemma 3.20]{KhoSei}, allowing the computation of trigraded intersection numbers between $\check{b}_j$ and any given trigraded curve. 

\begin{lemma} \label{compute tri int}
Let $(c, \check{c})$ be a trigraded curve. 
Then, $I^{trigr}(\check{b}_j, \check{c})$ can be computed by adding up contributions from each trigraded $j$-string of $\check{c}.$
 For $j > 1,$ the contributions are listed in the following table:
\begin{center}
\begin{tabular}{ c|c|c|c} 

 $I_0(0,0,0)$ & $II_0(0,0,0)$ & $II'_0(0,0,0)$ & $III_0(0,0,0)$  \\ 
 \hline 
$q_1 + q_2 + q_2q_3 + q_1q_3 $ & $q_1 + q_2 + q_2q_3 + q_1q_3 $ & $1 + q_1q_2^{-1} + q_3 + q_1q_2^{-1} q_1q_3 $  & $q_2 + q_2q_3$  \\

\end{tabular}
\end{center}
\begin{center}
\begin{tabular}{ c|c|c|c|c|c} 
 $III'_0(0,0,0)$ & $IV$  & $IV'$ & $V$ &  $V'$  & $VI(0,0,0)$\\
 \hline
 $1 + q_3$ & 0 & 0 & 0 & 0 & $1+q_2 +q_3 + q_2q_3$

\end{tabular}
\end{center}
and the remaining ones can be computed as follows: to determine the contribution of a component of type, say, $I_u(r_1,r_2,r_3)$, one takes the contribution of $I_0(0,0,0)$ and multiplies it by $q_1^{r_1}q_2^{r_2}q_3^{r_3}(q_1^{-1}q_{2})^u.$
For $j=1,$ the relevant contributions are
\begin{center}
\begin{tabular}{ c|c|c|c|c|c} 
  $II'_0(0,0,0)$ &  $II'_\frac{1}{2}(0,0,0)$ & $III'_0(0,0,0)$ & $III'_\frac{1}{2}(0,0,0)$ & $V''$  & $VI(0,0,0)$\\ 
 \hline 
$ 1+ q_3 + q_1q_2^{-1} + q_1q_2^{-1}q_3$ & $ 1+ q_3 + q_1^{-1}q_2 + q_1^{-1}q_2q_3$ & $1 + q_3 $  & $q_1^{-1}q_2 + q_3$ & 0 & $1 + q_2$ 
\end{tabular}
\end{center}
and the remaining ones can be computed as follows: to determine the contribution of a component of type, say, $II'_u(r_1,r_2,r_3)$, one takes the contribution of $II'_0(0,0,0)$ and multiplies it by $q_1^{r_1}q_2^{r_2}q_2^{r_2}(q_1^{-1}q_2q_3)^u.$

\end{lemma}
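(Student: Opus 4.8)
The plan is to follow the strategy of the analogous bigraded computation in \cite{KhoSei}, now carried out in the trigraded setting, in three steps: localize the pairing $I^{trigr}(\check b_j,-)$ to the strip $D_{j-1}\cup D_j$, compute the contribution of each base type of trigraded $j$-string directly, and then propagate to the infinite families by a twist-and-shift argument. For the first step, note that the basic curve $b_j$ is supported in $D_{j-1}\cup D_j$ with its endpoints at the marked points lying there, while $c$ is assumed to be in normal form and hence has minimal intersection with each $d_i$. Running through the classification of the connected components of $c\cap D_i$ in \cref{sixtype}, \cref{2types0} and \cref{2typesn}, one checks that $b_j$ bounds no empty bigon with any of them, and if it does one removes it by an isotopy supported in $D_{j-1}\cup D_j$, so that $b_j$ and $c$ are in minimal position. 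Every point of $b_j\cap c$ then lies in $D_{j-1}\cup D_j$, hence in exactly one $j$-string of $\check c$; together with the exceptional case where $c$ is isotopic to a shift of $b_j$ meeting the branch point $0$ (which is precisely the first bullet in the definition of $I^{trigr}$, recorded as the $VI(r_1,r_2,r_3)$ entry of the $j=0$ table), this gives $I^{trigr}(\check b_j,\check c)=\sum_{g\in st(\check c,j)}(\text{contribution of }g)$.

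For the second step, for each type with family index $0$ and shift $(0,0,0)$ --- the list $I_0,II_0,II'_0,III_0,III'_0,IV,IV',V,V',VI$ when $j>0$ and $II'_0,II'_{1/2},III'_0,III'_{1/2},V',VI$ when $j=0$ --- I would place $b_j$ together with the corresponding trigraded $j$-string of \cref{B j-string}, \cref{B 1-string} or \cref{B n-string} in minimal position, list the (at most two) points of intersection, and compute the local index $\mu^{trigr}(\check b_j,\check g;z)=(\mu_1,\mu_2,\mu_3)$ at each by the defining recipe: a small clockwise arc across $z$, the associated path of tangent lines, and its lift to $\wRDAz$. With the trigradings of $b_j$ and $d_j$ normalized as in \eqref{basic trigrading condition}, the contribution is then assembled from the three-case formula for $I^{trigr}$, weighting an interior point by $(1+q_3)(1+q_1^{-1}q_2)$, a point at a marked point other than $0$ by $(1+q_3)$, and a point at $0$ by $(1+q_1^{-1}q_2q_3)$. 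The types $IV,IV',V,V'$ meet $b_j$ in no point and contribute $0$, and the remaining ones yield exactly the entries of the two tables. As a shortcut, the primed types are the mirror images of the unprimed ones, so their contributions may instead be read off using the symmetry relations of \cref{locinsym}, i.e.\ property (T3) of \cref{triintpro}.

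For the third step, the type label $I_u(r_1,r_2,r_3)$, and similarly for each of the other families, is obtained from $I_0(0,0,0)$ by first applying the shift $\chi^B(r_1,r_2,r_3)$ to the $j$-string and then iterating the twist $t^B_{b_j}$ a total of $u$ times, with the full twist $(t^B_{b_1})^2$ in place of $t^B_{b_j}$ when $j=0$. Property (T1) of \cref{triintpro} turns applying a twist to $\check c$ into applying its inverse to $\check b_j$; \cref{action} evaluates $\check t^B_{b_j}(\check b_j)=\chi^B(-1,1,0)\check b_j$, respectively $\chi^B(-1,1,1)\check b_1$ for the full twist, so that $(\check t^B_{b_j})^{-1}(\check b_j)=\chi^B(1,-1,0)\check b_j$ resp.\ $\chi^B(1,-1,1)\check b_1$; and property (T2) then converts each such step into multiplication of the whole contribution by $q_1^{-1}q_2$, respectively $q_1^{-1}q_2q_3$, while the shift $\chi^B(r_1,r_2,r_3)$ on the second argument contributes the factor $q_1^{r_1}q_2^{r_2}q_3^{r_3}$. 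Combining these gives the stated multiplicative rules, and simultaneously shows that the sum over $st(\check c,j)$ lies in $\Z[q_1,q_1^{-1},q_2,q_2^{-1},q_3]/\langle q_3^2-1\rangle$ and is independent of the representatives chosen, consistently with the general theory already established.

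I expect the second step to be the main obstacle, and within it the base types that touch the branch point $0$: the $j=0$ types $II'_0,II'_{1/2},III'_0,III'_{1/2},VI$, together with whichever components of $1$-strings sit inside $D_0$. Near $0$ the covering $\wRDAz\to\RDB$ differs genuinely from its type-$A$ model --- the loop $\lambda_0$ contributes the generator of the $\Z/2\Z$ factor rather than to the $\Z\times\Z$ factor, cf.\ \eqref{l0}--\eqref{30} and \cref{finalcovering}(3) --- so the local index and the symmetry relation of \cref{locinsym} take their third form, and the tangent-line path must be followed carefully through the branch fiber. Away from $0$ the local-index computations are formally identical to those of \cite{KhoSei}.
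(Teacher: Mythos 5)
The proposal is correct and takes essentially the same approach as the paper: the paper's proof simply reads ``Apply \cref{action} as well as (T2) and (T3) of \cref{triintpro},'' and your three steps (localize the pairing to $D_{j-1}\cup D_j$ so it decomposes over $j$-strings, compute the base types directly from the definition of $I^{trigr}$ and from (T3) for the primed mirrors, and propagate to $I_u(r_1,r_2,r_3)$ etc.\ by combining (T1)/(T2) with \cref{action}) are exactly what that citation abbreviates. The only cosmetic difference is that you invoke (T1) explicitly for the twist-transfer step, which the paper leaves implicit.
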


\begin{proof}
Apply \cref{action} as well as $(T2)$ and $(T3)$ of \cref{triintpro}.
\end{proof}

\subsection{Bigraded curves and bigraded multicurves  in $\wRDAz$} \label{bigraded curves}
We briefly remind the reader the definition of a bigraded curve in $\wRDAz$; refer to \cite[Section 3d]{KhoSei} for a more detailed construction.
	Consider the projectivisation  $\RDA := PT \left(\DA \setminus \Delta \right)$ of the tangent bundle of $\DA \setminus \Delta.$ 
	 The covering $\wRDA$ of $\RDA$ is classified by
	the cohomology class $C^A \in H^1(\RDA; \Z \times \Z)$ defined as follows:
	\begin{align}
 C^A([point \times \lambda_i]) &= (-2,1), \quad \text{for } i= -n, \cdots, -1, 1, \cdots, n; \\
 C^A([\R \text{P}^1 \times point]) &= (1,0).
\end{align}
A bigrading of a curve $c \in \DA$ is a lift $\ddot{c}$ of $s^A_c $ to $\wRDA$ where  $s^A_c: c \setminus \Delta \ra \RDA$ is the canonical section given by $s^A_c(z) = T_zc.$ 
  A \emph{bigraded curve} is a pair $(c, \ddot{c}),$ where sometimes we  abbreviate as $\ddot{c}.$  
  
  A \emph{bigraded multicurve} $\ddot{\fc}$ is a union of a finite collection of disjoint bigraded curves.
  There is an obvious notion of isotopy for bigraded multicurves.

\subsection{Lifting of trigraded curves to bigraded multicurves}\label{lift section}
Our goal is to define a map  $\m: \check{\fC} \ra \ddot{\undertilde{\wt{\fC}}}$ from the set $\check{\fC}$ of isotopy classes of trigraded curves to the set $\ddot{\undertilde{\wt{\fC}}}$ of isotopy classes of bigraded multicurves.
Let $c $ be a curve in $\D^B_{n+1}$ with trigrading $\check{c}$.
First consider the case when $c \cap \{0\} = \emptyset.$
Recall the map $q_{br}: \DA \ra \DB $ as defined in \cref{brcover}. 
Then,  $q_{br}^{-1}{(c)}$ has two connected components in $\DA$; denote them as $\widetilde{c}, \undertilde{c}$, such that  $\wt{c} \setminus \Delta $ agrees with the curve component of  $ \fp \circ \check{c}(c \setminus \Lambda)$ and $\ut{c} \setminus \Delta $ agrees with the curve component of  $\fp \circ \chi^B(0,0,1)\check{c}(c \setminus \Lambda).$
Define $\ddot{\wt{c}}: \wt{c}\setminus \Delta \ra \wRDA$ as 
$\ddot{\widetilde{c}} := \wt{\sF} \circ \check{c}  \circ q_{br}|_{\wt{c} \setminus \Delta};$
similarly $\ddot{\ut{c}} : \ut{c}\setminus \Delta \ra \wRDA $ is defined by 
$\ddot{\widetilde{c}} := \wt{\sF} \circ \chi^B(0,0,1) \check{c} \circ q_{br}|_{\wt{c} \setminus \Delta}$
where $\wt{\sF}: \wRDAz \ra \wRDA$ is the unique map induced by the inclusion $\sF: \RDAz \ra \RDA.$
It is easy to check that these are indeed bigradings of the respective curves.
     On the other hand,  if $c$ contains $0$ as one of its endpoints, we define $\widetilde{\undertilde{c}} := \wt{ c \setminus \{0\}} \amalg \{0\} \amalg \ut{c \setminus \{0\}} $, which is just a single connected component. 
     Furthermore, $\ddot{\wuc}$  is defined to be the unique continuous extension of  $\ddot{\widetilde{c \setminus \{0\}}} \amalg \ddot{\ut{c \setminus \{0\}}}$, which is again an easy verification that it is a bigrading of $\wuc.$

	 In total, we define the map $\m: \check{\fC} \ra \ddot{\undertilde{\wt{\fC}}}$  as follows: for a trigraded curve $\left(c, \check{c} \right)$ in $\wRDAz,$
    \[ \m(\left(c, \check{c} \right))  :=  \begin{cases} 
      (\wuc, \ddot{\tilde{\undertilde{c}}} ),  & \text{ if $c $ has $\{0\}$ as one of its endpoints;} \\
      (\wt{c}, \ddot{\tilde{c}}) \amalg (\undertilde{c},\ddot{\undertilde{c}}), & \text{ otherwise.}\\
       
   \end{cases}
\]
  Due to the isotopy lifting property of the space, $\m$ is well-defined on the isotopy classes of trigraded curves.

Recall the natural induced action of $\cA(B_n) \cong$ MCG$(\DB, \{0\} )$ on $\check{\fC}$ given in the paragraph before \cref{freeact}.
 Since $\mathcal{A}(A_{2n-1}) \cong $ MCG$( \DA )$ acts on $\ddot{\undertilde{\wt{\fC}}} $ \cite[Proposition 3.19]{KhoSei},  there exists an induced action of $\cA(B_n)$ on $\ddot{\undertilde{\wt{\fC}}} $ through the injection $\Psi$ as given in \cref{injB}.
\begin{proposition} \label{topological equivariant}

The map 
$\m : \check{\fC} \ra  \ddot{\undertilde{\wt{\fC}}}$ from isotopy classes of trigraded curves in $\wRDAz$  to  isotopy classes of bigraded multicurves in $\wRDA$
is $\cA({B_{n}})$-equivariant:
\begin{center}
\begin{tikzpicture} [scale=0.8]
\node (tbB) at (-3,1.75) 
	{$\mathcal{A}(B_n)$};
\node (tbA) at (10.5,1.75) 
	{$\mathcal{A}(A_{2n-1}) \overset{\Psi}{\hookleftarrow} \mathcal{A}(B_n) $};

\node[align=center] (cB) at (0,0) 
	{Isotopy classes $\check{\fC}$ of\\ trigraded curves  in $\DB$};
\node[align=center] (cA) at (6.5,0) 
	{Isotopy classes $\ddot{\undertilde{\wt{\fC}}}$ of\\ bigraded multicurves  in $\DA$};

\coordinate (tbB') at ($(tbB.east) + (0,-1)$);

\coordinate (tbA') at ($(tbA.west) + (0,-1)$);

\draw [->,shorten >=-1.5pt, dashed] (tbB') arc (245:-70:2.5ex);

\draw [->, shorten >=-1.5pt, dashed] (tbA') arc (-65:250:2.5ex);

\draw[->] (cB) -- (cA) node[midway,above]{$\mathfrak{m}$}; 

\end{tikzpicture}.
\end{center}
\end{proposition}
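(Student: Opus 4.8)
The plan is to reduce the statement to the commuting square relating the branched cover $q_{br}\colon\DA\to\DB$ to the two mapping class group actions, plus a uniqueness-of-lifts argument for the gradings. Fix $\sigma\in\cA(B_n)\cong\MCG(\DB,\{0\})$ represented by $f^B\in\Diff(\DB,\{0\})$ and a trigraded curve $(c,\check c)$; recall from \cref{tribundle} that $\sigma\cdot(c,\check c)=\big(f^B(c),\ \check{f^B}\circ\check c\circ(f^B)^{-1}\big)$, where $\check{f^B}$ is the canonical boundary-trivial lift of $f^B$ to $\wRDAz$. Let $f^A\in\Diff(\DA,\{0\})$ be the unique fibre-preserving lift of $f^B$, so that $q_{br}\circ f^A=f^B\circ q_{br}$; by \cref{injB} we have $[f^A]=\Psi(\sigma)$, and the induced action of $[f^A]$ on a bigraded curve $(b,\ddot b)$ is $\big(f^A(b),\ \ddot{f^A}\circ\ddot b\circ(f^A)^{-1}\big)$, where $\ddot{f^A}$ is the canonical boundary-trivial lift of $f^A$ to $\wRDA$. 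Thus the claim is $[f^A]\cdot\m(c,\check c)=\m\big(\sigma\cdot(c,\check c)\big)$.

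First I would verify agreement of the underlying (multi)curves. From $q_{br}\circ f^A=f^B\circ q_{br}$ we get $f^A(q_{br}^{-1}(c))=q_{br}^{-1}(f^B(c))$; since $f^A$ is fibre-preserving it commutes with the half-rotation $r$, and it fixes the branch point $\{0\}$, so it carries the two components $\wt c,\ut c$ of $q_{br}^{-1}(c)$ onto the two components of $q_{br}^{-1}(f^B(c))$ in the generic case, and carries the single component $\wuc$ onto its counterpart in the case that $\{0\}$ is an endpoint of $c$ (noting $f^B$ fixes $\{0\}$ and permutes $\Lambda\setminus\{0\}$). Hence the underlying multicurves on the two sides of the claimed identity agree.

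The core of the argument is the grading-compatibility identity
\[
\wt{\sF}\circ\check{f^B}=\ddot{f^A}\circ\wt{\sF}\colon\wRDAz\longrightarrow\wRDA,
\]
which I would prove by uniqueness of lifts: since $\fp'\circ\wt{\sF}=\sF\circ\fp$ and $f^A$ restricts compatibly to $\DA\setminus\Delta$ and to $\DA\setminus\Delta_0$ (so the induced maps on $\RDA$ and $\RDAz$ are intertwined by $\sF$), both $\wt{\sF}\circ\check{f^B}$ and $\ddot{f^A}\circ\wt{\sF}$ project under $\fp'$ to the same self-map of $\RDA$ precomposed with $\fp$, and both act trivially on the fibres over $\partial\DA$; as $\wRDAz$ is connected they coincide. (The same principle, applied to the tower $\wRDAz\xra{\fp}\RDAz\xra{\fq}\RDB$, also identifies $\check{f^B}$ with the canonical lift of the map induced by $f^A$ on $\RDAz$, which is what makes the previous sentence meaningful.) Granting this, for the component $\wt c$ one computes from the definition of $\m$, the displayed identity, and $q_{br}\circ(f^A)^{-1}=(f^B)^{-1}\circ q_{br}$:
\begin{align*}
\ddot{f^A}\circ\ddot{\wt c}\circ(f^A)^{-1}
&=\ddot{f^A}\circ\wt{\sF}\circ\check c\circ q_{br}\circ(f^A)^{-1}\\
&=\wt{\sF}\circ\check{f^B}\circ\check c\circ(f^B)^{-1}\circ q_{br}=\ddot{\widetilde{f^B(c)}},
\end{align*}
which is precisely the bigrading of the component $\widetilde{f^B(c)}$ appearing in $\m\big(\sigma\cdot(c,\check c)\big)$. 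The component $\ut c$ is handled identically after inserting the deck element $\chi^B(0,0,1)$, using that $\wt{\sF}$ intertwines $\chi^B(0,0,1)$ with the corresponding lift of $r$ to $\wRDA$ (again forced by uniqueness); the $\{0\}$-endpoint case follows by running the same computation on $c\setminus\{0\}$ and invoking uniqueness of the continuous extension defining $\ddot{\wuc}$. Well-definedness on isotopy classes is the isotopy-lifting property already used to construct $\m$, so $\cA(B_n)$-equivariance holds at the level of isotopy classes.

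The step I expect to be the main obstacle is the grading-compatibility identity $\wt{\sF}\circ\check{f^B}=\ddot{f^A}\circ\wt{\sF}$: one must be careful about which normalization of the lifts is used --- the boundary-trivial ones on $\wRDAz$ and on $\wRDA$ \emph{separately} --- and check that $\wt{\sF}$ respects it, and one must keep precise track of the extra deck factor $\chi^B(0,0,1)$ attached to the second sheet $\ut c$ (and of the continuous extensions across $\{0\}$). Everything else is bookkeeping with the commuting square $q_{br}\circ f^A=f^B\circ q_{br}$ and \cref{injB}.
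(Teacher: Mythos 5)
Your proof is correct and takes essentially the same approach the paper intends: the paper's proof is the one‑line assertion that the claim ``follows directly from the definition of $\mathfrak{m}$ and the actions,'' and your argument is precisely the unpacking of that — the commuting square $q_{br}\circ f^A=f^B\circ q_{br}$ plus uniqueness of boundary‑normalized lifts on $\wRDAz$ and $\wRDA$ to get $\wt{\sF}\circ\check{f^B}=\ddot{f^A}\circ\wt{\sF}$, and then tracking the $\chi^B(0,0,1)$ factor on the second sheet and the continuous extension at $\{0\}$. The one detail worth keeping explicit, which you do implicitly, is that the labelling of $\wt{c}$ versus $\ut{c}$ is itself determined by the trigrading (via $\wt{c}\setminus\Delta=\pi^A\circ\fp\circ\check{c}(c\setminus\Lambda)$), so $f^A(\wt{c})=\widetilde{f^B(c)}$ is not a separate geometric hypothesis but a consequence of the same lifting compatibility.
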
 
\begin{proof} This follows directly from the definition $\mathfrak{m}$ and the actions.
\end{proof}

\subsection{Bigraded intersection number and bigraded admissible multicurves in $\wRDA$}\label{sect: bigraded intersection number}

The local index for bigraded curves in $\wRDA$ is defined in the same spirit as the local index for trigraded curves in $\wRDAz.$ 
 For a more detailed explanation, we refer the reader to \cite[Section 3d]{KhoSei}. 

We recall from \cite[Section 3d]{KhoSei} that the bigraded intersection number of two bigraded curves $\ddot{c}_0, \ddot{c}_1$ that do not intersect at $\partial \DA$  is defined by 
\begin{equation} \label{bigrint}
I^{bigr} (\ddot{c}_0, \ddot{c}_1) = 
  (1+q_1^{-1}q_2) \left( \ \sum_{z \in (c_0 \cap c_1') \backslash \Delta} q_1^ {\mu_1(z)} q_2^ {\mu_2(z)} \ \right) 
 + \left( \ \sum_{z \in (c_0 \cap c_1') \cap \Delta } q_1^ {\mu_1(z)} q_2^ {\mu_2(z)} \ \right)  .
\end{equation}
We extend the definition of bigraded intersection number of bigraded curves to bigraded multicurves by adding up the bigraded intersection numbers of each pair of bigraded curves.

 To talk about bigraded admissible curves in $\DA$ and their normal forms,
we need to fix a set of basic bigraded curves.
To do so, first recall the set of trigraded basic curves $(b_j, \check{b}_j)$ and the set of trigraded vertical curves $(d_j, \check{d}_j)$ as defined in \cref{normal DB}.
Denote this set of basic trigraded curves as $\check{\fB}$.
Consider,  for each $(c,\check{c}) \in \check{\fB}$, its lift to bigraded multicurves in $\DA$:
\[
\m(c, \check{c})  =  \begin{cases} 
      (\wuc, \ddot{\tilde{\undertilde{c}}} ),  & \text{ if $(c, \check{c}) = (b_1, \check{b}_1) $;} \\
      (\wt{c}, \ddot{\tilde{c}}) \amalg (\undertilde{c},\ddot{\undertilde{c}}), & \text{ otherwise},\\
   \end{cases}
\]
	where $\wt{c}$ denotes the curve whose points have positive real parts; so points in $\ut{c}$ have negative real parts. 
We shall fix the set of bigraded basic curves $(\varrho, \ddot{\varrho}_j)$ and bigraded vertical curves $(\theta_j, \ddot{\theta}_j)$ as follows:
	\begin{itemize}
	\item choose $(\theta_n, \ddot{\theta}_n) := (\wt{d}_1, \ddot{\wt{d}_1} ) ;$
	\item choose $(\theta_{n+j-1}, \ddot{\theta}_{n+j-1}) := (\wt{d}_j, \ddot{\wt{d}_j} ) $ and $(\theta_{n-j+1}, \ddot{\theta}_{n-j+1}) := (\ut{d_j}, \ddot{\ut{d_j}} ),  $ \quad for $2 \leq j \leq n;$
	\item choose $( \varrho_{n+j-1}, \ddot{\varrho}_{n+j-1}) := (\wt{b}_j, \ddot{\wt{b}}_j ) $ and $(\varrho_{n-j+1}, \ddot{\varrho}_{n-j+1}) := (\ut{b_j}, \ut{\ddot{b}_j} ),  $ \quad for $2 \leq j \leq n;$
	\item choose $( \varrho_{n}, \ddot{\varrho}_{n}) := (\ut{\wt{b}_1}, \ut{\ddot{\wt{b}}_1}).$
	\end{itemize}
\Cref{NormalformA} illustrates the (underlying) basic curves $\varrho_j$ and vertical curves $\theta_j$ chosen.
\begin{figure}[H]  
\begin{tikzpicture}  [xscale=1.5, yscale=1]
\draw[thick] (0,0) ellipse (4cm and 2cm);
\draw[fill] (-3.3,0) circle [radius=0.045];
\node  at (-2.4,0) {$ \boldsymbol{\cdots}$};
\draw[fill] (-1.5,0) circle [radius=0.045];
\draw[fill] (-0.75,0) circle [radius=0.045];
\draw[fill] (.75,0) circle [radius=0.045];
\draw[fill] (1.5,0) circle [radius=0.045];
\node  at (2.4,0) {$ \boldsymbol{\cdots}$};
\draw[fill] (3.3,0) circle [radius=0.045];

\node [below] at (-3.3,0) {-$n$};
\node  [below] at (-1.5,0) {-$2$} ;
\node [below] at (-0.75,0) {-$1$};
\node [below] at (.75,0) {$1$};
\node  [below] at (1.5,0) {$2$} ;
\node [below] at (3.3,0) {$n$};

\draw [thick,blue] (-2.925, 1.35) -- (-2.925, -1.35);
\draw [thick,blue] (-1.875, 1.75) -- (-1.875, -1.75);
\draw [thick,blue](-1.125, 1.93) -- (-1.125, -1.93);
\draw [thick, blue] (0.375, 2) -- (0.375, -2);
\draw [thick,blue](1.125, 1.93) -- (1.125, -1.93);
\draw [thick,blue] (1.875, 1.75) -- (1.875, -1.75);
\draw [thick,blue] (2.925, 1.35) -- (2.925, -1.35);

\node at (-3.4,0.55) {$\cD_0$};
\node at (3.4,0.55) {$\cD_{2n}$};
\node at (-.6,0.75) {$\cD_{n-1}$};
\node at (-.6,-0.75) {$\cD_{n-1}$};
\node at (.6,0.75) {$\cD_{n}$};
\node at (.6,-0.75) {$\cD_{n}$};
\node at (1.5,0.7) {$\cD_{n+2}$};
\node at (1.5,-0.75) {$\cD_{n+2}$};
\node at (-1.5,0.7) {$\cD_{n-2}$};
\node at (-1.5,-0.75) {$\cD_{n-2}$};

\node[left,blue] at (-2.925, 1) {$\theta_1$};
\node[left,blue] at (-1.875, 1.3) {$\theta_{n-2}$};
\node[left,blue] at (-1.125, 1.4) {$\theta_{n-1}$};
\node[left,blue] at (0.375, 1.5) {$\theta_n$};
\node[left,blue] at (1.125, 1.4) {$\theta_{n+1}$};
\node[left,blue] at (1.875, 1.3) {$\theta_{n+2}$};
\node[left,blue] at (2.925, 1.1) {$\theta_{2n-1}$};

\draw [thick,red] (-3.3,0)--(-2.8,0);
\draw [thick,red] (-1.5,0)--(-2.1,0);
\draw [thick,red] (-1.5,0)--(-0.75,0);
\draw [thick,red](-0.75,0) -- (.75,0);
\draw [thick,red] (3.3,0)--(2.8,0);
\draw [thick,red] (1.5,0)--(2.1,0);
\draw [thick,red] (1.5,0)--(.75,0);

\node[above right,red] at (-3.3,0) {$\varrho_1$};
\node[above left,red] at (-1.5,0) {$\varrho_{n-2}$};
\node[above left,red] at (-0.75,0) {$\varrho_{n-1}$};
\node[above right,red] at (1.6,0) {$\varrho_{n+2}$};
\node[above left,red] at (0,0) {$\varrho_{n}$};
\node[above left,red] at (1.5,0) {$\varrho_{n+1}$};
\node[above left,red] at (3.3,0) {$\varrho_{2n-1}$};

\end{tikzpicture}

\caption{The basic curves $\theta_i$ and $\varrho_i$ in the aligned configuration with regions $\cD_i$ for $\DA.$ } \label{NormalformA}

\end{figure}

\begin{remark} \label{basic curves A}
We warn the reader of two slight differences here in comparison to \cite{KhoSei}: our underlying set of basic curves $\{\varrho_j\}$ chosen does not include the one curve connected to the boundary of the disc; moreover, the bigradings of the basic curves and vertical curves are slightly different.
Compare our equations \eqref{bigrading defn eqn 1} and \eqref{bigrading defn eqn 2} to the defining equations in  \cite[pg. 232]{KhoSei}.
%
\end{remark}
\begin{lemma}
The bigradings we choose for the set of basic curves and vertical curves in $\DA$ satisfy the following by properties:
	\begin{align}
	I^{bigr}(\ddot{\theta}_j, \ddot{\varrho}_j) &= 1 + q_1^{-1}q_2, & \text{ for $ 1 \leq j \leq 2n-1;$}\\
	I^{bigr}(\ddot{\varrho}_{j}, \ddot{\varrho}_{j+1}) &= 1,   & \text{ for $ n \leq j \leq 2n-2;$} \label{bigrading defn eqn 1} \\
	I^{bigr}(\ddot{\varrho}_{j}, \ddot{\varrho}_{j-1}) &=1, & \text{ for $ 2 \leq j \leq n$. \label{bigrading defn eqn 2}}
	\end{align}
\end{lemma}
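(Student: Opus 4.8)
The statement asks us to verify three families of bigraded intersection-number identities for the chosen basic bigraded curves $(\theta_j,\ddot\theta_j)$ and $(\varrho_j,\ddot\varrho_j)$ in $\wRDA$. My plan is to reduce everything to the analogous trigraded identities in $\DB$ recorded in \eqref{basic trigrading condition}, via the lifting map $\mathfrak m$ and a compatibility between trigraded and bigraded intersection numbers. First I would set up the dictionary: by our choices, each $\theta_j$ is one of the two lifts $\wt{d_k}$ or $\ut{d_k}$ of a curve $d_k$ in $\DB$ (with $\theta_n = \wt{d_1}$), and similarly $\varrho_j$ is a lift of some $b_k$ (with $\varrho_n = \ut{\wt{b_1}}$, the single connected component lifting $b_1$). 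The key observation is that $q_{br}$ is a double branched cover, so for two curves $c,c'$ in $\DB$ neither of which passes through the branch point $\{0\}$, the preimages satisfy $q_{br}^{-1}(c)\cap q_{br}^{-1}(c') = q_{br}^{-1}(c\cap c')$, and each transverse intersection point of $c,c'$ away from $\{0\}$ has exactly two preimages, one in each "half''. Thus geometric intersection numbers downstairs and upstairs are related in a controlled way, and the same holds for the graded refinements because $\ddot{\wt c}$ and $\ddot{\ut c}$ are defined precisely as $\wt\sF\circ\check c\circ q_{br}$ and $\wt\sF\circ\chi^B(0,0,1)\check c\circ q_{br}$.

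The second step is to track how the local indices transform. The cohomology class $C_0$ defining $\wRDAz$ was chosen (see \eqref{l0}--\eqref{30}) so that the $\Z\times\Z$-part of $C_0$ agrees with the bigraded class $C$ on $\RDA$ (both send $[\mathrm{pt}\times\lambda_i]\mapsto(-2,1)$ for $i\neq 0$ and $[\mathbb{RP}^1\times\mathrm{pt}]\mapsto(1,0)$), while the third $\Z/2\Z$-coordinate records the monodromy around the branch point and the deck transformation of $q_{br}$. Consequently, under $\wt\sF$ the first two coordinates of a trigraded local index $\mu^{trigr} = (\mu_1,\mu_2,\mu_3)$ pass directly to the bigraded local index $(\mu_1,\mu_2)$ of the corresponding intersection point upstairs, and the $\Z/2\Z$-coordinate governs which of the two lifts ($\wt{\phantom c}$ vs. $\ut{\phantom c}$) the intersection point lands in. Carrying this through: for $d_k,b_k$ with $k\geq 2$, the two intersection points of $d_k\cap b_k$ lift to one point in $\wt{d_k}\cap\wt{b_k}$ and one in $\ut{d_k}\cap\ut{b_k}$ (by minimality they are disjoint from $\{0\}$ and from the "negative/positive'' copies respectively), so $I^{trigr}(\check d_k,\check b_k) = (1+q_3)(1+q_1^{-1}q_2)$ specialising $q_3\mapsto$ (the appropriate deck sign) yields $I^{bigr}(\ddot\theta_j,\ddot\varrho_j) = 1+q_1^{-1}q_2$ for the corresponding index $j$. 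The analogous bookkeeping handles $I^{trigr}(\check b_k,\check b_{k+1}) = 1+q_3 \rightsquigarrow I^{bigr}(\ddot\varrho_j,\ddot\varrho_{j\pm1}) = 1$.

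The third step is the two exceptional indices $j$ coming from $b_1,d_1$, where the branch point intervenes. Here $\varrho_n = \ut{\wt{b_1}}$ is a single curve (the full preimage of $b_1$, passing through $\{0\}$), and $\theta_n = \wt{d_1}$; I would compute $I^{bigr}(\ddot\theta_n,\ddot\varrho_n)$ directly from \cref{fulltwist} and the normal-form picture \cref{NormalformA}, checking that $d_1$ meets $b_1$ in $\DB$ in exactly one point near $\{0\}$ (a half-endpoint), which lifts to one honest intersection point upstairs contributing $1$, and that the grading works out to $1+q_1^{-1}q_2$ after accounting for the lift. The bottom inequality $I^{bigr}(\ddot\varrho_n,\ddot\varrho_{n-1})=1$ and its mirror $I^{bigr}(\ddot\varrho_{n},\ddot\varrho_{n+1})=1$ again reduce to $I^{trigr}(\check b_1,\check b_2)=1+q_3$, but now $\varrho_n$ is connected while $\varrho_{n-1},\varrho_{n+1}$ are the two separate lifts of $b_2$; by the branched-cover count each meets $\varrho_n$ in exactly one point, giving the required $1$. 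The main obstacle is precisely this exceptional case: one must be careful that the "doubling then quotient'' bookkeeping for the $q_3$-variable is consistent at the branch point, i.e. that specialising $q_3$ in $I^{trigr}$ really does recover $I^{bigr}$ and not $2I^{bigr}$ or a shifted version — this is exactly the subtlety flagged after \cref{triintpro} about the role of the factor $(1+q_1^{-1}q_2q_3)$. I would isolate this as a short lemma (bigraded intersection of $\mathfrak m(\check c_0),\mathfrak m(\check c_1)$ in terms of $I^{trigr}(\check c_0,\check c_1)$ under $q_3\mapsto\text{sign}$) and then the three identities fall out by plugging in \eqref{basic trigrading condition}.
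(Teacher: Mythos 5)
Your overall strategy is the same one the paper has in mind: the bigraded basic curves $(\ddot\theta_j,\ddot\varrho_j)$ are defined as $\mathfrak m$-images of the trigraded basics $(\check d_k,\check b_k)$, so the lemma should fall out of the trigraded normalizations in \eqref{basic trigrading condition} together with the compatibility between $I^{trigr}$ and $I^{bigr}$ under lifting. However, the bridge step as you state it does not give the right numbers. There is no specialization $q_3\mapsto(\text{deck sign})$ that takes $(1+q_3)(1+q_1^{-1}q_2)$ to $1+q_1^{-1}q_2$: the value $q_3=1$ gives $2(1+q_1^{-1}q_2)$ and $q_3=-1$ gives $0$. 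The correct statement (recorded later in the paper as the corollary to \cref{tribilocin}) is $I^{trigr}(\check c_0,\check c_1)|_{q_3=1}=I^{bigr}(\mathfrak m(\check c_0),\mathfrak m(\check c_1))$, where the right-hand side is the bigraded intersection number of the \emph{multi}curves. You then have to decompose that multicurve count into its summands over pairs of components and use that $\wt{d_k}\cap\ut{b_k}=\emptyset=\ut{d_k}\cap\wt{b_k}$ together with the deck-transformation symmetry $r$ exchanging the two halves, to conclude that each of the two nonzero summands $I^{bigr}(\ddot{\wt{d_k}},\ddot{\wt{b_k}})$ and $I^{bigr}(\ddot{\ut{d_k}},\ddot{\ut{b_k}})$ is half of $2(1+q_1^{-1}q_2)$. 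This halving via the $\Z/2\Z$-symmetry is the missing step; a naive specialization of $q_3$ skips it and produces the wrong answer.

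There are also two smaller factual slips. The curve $d_k$ meets $b_k$ in a single transverse interior point, not two; the factor $(1+q_3)(1+q_1^{-1}q_2)$ in $I^{trigr}(\check d_k,\check b_k)$ is the formula's contribution for that one point, not a sum over two points. Its preimage under $q_{br}$ is two points precisely because the branched cover is $2{:}1$ away from $0$, and each lands on a different component of $q_{br}^{-1}(d_k)\cap q_{br}^{-1}(b_k)$, which is why you get one intersection point per branch. Similarly, the $d_1\cap b_1$ intersection is not a ``half-endpoint'': it is the same kind of interior transverse crossing as in the other cases (see \cref{bidi}); what is special about $j=n$ is only that $\varrho_n=\ut{\wt{b_1}}$ is a single connected lift passing through the branch point, while $\theta_n=\wt{d_1}$ remains one of two lifts. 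Once you fix the multicurve-decomposition/symmetry step above, all three families of identities in the lemma fall out uniformly and the branch case needs no special treatment at the level of the count.
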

\begin{proof}
This follows immediately from the construction.
\end{proof}

Similar to curves in $\DB$, a curve $c$ in $\DA$ is called \bit{admissible} if $c = f(\varrho_j)$ for some $ -n \leq j \leq n$ and some $f \in \Diff \left(\DA, \partial \DA \right)$.
Note that unlike in \cite{KhoSei}, admissible curves in $\DA$ will not touch the boundary of the disc (none of our basic curves $\varrho_j$ does).
An admissible curve $c$ in its isotopy class that has minimal intersection with all the $\theta_j$'s among its other representatives is said to be in \bit{normal form}. 
We define crossings, essential segments, $j$-strings and bigraded $j$-strings in a similar fashion to the trigraded case (see \cref{local index function}).
In particular, given a $j$-crossing $x$ of a bigraded curve $\ddot{c}$, we fix $x_0 := j$ and $(x_1, x_2)$ is the local index $(\mu_1, \mu_2)$ of the crossing $x$.
	All these notions can be extended to that for multicurves and bigraded multicurves naturally.
    
    Suppose $c_0$ and $c_1$ are two admissible curves in $\DB$ intersecting at $z \in \DB$.
    If $z = 0,$ we require $c_0 \not\simeq c_1.$
    Their preimage  $q_{br}^{-1}(c_0)$ and $q_{br}^{-1}(c_1)$  in $\DA$ under the map $q_{br}: \DA \ra \DB$  would then intersect minimally. 
    However,  if $c_0 \cap c_1 \cap \{0\} \neq \emptyset$ and $c_0 \simeq c_1$, they will not intersect minimally as illustrated below:

\begin{figure}[H]
\begin{tikzpicture}[scale = 0.7]
\draw (-3,0) circle (2cm);
\draw[fill] (-4.5,0) circle [radius=0.07];
\draw[fill] (-1.5,0) circle [radius=0.07];
\draw [thick, blue] (-4.5,0) arc (180:360:.75);
\draw [thick, blue] (-1.5,0) arc (0:180:.75);
\draw [thick, red] (-4.5, 0) -- (-1.5,0);

\draw (3,0) circle (2cm);
\node at (0,0) {$\simeq$};

\draw[fill] (4.5,0) circle [radius=0.07];
\draw[fill] (1.5,0) circle [radius=0.07];
\draw [thick, blue] (4.5,0) arc (0:180:1.5);
\draw [thick, red] (4.5, 0) -- (1.5,0);

\node [above] at (-2.25,.7) {{\scriptsize $q_{br}^{-1}(c_1)$}};
\node [above] at (-3.6,0) {{{\scriptsize $q_{br}^{-1}(c_0)$}}};
\node [below] at (-1.5,0) {{{\scriptsize $j$}}};
\node [below] at (-4.75,0) {{{\scriptsize $-j$}}};
\node [below] at (3,1.4) {{{\scriptsize $(q_{br}^{-1}(c_1))'$}}};
\node [below] at (3,0) {{{\scriptsize $q_{br}^{-1}(c_0)$}}};
\node [below] at (1.5,0) {{{\scriptsize $-j$}}};
\node [below] at (4.5,0) {{{\scriptsize $j$}}};

\end{tikzpicture}
\caption{{\small The preimages $q_{br}^{-1}(c_0), q_{br}^{-1}(c_1)$ in $\DA$.}} 
\end{figure}
As such, we obtain the following proposition:
\begin{proposition} \label{tribilocin}
Let $\check{c}_0$ and $\check{c}_1$ be two trigraded curves intersecting at $z \in \DB$, with local index $\mu^{trigr}(\check{c}_0, \check{c}_1, z) = (r_1, r_2, r_3)$. If $c_0 \cap c_1 \cap \{0\} \neq \emptyset,$ we require $c_0 \not\simeq c_1$.
	If $z \neq 0$, further suppose that $\m({c_0 , \check{c}_0}) = (\wt{c_0}, \check{\wt{c}}_0) \amalg  (\ut{c_0}, \ut{\check{c}_0})$ and  $\m({c_1 , \check{c}_1}) = (\wt{c_1}, \check{\wt{c_1}}) \amalg  (\ut{c_1}, \ut{\check{c}_1})$
	such that $\wt{c}_0 \cap \wt{c}_1 = \wt{z}$ and $\ut{c_0} \cap \ut{c_1} = \ut{z}.$ 
Then
$$
\begin{cases}
\mu^{bigr}(\ddot{\wt{c}}_0, \ddot{\wt{c}}_1, \wt{z}) = (r_1, r_2) 
= \mu^{bigr}({\undertilde{\ddot{c}_0}},{\undertilde {\ddot{c}_1}}, \undertilde{z}), &\text{for } z \neq 0; \\
\mu^{bigr}(\ddot{\ut{\wt{c_0}}}, \ddot{\ut{\wt{c_1}}}, 0  ) = (r_1, r_2), &\text{for } z=0.
\end{cases}
$$
\end{proposition}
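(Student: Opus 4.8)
The plan is to unwind the two definitions of local index and to check that the construction of $\m$ carries one into the other. Recall that $\mu^{trigr}(\check{c}_0,\check{c}_1;z)$ is computed from a small clockwise arc $\alpha$ on a circle $\ell\subset\DB\setminus\Lambda$ about $z$, a path $\kappa$ of tangent lines along $\alpha$ in $\RDB$ running from $T_{\alpha(0)}c_0$ to $T_{\alpha(1)}c_1$, and the lift $\check\kappa$ of $\kappa$ to $\wRDAz$ with $\check\kappa(0)=\check{c}_0(\alpha(0))$, via the identity $\check{c}_1(\alpha(1))=\chi^B(r_1,r_2,r_3)\check\kappa(1)$; the bigraded index $\mu^{bigr}$ is defined in exactly the same manner inside $\DA$ and $\wRDA$. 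I would use two facts throughout: that $q_{br}$ is a local diffeomorphism at every point of $\DA\setminus\{0\}$ and is modelled on $w\mapsto w^2$ near $0$; and that $\wt{\sF}\colon\wRDAz\to\wRDA$ is a morphism of covering spaces over the inclusion $\sF\colon\RDAz\hookrightarrow\RDA$, so that, since the classifying classes $C_0$ and $C$ take the same values on $[\RP\times point]$ and on the loops $[point\times\lambda_i]$ for $i\in\Delta$, the map $\wt{\sF}$ intertwines the restriction of $\chi^B$ to the subgroup $\Z\times\Z\times\{0\}$ --- which, under the splitting in \cref{finalcovering}(3), is the deck action of $\fp$ --- with the $\Z\times\Z$ deck action on $\wRDA$, which we write $\chi^A$; explicitly $\wt{\sF}(\chi^B(a,b,0)x)=\chi^A(a,b)\,\wt{\sF}(x)$.

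Suppose first that $z\neq 0$. I would shrink $\ell$ so that the disc it bounds avoids $0$ and meets $\Lambda$ at most in $z$, take the component $\wt U$ of the $q_{br}$-preimage of that disc containing $\wt z$, and transport $\ell,\alpha,\kappa$ through the diffeomorphism $q_{br}|_{\wt U}^{-1}$ to data $\wt\ell,\wt\alpha,\wt\kappa$ about $\wt z$. Using the hypothesis $\wt{c}_0\cap\wt{c}_1=\{\wt z\}$, one checks that these are legitimate data for computing $\mu^{bigr}(\ddot{\wt{c}}_0,\ddot{\wt{c}}_1;\wt z)$, and that the lift of $\wt\kappa$ to $\wRDA$ starting at $\ddot{\wt{c}}_0(\wt\alpha(0))=\wt{\sF}(\check{c}_0(\alpha(0)))=\wt{\sF}(\check\kappa(0))$ is precisely $\wt{\sF}\circ\check\kappa$, because $\fp\circ\check\kappa$ is the tangent-line path $\wt\kappa$ inside $\RDAz$. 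The crux is then the observation that $\fp(\check\kappa(1))$ and $\fp(\check{c}_1(\alpha(1)))$ are both equal to the canonical tangent section $T_{\wt\alpha(1)}\wt{c}_1$ of $\wt{c}_1$ --- here one uses once more that $\wt{c}_1\ni\wt z$, so that the relevant lift of $\alpha(1)$ lies on $\wt{c}_1$ --- whence $\check\kappa(1)$ and $\check{c}_1(\alpha(1))$ lie in the same $\fp$-fibre; this forces $r_3=0$ and $\check{c}_1(\alpha(1))=\chi^B(r_1,r_2,0)\check\kappa(1)$. Applying $\wt{\sF}$ and the intertwining relation gives
\[\ddot{\wt{c}}_1(\wt\alpha(1))=\wt{\sF}\big(\chi^B(r_1,r_2,0)\check\kappa(1)\big)=\chi^A(r_1,r_2)\,(\wt{\sF}\circ\check\kappa)(1),\]
that is, $\mu^{bigr}(\ddot{\wt{c}}_0,\ddot{\wt{c}}_1;\wt z)=(r_1,r_2)$. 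The statement for $\ut z$ follows by rerunning the argument with the trigradings $\chi^B(0,0,1)\check{c}_0$ and $\chi^B(0,0,1)\check{c}_1$, whose $\m$-components are the $(\ut{c_i},\ddot{\ut{c_i}})$ and whose local index at $z$ agrees with $\mu^{trigr}(\check{c}_0,\check{c}_1;z)$.

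Suppose now that $z=0$. Here the $q_{br}$-preimage of $\ell$ is a single circle double-covering $\ell$, the arc $\alpha$ (unique up to reparametrisation since $0\in\Lambda$) lifts to an arc $\wt\alpha$ on it, and $\ddot{\ut{\wt{c_i}}}$ is defined at $0$ because $0\notin\Delta$. I would carry out the same unwinding, taking $\wt\alpha(0)$ on the branch $\wt{c_0\setminus\{0\}}$ say, so that $\check{\wt\kappa}:=\wt{\sF}\circ\check\kappa$ has $\check{\wt\kappa}(0)=\ddot{\ut{\wt{c_0}}}(\wt\alpha(0))$. The only new feature is that $\wt\alpha(1)$ may land on either branch of $\ut{\wt{c_1}}$, giving $\ddot{\ut{\wt{c_1}}}(\wt\alpha(1))$ equal to $\wt{\sF}(\check{c}_1(\alpha(1)))$ or to $\wt{\sF}(\chi^B(0,0,1)\check{c}_1(\alpha(1)))$; a local computation in the model $w\mapsto w^2$ --- the one behind the $\{0\}$-symmetry of \cref{locinsym} --- matches these alternatives with $r_3=0$ and $r_3=1$ respectively, so that in either case
\[\ddot{\ut{\wt{c_1}}}(\wt\alpha(1))=\wt{\sF}\big(\chi^B(r_1,r_2,0)\check\kappa(1)\big)=\chi^A(r_1,r_2)\,\check{\wt\kappa}(1),\]
giving $\mu^{bigr}(\ddot{\ut{\wt{c_0}}},\ddot{\ut{\wt{c_1}}};0)=(r_1,r_2)$. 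The hypothesis $c_0\not\simeq c_1$ is exactly what makes $\ut{\wt{c_0}}$ and $\ut{\wt{c_1}}$ meet transversally at $0$ (compare the figure preceding the statement), so that the local index there is defined.

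I expect the branch-parity count in the case $z=0$ to be the only genuine obstacle: one must verify that the $\Z/2\Z$-ambiguity introduced by crossing the branch point of $q_{br}$ is cancelled exactly by the $\chi^B(0,0,1)$-twist hard-wired into the definition of $\ddot{\ut{c}}$. This is an orientation bookkeeping inside the double cover $w\mapsto w^2$ (clockwise versus anticlockwise, and the halving of sweep angles), entirely parallel to the verification of the third symmetry in \cref{locinsym}; everything else is formal, the decisive point away from $0$ being simply that the hypothesis $\wt{c}_0\cap\wt{c}_1=\wt z$ forces $r_3=0$.
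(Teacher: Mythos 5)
Your proof is correct and follows the same strategy as the paper's own (brief) argument: unwind the two local-index definitions along the tower $\wRDAz\xrightarrow{\fp}\RDAz\xrightarrow{\fq}\RDB$, lift the circle, arc and tangent path via $q_{br}$, and use the fact that $\wt{\sF}\colon\wRDAz\to\wRDA$ intertwines the $\Z\times\Z\times\{0\}$-part of $\chi^B$ (the deck group of $\fp$, since $\sF^*C=C_0$) with $\chi^A$. Where the paper simply writes ``by the construction of the covering space \dots\ we can see that $\ddot{\wt c}_1(\alpha(1))=\chi^A(r_1,r_2)\ddot{\wt\kappa}(1)$'' and, for $z=0$, ``modify the argument \dots\ and apply a continuous extension argument,'' you supply the two details that actually make these steps go through: first, away from $0$, the observation that the labelling hypothesis $\wt c_0\cap\wt c_1=\{\wt z\}$ forces $\fp(\check\kappa(1))$ and $\fp(\check c_1(\alpha(1)))$ to coincide in $\RDAz$, whence $r_3=0$ and the $\Z\times\Z$-equivariance of $\wt{\sF}$ applies cleanly; and second, at $0$, the branch-parity bookkeeping in the local model $w\mapsto w^2$, which shows that the $\Z/2\Z$-ambiguity of which sheet $\wt\alpha(1)$ lands on is cancelled exactly by the $\chi^B(0,0,1)$-twist built into the definition of $\ddot{\ut c}$. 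Both refinements are sound, and your passage to the other component $\ut z$ by replacing $\check c_i$ with $\chi^B(0,0,1)\check c_i$ (which swaps the two $\m$-components without changing the relative local index) is a clean way to avoid re-running the argument.
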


Furthermore, this proposition allows us to relate trigraded intersection numbers and bigraded intersection number in the following way. 
    \begin{corollary}For any trigraded admissible curves $(c_0, \check{c}_0)$ and $(c_1, \check{c}_1)$	
	   \[ I^{trigr}(\check{c}_0,\check{c}_1) |_{q_3=1}  =   I^{bigr}\left(\m({\check{c}_0}),\m({\check{c}_1}) \right).\]
In particular,	
	   $ \frac{1}{2} I^{trigr}(\check{c}_0,\check{c}_1) |_{q_1 = q_2 = q_3=1}  = I(\m(c_0), \m(c_1)),
	   $
i.e. $ \frac{1}{2} I^{trigr}(\check{c}_0,\check{c}_1) |_{q_1 = q_2 = q_3=1}$ counts the geometric intersection number of the lifts of $c_0$ and $c_1$ in $\DA$ under the map $\m.$
\end{corollary}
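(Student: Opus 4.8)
The plan is to read the identity off directly from \cref{tribilocin}, matching the defining formulas for $I^{trigr}$ and $I^{bigr}$ term by term after the substitution $q_3=1$. The structural fact I would build on is that the bigraded multicurve $\m(\check c_i)$ has underlying multicurve exactly $q_{br}^{-1}(c_i)$: a single connected curve $\ut{\wt{c_i}}$ when $c_i$ has $\{0\}$ as an endpoint, and an $r$-interchanged pair $\wt{c_i}\amalg\ut{c_i}$ otherwise. Consequently every intersection point of $q_{br}^{-1}(c_0)$ with $q_{br}^{-1}(c_1)$ lies over an intersection point $z$ of $c_0$ and $c_1$ in $\DB$, the fibre of $q_{br}$ over $z$ containing two points $z^{\pm}=r^{\pm1}(z^{+})$ when $z\neq 0$ and one point when $z=0$.

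I would first treat the non-exceptional case: $c_0\not\simeq c_1$, or $c_0\simeq c_1$ with $c_0\cap c_1\cap\{0\}=\emptyset$. Pick $c_1'\simeq c_1$ with minimal intersection with $c_0$; by the discussion preceding \cref{tribilocin}, $q_{br}^{-1}(c_1')=\m(\check c_1')\simeq\m(\check c_1)$ then has minimal intersection with $\m(\check c_0)$, so it may be used to compute $I^{bigr}$. For $z\in c_0\cap c_1'$ with trigraded local index $(\mu_1,\mu_2,\mu_3)$: when $z\neq 0$ both preimages $z^{\pm}$ are intersection points of $q_{br}^{-1}(c_0)$ and $q_{br}^{-1}(c_1')$, and \cref{tribilocin} --- applied directly when $z^{+}\in\wt{c_0}\cap\wt{c_1'}$, and after interchanging the two lifts of $c_1'$ (which alters the trigraded local index only by $(0,0,1)$, hence not its first two coordinates) in the remaining case --- gives both of them bigraded local index $(\mu_1,\mu_2)$, with $z^{\pm}$ marked in $\DA$ precisely when $z$ is a marked point of $\DB$ other than $0$; when $z=0$ (which here forces $c_0,c_1'$ to share $\{0\}$ as an endpoint, $0$ being unmarked in $\DA$), \cref{tribilocin} gives bigraded local index $(\mu_1,\mu_2)$ at the single preimage. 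Summing, a generic $z$ contributes $2(1+q_1^{-1}q_2)q_1^{\mu_1}q_2^{\mu_2}$, a marked $z\neq 0$ contributes $2q_1^{\mu_1}q_2^{\mu_2}$, and $z=0$ contributes $(1+q_1^{-1}q_2)q_1^{\mu_1}q_2^{\mu_2}$ to $I^{bigr}(\m(\check c_0),\m(\check c_1))$; these are exactly the specialisations at $q_3=1$ of the coefficients $(1+q_3)(1+q_1^{-1}q_2)$, $(1+q_3)$ and $(1+q_1^{-1}q_2q_3)$ in the formula for $I^{trigr}$, so $I^{trigr}(\check c_0,\check c_1)|_{q_3=1}=I^{bigr}(\m(\check c_0),\m(\check c_1))$ here.

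For the exceptional case $\check c_1\simeq\chi^B(r_1,r_2,r_3)\check c_0$ with $c_0\cap c_1\cap\{0\}\neq\emptyset$, I would instead observe that $\m(\chi^B(0,0,1)\check c)=\m(\check c)$ and $\m(\chi^B(r_1,r_2,0)\check c)=\chi^A(r_1,r_2)\m(\check c)$, both immediate from the construction of $\m$ and the relation $\wt{\sF}\circ\chi^B(r_1,r_2,0)=\chi^A(r_1,r_2)\circ\wt{\sF}$; hence $\m(\check c_1)\simeq\chi^A(r_1,r_2)\m(\check c_0)$, and by the evident bigraded analogue of property (T2) of \cref{triintpro}, $I^{bigr}(\m(\check c_0),\m(\check c_1))=q_1^{r_1}q_2^{r_2}\,I^{bigr}(\ddot{\ut{\wt{c_0}}},\ddot{\ut{\wt{c_0}}})$. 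The self-intersection $I^{bigr}(\ddot{\ut{\wt{c_0}}},\ddot{\ut{\wt{c_0}}})=1+q_2$ by the corresponding type $A$ computation of \cite{KhoSei} (push the curve off itself so the two copies meet only at its two marked endpoints, separating them at the unmarked point $0$), which matches $I^{trigr}(\check c_0,\check c_1)|_{q_3=1}=q_1^{r_1}q_2^{r_2}(1+q_2)$. For the final assertion I would specialise the identity just obtained further at $q_1=q_2=1$: from \eqref{bigrint} and its extension to multicurves, $I^{bigr}(\ddot{\mathfrak a},\ddot{\mathfrak b})|_{q_1=q_2=1}=2\,I(\mathfrak a,\mathfrak b)$, since a generic intersection point then counts $2$ and a marked one counts $1$, i.e. twice their weights $1$ and $\tfrac12$ in $I$; applying this to $\m(\check c_0),\m(\check c_1)$ gives $\tfrac12 I^{trigr}(\check c_0,\check c_1)|_{q_1=q_2=q_3=1}=I(\m(c_0),\m(c_1))$.

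The main obstacle I anticipate is the bookkeeping in the second paragraph --- for each intersection point of $c_0$ and $c_1'$, identifying which of the four sheet-pairings of $q_{br}^{-1}(c_0)$ and $q_{br}^{-1}(c_1')$ it lies over and checking that the sheet-swap ambiguity is harmless --- together with pinning down that the exceptional self-intersection number is $1+q_2$ and that no residual $q_3$-dependence survives. All the genuinely new input, however, is already packaged in \cref{tribilocin}, so what remains is organisation and specialisation.
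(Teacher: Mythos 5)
Your proof is correct and follows essentially the same route as the paper: the non-exceptional case is read off from \cref{tribilocin} by matching the term-by-term contributions of the defining formulas for $I^{trigr}$ (specialised at $q_3=1$) and $I^{bigr}$, noting that a point $z\neq 0$ of $c_0\cap c_1'$ has two preimages in $\DA$ (marked in $\DA$ iff $z$ is marked in $\DB$, which doubles the contribution, matching the factor $1+q_3\mapsto 2$), while $z=0$ has a single unmarked preimage in $\DA$ (matching the coefficient $1+q_1^{-1}q_2q_3\mapsto 1+q_1^{-1}q_2$); the exceptional case $\check c_1\simeq\chi^B(r_1,r_2,r_3)\check c_0$ with $0\in c_0\cap c_1$ is handled by the direct computation $I^{bigr}(\ddot{\ut{\wt{c_0}}},\ddot{\ut{\wt{c_0}}})=1+q_2$ together with the compatibility $\m\circ\chi^B(r_1,r_2,r_3)=\chi^A(r_1,r_2)\circ\m$; and the final statement is the property $I^{bigr}|_{q_1=q_2=1}=2I$ from Khovanov--Seidel. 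The only point worth spelling out a little more carefully in your write-up is the sheet-swap bookkeeping in the non-exceptional case --- you correctly note that interchanging $\wt{c}_1$ and $\ut{c}_1$ shifts the trigraded local index by $(0,0,1)$ and so leaves $(\mu_1,\mu_2)$ unchanged, which is precisely why \cref{tribilocin} applies regardless of which pairing of sheets is realised at a given crossing --- but that is exactly the content the paper intends by saying the claim ``follows directly'' from that proposition.
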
 
\begin{proof}
The case when $c_0 \not\simeq c_1$ in $\DB$ or when at least one of $c_0$ and $c_1$ does not have its endpoint at $\{0\}$ follows directly from \cref{tribilocin}.
 The other case follows from a direct computation.
 The last statement relating trigraded intersection number and geometric intersection number follows from the property of bigraded intersection number (see \cite[pg.227, property (B1)]{KhoSei}).
\end{proof}

We shall abuse notation and also allow $\m$ to lift crossings of a trigraded admissible curve $(c,\check{c})$ to crossings of the bigraded admissible multicurves $\m((c,\check{c})) = (\wt{c}, \ddot{\wt{c}})\amalg (\ut{c}, \ddot{\ut{c}}) .$
    Suppose $z$ is a $j$-crossing of $c$, for $j >1$. 
    Then, $q_{br}^{-1}(z) = \{ \wt{z}, \ut{z} \}$ where $\wt{z} \in \wt{c}$ and $\ut{z} \in \ut{c}.$
    If $z$ is a $1$-crossing of $c$,  then we also have $q_{br}^{-1}(z) = \{ \wt{z}, \ut{z} \}$; in this case we shall pick $\wt{\ut{z}}$ to be the unique element in $ \{ \wt{z}, \ut{z} \} \cap \theta_n.$
    So, if $z$ is a $j$-crossing of $c$ with $\mu^{trigr}(\check{d}_k, \check{c}, z)=(r_1,r_2,r_3)$,
    We define $\m(z) = \{ \wt{z} , \ut{z} \}$, both $\wt{z}$ and $\ut{z}$ with local index $(r_1, r_2)$ for $j >1$; otherwise $\m(z) = \{\wt{\ut{z}}\}$ for $j =1$, with local index $(r_1, r_2)$ by \cref{tribilocin}. 
	Let $\check{h}$ be a connected subset of $\check{c}$ within some connected region given by unions of $D_j$'s, equipped with the trigrading given by local indices of crossings of $\check{h}$ induced from crossings of $\check{c}$.
	We define $\m(\check{h})$ to consist of $q_{br}^{-1}(\check{h})$, with bigradings given by local indices of crossings of $q_{br}^{-1}(\check{h})$ induced from crossings of $\m(\check{c})$.



\section{Type $A_{2n-1}$ and Type $B_n$ Zigzag Algebras}\label{define zigzag}

In this section, we recall the construction of the type $A_{m}$ zigzag algebra $\Aa_{m}$ as given in \cite{KhoSei} (with slight change in gradings) and recall the $\cA(A_{m})$ action on the bounded homotopy category $\Kom^b(\Aa_{m}$-$\text{p$_{r}$g$_{r}$mod})$ of complexes of projective graded modules over $\Aa_{m}$. 
We then construct a type $B_n$ zigzag algebra $\Ba_n$, following a similar construction, and show that $\mathcal{A}(B_n)$ acts on $\Kom^b(\Ba_n$-$\text{p$_{r}$g$_{r}$mod})$.
We shall assume that the reader is familiar with projective modules over finite-dimensional (graded) algebras, and refer the unfamiliar readers to \cite[Chapter 1]{Frob_alg}.
Note that throughout this whole paper, all complexes are bounded.

\subsection{Type $A_{m}$ zigzag algebra $\Aa_{m}$} \label{A zigzag} 
Consider the following quiver $\Gamma_{m}$:
\begin{figure}[H]
\begin{tikzcd}[column sep = 1.5cm]
1				\arrow[r,bend left,"1|2"]  														 &		 
2 			\arrow[l,bend left,"2|1"] \arrow[r,bend left, "2|3"] 
				&
3 			\arrow[l,bend left,"3|2"] \arrow[r,bend left, "3|4"] 
				 &
\cdots \arrow[l,bend left,"4|3"] \arrow[r,bend left, "m-1|m"] &
m 		.	\arrow[l,bend left,"m|m-1"]				
\end{tikzcd}
\caption{{\small The quiver $\Gamma_{m}$.}}
\label{A quiver}
\end{figure}
We can take its path algebra $\C \Gamma_{m}$ over $\C$;
$\C \Gamma_{m}$ is the $\C$-vector space spanned by the set of all paths in $\Gamma_{m}$, with multiplication given by concatenation of paths (the multiplication is zero if the endpoints do not agree).
We denote the constant path at each vertex $j$ by $e_j$. 

In this paper, we will only consider $m$ odd, and the grading we put on $\C\Gamma_{m}$ will be slightly different to \cite{KhoSei}; we set 
 \begin{itemize}
 \item the degree of $(j|j-1)$ is $1$  for $j > \frac{m+1}{2}$ and $0$ for  $j \leq \frac{m+1}{2}$,
 \item the degree of $(j|j+1)$ is $1$ for  $j < \frac{m+1}{2}$ and $0 $ for $j \geq \frac{m+1}{2},$ and
 \item the degree of $e_j = 0$, for all $j$,
 \end{itemize}
where the grading is extended to all paths via multiplication.
 In this way, this path algebra is $\Z$-graded with the grading shift denoted by $\{-\}$ and unital with a family of pairwise orthogonal primitive central idempotent $e_j$ summing up to the unit element.

	Let $\Aa_{m}$ be the quotient of the path algebra of the quiver $\Gamma_{m}$ by the relations:
	\begin{align*}
	(j|j+1|j) &= (j| j-1|j), \\
		(j-1|j|j+1) &= 0 = (j+1| j|j-1),
	\end{align*}
for all $2 \leq j \leq m-1$.
 It is easy to see that these relations are homogenous with respect to the above grading, so that $\Aa_{m}$ is a $\Z$-graded algebra. 
As a $\C$-vector space, it has dimension $4m-2$ with the following basis $\{ e_1 , \ldots, e_{m},  (1|2), \ldots, (m-1 |m), (2|1), \ldots (m | m-1), (1|2|1), \ldots, (m|m-1|m) \}$.

The indecomposable projective, $\Z$-graded $\Aa_{m}$-modules are denoted by $P^A_j := \Aa_{m}e_j$, and we denote the (additive) category of projective, graded $\Aa_m$-modules by $\Aa_m$-$p_rg_r$mod.
We recall the following results from \cite{KhoSei}:
\begin{theorem}
For each $j$, the following complex of graded $(\Aa_{m}, \Aa_{m})$-bimodule
\[
\mathcal{R}_j := 0 \ra P^A_j \otimes_\C {}_jP^A \xra{\beta_j} \Aa_{m} \ra 0,
\]
with $\Aa_{m}$ in cohomological degree 0 is invertible in the homotopy category $\Kom^b((\Aa_{m}, \Aa_{m})\text{-bimod})$ of graded $(\Aa_m,\Aa_m)$-bimodules and satisfies the following relations:
\begin{align*}
\cR_j \otimes \cR_k &\cong \cR_k \otimes \cR_j, \quad \text{ for } |j-k|>1; \\
\cR_j \otimes \cR_{j+1} \otimes \cR_j &\cong \cR_{j+1} \otimes \cR_j \otimes \cR_{j+1}.
\end{align*}
\end{theorem}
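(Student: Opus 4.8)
The plan is to follow Khovanov--Seidel, since $\cR_j$ and its tensor powers are complexes of bimodules that do not really see the grading: only the internal degree shifts require separate bookkeeping, and at the end one checks that $\beta_j$ and every map produced below is homogeneous for the grading fixed in \cref{A zigzag}, so that all isomorphisms obtained are grading-preserving. For invertibility, identify $P^A_j \ot_\C {}_jP^A$ with $\Aa_{2n-1}e_j \ot_\C e_j\Aa_{2n-1}$ and $\beta_j$ with the multiplication map $a \ot b \mapsto ab$, whose image is the two-sided ideal $\Aa_{2n-1}e_j\Aa_{2n-1}$. Exhibit the candidate inverse $\cR_j^{-1} := \bigl(0 \ra \Aa_{2n-1} \xra{\gamma_j} P^A_j \ot_\C {}_jP^A \ra 0\bigr)$, with $\Aa_{2n-1}$ in cohomological degree $0$ and an internal shift on the other term, where $\gamma_j$ is the coevaluation map dual to $\beta_j$ under the symmetric (Frobenius) structure of $\Aa_{2n-1}$; concretely $\gamma_j(1)$ is the Casimir-type element $e_j \ot X_j + X_j \ot e_j$ with $X_j$ the length-two loop at $j$. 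Forming the total complex of $\cR_j \ot_{\Aa_{2n-1}} \cR_j^{-1}$ gives a length-two complex of bimodules in which one term is $\Aa_{2n-1}$ and the remainder is contractible, so a single Gaussian elimination leaves $\Aa_{2n-1}$ in degree $0$. Hence $\cR_j \ot \cR_j^{-1} \cong \Aa_{2n-1} \cong \cR_j^{-1} \ot \cR_j$ in $\Kom^b$, and $\cR_j$ is invertible.

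For the distant commutation relation, when $|j-k|>1$ the vertices $j$ and $k$ are non-adjacent in $\Gamma_{2n-1}$, so $e_j\Aa_{2n-1}e_k = e_k\Aa_{2n-1}e_j = 0$. Consequently in $\cR_j \ot_{\Aa_{2n-1}} \cR_k$ every ``mixed'' term contains the vanishing factor ${}_jP^A \ot_{\Aa_{2n-1}} P^A_k = e_j\Aa_{2n-1}e_k = 0$, and the surviving complex is literally symmetric in $j$ and $k$; this gives $\cR_j \ot \cR_k \cong \cR_k \ot \cR_j$ on the nose.

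The braid relation is the heart of the matter and the main obstacle. Write each triple product $\cR_j \ot_{\Aa_{2n-1}} \cR_{j+1} \ot_{\Aa_{2n-1}} \cR_j$ and $\cR_{j+1} \ot_{\Aa_{2n-1}} \cR_j \ot_{\Aa_{2n-1}} \cR_{j+1}$ out as a complex of bimodules concentrated in cohomological degrees $0,-1,-2,-3$, its terms assembled from the spaces $e_j\Aa_{2n-1}e_j$, $e_j\Aa_{2n-1}e_{j+1}$, $e_{j+1}\Aa_{2n-1}e_{j+1}$ and the projectives $P^A_j, P^A_{j+1}$ --- each of these $\Hom$-spaces being at most two-dimensional over $\C$, and several of the relevant composites of the structure maps $\beta$ being split injective. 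Repeated Gaussian elimination then collapses both triple products onto the same reduced complex of bimodules, which yields the desired isomorphism in $\Kom^b((\Aa_{2n-1},\Aa_{2n-1})\text{-bimod})$. The delicate points are choosing exactly which contractible summands to cancel and verifying that the residual differential is the expected map up to a unit scalar rather than something genuinely different; this bookkeeping is precisely the computation carried out in \cite[Section 2]{KhoSei}, which we invoke, with the sole addition that every map involved remains homogeneous for the grading fixed in \cref{A zigzag}.
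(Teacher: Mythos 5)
Your overall strategy matches the paper's, which simply cites \cite[Proposition 2.4 and Theorem 2.5]{KhoSei}: use the Frobenius/adjunction structure for invertibility, the vanishing of $e_j\Aa_{2n-1}e_k$ for distant commutation, and a Gaussian-elimination computation (deferred to Khovanov--Seidel) for the braid relation. Your distant-commutation argument is correct as stated.

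However, your explicit formula for $\gamma_j(1)$ is wrong, and as written $\gamma_j$ would not be an $(\Aa_{2n-1},\Aa_{2n-1})$-bimodule map. Take $a = (j{-}1|j)$: then $a \cdot (e_j\otimes X_j + X_j\otimes e_j) = (j{-}1|j)\otimes X_j$ (the term $(j{-}1|j)X_j$ dies by the length-three relation), while $(e_j\otimes X_j + X_j\otimes e_j)\cdot a = 0$ since both $X_j(j{-}1|j)$ and $e_j(j{-}1|j)$ vanish for endpoint reasons. The genuine coevaluation must include the off-diagonal terms of the Casimir element, namely
\[
\gamma_j(1) = X_j\otimes e_j + e_j\otimes X_j + (j{-}1|j)\otimes(j|j{-}1) + (j{+}1|j)\otimes(j|j{+}1),
\]
as in the paper's own $\gamma_j$ for $\Ba_n$ (cf.\ the lemma preceding the definition of $R_j$, $R_j'$ in \cref{B zigzag}); with these terms one checks $a\gamma_j(1)=\gamma_j(1)a$ by cancellation through the zigzag relation $(j|j{-}1|j)=(j|j{+}1|j)=X_j$. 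A smaller quibble: $\cR_j\otimes_{\Aa_{2n-1}}\cR_j^{-1}$ lives in cohomological degrees $-1,0,1$ with a nontrivial summand $(P^A_j\otimes_\C e_j\Aa_{2n-1}e_j\otimes_\C {}_jP^A)$ in degree $0$, so it is a three-term complex and collapsing it to $\Aa_{2n-1}$ takes more than one Gaussian elimination; the method still works but the description should be adjusted.
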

\begin{proof}
See \cite[Proposition 2.4 and Theorem 2.5]{KhoSei}.
\end{proof}
\begin{proposition}
There is a (weak) $\cA(A_{m})$-action on $\Kom^b(\Aa_{m}$-$\text{p$_{r}$g$_{r}$mod})$, where each standard generator $\sigma_j^A$ of $\cA(A_{m})$ acts on a complex $M \in \Kom^b(\Aa_{m}$-$\text{p$_{r}$g$_{r}$mod})$ via $\mathcal{R}_j$:
\[
\sigma^A_j(M) := \mathcal{R}_j\otimes_{\Aa_{m}}M.
\]
\end{proposition}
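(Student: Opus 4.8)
The plan is to show that the assignment $\sigma_j^A \mapsto \cR_j \otimes_{\Aa_{2n-1}} (-)$ produces exact autoequivalences of $\Kom^b(\Aa_{2n-1}$-$\text{p$_{r}$g$_{r}$mod})$ satisfying the braid relations up to natural isomorphism; since this is all that a \emph{weak} action requires, the proposition will then follow from the invertibility and braid relations for the bimodule complexes $\cR_j$ recorded in the preceding theorem. The first step is to check that $\cR_j \otimes_{\Aa_{2n-1}} (-)$ really restricts to an endofunctor of $\Kom^b(\Aa_{2n-1}$-$\text{p$_{r}$g$_{r}$mod})$. As $\cR_j$ is a bounded complex of $(\Aa_{2n-1},\Aa_{2n-1})$-bimodules, tensoring with it preserves boundedness and commutes with the internal grading shift $\{-\}$, so it suffices to see that it sends finitely generated projective left modules to bounded complexes of such. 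It is enough to evaluate on the indecomposables $P^A_k = \Aa_{2n-1}e_k$: here $\cR_j \otimes_{\Aa_{2n-1}} P^A_k \cong \cR_j e_k$, whose degree-$0$ term is $\Aa_{2n-1}e_k = P^A_k$ and whose degree-$(-1)$ term is $\big(P^A_j \otimes_\C {}_jP^A\big)e_k \cong P^A_j \otimes_\C (e_j\Aa_{2n-1}e_k)$, a finite direct sum of graded shifts of $P^A_j$ since $e_j\Aa_{2n-1}e_k$ is finite-dimensional. Both are finitely generated projective, so we are done; exactness as a functor of triangulated categories is immediate, since $\cR_j \otimes_{\Aa_{2n-1}} (-)$ is additive on complexes and commutes with mapping cones and translations.

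Next I would invoke the invertibility of $\cR_j$ in $\Kom^b((\Aa_{2n-1},\Aa_{2n-1})$-bimod$)$: writing $\cR_j^{-1}$ for its tensor inverse, we have $\cR_j^{-1}\otimes_{\Aa_{2n-1}}\cR_j \simeq \Aa_{2n-1} \simeq \cR_j\otimes_{\Aa_{2n-1}}\cR_j^{-1}$ as complexes of bimodules, and $\Aa_{2n-1}\otimes_{\Aa_{2n-1}}(-) \cong \id$, so $\cR_j^{-1}\otimes_{\Aa_{2n-1}}(-)$ is a quasi-inverse to $\cR_j\otimes_{\Aa_{2n-1}}(-)$; one checks as above (using an explicit model for $\cR_j^{-1}$) that this quasi-inverse also preserves $\Kom^b(\Aa_{2n-1}$-$\text{p$_{r}$g$_{r}$mod})$, so $\cR_j\otimes_{\Aa_{2n-1}}(-)$ is an autoequivalence. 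Finally, the relations $\cR_j\otimes\cR_k \cong \cR_k\otimes\cR_j$ for $|j-k|>1$ and $\cR_j\otimes\cR_{j+1}\otimes\cR_j \cong \cR_{j+1}\otimes\cR_j\otimes\cR_{j+1}$ from the theorem, together with associativity of $\otimes_{\Aa_{2n-1}}$, translate directly into natural isomorphisms between the associated endofunctors; these are exactly the braid relations defining $\cA(A_{2n-1})$, so the functors $\cR_j\otimes_{\Aa_{2n-1}}(-)$ assemble into a weak $\cA(A_{2n-1})$-action, i.e.\ a homomorphism from $\cA(A_{2n-1})$ to the group of isomorphism classes of exact autoequivalences of $\Kom^b(\Aa_{2n-1}$-$\text{p$_{r}$g$_{r}$mod})$.

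The only point requiring a little care — and where I would be most explicit — is the last clause of the previous paragraph: because we only produce isomorphisms of functors and make no coherence claims about them, what is being constructed is a \emph{weak} action in the sense above, not a genuine action by a fixed family of invertible functors or a $2$-functorial lift. Once one accepts this, there is no real obstacle: every verification reduces to the invertibility and braid relations for the $\cR_j$ (which are \cite[Proposition 2.4 and Theorem 2.5]{KhoSei}, recalled above) plus the formal stability of $\Kom^b(\Aa_{2n-1}$-$\text{p$_{r}$g$_{r}$mod})$ under tensoring with a bounded complex of projective bimodules.
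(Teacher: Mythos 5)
Your proof is correct and unpacks exactly the argument the paper delegates to \cite[Proposition 2.7]{KhoSei}: tensoring with the bounded complex of projective bimodules $\cR_j$ preserves $\Kom^b(\Aa_{2n-1}$-$\text{p$_{r}$g$_{r}$mod})$, invertibility of $\cR_j$ yields autoequivalences, and the braid-relation isomorphisms from the preceding theorem give a homomorphism to isomorphism classes of exact autoequivalences, which is precisely a weak action. The route is the standard one and matches the paper's intent.
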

\begin{proof}
See \cite[Proposition 2.7]{KhoSei}.
\end{proof}
We will abuse notation and use $\sigma^A_j$ in place of $\cR_j$ whenever the context is clear.

\subsection{Type $B_n$ zigzag algebra $\Ba_n$} \label{B zigzag}
Consider the following quiver $\Omega_n$:
\begin{figure}[H]
\begin{tikzcd}[column sep = 1.5cm]
1				\arrow[r,bend left,"1|2"]  														 &		 
2 			\arrow[l,bend left,"2|1"] \arrow[r,bend left, "2|3"] 
				\arrow[color=blue,out=70,in=110,loop,swap,"ie_2"] &
3 			\arrow[l,bend left,"3|2"] \arrow[r,bend left, "3|4"] 
				\arrow[color=blue,out=70,in=110,loop,swap,"ie_3"] &
\cdots \arrow[l,bend left,"4|3"] \arrow[r,bend left, "n-1|n"] &
n 		.	\arrow[l,bend left,"n|n-1"]
				\arrow[color=blue,out=70,in=110,loop,swap,"ie_n"]
\end{tikzcd}
\caption{{\small The quiver $\Omega_n$.}}
\label{B quiver}
\end{figure}
%
Take its path algebra $\R \Omega_n$ over $\R$ and consider the two gradings on $\R \Omega_n$ given as follows:
	\begin{enumerate} [(i)]
	\item the first grading is defined following the convention in  \cite{KhoSei} where we set \begin{itemize}
	\item 	the degree of $(j+1|j)$ to be $1$ and $(j|j+1)$ to be $0$ for all $1 \leq j \leq n-1$, and
	\item   the degree of $e_j$, $ie_j$ (blue paths in \cref{B quiver}) to be $0$ for all $1 \leq j \leq n$,
\end{itemize}
	extending to all paths.
	\item the second grading is a $\Z/2\Z$-grading defined by setting 
	\begin{itemize}
	\item  the degree of $ie_j$ as 1 for all $1 \leq j \leq n$, and
	\item the degree of all other paths in \cref{B quiver} and the constant paths as zero,
	\end{itemize}
	extending again to all paths.
	\end{enumerate}

\noindent We denote  a shift in the $\Z$-grading by  $\{-\}$ and a shift in the $\Z/2\Z$-grading by $\< - \>.$

We are now ready to define the zigzag algebra of type $B_n$:
\begin{definition} \label{defn: B zigzag}
The zigzag path algebra of $B_n$, denoted by $\Ba_n$, is the quotient algebra of the path algebra $\R \Omega_n$ modulo the usual zigzag relations given by
\begin{align}
(j|j-1)(j-1|j) &= (j|j+1)(j+1|j), \label{B-zizgzag1}\\
(j-1|j)(j|j+1) = & 0 = (j+1|j)(j|j-1), \label{B-zigzag2}
\end{align}
for $2\leq j \leq n-1$; in addition to the relations
\begin{align}
(ie_j)(ie_j) &= -e_j, \qquad \text{for } j \geq 2 \label{imaginary};\\
(ie_{j-1})(j-1|j) &= (j-1|j)(ie_j), \qquad \text{for } j\geq 3; \label{complex symmetry 1}\\
(ie_{j})(j|j-1) &= (j|j-1)(ie_{j-1}), \qquad \text{for } j\geq 3; \label{complex symmetry 2}\\
(1|2)(ie_2)(2|1) &= 0; \\
(ie_2) (2|1|2) &= (2|1|2) (ie_2).
\end{align}
We shall also denote the (non-trivial) loop on vertex $j$ by $X_j := (j|j\pm 1)(j\pm 1|j)$.
The relations above are homogeneous with respect to both the $\Z$ and $\Z/2\Z$ gradings, so $\Ba_n$ is a bigraded algebra.
\end{definition}

\begin{proposition}\label{prop: dim of B-zigzag}
As a $\R$-vector space, $\Ba_n$ has dimension $8n-6$.
\end{proposition}            
\begin{proof}
Using the relations \eqref{B-zizgzag1} to \eqref{complex symmetry 2}, the vector subspace (over $\R$) spanned by paths that do not pass through the vertex $1$ is isomorphic to $\Aa_{n-1}$ viewed as a $\R$-vector space, which has $\R$-dimension $2(4(n-1)-2) = 8n-12$.
Combined with the remaining relations, one can check that the remaining paths that passes through the vertex 1 (modulo relations) are exactly $e_1, (1|2), (2|1), (1|2)(ie_2), (ie_2)(2|1)$ and $X_1$.
   Hence, $\dim_\R(\Ba_n) =  8n-12 + 6 = 8n-6$; in particular,
   the set $\{ e_1 , \ldots, e_{n}, ie_2 , \ldots, ie_{n}, (1|2), \ldots, (n-1 | n), (2|1), \ldots, (n | n-1), (ie_2)(2|1), (1|2)(ie_2),$ $(ie_2)(2|3), \ldots, (ie_{n-1})(n-1|n), (3|2)(ie_2), \ldots, (n|n-1)(ie_{n-1}), X_1, \ldots, X_n, (ie_2)X_2, \ldots, (ie_n)X_n \}$ forms a $\R$-basis of $\Ba_n.$
\end{proof}

The indecomposable (left) projective, bigraded $\Ba_n$-modules are given by $P^B_j := \Ba_n e_j$, and we denote the (additive) category of projective, bigraded $\Ba_n$-modules by $\Ba_n$-$p_rg_r$mod.
For $j=1$, $P^B_j$ is naturally a $(\Ba_n, \R)$-bimodule; there is a natural left $\Ba_n$-action given by multiplication of the algebra and the right $\R$-action induced by the left (commutative) $\R$-action.
But for $j\geq 2$, we shall endow $P^B_j$ with a right $\C$-action.

To this end, let us view $\C$ as a $\Z/2\Z$-graded algebra over $\R$ by endowing the reals with degree $0$ and the complex imaginary $i$ with degree $1$ over $\Z/2\Z$ and extend linearly.
Note also that \eqref{imaginary} in \cref{defn: B zigzag} is analogous to the relation satisfied by the complex imaginary number $i$.
We define a right $\C$-action on $P^B_j$ by $p * (a+ib) = ap + bp(ie_j)$ for $p \in P^B_j, a+ib\in \C$.
It follows from the definition that this right action restricted to $\R$ agrees with the natural right (and left) $\R$-action.
This makes $P^B_j$ into a bigraded $(\Ba_n,\C)$-bimodule for $j\geq 2$.
Dually, we shall define ${}_jP^B := e_j\Ba_n$, where we similarly consider it as a bigraded ($\R,\Ba_n)$-bimodule for $j=1$ and as a bigraded $(\C,\Ba_n)$-bimodule for $j\geq 2$.

\begin{proposition}
Denote ${}_jP^B_k := {}_jP^B\otimes_{\Ba_n}P^B_k$.
We have that
\[
  {}_jP^B_k \cong 
  \begin{cases}
  		\vspace{1mm}
  		\ _{\C}\C_{\C}, & \text{as bigraded } (\C,\C)\text{-bimodules, for } j,k \in \{2,\hdots, n\} \text{ and }  k-j=1;\\
  		\vspace{1mm}
  		\ _{\C}\C_{\C}\{1\}, & \text{as bigraded } (\C,\C)\text{-bimodules, for } j,k \in \{2,\hdots, n\} \text{ and }  j-k=1;\\
  		\vspace{1mm}
  		\ _{\C}\C_{\C} \oplus \ _{\C}\C_{\C}\{1\}, & \text{as bigraded }  (\C,\C)\text{-bimodules, for } j=k=2,3,\hdots,n; \\
  		\vspace{1mm}
        \ _{\R}\C_{\C}, & \text{as bigraded } (\R,\C)\text{-bimodules, for } j=1 \text{ and } k=2; \\
        \vspace{1mm}
        \ _{\C}\C_{\R}\{1\}, & \text{as bigraded }  (\C,\R)\text{-bimodules, for } j=2 \text{ and } k=1; \\ 
        \vspace{1mm}
        \ _{\R}\R_{\R} \oplus \ _{\R}\R_{\R}\{1\}, & \text{as bigraded }  (\R,\R)\text{-bimodules, for } j=k=1. \\    
  \end{cases}
\]
\label{bimodule isomorphism}
\end{proposition}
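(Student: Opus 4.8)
The plan is to reduce the proposition to an elementary computation in the corner spaces $e_j\Ba_n e_k$ and then to read off the $\C$-module structure directly from the explicit $\R$-basis of $\Ba_n$ recorded above. There is no deep input required; the work is entirely bookkeeping, organised along the six cases of the statement.

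First I would record the standard identification ${}_jP^B\otimes_{\Ba_n}P^B_k = e_j\Ba_n\otimes_{\Ba_n}\Ba_n e_k \xrightarrow{\sim} e_j\Ba_n e_k$, $x\otimes y\mapsto xy$, which is an isomorphism of bigraded $\R$-vector spaces. Because the left $\C$- (or $\R$-) action on ${}_jP^B$ and the right $\C$- (or $\R$-) action on $P^B_k$ are, by the definitions given just before the statement, implemented by left multiplication by $ie_j$ (resp. by scalars) and right multiplication by $ie_k$ (resp. by scalars), this isomorphism intertwines those one-sided actions with the corresponding left/right multiplications on $e_j\Ba_n e_k$. So it suffices to compute each corner $e_j\Ba_n e_k$ together with its grading and the two operators ``left multiplication by $ie_j$'' and ``right multiplication by $ie_k$''. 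For $|j-k|\geq 2$ the corner is zero, and the remaining cases are treated one at a time using the basis of $\Ba_n$.

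The key structural point is that the complex-symmetry relations \eqref{complex symmetry 1}, \eqref{complex symmetry 2} (together with $(ie_2)X_2=X_2(ie_2)$) say exactly that, on every corner $e_j\Ba_n e_k$ with $j,k\geq 2$, left multiplication by $ie_j$ equals right multiplication by $ie_k$, while \eqref{imaginary} makes each such operator square to $-\mathrm{id}$. Hence whenever a corner contains one of the elements $ie_j$, $(ie_j)(j|j+1)$, $(ie_j)(j|j-1)$, it is free of rank one as a $\C$-module with coinciding left and right actions, i.e. a copy of ${}_\C\C_\C$ up to a grading shift. Concretely: $e_j\Ba_n e_{j+1}$ ($2\le j\le n-1$) has basis $\{(j|j+1),(ie_j)(j|j+1)\}$, both in first-degree $0$, giving ${}_\C\C_\C$; $e_j\Ba_n e_{j-1}$ ($3\le j\le n$) has basis $\{(j|j-1),(ie_j)(j|j-1)\}$, both in first-degree $1$ (the arrows of the form $(\ell+1|\ell)$ carry first-degree $1$), giving ${}_\C\C_\C\{1\}$; and $e_j\Ba_n e_j$ ($2\le j\le n$) has basis $\{e_j,ie_j,X_j,(ie_j)X_j\}$, which splits into the first-degree-$0$ part $\{e_j,ie_j\}\cong\C$ and the first-degree-$1$ part $\{X_j,(ie_j)X_j\}\cong\C\{1\}$, giving ${}_\C\C_\C\oplus{}_\C\C_\C\{1\}$. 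In each of these cases the $\Z/2$-grading is absorbed into the $\C$-coefficients, which is why it does not appear in the statement.

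For the corners involving vertex $1$, exactly one side retains its $\R$-structure since there is no loop at vertex $1$. The corner $e_1\Ba_n e_2=\mathrm{span}\{(1|2),(1|2)(ie_2)\}$ lies in first-degree $0$ and is free of rank one as a right $\C$-module (using $(1|2)(ie_2)(ie_2)=-(1|2)$), with left action by $\R$-scalars, so ${}_1P^B_2\cong{}_\R\C_\C$; symmetrically $e_2\Ba_n e_1=\mathrm{span}\{(2|1),(ie_2)(2|1)\}$ is concentrated in first-degree $1$ and free of rank one as a left $\C$-module, giving ${}_2P^B_1\cong{}_\C\C_\R\{1\}$; and for $j=k=1$ the relation $(1|2)(ie_2)(2|1)=0$ prevents any $ie$-element from entering, so $e_1\Ba_n e_1=\mathrm{span}\{e_1,(1|2|1)\}$ is just ${}_\R\R_\R\oplus{}_\R\R_\R\{1\}$, in degrees $0$ and $1$. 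This exhausts all cases.

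I do not expect a genuine obstacle here. The only points requiring care are: fixing the convention for which corner $e_j\Ba_n e_k$ each path of $\Ba_n$ belongs to, so that left/right $ie$-multiplication is correctly matched to the left/right $\C$-actions inherited from ${}_jP^B$ and $P^B_k$; checking that these two $\C$-actions are mutually compatible (associativity, which is automatic once the complex-symmetry relations identify them); and verifying that the shifts $\{1\}$ land precisely on the corners generated by the ``downward'' arrows $(\ell+1|\ell)$, which are exactly those carrying first-grading degree $1$.
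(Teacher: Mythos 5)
The paper gives no proof of this proposition — it is stated after the phrase ``It is easy to check,'' so there is no argument to compare against. Your proposal supplies the omitted verification, and it does so via the only reasonable route: identify ${}_jP^B\otimes_{\Ba_n}P^B_k$ with the corner space $e_j\Ba_n e_k$, read off an $\R$-basis from the explicit basis of $\Ba_n$ given in the text, track the $\Z$-grading from the degree conventions on arrows $(j|j{+}1)$ (degree $0$) versus $(j{+}1|j)$ (degree $1$), and use relations \eqref{imaginary}, \eqref{complex symmetry 1}, \eqref{complex symmetry 2} together with $(ie_2)X_2=X_2(ie_2)$ to see that for $j,k\ge 2$ the corner is free of rank one over $\C$ with coinciding left and right $\C$-actions. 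The three cases involving vertex $1$ are handled correctly: there is no loop $ie_1$, so one side stays over $\R$, and the relation $(1|2)(ie_2)(2|1)=0$ is exactly what keeps $e_1\Ba_n e_1$ two-dimensional over $\R$. One small bookkeeping check worth keeping explicit: the basis element of $e_j\Ba_n e_{j-1}$ ($j\ge 3$) is recorded in the paper as $(j|j{-}1)(ie_{j-1})$, which equals your $(ie_j)(j|j{-}1)$ by \eqref{complex symmetry 2}; you implicitly use this identification, and it is worth noting since it is precisely why the left and right $\C$-structures on that corner agree. No gaps; the argument is complete.
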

\begin{proof}
The case where $j,k \in \{2,...,n\}$ follows as in type $A$.
By identifying ${}_jP_k$ as the $\R$-vector subspace of $\Ba_n$ spanned by paths starting at vertex $j$ and ending at vertex $k$, we see that ${}_1P_2$ has basis $\{(1|2), (1|2)(ie_2)\}$; ${}_2P_1$ has basis $\{(2|1), (ie_2)(2|1)\}$ and ${}_1P_1$ has basis $\{e_1, X_1\}$.
The fact that the bimodule and bigrading structures agree follows from the definition and is left as a simple exercise to the reader.
\end{proof}

\begin{remark}\label{rmk: Z/2Z grading bimodule iso}
Note that all the bigraded bimodules in \cref{bimodule isomorphism} can be restricted to bigraded $(\R, \R)$-bimodules by identifying ${}_\R \C_\R \cong \R\oplus \R\<1\>$.
For example, ${}_1 P_2^B$ restricted to an $(\R, \R)$-bimodule is generated by $(1|2)$ and $(1|2)ie_2$, and it is isomorphic to $\R \oplus \R\<1\> \cong {}_\R \C_\R$.
\end{remark}

\begin{lemma}
Denote $\mathbb{K}_1 := \R$ and $\mathbb{K}_j := \C$ when $j \geq 2$.
The maps \[\beta_j: P^B_j \otimes_{\mathbb{K}_j} {}_jP^B  \to \Ba_n \text{ and } \gamma_j: \Ba_n  \to  P^B_j \otimes_{\mathbb{K}_j} {}_jP^B  \{-1\}\] defined by:
\begin{align*} 
\beta_j(x\otimes y) &:= xy, \\
\gamma_j(1) &:= 
\begin{cases}
X_j \otimes e_j + e_j \otimes X_j + (j+1|j) \otimes (j|j+1) \\
 \hspace{8mm} + (-ie_{j+1})(j+1|j) \otimes (j|j+1)(ie_{j+1}), &\text{for } j=1;\\
X_j \otimes e_j + e_j \otimes X_j + (j-1|j) \otimes (j|j-1) + (j+1|j) \otimes (j|j+1), &\text{for } 1<j < n; \\
X_j \otimes e_j + e_j \otimes X_j + (j-1|j) \otimes (j|j-1), &\text{for } j = n
\end{cases}
\end{align*}
are $(\Ba_n,\Ba_n)$-bimodule maps.
\end{lemma}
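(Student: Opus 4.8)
The plan is to verify directly that both $\beta_j$ and $\gamma_j$ are well-defined on the relevant tensor products and commute with the left and right $\Ba_n$-actions; the content is entirely a finite check, so the task is to organize it so as to minimize case-work. First I would treat $\beta_j$: the map $x\otimes y\mapsto xy$ is the multiplication map $P^B_j\otimes_{k_j}{}_jP^B\to\Ba_n$, and it is well-defined precisely because for $j\geq2$ the right $\C$-action on $P^B_j=\Ba_n e_j$ and the left $\C$-action on ${}_jP^B=e_j\Ba_n$ both come from multiplication by $ie_j$, so $x*(a+ib)\otimes y=xa\otimes y+xb(ie_j)\otimes y$ agrees with $x\otimes (a+ib)*y$ inside $\Ba_n$; for $j=1$ the tensor is over $\R$ and there is nothing to check. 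Left/right $\Ba_n$-linearity of $\beta_j$ is immediate from associativity of multiplication in $\Ba_n$. The only genuine point is that $\beta_j$ lands in the correct graded piece, which follows from the degree conventions: $(j|j\pm1)$ has degree $0$ or $1$ appropriately and $X_j$ has degree $1$ (resp. $2$ for $j=1$ after the shift bookkeeping), so $\beta_j$ is degree-preserving with the stated shift on the source.

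Next I would check $\gamma_j$. Here the first thing to confirm is that the proposed element, call it $\omega_j\in\Ba_n\otimes_{k_j}\Ba_n$ sitting in the summand $P^B_j\otimes_{k_j}{}_jP^B$, is independent of the choice of representatives — i.e. that balancing over $k_j$ is respected — and that $\gamma_j(1):=\omega_j$ extends to a bimodule map $\Ba_n\to P^B_j\otimes_{k_j}{}_jP^B\{-1\}$, which amounts to checking $e_k\,\omega_j=\omega_j\,e_k$ reduces correctly (it is supported on $e_j$ on both sides by construction) and, crucially, that $a\,\omega_j=\omega_j\,a$ for $a$ ranging over the algebra generators $(k|k\pm1)$ and $ie_k$. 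Because $\omega_j$ is concentrated at vertex $j$ on both tensor legs, the only generators that can act nontrivially are those incident to $j$ or $j\pm1$, so this collapses to a small number of identities: for instance, for $j\geq2$ one must verify that $(j|j-1)\cdot\omega_j=\omega_j\cdot(j|j-1)$, which unwinds to the zigzag relation $(j|j-1)X_j=(j|j-1)(j-1|j)(j|j-1)$ versus the term $X_{j-1}$-type contribution coming from $(j-1|j)\otimes(j|j-1)$, together with the vanishing $(j-1|j)(j|j+1)=0$; similarly the compatibility with $ie_j$ uses $(ie_j)X_j=X_j(ie_j)$, $(ie_j)(j|j\pm1)=(j|j\pm1)(ie_{j\pm1})$, and $(ie_j)^2=-e_j$, which is exactly why the last leg of $\gamma_1(1)$ carries the factor $(-ie_2)(2|1)\otimes(1|2)(ie_2)$ rather than $(2|1)(ie_2)\otimes\cdots$. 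The degree shift $\{-1\}$ is forced by comparing the degree of $1\in\Ba_n$ with the degrees of the four generating tensors of $\omega_j$.

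The main obstacle — really the only subtle point — is the bookkeeping at the branch vertex $j=1$, where the scalars change from $\R$ to $\C$: one must check that $\gamma_1$ is $\Ba_n$-bilinear over $\R$ while the tensor product $P^B_1\otimes_{\R}{}_1P^B$ is only over $\R$, and that the term $(-ie_2)(2|1)\otimes(1|2)(ie_2)$ is the correct "complexification correction" so that acting by $ie_2$ on either side of $\omega_1$ produces matching results, using the relation $(1|2)(ie_2)(2|1)=0$ to kill the unwanted cross-terms. I would organize the whole proof as: (i) well-definedness of $\beta_j,\gamma_j$ over $k_j$; (ii) $\beta_j$ is a bimodule map (trivial); (iii) $\gamma_j$ is a bimodule map, reduced to checking commutation with the generators incident to $j,j\pm1$, handled in two cases $j\geq2$ and $j=1$; (iv) degree check. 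Each identity in (iii) is a one-line application of the defining relations of $\Ba_n$, so I would state the list of identities to be checked and indicate that each follows by direct substitution rather than write them all out.
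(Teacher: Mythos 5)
Your plan is correct and is essentially the approach the paper takes — the paper itself omits the verification entirely, stating only that $\gamma_j$ being a bimodule map ``follows from a tedious check on each basis elements, which we shall omit and leave it to the reader.'' The specific identity you cite when checking commutation with $(j|j-1)$ is a little imprecise (with the paper's left-to-right path-concatenation convention both $(j|j-1)X_j$ and $(j|j-1)(j-1|j)(j|j-1)$ already vanish in $\Ba_n$, and the genuinely surviving term $X_j\otimes(j|j-1)$ arises from $(j-1|j)\otimes(j|j-1)$ on the left and from $X_j\otimes e_j$ on the right), but your organizing framework — reduce to the generators incident to $j$, use the zigzag and $ie_k$-relations, and separately handle the $j=1$ branch where $(1|2)(ie_2)(2|1)=0$ is needed — is exactly the right way to carry out the omitted check.
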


\begin{proof}
It is obvious from the definition that the $\beta_j$ are maps of $(\Ba_n,\Ba_n)$-bimodules for all $j$. 
The fact that $\gamma_j$ is a $(\Ba_n,\Ba_n)$-bimodule map also follows from a tedious check on each basis elements, which we shall omit and leave it to the reader.
 \end{proof}

\begin{definition}
Define the following complexes of bigraded $(\Ba_n,\Ba_n)$-bimodules:
\begin{align*}
R_j &:= (0 \to P^B_j \otimes_{\mathbb{K}_j} {}_jP^B \xra{\beta_j} \Ba_n \to 0), \text{and} \\
R_j' &:= (0 \to \Ba_n  \xra{\gamma_j}  P^B_j \otimes_{\mathbb{K}_j} {}_jP^B\{-1 \} \to 0).
\end{align*}
for each $1 \leq j \leq n,$ with both $\Ba_n$ in cohomological degree 0, $\mathbb{K}_1 = \R$ and $\mathbb{K}_j = \C$ for $j \geq 2$.
\end{definition}

\begin{proposition} \label{standard braid relations}
We have the following isomorphisms in the homotopy category $\Kom^b ((\Ba_n, \Ba_n )$-$\text{bimod})$ of complexes of projective graded $(\Ba_n,\Ba_n)$-bimodules:
\begin{align}
R_j \otimes R_j^{'} \cong & \Ba_n \cong R_j^{'} \otimes R_j; \\
R_j \otimes R_k & \cong R_k \otimes R_j, \quad \text{for } |k-j| > 1;\\
R_j \otimes R_{j+1} \otimes R_j &\cong R_{j+1} \otimes R_j \otimes R_{j+1}, \quad \text{for } j \geq 2.
\end{align}
\end{proposition}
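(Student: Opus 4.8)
The plan is to reduce all three isomorphisms to computations that are either already done in type $A$ or become type $A$ computations after a suitable scalar extension. Relation $(3.10)$, the invertibility $R_j \otimes R_j' \cong \Ba_n \cong R_j' \otimes R_j$, should be proved directly and first, since the other two relations are most cleanly deduced by transporting the corresponding type $A$ identities. I would begin by computing $R_j \otimes_{\Ba_n} R_j'$ explicitly: this is a two-term-by-two-term complex whose underlying bimodule in each degree is a direct sum of the relevant $P^B_j \otimes_{k_j} {}_jP^B$'s and $\Ba_n$'s, and one identifies the contractible summand using the explicit formulas for $\beta_j$ and $\gamma_j$ from the preceding lemma. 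The key input is the ``near-split'' identity $\beta_j \circ \gamma_j = $ (twice the identity, up to the degree shift) together with $\gamma_j \circ \beta_j$ being, modulo a contractible piece, the identity on $P^B_j \otimes {}_jP^B$; here the subtlety is that for $j=1$ the middle tensor factor is over $\R$ while for $j \geq 2$ it is over $\C$, so one must check that the formula for $\gamma_1$ (which visibly involves the element $(-ie_2)(2|1)\otimes(1|2)(ie_2)$) really does produce a splitting over $\R$. This is essentially \cite[Proposition 2.4]{KhoSei} adapted to the present normalization of gradings, and I expect it to go through with the same homotopies.

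For the braid relations $(3.11)$ and $(3.12)$, the cleanest route is to apply the extension-of-scalars functor $\C \otimes_\R -$ and invoke the isomorphism $\C \otimes_\R \Ba_n \cong \Aa_{2n-1}$ promised in the introduction (Proposition ``isomorphic algebras''), which carries $P^B_j$ to the appropriate $P^A$-modules and hence carries $R_j$ to (a complex isomorphic to) $\mathcal{R}_{\sigma(j)}$ for the relevant index $\sigma(j)$ determined by the embedding $\Psi$ --- i.e. $R_1 \mapsto \mathcal{R}_n$ and, because the tensor factor over $\C$ versus $\R$ is detected by this functor, $R_j \mapsto \mathcal{R}_{n+j-1}\otimes \mathcal{R}_{n-j+1}$ for $j \geq 2$ is \emph{not} quite what one wants here; rather, at the level of bimodules on $\Aa_{2n-1}$ one should get $\C\otimes_\R R_j$ identified with a single $\mathcal{R}_{?}$ only in the commuting cases. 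So I would instead argue as follows: both sides of $(3.11)$ and $(3.12)$ are complexes of projective $(\Ba_n,\Ba_n)$-bimodules, hence are determined up to homotopy equivalence by their images under $\C\otimes_\R-$ provided that functor is faithful enough --- concretely, a map of complexes of projective $\R$-bimodules that becomes a homotopy equivalence after $\otimes_\R \C$ is already one, since $\C$ is faithfully flat over $\R$ and $\mathrm{Hom}$ of projectives commutes with the base change. Thus it suffices to verify $(3.11)$ and $(3.12)$ after extending scalars, where they become instances of the type $A$ relations $\mathcal{R}_j \otimes \mathcal{R}_k \cong \mathcal{R}_k\otimes\mathcal{R}_j$ and $\mathcal{R}_j\otimes\mathcal{R}_{j+1}\otimes\mathcal{R}_j \cong \mathcal{R}_{j+1}\otimes\mathcal{R}_j\otimes\mathcal{R}_{j+1}$ from the quoted theorem of Khovanov--Seidel, using that $\C\otimes_\R R_j$ is homotopy equivalent to $\mathcal{R}_{n+j-1}\otimes_{\Aa}\mathcal{R}_{n-j+1}$ for $j\ge 2$ and to $\mathcal{R}_n$ for $j=1$ (these two $\mathcal{R}$'s commute since $|(n+j-1)-(n-j+1)| = 2j-2 > 1$ for $j \geq 2$), and then checking that the required braid identities among the $R$'s translate into (compositions of) the commutation and three-term braid relations among the $\mathcal{R}$'s in $\cA(A_{2n-1})$ --- which hold because $\Psi$ is a homomorphism.

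The main obstacle I anticipate is the faithfulness step: one must be careful that $\C \otimes_\R -$ genuinely reflects homotopy equivalences of bounded complexes of \emph{finitely generated projective} bigraded $(\Ba_n,\Ba_n)$-bimodules, respecting both the $\Z$-grading $\{-\}$ and the $\Z/2\Z$-grading $\langle-\rangle$ --- in particular the $\Z/2\Z$-grading interacts with the element $i$ and must be tracked, since $\C\otimes_\R\Ba_n$ forgets it. A clean way around this is to note that $R_j$ and the composites appearing in $(3.11)$, $(3.12)$ are all concentrated in $\Z/2\Z$-degree $0$ as complexes of bimodules (the generators of the bimodules $P^B_j\otimes_{k_j}{}_jP^B$ can be taken homogeneous of $\langle-\rangle$-degree $0$), so the $\Z/2\Z$-grading plays no role in these particular identities and the reduction to type $A$ is legitimate; I would state this as a short lemma. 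Failing that, one falls back on a direct homotopy computation mirroring \cite[Theorem 2.5]{KhoSei} with the extra relations \eqref{complex symmetry 1}, \eqref{complex symmetry 2} and $(1|2)(ie_2)(2|1)=0$ invoked exactly where Khovanov--Seidel use $\C$-linearity, which is longer but routine.
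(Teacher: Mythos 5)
Your approach departs from the paper's. The paper's proof of this proposition is a single sentence: the relations ``can be verified similarly as in \cite[Theorem 2.5]{KhoSei}'', i.e.\ one repeats the Khovanov--Seidel homotopy computation term by term, using the extra relations of $\Ba_n$ in place of $\C$-linearity where needed. Your proposed reduction through $\C \otimes_\R -$ is a genuinely different and in principle appealing route. The bimodule bookkeeping you sketch is correct: $\C\otimes_\R R_1 \cong \mathcal{R}_n$ and, for $j \geq 2$, $\C \otimes_\R R_j \cong \mathcal{R}_{n-(j-1)}\otimes_\Aa \mathcal{R}_{n+(j-1)}$ (these two $\mathcal{R}$'s commute), so the type~$A$ relations under $\Psi$ really do yield the images of (3.11) and (3.12) after scalar extension. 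This computation is essentially what the paper carries out later in \cref{tensor equivariant}, so your route trades the direct braid-relation verification for a closely related bimodule computation, importing the type~$A$ relations rather than re-deriving them. Note that it also requires \cref{isomorphic algebras} to be proved first, reordering the paper's logic, though there is no circularity since that proposition does not rely on the braid relations.

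The genuine gap is in the descent step, and it has two parts. First, faithfully flat base change lets you reflect the property ``this \emph{given} map is a homotopy equivalence'', but the type~$A$ isomorphism $\mathcal{R}_a\mathcal{R}_b \cong \mathcal{R}_b\mathcal{R}_a$ lives on the $\Aa_{2n-1}$ side and there is no reason for the witnessing map to be of the form $\C\otimes_\R f$ for an $\R$-linear $f$ --- the functor $\C\otimes_\R -$ on Hom-spaces is not surjective. So you either must first construct the candidate isomorphism on the $\Ba_n$ side and then check it becomes an isomorphism after base change (which is essentially the direct computation again), or you must use reflection of \emph{isomorphic objects} via Krull--Schmidt for the finite-dimensional $\R$-algebra $\Ba_n \otimes_\R \Ba_n^{\mathrm{op}}$. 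Second, and more seriously, the functor you are using is $\Aa_{2n-1} \otimes_{\ddot{\Ba}_n}\mathfrak{F}(-)$, which forgets the $\Z/2\Z$-grading $\langle-\rangle$; even with Krull--Schmidt, this only gives you the relations up to an overall $\langle s \rangle$ shift. Your proposed fix --- that the relevant bimodules are concentrated in $\Z/2\Z$-degree $0$ --- is not correct as stated: for $j\geq 2$ the bimodule $P_j^B \otimes_\C {}_jP^B$ contains $ie_j \otimes e_j$ in degree $1$, and $\Ba_n$ itself (which appears as a term in every $R_j$) contains $P_1^B$ as a summand and hence is \emph{not} invariant under $\langle 1 \rangle$; so the claim that the $\Z/2\Z$-grading plays no role needs a real argument, e.g.\ a degree-parity invariant in the spirit of the $\sgn$ function used in \cref{gradinginvariant}. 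Without that, the reduction argument does not close. Your fallback --- the direct homotopy computation --- is exactly what the paper does, and given the small size of the extra bookkeeping in type $B_n$ it is probably the shorter path.
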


\begin{proof} 
These relations can be verified similarly as in \cite[Theorem 2.5]{KhoSei}.
\end{proof}

\subsection{Adjunctions and Dehn Twist}
To show the last type $B_n$ relation (the 4-braiding relation), we shall introduce a larger family of invertible complexes that will aid us in our calculation. 
This construction mirrors the notion of Dehn twists in topology and uses the theory on adjunctions (we highly recommend \cite{KhoBiadj} for an amazing exposition on expressing adjunctions using planar diagrammatics).
Throughout this section we will denote $\mathbb{K}_1 := \R$ and $\mathbb{K}_j := \C$ for $j \geq 2$.

\begin{definition}
Let $X\in \Kom^{b}((\Ba_n,\mathbb{K}_j)$-$bimod)$ and $X^\ell, X^r \in \Kom^{b}((\mathbb{K}_j,\Ba_n)\text{-bimod})$ such that $X^\ell \otimes_{\Ba_n} -$ and $X^r \otimes_{\Ba_n} -$ are left and right adjoints of $X \otimes_{\mathbb{K}_j} -$ respectively.
We define the twist of $X$ as the complex of $(\Ba_n,\Ba_n)$-bimodule
\[
\sigma_X:= \cone\left( X\otimes_{\mathbb{K}_j} X^r \xrightarrow{\varepsilon} \Ba_n \right),
\]
with $\varepsilon$ the counit of the adjunction $X \dashv X^r$.
Similarly, the dual twist of $X$ is given by
\[
\sigma_X':= \cone \left( \Ba_n \xrightarrow{\eta} X\otimes_{\mathbb{K}_j} X^\ell \right)
\]
with $\eta$ the unit of the adjunction $X \vdash X^\ell$.
The twist $\sigma_X$ is said to be spherical if the twist and dual twist are inverses of each other, namely $\sigma_X \otimes_{\Ba_n} \sigma_X' \cong \Ba_n \cong \sigma_X' \otimes_{\Ba_n} \sigma_X$.
\end{definition}
One can verify from the definition of the adjunctions that the twist (resp. dual twist) is uniquely defined up to isomorphism, i.e. $X \cong Y$ implies $\sigma_X \cong \sigma_{Y}$ (resp. $\sigma_X' \cong \sigma_{Y}'$).
On the other hand, the shift functors $[1], \{1\}$ and $\<1\>$ are autoequivalences, so we also have that $\sigma_{X} = \sigma_{X[r]\{s\}\<t\>}$.
More generally, given a pair of adjunctions $(X, X^\ell, X^r)$ on $X$ with $Y \cong \Sigma \otimes_{\Ba_n} X$ where $\Sigma$ an invertible object in $\Kom^b((\Ba_n, \Ba_n)\text{-mod})$, we also have a pair of adjunctions $(Y, Y^\ell, Y^r)$ on $Y$ given by
\[
Y^\ell := X^\ell \otimes_{\Ba_n} \Sigma^{-1}, \quad Y^r := X^r \otimes_{\Ba_n} \Sigma^{-1}.
\]
Furthermore, the twists and dual twists are related by:
\[
\sigma_Y \cong \Sigma \otimes_{\Ba_n} \sigma_X \otimes_{\Ba_n} \Sigma^{-1}, \quad
\sigma_Y' \cong \Sigma \otimes_{\Ba_n} \sigma_X' \otimes_{\Ba_n} \Sigma^{-1}.
\]

\begin{lemma} \label{P_i biadjoint}
The functor $P_j^B \otimes_{\mathbb{K}_j} - $ is a left adjoint of ${}_{j}{P}^B \otimes_{\Ba_n} - $ and a right adjoint of ${}_{j}{P}^B \{-1\} \otimes_{\Ba_n} - $.
\end{lemma}
\begin{proof}
To show that the $P_j^B \otimes_{\mathbb{K}_j} - $ is a left adjoint to ${}_{j}{P}^B \otimes_{\Ba_n} - $, take the counit to be the functor induced by $P_j^B\otimes_{\mathbb{K}_j} {}_{j}{P}^B \xrightarrow{\beta_j} \Ba_n$ and the unit is instead induced by $\mathbb{K}_j \xrightarrow{\varphi} {}_{j}{P}^B\otimes_{\Ba_n} P_j^B$, where $\varphi$ is defined by $\varphi(1) = e_j\otimes_{\Ba_n} e_j$.
To show ${}_{j}{P}^B\{-1\} \otimes_{\Ba_n} - $ is a left adjoint to $P_j^B \otimes_{\mathbb{K}_j} - $, take the counit to be the functor induced by ${}_{j}{P}^B \otimes_{\Ba_n} P_j^B \{-1\} \xrightarrow{\varphi'} \mathbb{K}_j$ where $\varphi'$ is defined by $\varphi'(e_j\otimes e_j) = 0, \varphi'(X_j\otimes e_j) = 1$ (note that $X_j \otimes e_j = e_j \otimes X_j$), and the unit is instead induced by ${\Ba_n} \xrightarrow{\gamma_j} P_j^B\otimes_{\mathbb{K}_j} {}_{j}{P}^B \{-1\}$.
We leave the verification of the conditions required to the reader.
\end{proof}

Using this, we shall now prove the last type $B_n$ relation required:
\begin{proposition} \label{type B relation}
We have the following isomorphism of $\Kom^b((\Ba_n,\Ba_n)\text{-bimod})$:
\[
R_2 \otimes_{\Ba_n} R_1 \otimes_{\Ba_n} R_2 \otimes_{\Ba_n} R_1 \cong 
R_1 \otimes_{\Ba_n} R_2 \otimes_{\Ba_n} R_1 \otimes_{\Ba_n} R_2 .
\]
\end{proposition}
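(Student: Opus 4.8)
The plan is to obtain this last type-$B_n$ relation from the Dehn-twist formalism just developed, rather than by a brute-force quadruple-complex computation. First I would observe that $R_j\cong\sigma_{P_j^B}$: by the definition of $R_j$ and the proof of \cref{P_i biadjoint}, $R_j=\cone\big(P_j^B\otimes_{\F_j}{}_jP^B\xrightarrow{\beta_j}\Ba_n\big)$ and $\beta_j$ is precisely the counit of the biadjunction $P_j^B\dashv{}_jP^B$, which is the very definition of $\sigma_{P_j^B}$.

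Next I would reformulate the claim. Write $\otimes$ for $\otimes_{\Ba_n}$ and put $Z:=R_2\otimes R_1\otimes R_2$, which is invertible by \cref{standard braid relations}. Since $Z\otimes R_1=R_2\otimes R_1\otimes R_2\otimes R_1$ and $R_1\otimes Z=R_1\otimes R_2\otimes R_1\otimes R_2$, the desired isomorphism is exactly the assertion that $Z$ and $R_1$ commute in $\Kom^b((\Ba_n,\Ba_n)\text{-bimod})$, equivalently that $Z\otimes R_1\otimes Z^{-1}\cong R_1$. Applying the conjugation property of $\sigma_{(-)}$ recorded just before \cref{P_i biadjoint} with $\Sigma=Z$, $Y=P_1^B$ and $Y^\vee={}_1P^B$ gives
\[
Z\otimes R_1\otimes Z^{-1}\;\cong\;Z\otimes\sigma_{P_1^B}\otimes Z^{-1}\;\cong\;\sigma_{Z\otimes P_1^B}.
\]
Because $\sigma_X$ depends on $X$ only up to isomorphism and up to the shifts $[r]\{s\}\langle t\rangle$, it therefore suffices to prove that
\[
R_2\otimes_{\Ba_n}R_1\otimes_{\Ba_n}R_2\otimes_{\Ba_n}P_1^B\;\cong\;P_1^B[r]\{s\}\langle t\rangle
\]
for some $r,s,t$. (Morally this says that $\sigma_2^B\sigma_1^B\sigma_2^B$ fixes the basic curve $b_1$ up to grading, which is forced topologically by the very relation at hand, but of course must be checked algebraically here.)

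The remaining work is this single computation, carried out by applying the exact functors $R_2\otimes_{\Ba_n}-$, then $R_1\otimes_{\Ba_n}-$, then $R_2\otimes_{\Ba_n}-$ to $P_1^B$, evaluating each ${}_jP^B_k={}_jP^B\otimes_{\Ba_n}P_k^B$ via \cref{bimodule isomorphism}, and simplifying the resulting total complexes by Gaussian elimination. The first step yields $R_2\otimes_{\Ba_n}P_1^B\simeq\big(P_2^B\{1\}\xrightarrow{\,\cdot(2|1)\,}P_1^B\big)$, using ${}_2P^B_1\cong{}_\C\C_\R\{1\}$; the second and third steps expand $R_1\otimes_{\Ba_n}P_k^B$ and $R_2\otimes_{\Ba_n}P_k^B$ for the projectives that appear — here it matters that, $\Ba_n$ being an $\R$-algebra, the bimodule $P_1^B\otimes_\R{}_1P^B$ defining $R_1$ is ``doubled'' compared with its type-$A$ analogue, so the intermediate complexes are correspondingly larger — then assemble the resulting double/triple complexes and cancel the acyclic summands produced by the identity components of the $\beta_j$'s and $\gamma_j$'s. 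After the last round of cancellation the complex should collapse onto a single shifted copy of $P_1^B$.

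The main obstacle is precisely this final collapse. Unlike in the naive type-$A$ picture, where the twist of a projective by itself is a genuinely two-term complex, here the cancellation succeeds — producing $P_1^B[r]\{s\}\langle t\rangle$ — only because a specific composite of structure maps vanishes, and this vanishing is exactly the extra type-$B$ relation $(1|2)(ie_2)(2|1)=0$ built into the definition of $\Ba_n$. The other point requiring care is keeping track of all three gradings (cohomological, $\{-\}$, and the $\Z/2\Z$-grading $\langle-\rangle$) through the successive Gaussian eliminations in order to pin down $r,s,t$; this is bookkeeping rather than insight, and no ingredient beyond \cref{bimodule isomorphism}, \cref{P_i biadjoint} and the conjugation formula is needed.
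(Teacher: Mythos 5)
Your reduction is exactly the paper's: use \cref{standard braid relations} to rewrite the claim as a conjugation statement, invoke $R_1\cong\sigma_{P_1^B}$ from \cref{P_i biadjoint} and the conjugation formula $\Sigma\otimes\sigma_Y\otimes\Sigma^{-1}\cong\sigma_{\Sigma\otimes Y}$ to reduce to showing $R_2R_1R_2(P_1^B)$ is a shifted copy of $P_1^B$, then compute that complex directly via \cref{bimodule isomorphism} and Gaussian elimination. You also correctly pinpoint $(1|2)(ie_2)(2|1)=0$ as the algebraic ingredient that makes the intermediate complex collapse (it is what makes the second column of the differential $P_1^B\{1\}\oplus P_1^B\{1\}\langle 1\rangle\to P_1^B\oplus P_1^B\{1\}$ vanish), so the only thing left unwritten is the explicit Gaussian-elimination bookkeeping that the paper carries out to land on $P_1^B\{1\}\langle 1\rangle$.
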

\begin{proof}
We shall drop the tensor products for the sake of readability:
$
 R_2 R_1 R_2 R_1 \cong R_1 R_2 R_1 R_2. 
 $
Using \cref{standard braid relations}, note that this relation is equivalent to
$
(R_2 R_1 R_2) R_1 (R_2' R_1' R_2') \cong 
R_1.
$
By the adjunctions shown in \cref{P_i biadjoint}, note that $R_1$ and $R_1'$ are by definition the same as the twist $\sigma_{P_1^B}$ and dual twist $\sigma_{P_1^B}'$ of $P_1^B$ respectively.
As such, we get that
$$
\sigma_{R_2 R_1 R_2(P_1^B)} \cong (R_2 R_1 R_2) \sigma_{P_1^B} (R_2' R_1' R_2') \cong 
\sigma_{P_1^B}.
$$
It is now sufficient to show $R_2 R_1 R_2(P_1^B)$ and $P_1^B$ are isomorphic in $\Kom^b((\Ba_n, \R)$-bimod) up to cohomological or internal gradings shifts.
This is shown in the following series computation (note that we have omitted the cohomological grading since it does not matter).
\begin{align*}
R_2(P_1^B) &= 0 \ra P_2^B\otimes_\C {}_2P_1^B \ra P_1^B \ra 0 
\cong 0 \ra P_2^B \{1\} \xra{(2|1)} P_1^B \ra 0 \qquad (\text{by } \cref{bimodule isomorphism})
\end{align*}
\begin{align*}
R_1R_2(P_1^B) &\cong R_1 \left( 0 \ra P_2^B \{1\} \xra{(2|1)} P_1^B \ra 0 \right)\\
&= \text{cone } \left\{
\begin{tikzcd}[row sep = large, column sep = large, ampersand replacement=\&]
P_1^B\{1\}\otimes_\R {}_1P_2^B \ar{r}{\id\otimes (2|1)} \ar{d}{}\& 
P_1^B \otimes_\R {}_1P_1^B \ar{d}{} \\
P_2^B \{1\} \ar{r}{(2|1)} \& 
P_1^B
\end{tikzcd} \right\}\\
&\cong \text{cone }\left\{
\begin{tikzcd}[row sep = large, column sep = large, ampersand replacement=\&]
P_1^B\{1\}\oplus P_1^B\{1\}\<1\> 
\ar{r}
	{
	\begin{bsmallmatrix}
	0 & 0 \\
	\id & 0
	\end{bsmallmatrix}
} 
\ar{d}
	{
	\begin{bsmallmatrix}
	(1|2) & (1|2)i
	\end{bsmallmatrix}
}\& 
P_1^B \oplus P_1^B\{1\} 
\ar{d}{
	\begin{bsmallmatrix}
	\id & X_1
	\end{bsmallmatrix}
} \\
P_2^B \{1\} \ar{r}{(2|1)} \& 
P_1^B
\end{tikzcd} \right\} \qquad (\text{by } \cref{bimodule isomorphism})\\
&\cong 0 \ra P_1^B\{1\}\<1\> \xra{(1|2)i} P_2^B\{1\} \ra 0 \\
R_2R_1R_2(P_1^B) &\cong \text{cone }\left\{
\begin{tikzcd} [row sep = large, column sep = large, ampersand replacement=\&]
P_2^B\otimes_\C {}_2P_1^B\{1\} \<1\> 
\ar{r}{} 
\ar{d}{}
\&
P_2^B \otimes_\C {}_2P_2^B\{1\}
\ar{d}{} \\
P_1^B\{1\} \<1\> \ar{r}{(1|2)i} \&
P_2^B\{1\}
\end{tikzcd}
\right\}\\
&\cong \text{cone }\left\{
\begin{tikzcd} [row sep = large, column sep = large, ampersand replacement=\&]
P_2^B\{2\} \<1\> 
\ar{r}{
	\begin{bsmallmatrix}
	0 \\
	\id
	\end{bsmallmatrix}
} 
\ar{d}{(2|1)}
\&
P_2^B\{1\} \oplus P_2^B\{2\}\<1\>
\ar{d}{
	\begin{bsmallmatrix}
	\id & X_2i
	\end{bsmallmatrix}
} \\
P_1^B\{1\} \<1\> \ar{r}{(1|2)i} \&
P_2^B\{1\}.
\end{tikzcd}
\right\} \qquad (\text{by } \cref{bimodule isomorphism})\\
&\cong P_1^B\{1\}\<1\>
\end{align*}
\end{proof}
\begin{theorem} \label{Cat B action}
We have a (weak) $\mathcal{A}(B_n)$-action on $\Kom^b(\Ba_n$-$p_r g_r mod)$, where each standard generator $\sigma^B_j$ for $j \geq 2$ of $\mathcal{A}(B_n)$ acts on a complex $M \in \Kom^b(\Ba_n$-$p_rg_rmod)$ via $R_j$, and $\sigma^B_1$ acts via $R_1 \<1\>$:
$$\sigma^B_j(M):= R_j \otimes_{\Ba_n} M, \text{ and } (\sigma^B_j)^{-1}(M):= R_j' \otimes_{\Ba_n} M,$$
\[
\sigma^B_1(M):= R_1 \<1\> \otimes_{\Ba_n} M, \text{ and } (\sigma^B_1)^{-1}(M):= R_1' \<1\> \otimes_{\Ba_n} M.
\]
\end{theorem}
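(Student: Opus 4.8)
The plan is to show that the assignment $\sigma_j^B \mapsto \left(R_j \otimes_{\Ba_n} - \right)$ for $j \geq 2$ and $\sigma_1^B \mapsto \left(R_1\<1\> \otimes_{\Ba_n} - \right)$ descends to a weak action of $\cA(B_n)$, which by the presentation recalled in \cref{genbraid} amounts to two tasks: checking that each generator acts by an exact autoequivalence, and checking that each of the three families of braid relations for $\cA(B_n)$ holds up to natural isomorphism. First I would observe that every component of $R_j$ and of $R_j'$, namely $P^B_j \otimes_{\F_j} {}_jP^B$ and $\Ba_n$, is projective both as a left and as a right $\Ba_n$-module; hence $R_j \otimes_{\Ba_n} -$ is an exact endofunctor of $\Kom^b(\Ba_n$-$p_r g_r mod)$, and by the first isomorphism of \cref{standard braid relations} it is an autoequivalence with quasi-inverse $R_j' \otimes_{\Ba_n} -$. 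For $\sigma_1^B$ one notes in addition that the $\Z/2\Z$-shift $\<1\>$ is an involutive autoequivalence ($\<1\>^2 \cong \id$) which commutes with every $R_k \otimes_{\Ba_n} -$ up to natural isomorphism, so $R_1\<1\> \otimes_{\Ba_n} -$ is an autoequivalence with quasi-inverse $R_1'\<1\> \otimes_{\Ba_n} -$, again using $R_1 \otimes R_1' \cong \Ba_n \cong R_1' \otimes R_1$.

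Next I would verify the relations. The far-commutation relations $\sigma_j^B \sigma_k^B = \sigma_k^B \sigma_j^B$ with $|j-k| > 1$ and both indices $\geq 2$, together with the braid relations $\sigma_j^B \sigma_{j+1}^B \sigma_j^B = \sigma_{j+1}^B \sigma_j^B \sigma_{j+1}^B$ for $2 \leq j \leq n-1$, involve only the shift-free functors attached to $R_2, \dots, R_n$ and so are immediate from \cref{standard braid relations}. When the index $1$ appears in a far-commutation relation (so $k \geq 3$), one slides the shift $\<1\>$ past $R_k \otimes_{\Ba_n} -$ and applies $R_1 \otimes R_k \cong R_k \otimes R_1$. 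It remains to treat the quartic relation, for which one computes
\[
\left(R_1\<1\>\right) R_2 \left(R_1\<1\>\right) R_2 \;\cong\; \left(R_1 R_2 R_1 R_2\right)\<2\> \;\cong\; \left(R_2 R_1 R_2 R_1\right)\<2\> \;\cong\; R_2\left(R_1\<1\>\right) R_2\left(R_1\<1\>\right),
\]
all tensor products being over $\Ba_n$, where the middle isomorphism is exactly \cref{type B relation} and the two shifts on each side combine via $\<2\> \cong \<0\>$. Tensoring these bimodule isomorphisms with an arbitrary $M \in \Kom^b(\Ba_n$-$p_r g_r mod)$ converts them into the required isomorphisms of composite functors, so the generators-and-relations data assemble into a weak $\cA(B_n)$-action.

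I do not anticipate a genuine obstacle here: the entire content is carried by \cref{standard braid relations} and \cref{type B relation}, which have already been established. The only point demanding a modicum of care is the bookkeeping of the $\Z/2\Z$-shift $\<1\>$ attached to $\sigma_1^B$ — one must confirm that it slides freely through tensor products and that its occurrences balance on the two sides of each relation in which $\sigma_1^B$ appears, which they do precisely because $\sigma_1^B$ occurs the same number of times on each side of every defining relation of $\cA(B_n)$.
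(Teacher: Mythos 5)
Your proposal is correct and follows essentially the same route as the paper, which simply cites \cref{standard braid relations} and \cref{type B relation} and notes that the added shift $\<1\>$ on $R_1$, $R_1'$ does not disturb the relations. You have merely made explicit the bookkeeping the paper leaves implicit — that $\<1\>$ slides past each $R_k \otimes_{\Ba_n} -$, that $\sigma^B_1$ occurs equally often on both sides of every defining relation so the shifts cancel modulo $2$, and that $R_j' \otimes_{\Ba_n} -$ furnishes the quasi-inverse — all of which is exactly what the paper's terse remark is appealing to.
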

\begin{proof}
This follows directly from \cref{standard braid relations} and \cref{type B relation}, where the required relations still hold with the extra third grading shift $\<1\>$ on $R_1$ and $R_1'$.
\end{proof}
\noindent From now on, we will abuse notation and use $\sigma^B_j$ and $(\sigma^B_j)^{-1}$ in place of $R_j$ and $R_j'$ (with an extra grading shift $\<1\>$ for $j=1$) respectively  whenever it is clear from the context what we mean.

\section{Relating Categorical Type $B_n$ and Type $ {A_{2n-1}}$ actions} \label{relating categorical b a action}

In \cref{topology}, we have defined $\m$ that lifts isotopy classes of trigraded curves in $\D^B_{n+1}$ to isotopy classes of bigraded multicurves in $\D^A_{2n}$. Furthermore, we showed that the map $\mathfrak{m}$ is equivariant under the $\cA(B_n)$-action.
In this section, we shall develop the algebraic version of this story.
We will first relate our type $B_n$ zigzag algebra $\Ba_n$ to the type $A_{2n-1}$ zigzag algebra $\Aa_{2n-1}$ by showing that $\C\otimes_\R \Ba_n \cong \Aa_{2n-1}$ as graded $\C$-algebras (forgetting the $\Z/2\Z$ grading in $\Ba_n$).
Through this, we have an injection $\Ba_n \hookrightarrow \C\otimes_\R \Ba_n \cong \Aa_{2n-1}$ as graded $\R$-algebras.
Thus, we can relate the two categories $\Kom^b(\Ba_n$-$\text{p$_{r}$g$_{r}$mod})$ and $\Kom^b(\Aa_{2n-1}$-$\text{p$_{r}$g$_{r}$mod})$  through an extension of scalar $\Aa_{2n-1} \otimes_{\Ba_n} -$. 
We end  this section by showing that the functor $\Aa_{2n-1} \otimes_{\Ba_n} -$ is $\cA(B_n)$-equivariant, which also allows us to deduce that the $\cA(B_n)$-action on $\Kom^b(\Ba_n\text{-}p_rg_rmod)$ is faithful.

Let $Q$ be a left $\C$-module. 
Throughout this section, we shall denote ${}_{\bar{\C}} Q$ to be the left $\C$-module with a deformed left action, given by multiplication its with complex conjugate:
\begin{equation} \label{eqn: deformed C-action}
a(c) = \bar{a}c.
\end{equation}
Similarly for $Q$ a right $\C$-module, we use $Q_{\bar{\C}}$ to denote the right $\C$-module with the deformed action.

\begin{proposition} \label{isomorphic algebras}
Consider the graded $\R$-algebra $\ddot{\Ba}_n$, where $\ddot{\Ba}_n$ is just $\Ba_n$ without the $\Z/2\Z$-grading $\<-\>$.
The $\Z$-graded $\C$-algebras $\C \otimes_{\R} \ddot{\Ba}_n $ and $\Aa_{2n-1}$ are isomorphic.
\end{proposition}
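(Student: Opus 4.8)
The plan is to write down an explicit isomorphism of $\Z$-graded $\C$-algebras and verify the (finitely many) defining relations of $\Ba_n$ by hand, with the map dictated by the branched-cover picture of \cref{topology}. The key observation is that in $\C\otimes_\R\ddot\Ba_n$ the idempotent $e_j$, for $j\ge 2$, splits: writing $i$ for the central scalar $i\otimes 1$ and identifying $\ddot\Ba_n$ with $1\otimes\ddot\Ba_n$, the relation $(ie_j)^2=-e_j$ produces orthogonal idempotents
\[
f_j^{+}:=\tfrac12\!\left(e_j-i(ie_j)\right),\qquad f_j^{-}:=\tfrac12\!\left(e_j+i(ie_j)\right),\qquad f_j^{+}+f_j^{-}=e_j,
\]
which realises $\C\otimes_\R\C\cong\C\times\C$ inside $\C\otimes_\R(e_j\ddot\Ba_n e_j)$. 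Together with $f_1:=e_1$ this yields $2n-1$ orthogonal idempotents summing to $1$, which I match to the vertices $1,\dots,2n-1$ of $\Gamma_{2n-1}$ by $e_1\mapsto e_n$, $f_j^{+}\mapsto e_{n+j-1}$, $f_j^{-}\mapsto e_{n-j+1}$ for $j\ge2$ --- exactly the vertex correspondence predicted by the lift $\m$, since $\m$ sends $b_j$ to $\varrho_{n+j-1}\amalg\varrho_{n-j+1}$ for $j\ge2$ and $b_1$ to $\varrho_n$.

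With this dictionary fixed, I would define the $\C$-algebra map $\phi\colon\C\otimes_\R\ddot\Ba_n\to\Aa_{2n-1}$ on the remaining generators of $\Ba_n$ by $e_j\mapsto e_{n+j-1}+e_{n-j+1}$ and $(ie_j)\mapsto i\,(e_{n+j-1}-e_{n-j+1})$ for $j\ge2$, and by sending each arrow of $Q_n$ to the sum of its two lifts in $\Gamma_{2n-1}$:
\[
(1|2)\mapsto(n|n{+}1)+(n|n{-}1),\qquad (2|1)\mapsto(n{+}1|n)+(n{-}1|n),
\]
\[
(j|j{+}1)\mapsto(n{+}j{-}1|n{+}j)+(n{-}j{+}1|n{-}j),\qquad (j{+}1|j)\mapsto(n{+}j|n{+}j{-}1)+(n{-}j|n{-}j{+}1)\quad(j\ge2).
\]
A short inspection of the degree conventions of \cref{A zigzag} shows that each degree-$0$ generator of $\ddot\Ba_n$ lands in degree $0$ and each degree-$1$ arrow lands in degree $1$ --- indeed the slightly modified grading on $\Aa_{2n-1}$ seems to have been arranged precisely so that this holds --- so $\phi$, once well-defined, is a morphism of graded algebras.

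The bulk of the work is verifying that $\phi$ kills all the defining relations of $\Ba_n$. The orthogonal-idempotent relations are immediate because the index sets $\{n\}$ and $\{n\pm(j-1)\}$ are pairwise disjoint. For the zigzag and ``broken zigzag'' relations one expands the products: the potential cross-terms are either paths in $\Gamma_{2n-1}$ that fail to compose or length-three unidirectional paths $(k|k{+}1|k{+}2)$ and $(k{+}2|k{+}1|k)$, which vanish by the relations defining $\Aa_{2n-1}$, while the surviving terms reproduce $X_{n+j-1}+X_{n-j+1}$ on both sides. The relations $(ie_j)^2=-e_j$, the two complex-symmetry relations $(ie_{j-1})(j{-}1|j)=(j{-}1|j)(ie_j)$ and $(ie_j)(j|j{-}1)=(j|j{-}1)(ie_{j-1})$, and $(ie_2)X_2=X_2(ie_2)$ are routine, since $i(e_{n+j-1}-e_{n-j+1})$ acts as a central scalar on each of the two blocks it meets. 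The delicate relation is $(1|2)(ie_2)(2|1)=0$ at vertex $1$: tracking signs, $(1|2)(ie_2)(2|1)$ maps to a scalar multiple of $X_n$ that vanishes precisely because of the relative minus sign in $\phi(ie_2)=i(e_{n+1}-e_{n-1})$, whereas $X_1=(1|2)(2|1)\mapsto 2X_n\ne0$; getting these signs consistent is what pins the map down, and I expect this to be the main point requiring care.

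Granting that $\phi$ is a well-defined graded homomorphism, it remains to see it is bijective, for which I would argue surjectivity and compare dimensions. From $\phi(e_j)=e_{n+j-1}+e_{n-j+1}$ and $\phi(ie_j)=i(e_{n+j-1}-e_{n-j+1})$ the image contains every vertex idempotent $e_k$, $1\le k\le 2n-1$; left- and right-multiplying the images of the arrows by these idempotents then produces every arrow of $\Gamma_{2n-1}$, and products of two arrows produce all the loops $X_k$. Since the $e_k$, the arrows and the $X_k$ span $\Aa_{2n-1}$, the map $\phi$ is surjective; as both $\C\otimes_\R\ddot\Ba_n$ and $\Aa_{2n-1}$ have $\C$-dimension $8n-6$, $\phi$ is an isomorphism of graded $\C$-algebras. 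The only genuine obstacle is the relation bookkeeping --- in particular the sign at vertex $1$ --- everything else being forced by the construction.
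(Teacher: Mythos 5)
Your proposal is correct and uses the same key idea as the paper: splitting the idempotent $e_j$ (for $j\ge 2$) in $\C\otimes_\R\ddot\Ba_n$ into the orthogonal pair $f_j^\pm=\tfrac12(e_j\mp i(ie_j))$, which coincides with the paper's $\nu_j=\tfrac12(1\otimes e_j+i\otimes ie_j)=f_j^-$ and $1\otimes e_j-\nu_j=f_j^+$, and then matching these $2n-1$ idempotents to the vertices of $\Gamma_{2n-1}$ exactly as the paper does. The only difference is presentational: the paper writes the map on a $\C$-basis of $\C\otimes_\R\ddot\Ba_n$ (so well-definedness is automatic and the work shifts to the multiplicativity check, which it leaves to the reader), whereas you define it on quiver generators and carry the burden in the relation check, including the sign computation for $(1|2)(ie_2)(2|1)\mapsto 0$ that you correctly flag as the one delicate point; both routes are equivalent and lead to the same isomorphism with the same dimension count for bijectivity.
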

\begin{proof}
Note that as $\C$-vector space, we have the following decomposition:
\begin{align*}
\C \otimes_\R \ddot{\Ba}_n 
& \cong \bigoplus_{j=2}^{n} \C \otimes_\R \left(  {}_jP^B_j \ \oplus {}_jP^B_{(j-1)} \oplus  {}_{(j-1)}P^B_{j} \right) \oplus (\C \otimes_\R {}_1P^B_1) \\ 
& \cong 
\bigoplus_{j=2}^{n} \big(\C \otimes_\R  {}_jP^B_j \big) \oplus  \bigoplus_{j=2}^{n} \big( \C \otimes_\R {}_jP^B_{(j-1)} \big) \oplus \bigoplus_{j=2}^{n} \big(\C \otimes_\R {}_{(j-1)}P^B_{j} \big) \oplus 
(\C \otimes_\R {}_1P^B_1). 
\end{align*}

Firstly, for each $j\geq 2$, note that ${}_jP^B_j$ is itself a $\C$-algebra with unit $e_j\otimes 1$.
Moreover, after tensoring with $\C$ over $\R$, $\C \otimes_\R {}_jP^B_j$ has idempotent $\nu_j := \frac{1}{2}(1 \otimes e_j + i \otimes ie_j)$.
We shall define a $\C$-linear morphism $\Phi: \C\otimes_\R \ddot{\Ba}_n \ra \Aa_{2n-1}$ by specifying the images of the basis elements of $\C\otimes_\R \ddot{\Ba}_n$.
It will be easy to see that $\Phi$ is grading preserving and we leave the routine check that $\Phi$ is an algebra morphism to the reader.
For $j=1$,
\begin{align*}
  \C \otimes_\R {}_1P^B_1 \to {}_n P^A_n 
  \\
\begin{cases}
 1 \otimes \frac{1}{2} X_1 &\mapsto X_n;
\\
  1 \otimes e_1 &\mapsto e_n.
\end{cases}
\end{align*}
For $2 \leq j \leq n$,
\begin{enumerate}
\item 
$
\C \otimes_\R {}_jP^B_j \to {}_{n-j+1}P^A_{n-j+1} \oplus   {}_{n+j-1}P^A_{n+j-1}
$
\\
$
\begin{cases}
 (1 \otimes X_j ) \nu_j & \mapsto X_{n-j+1};\\
 (1 \otimes X_j )(1\otimes e_j - \nu_j) & \mapsto X_{n+j-1};\\
  \nu_j & \mapsto e_{n-j+1}; \\
  (1\otimes e_j - \nu_j)& \mapsto e_{n+j-1},
\end{cases}
$
\vspace{5pt}
\item
$
 \C \otimes_\R {}_{j-1}P^B_{j} \to {}_{n-j+2}P^A_{n-j+1} \oplus {}_{n+j-2}P^A_{n+j-1}
$
 \\
$
\begin{cases}
 \big(1 \otimes (j-1|j)\big) \nu_j & \mapsto ((n-j+2) \mid \big(n-j+1)\big); \\
   \big(1 \otimes (j-1|j)\big)(1 \otimes e_j - \nu_j) & \mapsto \big((n+j-2) \mid (n+j-1) \big),
\end{cases}
$
\vspace{5pt}
\item
$
   \C \otimes_\R {}_jP^B_{j-1} \to {}_{n-j+1}P^A_{n-j+2} \oplus {}_{n+j-1}P^A_{n+j-2}
$
\\
$
\begin{cases}
  v_j( 1 \otimes  (j|j-1) ) & \mapsto \big((n-j+1) \mid (n-j+2) \big); \\
   ( 1 \otimes  e_j -v_j)( 1 \otimes  (j|j-1)) & \mapsto \big((n+j-1) \mid (n+j-2) \big).
\end{cases}
$
\end{enumerate}
It is easy to see that this map is surjective and $\dim_\C ( \C \otimes_\R \ddot{\Ba}_n ) = \dim_\C \left( \Aa_{2n-1} \right)$.
\end{proof} 

Let $i:\ddot{\Ba}_n \hookrightarrow \C \otimes_\R \ddot{\Ba}_n$ be the canonical injection of $\Z$-graded $\R$-algebras.
\cref{isomorphic algebras} allows us to view $\ddot{\Ba}_n$ as a $\Z$-graded subalgebra over $\R$ of $\Aa_{2n-1}$ through $\Phi\circ i$.
Thus, every $\Aa_{2n-1}$-module can be restricted to a $\ddot{\Ba}_n$-module.
In particular, $\Aa_{2n-1}$ as a $\Z$-graded $(\Aa_{2n-1}, \Aa_{2n-1})$-bimodule can be restricted to a $\Z$-graded $(\Aa_{2n-1}, \ddot{\Ba}_n)$-bimodule.
This gives us an extension of scalar functor $\Aa_{2n-1} \otimes_{\ddot{\Ba}_n} -$, sending $\Z$-graded $\ddot{\Ba}_n$-modules to $\Z$-graded $\Aa_{2n-1}$-modules. 

Let $\mathfrak{F}$ denote the functor which forgets the $\Z/2\Z$-grading of the bigraded $\Ba_n$-modules.
We define
$$
\Aa_{2n-1} \otimes_{\Ba_n} - := \Aa_{2n-1} \otimes_{\ddot{\Ba}_n} (\mathfrak{F}(-)).
$$
The proposition below identifies the indecomposable projectives under the functor $\Aa_{2n-1} \otimes_{\Ba_n} -$.
\begin{proposition} \label{B tensor to A}
Recall the deformed action of $\C$ given in \eqref{eqn: deformed C-action}.
We have the following isomorphisms of $\Z$-graded bimodules:
\begin{align*}
\Aa_{2n-1} \otimes_{\Ba_n} P^B_1 &\cong (P^A_n)_\R, &\text{ as } \Z\text{-graded  $(\Aa_{2n-1}, \R)$-bimodules}; \text{ and } \\
\Aa_{2n-1} \otimes_{\Ba_n} P^B_j &\cong 
		\left( P^A_{n-(j-1)} \right)_{\bar{\C}} \oplus 
		P^A_{n+(j-1)}, &\text{ as } \Z\text{-graded $(\Aa_{2n-1}, \C)$-bimodules}.
\end{align*}
\end{proposition}
\begin{proof}
Define $\Phi_1: \Aa_{2n-1} \otimes_{\Ba_n} P^B_1 \ra (P^A_n)_\R$ and $\Phi_j: \Aa_{2n-1} \otimes_{\Ba_n} P^B_j \ra 
		\left( P^A_{n-(j-1)} \right)_{\bar{\C}} \oplus 
		P^A_{n+(j-1)}$
as the maps given on the basis elements by $a\otimes b \mapsto a \Phi(1\otimes b)$ and extend linearly.
It is easy to check that $\Phi_1$ is a graded $(\Aa_{2n-1},\R)$-bimodule morphism and $\Phi_j$ is a graded $(\Aa_{2n-1},\C)$-bimodule morphism; the only detail that one should be aware of is that $\Phi_j$ maps into $\left( P^A_{n-(j-1)} \right)_{\bar{\C}} \oplus 
		P^A_{n+(j-1)}$ instead of $P^A_{n-(j-1)}\oplus P^A_{n+(j-1)}$.
The fact that they are isomorphisms follows easily from looking at the dimensions.
\end{proof}

It follows from the above proposition that $\Aa_{2n-1} \otimes_{\Ba_n} -$ sends projectives to projectives.
Therefore $\Aa_{2n-1} \otimes_{\Ba_n} -$ extends to a functor from $\Kom^b(\ddot{\Ba}_n$-$\text{p$_{r}$g$_{r}$mod})$ to $\Kom^b(\Aa_{2n-1}$-$\text{p$_{r}$g$_{r}$mod})$.
We will denote the functor
$$
\Aa_{2n-1} \otimes_{\Ba_n} - := \Aa_{2n-1} \otimes_{\ddot{\Ba}_n} (\mathfrak{F}(-)) : \Kom^b(\Ba_n\text{-p$_{r}$g$_{r}$mod})\ra \Kom^b(\Aa_{2n-1}\text{-p$_{r}$g$_{r}$mod}).
$$

Recall the injection $\Psi: \mathcal{A}(B_n) \ra \mathcal{A}(A_{2n-1}) $  as defined in \cref{injB}; the image of standard generators are explicitly given by:
\begin{align*}
\Psi(\sigma^B_j) &= 
\begin{cases}
\sigma^A_n, & \text{for } j = 1; \\
\sigma^A_{n-(j-1)} \sigma^A_{n+(j-1)}, & \text{otherwise}.
\end{cases}
\end{align*}
We have previously shown that $\mathcal{A}(B_n)$ acts on $\Kom^b(\Ba_n$-$\text{p$_{r}$g$_{r}$mod})$ and similarly $\mathcal{A}(A_{2n-1})$ acts on $\Kom^b(\Aa_{2n-1}$-$\text{p$_{r}$g$_{r}$mod})$.
Through $\Psi$, we have an induced action of $\mathcal{A}(B_n)$ on   $\Kom^b(\Aa_{2n-1}$-$\text{p$_{r}$g$_{r}$mod})$.
We will now prove the algebraic version of \cref{topological equivariant}.

\begin{theorem}\label{tensor equivariant}
For all $1 \leq j \leq n$, we have an isomorphism $
\Aa_{2n-1} \otimes_{\Ba_n} \sigma^B_j \cong \Psi(\sigma^B_j)_{\Ba_n}$
in $\Kom^b((\Aa_{2n-1},\Ba_n)\text{-bimod})$.
In particular, the functor $\Aa_{2n-1} \otimes_{\Ba_n} -$ is $\mathcal{A}(B_n)$-equivariant:
\begin{center}
\begin{tikzpicture} [scale=0.8]
\node (tbB) at (-2.5,1.5) 
	{$\mathcal{A}(B_n)$};

\node (tbA) at (10,1.5) 
	{$\mathcal{A}(A_{2n-1}) \overset{\Psi}{\hookleftarrow} \mathcal{A}(B_n) $};

\node[align=center] (cB) at (0,0) 
	{$\Kom^b(\Ba_n$-$p_rg_rmod)$};
\node[align=center] (cA) at (7,0) 
	{$\Kom^b(\Aa_{2n-1}$-$p_rg_rmod),$};

\coordinate (tbB') at ($(tbB.east) + (0,-1)$);

\coordinate (tbA') at ($(tbA.west) + (0,-1)$);

\draw [->,shorten >=-1.5pt, dashed] (tbB') arc (245:-70:2.5ex);

\draw [->, shorten >=-1.5pt, dashed] (tbA') arc (-65:250:2.5ex);

\draw[->] (cB) -- (cA) node[midway,above]{$\Aa_{2n-1}\otimes_{\Ba_n} -$}; 

\end{tikzpicture}
\end{center}
i.e. for any $\sigma \in \mathcal{A}(B_n)$ and any complex $C \in \Kom^b(\Ba_n$-$p_rg_rmod)$,
$$
\Aa_{2n-1} \otimes_{\Ba_n} (\sigma \otimes_{\Ba_n} C) \cong \Psi(\sigma)\otimes_{\Aa_{2n-1}} (\Aa_{2n-1} \otimes_{\Ba_n} C).
$$
\end{theorem}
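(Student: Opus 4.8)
The plan is to reduce the statement to a finite check on the standard generators $\sigma_j^B$, and then verify each of those cases by an explicit computation using the bimodule identifications already at our disposal. First I would observe that, since $\Aa_{2n-1}\otimes_{\Ba_n} -$ is an additive functor between triangulated categories (it is exact, sending projectives to projectives, hence induces a triangulated functor on homotopy categories) and since $\Psi$ is a group homomorphism, the ``in particular'' clause follows formally from the first assertion: if $\Aa_{2n-1}\otimes_{\Ba_n}\sigma_j^B \cong \Psi(\sigma_j^B)_{\Ba_n}$ as $(\Aa_{2n-1},\Ba_n)$-bimodule complexes for every generator, then for a word $\sigma = \sigma_{j_1}^{B}\cdots\sigma_{j_\ell}^{B}$ one composes the isomorphisms, using associativity of $\otimes$ and the fact that $\Psi(\sigma) = \Psi(\sigma_{j_1}^B)\cdots\Psi(\sigma_{j_\ell}^B)$; the equivariance statement $\Aa_{2n-1}\otimes_{\Ba_n}(\sigma\otimes_{\Ba_n}C)\cong \Psi(\sigma)\otimes_{\Aa_{2n-1}}(\Aa_{2n-1}\otimes_{\Ba_n}C)$ is then the special case applied to $C$ tensored on. So the whole theorem rests on the generator-level isomorphism.

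Next I would unwind both sides for a fixed $j$. The left-hand side is $\Aa_{2n-1}\otimes_{\Ba_n}\sigma_j^B$, i.e. $\Aa_{2n-1}\otimes_{\Ba_n}R_j$ (with the extra $\langle 1\rangle$ shift when $j=1$, which the functor $\mathfrak{F}$ forgets, so it is harmless here); this is the two-term complex $\Aa_{2n-1}\otimes_{\Ba_n}\big(P_j^B\otimes_{\mathbb{K}_j}{}_jP^B\big)\xrightarrow{\ \id\otimes\beta_j\ }\Aa_{2n-1}\otimes_{\Ba_n}\Ba_n$. The right term is just $\Aa_{2n-1}$ as an $(\Aa_{2n-1},\Ba_n)$-bimodule, i.e. $\Aa_{2n-1}$ with the right $\Ba_n$-action through $\Phi\circ i$. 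For the left term, \cref{B tensor to A} identifies $\Aa_{2n-1}\otimes_{\Ba_n}P_j^B$: it is $(P_n^A)_\R$ for $j=1$ and $(P^A_{n-(j-1)})_{\bar{\C}}\oplus P^A_{n+(j-1)}$ for $j\geq 2$. Combined with the analogous right-module identification of ${}_jP^B$ (via the restriction of $\Phi$, exactly as in the proof of \cref{B tensor to A}), and the base-change isomorphism $\Aa_{2n-1}\otimes_{\Ba_n}(M\otimes_{\mathbb{K}_j}N)\cong (\Aa_{2n-1}\otimes_{\Ba_n}M)\otimes_{\mathbb{K}_j}N$, the left term becomes, for $j\geq 2$,
\[
\big((P^A_{n-(j-1)})_{\bar{\C}}\oplus P^A_{n+(j-1)}\big)\otimes_{\C}\big({}_{\bar{\C}}({}_{n-(j-1)}P^A)\oplus {}_{n+(j-1)}P^A\big),
\]
and the cross terms $P^A_{n-(j-1)}\otimes_\C{}_{n+(j-1)}P^A$ and its mirror vanish because ${}_{n-(j-1)}P^A\otimes_{\Aa}P^A_{n+(j-1)}=0$ (the vertices are distance $\geq 2$ apart); the two surviving diagonal terms are $P^A_{n-(j-1)}\otimes_\C{}_{n-(j-1)}P^A$ and $P^A_{n+(j-1)}\otimes_\C{}_{n+(j-1)}P^A$, the conjugations on the two $\C$-factors cancelling in the first one. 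Thus the LHS becomes precisely $\cR_{n-(j-1)}\otimes_{\Aa}\cR_{n+(j-1)}$ (using that $\cR_k\otimes\cR_l\cong\cR_l\otimes\cR_k$ for $|k-l|>1$ to match the mixed double complex), which is $\Psi(\sigma_j^B)$; for $j=1$ a shorter version of the same computation gives $\cR_n=\Psi(\sigma_1^B)$. One must also check the differential: the map $\id\otimes\beta_j$ transports, under these identifications, to the direct sum of the structure maps $\beta_{n\pm(j-1)}$ of $\cR_{n\pm(j-1)}$; this follows by chasing $\Phi$ on the explicit generators $\gamma_j(1)$ listed in the lemma defining $R_j$, matched against the $\beta_k$ appearing in $\cR_k$.

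I expect the main obstacle to be the bookkeeping in the last paragraph: verifying that the complex $\Aa_{2n-1}\otimes_{\Ba_n}R_j$, after applying \cref{B tensor to A} and base change, has the \emph{same differential} (not merely the same terms) as the tensor product $\cR_{n-(j-1)}\otimes_{\Aa}\cR_{n+(j-1)}$, including getting the conjugation twists ${}_{\bar\C}(-)$ and $(-)_{\bar\C}$ to cancel correctly so that the result is an honest complex of $(\Aa_{2n-1},\Aa_{2n-1})$-bimodules and not a twisted version. The cleanest way to organise this is to prove a single lemma: the functor $\Aa_{2n-1}\otimes_{\Ba_n}-$ carries the biadjoint pair $(P_j^B,{}_jP^B)$ over $\mathbb{K}_j$ to the biadjoint pair $\big(\Aa_{2n-1}\otimes_{\Ba_n}P_j^B,\ {}_jP^B\otimes_{\Ba_n}\Aa_{2n-1}\big)$, compatibly with counits, so that by the uniqueness of $\sigma_X$ up to isomorphism (the discussion around \cref{P_i biadjoint}) one gets $\Aa_{2n-1}\otimes_{\Ba_n}R_j = \Aa_{2n-1}\otimes_{\Ba_n}\sigma_{P_j^B}\cong \sigma_{\Aa_{2n-1}\otimes_{\Ba_n}P_j^B}$, and then identifies the latter with $\cR_{n-(j-1)}\otimes\cR_{n+(j-1)}$ (resp.\ $\cR_n$) by the same uniqueness applied on the type $A$ side. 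This abstract route sidesteps most of the explicit differential chase and isolates the one genuine computation — that $\Phi$ intertwines the counit $\beta_j$ with $\beta_{n-(j-1)}\oplus\beta_{n+(j-1)}$ — which is a direct check on basis elements.
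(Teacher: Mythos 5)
Your overall strategy is the same as the paper's: reduce the equivariance to the generator-level isomorphism $\Aa_{2n-1}\otimes_{\Ba_n}\sigma_j^B\cong\Psi(\sigma_j^B)_{\Ba_n}$, then prove that isomorphism by running the bimodule identifications of \cref{B tensor to A} through the two-term complex and matching differentials. This is essentially what the paper does, and your description of how the ``in particular'' clause follows formally from the generator case is correct.

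There is, however, a genuine slip in your middle paragraph. You write the left term of the complex, after base change, as
\[
\bigl((P^A_{n-(j-1)})_{\bar{\C}}\oplus P^A_{n+(j-1)}\bigr)\otimes_{\C}\bigl({}_{\bar{\C}}({}_{n-(j-1)}P^A)\oplus {}_{n+(j-1)}P^A\bigr)
\]
and appeal to vanishing of cross terms. This expansion is wrong: ${}_jP^B$ is a \emph{single} $(\C,\Ba_n)$-bimodule, isomorphic to \emph{each} of $({}_{n+(j-1)}P^A)_{\Ba_n}$ and ${}_{\bar{\C}}({}_{n-(j-1)}P^A)_{\Ba_n}$ separately (items (\ref{lemma 4}) and (\ref{lemma 5}) of \cref{lemma bimod iso}), not to their direct sum. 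Your four-term tensor is therefore twice the correct dimension, and the ``cross term'' $P^A_{n-(j-1)}\otimes_\C{}_{n+(j-1)}P^A$ does \emph{not} vanish (it is a nonzero $\C$-module); the vanishing you cite, ${}_{n-(j-1)}P^A\otimes_{\Aa}P^A_{n+(j-1)}=0$, concerns the $\otimes_{\Aa}$ in the degree-$(-2)$ term of the total complex $\cR_{n-(j-1)}\otimes_{\Aa}\cR_{n+(j-1)}$, not a cross term in your expansion. The correct manoeuvre, as in the paper, is to distribute the direct sum on the $\Aa_{2n-1}\otimes_{\Ba_n}P_j^B$ factor only,
\[
\bigl((P^A_{n-(j-1)})_{\bar{\C}}\oplus P^A_{n+(j-1)}\bigr)\otimes_{\C}{}_jP^B
\cong \bigl((P^A_{n-(j-1)})_{\bar{\C}}\otimes_{\C}{}_jP^B\bigr)\oplus\bigl(P^A_{n+(j-1)}\otimes_{\C}{}_jP^B\bigr),
\]
and then apply a \emph{different} identification of ${}_jP^B$ in each summand ($\phi_{-j}$ resp.\ $\phi_{+j}$ in the paper's notation), using $h$ and the identity ${}_\C\C\otimes_{\bar{\C}}\C_\C\cong\C$ to cancel the conjugation twists on the $-$ summand.

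Your proposed biadjunction reorganisation at the end is a genuinely different framing from the paper. It would require two pieces of scaffolding the paper avoids: that $\Aa_{2n-1}\otimes_{\Ba_n}-$ transports the biadjunction data (not just the objects) compatibly, and an orthogonality lemma identifying $\sigma_{X\oplus Y}$ with $\sigma_X\otimes\sigma_Y$ when $X^\vee\otimes Y=Y^\vee\otimes X=0$, so that $\sigma_{P^A_{n-(j-1)}\oplus P^A_{n+(j-1)}}\cong\cR_{n-(j-1)}\otimes\cR_{n+(j-1)}$. You correctly isolate the residual computation (that $\Phi$ intertwines $\beta_j$ with $\beta_{n-(j-1)}\oplus\beta_{n+(j-1)}$), which is the same check the paper performs directly; the abstract route buys conceptual clarity at the cost of the two extra lemmas, rather than ``sidestepping most of the differential chase''.
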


Before we start with the proof, we will need the following lemma:
\begin{lemma} \label{lemma bimod iso}
Recall the deformed $\C$-action given in \eqref{eqn: deformed C-action}. We have that
\begin{align}
\C\otimes_\R {}_1P^B &\cong ({}_nP^A)_{\Ba_n}, \quad &\text{as $\Z$-graded $(\C, \Ba_n)$-bimodules}; 
	\label{lemma 1}\\
{}_jP^B &\cong \left({}_{n+ (j-1)}P^A \right)_{\Ba_n},
	\quad &\text{as $\Z$-graded $(\C, \Ba_n)$-bimodules;}
	\label{lemma 2}\\
{}_jP^B &\cong 
	{}_{\bar{\C}} \left( {}_{n- (j-1)}P^A
		\right)_{\Ba_n},
	\quad &\text{as $\Z$-graded $(\C, \Ba_n)$-bimodules;}
	\label{lemma 3} \\
{}_\C \C \otimes_{\bar{\C}} \C_\C &\cong {}_\C \C_\C,
	\quad &\text{as $\Z$-graded $(\C,\C)$-bimodules.}
	\label{lemma 4}
\end{align}
\end{lemma}
\begin{proof}
We will only define the maps; the proofs that they are isomorphisms respecting the required structures follows from a simple verification.

For \eqref{lemma 1} take the morphism $\phi_1 : \C \otimes_\R {}_1P^B \ra ({}_nP^A)_{\Ba_n}$ as the restriction of $\Phi$ constructed in the proof of \cref{isomorphic algebras}.
Note that $\phi_1$ does indeed map into $({}_nP^A)_{\Ba_n}$ since
\[
\Phi(c\otimes b) = \Phi(c\otimes e_1b)= \Phi((1\otimes e_1)(c\otimes b)) =  \Phi(1\otimes e_1)\Phi(c\otimes b) = e_n\Phi(c\otimes b).
\]

For \eqref{lemma 2} and \eqref{lemma 3}, consider the morphisms
\begin{align*}
\phi_{+j} : {}_jP^B  &\ra 
	\left({}_{n+ (j-1)}P^A \right)_{\Ba_n} &\text{and}  &&\phi_{-j} : {}_jP^B  &\ra 
	{}_{\bar{\C}}\left({}_{n- (j-1)}P^A \right)_{\Ba_n} \\
b &\mapsto 
	e_{n+ (j-1)}\Phi(1\otimes b),  &    &&b &\mapsto 
	e_{n-(j-1)}\Phi(1\otimes b). 
\end{align*}

Finally for \eqref{lemma 4}, consider the morphism $c: {}_\C \C \otimes_{\bar{\C}} \C_\C \ra \C$ uniquely defined by $
1\otimes 1 \mapsto 1.$
\end{proof}


\begin{proof}[Proof of \cref{tensor equivariant}]
We shall show the required statement by showing it for each generator, namely $\Psi(\sigma_j^B)_\Ba \cong \Aa \otimes_\Ba \sigma_j^B$ as complexes of $(\Aa, \Ba)$-bimodules for each $j$.

Let $j\geq 2$.
Using the relevant isomorphisms in \cref{lemma bimod iso}, we have the following chain of bimodule isomorphisms:
\begin{align*}
&\left(P_{n-(j-1)}^A \otimes_\C \left({}_{n-(j-1)}P^A\right)_\Ba \right) \oplus \left(P_{n+(j-1)}^A \otimes_\C \left( {}_{n+(j-1)}P^A\right)_\Ba \right)  \\
&\cong \left(P_{n-(j-1)}^A \otimes_{\bar{\C}} \left( {}_{n-(j-1)}P^A \right)_\Ba \right) \oplus \left(P_{n+(j-1)}^A \otimes_\C \left( {}_{n+(j-1)}P^A \right)_\Ba \right) \quad (\text{by }\eqref{lemma 4}) \\
&\cong \left( \left( P_{n-(j-1)}^A \right)_{\bar{\C}} \otimes_\C {}_{j}P^B \right) \oplus \left(P_{n+(j-1)}^A \otimes_\C {}_{j}P^B \right) \qquad \qquad (\text{by } \eqref{lemma 2} \text{ and } \eqref{lemma 3}) \\
&\cong \left( \left( P_{n-(j-1)}^A \right)_{\bar{\C}} \oplus P_{n+(j-1)}^A \right) \otimes_\C {}_{j}P^B.
\end{align*}
Using \cref{B tensor to A}, we have that 
\[
\left( \left( P_{n-(j-1)}^A \right)_{\bar{\C}} \oplus P_{n+(j-1)}^A \right) \otimes_\C {}_{j}P^B 
\cong \Aa \otimes_\Ba P_j^B \otimes_\C {}_jP^B.
\]
Let us denote the composition of this chain of isomorphisms by
\[
\Xi: \left(P_{n-(j-1)}^A \otimes_\C \left({}_{n-(j-1)}P^A\right)_\Ba \right) \oplus \left(P_{n+(j-1)}^A \otimes_\C \left( {}_{n+(j-1)}P^A\right)_\Ba \right)
\xra{\cong}
\Aa \otimes_\Ba P_j^B \otimes_\C {}_jP^B.
\]
Since we have that
\begin{align*}
\Psi(\sigma_j^B)_\Ba 
&= \left(\sigma_{n-(j-1)}^A\sigma_{n+(j-1)}^A \right)_\Ba \\
&= \left(P_{n-(j-1)}^A \otimes_\C \left({}_{n-(j-1)}P^A\right)_\Ba \right) \oplus \left(P_{n+(j-1)}^A \otimes_\C \left( {}_{n+(j-1)}P^A\right)_\Ba \right) 
	\xra{
		\begin{bsmallmatrix}
		\beta_{n-(j-1)}^A & \beta_{n+(j-1)}^A
		\end{bsmallmatrix}
	} 
	\Aa_\Ba
\end{align*}
and
\begin{align*}
\Aa \otimes_\Ba \sigma_j^B
&= \Aa \otimes_\Ba \left( P_j^B \otimes_\C {}_jP^B 
	\xra{\beta_j^B} \Ba \right) \\
&= \Aa \otimes_\Ba P_j^B \otimes_\C {}_jP^B 
	\xra{\id \otimes_\Ba \beta_j^B} 
\Aa \otimes_\Ba \Ba,
\end{align*}
all that is left is to show that the following diagram commutes:
\[
\begin{tikzcd}[column sep=80, ampersand replacement=\&]
\left(P_{n-(j-1)}^A \otimes_\C \left({}_{n-(j-1)}P^A\right)_\Ba \right) \oplus \left(P_{n+(j-1)}^A \otimes_\C \left( {}_{n+(j-1)}P^A\right)_\Ba \right)
	\arrow{r}{ 
		\begin{bmatrix} 
		\beta_{n-(j-1)}^A & \beta_{n+(j-1)}^A 
		\end{bmatrix}}
	\ar[d, "\Xi"] \& 
\Aa_\Ba 
	\ar[d, "\cong"] \\
\Aa \otimes_\Ba P_j^B \otimes_\C {}_jP^B  
	\ar[r, "\id \otimes_\Ba \beta_j^B"] \& 
\Aa \otimes_\Ba \Ba,
\end{tikzcd}
\]
which we leave for the reader to verify.

The proof for $j=1$ is simpler and follows from a similar argument.
\end{proof}
\begin{remark}
Define $U_j := P^B_j \otimes {}_jP^B$ and $\cU_j := P^A_j \otimes {}_jP^A$.
\cref{B tensor to A} implies that 
\[
\Aa_{2n-1} \otimes_{\Ba_n} U_j \cong
\begin{cases}
\left( \mathcal{U}_n \right)_{\Ba_n}, &\text{ for } j = 1; \\
\left( \mathcal{U}_{n-(j-1)} \oplus \mathcal{U}_{n+(j-1)} \right)_{\Ba_n}, &\text{ otherwise.}
\end{cases}
\]
\noindent 
When $n=2$, this also relates our bimodules to the ones given in \cite{MackTubb}, where $\cU_2 = \Theta_t$ and $\cU_1 \oplus \cU_3 = \Theta_s$ in \cite[Example 2.12]{MackTubb} for the $A_3$ graph (up to a difference in grading).
\end{remark}
We may now use this relation to deduce that the categorical action of $\cA(B_n)$ is faithful:
\begin{theorem}\label{faithful action}
The (weak) action of $\cA(B_n)$ on the category $\Kom^b(\Ba_n$-$\text{p$_{r}$g$_{r}$mod})$ given in \cref{Cat B action} is faithful.
\end{theorem}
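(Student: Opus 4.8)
The plan is to deduce faithfulness of the $\cA(B_n)$-action on $\Kom^b(\Ba_n\text{-}\text{p$_{r}$g$_{r}$mod})$ from the faithfulness of the $\cA(A_{2n-1})$-action on $\Kom^b(\Aa_{2n-1}\text{-}\text{p$_{r}$g$_{r}$mod})$, which is proven in \cite{KhoSei}, by transporting the problem across the extension-of-scalars functor $\Aa_{2n-1}\otimes_{\Ba_n}-$ using the equivariance established in \cref{tensor equivariant}. First I would recall the precise mechanism: since $\Psi: \cA(B_n)\hookrightarrow \cA(A_{2n-1})$ is injective (\cref{injB}) and the $\cA(A_{2n-1})$-action on $\Kom^b(\Aa_{2n-1}\text{-}\text{p$_{r}$g$_{r}$mod})$ is faithful, the induced $\cA(B_n)$-action (through $\Psi$) on $\Kom^b(\Aa_{2n-1}\text{-}\text{p$_{r}$g$_{r}$mod})$ is still faithful. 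So it suffices to show: if $\sigma\in\cA(B_n)$ acts as the identity on $\Kom^b(\Ba_n\text{-}\text{p$_{r}$g$_{r}$mod})$, then $\Psi(\sigma)$ acts as the identity on $\Kom^b(\Aa_{2n-1}\text{-}\text{p$_{r}$g$_{r}$mod})$, whence $\Psi(\sigma)=1$ and therefore $\sigma=1$.

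The key step is to make "acts as the identity" transfer correctly along the functor. Suppose $\sigma\otimes_{\Ba_n} C \cong C$ for every $C\in\Kom^b(\Ba_n\text{-}\text{p$_{r}$g$_{r}$mod})$ — in particular I only need this on the generating objects $P^B_1,\dots,P^B_n$, or, more robustly, I take $C=\Ba_n$ viewed as a complex concentrated in degree $0$, so that $\sigma\otimes_{\Ba_n}\Ba_n \cong \Ba_n$ as a complex of $(\Ba_n,\Ba_n)$-bimodules (the action being given by tensoring with invertible bimodule complexes, $\sigma$ is literally such a complex up to isomorphism). Applying $\Aa_{2n-1}\otimes_{\Ba_n}-$ and using \cref{tensor equivariant}, which gives $\Aa_{2n-1}\otimes_{\Ba_n}(\sigma\otimes_{\Ba_n}C)\cong \Psi(\sigma)\otimes_{\Aa_{2n-1}}(\Aa_{2n-1}\otimes_{\Ba_n}C)$, I get
\[
\Psi(\sigma)\otimes_{\Aa_{2n-1}}\big(\Aa_{2n-1}\otimes_{\Ba_n} C\big)\;\cong\;\Aa_{2n-1}\otimes_{\Ba_n} C
\]
for all $C$. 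Since $\Aa_{2n-1}\otimes_{\Ba_n}-$ hits (up to direct sums, by \cref{B tensor to A}) all the indecomposable projectives $P^A_k$ — indeed $\Aa_{2n-1}\otimes_{\Ba_n}P^B_1\cong P^A_n$ and $\Aa_{2n-1}\otimes_{\Ba_n}P^B_j\cong P^A_{n-(j-1)}\oplus P^A_{n+(j-1)}$ for $j\geq 2$, and $k\mapsto$ these exhausts $1,\dots,2n-1$ — the objects in the essential image generate $\Kom^b(\Aa_{2n-1}\text{-}\text{p$_{r}$g$_{r}$mod})$ as a triangulated category. A standard argument (the full subcategory of objects $M$ with $\Psi(\sigma)\otimes M\cong M$ "naturally" is not automatically triangulated, so one should instead argue at the level of the invertible bimodule complex) shows $\Psi(\sigma)$ is isomorphic to the identity bimodule complex, hence acts as the identity autoequivalence.

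I expect the main obstacle to be the bookkeeping around what "faithful" means for a weak action and promoting an object-wise isomorphism to an isomorphism of functors/bimodule complexes. The cleanest route, which I would take, is: $\sigma$ and $\Psi(\sigma)$ are (isomorphism classes of) invertible objects in $\Kom^b((\Ba_n,\Ba_n)\text{-bimod})$ and $\Kom^b((\Aa_{2n-1},\Aa_{2n-1})\text{-bimod})$ respectively; \cref{tensor equivariant} in its stated form $\Aa_{2n-1}\otimes_{\Ba_n}\sigma^B_j\cong\Psi(\sigma^B_j)_{\Ba_n}$ already compares the bimodule complexes directly, so "$\sigma$ acts trivially" means $\sigma\cong\Ba_n$ as a bimodule complex, and then $\Psi(\sigma)_{\Ba_n}\cong\Aa_{2n-1}\otimes_{\Ba_n}\Ba_n\cong (\Aa_{2n-1})_{\Ba_n}$ as $(\Aa_{2n-1},\Ba_n)$-bimodules. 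From here one upgrades to an isomorphism of $(\Aa_{2n-1},\Aa_{2n-1})$-bimodule complexes by noting $\Psi(\sigma)$ is already $\Aa_{2n-1}$-bilinear and the restriction functor from $(\Aa_{2n-1},\Aa_{2n-1})$-bimodules to $(\Aa_{2n-1},\Ba_n)$-bimodules reflects isomorphisms between complexes that are sums of shifts of $P^A_k\otimes {}_kP^A$ (this is where the explicit description in \cref{B tensor to A} plus the basis computations from \cref{isomorphic algebras} do the work). Then $\Psi(\sigma)\cong\Aa_{2n-1}$, faithfulness of the type $A$ action (\cite[Theorem 2.7 / the faithfulness theorem]{KhoSei}) gives $\Psi(\sigma)=1$ in $\cA(A_{2n-1})$, and injectivity of $\Psi$ gives $\sigma=1$. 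I would write the proof in roughly this order: (1) reduce to showing $\Psi(\sigma)=1$; (2) apply $\Aa_{2n-1}\otimes_{\Ba_n}-$ and \cref{tensor equivariant}; (3) use \cref{B tensor to A} to see the essential image generates and reflect the triviality back; (4) conclude via faithfulness in type $A$ and injectivity of $\Psi$.
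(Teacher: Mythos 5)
Your overall strategy—push the hypothesis that $\sigma$ fixes every object through the equivariant functor $\Aa_{2n-1}\otimes_{\Ba_n}-$, deduce that $\Psi(\sigma)$ fixes $\bigoplus_{k=1}^{2n-1}P^A_k$ up to isomorphism, invoke the type-$A$ faithfulness from \cite{KhoSei}, and finish with injectivity of $\Psi$—is exactly what the paper does. The paper tests on the single object $C=\bigoplus_{j=1}^n P^B_j$, applies \cref{tensor equivariant} and \cref{B tensor to A} to get $\Psi(\sigma)\left(\bigoplus_{j=1}^{2n-1}P^A_j\right)\cong\bigoplus_{j=1}^{2n-1}P^A_j$, and then cites \cite[Corollary 1.2]{KhoSei}.

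The worry that drives most of your write-up—that you must promote an object-wise isomorphism to an isomorphism of functors or bimodule complexes before you can conclude—is not an obstacle here. The Khovanov–Seidel faithfulness criterion is already stated and proved (via $L_A$ and intersection numbers) at the level of objects: if $\beta\in\cA(A_{2n-1})$ satisfies $\beta\left(\bigoplus_k P^A_k\right)\cong\bigoplus_k P^A_k$, then $\beta=\mathrm{id}$. So your first route, testing only on the $P^B_j$, is already complete and matches the paper, and the bimodule detour is unnecessary.

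Moreover the detour as you sketch it has a real gap. You assert that restriction $(\Aa_{2n-1},\Aa_{2n-1})\text{-bimod}\to(\Aa_{2n-1},\Ba_n)\text{-bimod}$ reflects isomorphisms between complexes built from the $P^A_k\otimes_{\C}{}_kP^A$. Restriction of scalars does not reflect isomorphisms in general, and even the restricted claim would require an argument (for instance a Krull–Schmidt multiplicity count after decomposing the $P^A_k\otimes_\C{}_kP^A$ as $(\Aa_{2n-1},\Ba_n)$-bimodules using \cref{lemma bimod iso}); nothing of the sort is immediate. Since the object-level route already closes the proof, you should drop the bimodule argument rather than try to repair it.
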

\begin{proof}
Assume that we are given $\sigma \in \cA(B_n)$ such that
$ \sigma\left(C\right) \cong C $
for all $C \in \Kom^b(\Ba_n$-$\text{p$_{r}$g$_{r}$mod})$.
We will show that this implies $\sigma$ is the identity.
In particular, take $C = \oplus_{j=1}^n P^B_j$ so that we have \[\sigma\left(\oplus_{j=1}^n P^B_j \right) \cong \oplus_{j=1}^n P^B_j.\]
Applying the functor $\Aa_{2n-1} \otimes_{\Ba_n} -$, we obtain
\begin{equation}\label{faithful 1}
\Aa_{2n-1}\otimes_{\Ba_n} \sigma\left(\oplus_{j=1}^n P^B_j \right) \cong 
	\Aa_{2n-1}\otimes_{\Ba_n} \left(\oplus_{j=1}^n P^B_j \right) \cong 
	\oplus_{j=1}^{2n-1} P^A_j.
\end{equation}
Applying \cref{tensor equivariant} to the LHS of \eqref{faithful 1}, we get 
\begin{equation}\label{faithful 2}
\Aa_{2n-1}\otimes_{\Ba_n} \sigma \left(\oplus_{j=1}^n P^B_j \right)
	\cong \Psi(\sigma) \left( \Aa_{2n-1} \otimes_{\Ba_n} \left(\oplus_{j=1}^n P^B_j \right) \right) \cong \Psi(\sigma) \left( \oplus_{j=1}^{2n-1} P^A_j \right).
\end{equation}
Combining the two equations \eqref{faithful 1} and \eqref{faithful 2} above we deduce that
\[
\Psi(\sigma) \left( \oplus_{j=1}^{2n-1} P^A_j \right) \cong \oplus_{j=1}^{2n-1} P^A_j.
\]
Since it was shown in \cite[Corollary 1.2]{KhoSei} that the type $A$ categorical action is faithful,  we conclude that $\Psi(\sigma) = \id$.
But $\Psi$ is injective, so we must have that $\sigma = \id$ as required.
\end{proof}


\section{Main Theorem} \label{main theorem}
Let us state the main theorem that we aim to prove:
\begin{theorem} \label{fullmaintheorem}
The diagram in \cref{full picture} is commutative, where the maps $\mathfrak{m}, L_B, L_A$ and $\Aa_{2n-1} \otimes_{\Ba_n} -$ are all $\mathcal{A}(B_n)$-equivariant.
\end{theorem}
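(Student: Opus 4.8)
\textbf{Proof proposal for \cref{fullmaintheorem}.}

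The statement is a bookkeeping conclusion assembling the four equivariance results established separately, so the plan is essentially to observe that all the hard work has already been done and to check that the pieces fit. First I would recall which of the four maps have already been shown to be $\cA(B_n)$-equivariant: the lift $\mathfrak{m}$ by \cref{topological equivariant}; the extension-of-scalars functor $\Aa_{2n-1}\otimes_{\Ba_n}-$ by \cref{tensor equivariant}; and $L_A$, which is $\cA(A_{2n-1})$-equivariant by \cref{L_A equivariant} (extended to multicurves as noted right after that theorem), hence $\cA(B_n)$-equivariant by restriction along the injection $\Psi\colon \cA(B_n)\hookrightarrow \cA(A_{2n-1})$ of \cref{injB}. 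The only genuinely new analytic input is \cref{L_B equivariant}, which gives $\cA(B_n)$-equivariance of $L_B$, and \cref{diagram commutes}, which gives commutativity of the square; both are proved earlier in this section. Thus the plan is: (i) cite commutativity from \cref{diagram commutes}; (ii) cite equivariance of $\mathfrak{m}$, $L_A$, $\Aa_{2n-1}\otimes_{\Ba_n}-$, and $L_B$ from the four results just listed; (iii) conclude.

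Concretely I would write: by \cref{diagram commutes}, for every trigraded admissible curve $\check c$ in $\D^B_{n+1}$ we have $\Aa_{2n-1}\otimes_{\Ba_n} L_B(\check c)\cong L_A(\mathfrak{m}(\check c))$ in $\Kom^b(\Aa_{2n-1}\text{-}p_rg_rmod)$, which is exactly commutativity of the square in \cref{full picture}. Equivariance of the three maps $\mathfrak{m}$, $L_A$ (restricted via $\Psi$), and $\Aa_{2n-1}\otimes_{\Ba_n}-$ has been established in \cref{topological equivariant}, \cref{L_A equivariant}, and \cref{tensor equivariant} respectively, and equivariance of $L_B$ is \cref{L_B equivariant}. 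This exhausts the content of the theorem, so nothing further is required.

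There is essentially no obstacle here: the theorem is the summary statement and its proof is a one-paragraph citation of the preceding results. If one wanted to be slightly more careful, the only point worth a sentence is that the $\cA(B_n)$-actions appearing on the right-hand column (on bigraded multicurves in $\D^A_{2n}$ and on $\Kom^b(\Aa_{2n-1}\text{-}p_rg_rmod)$) are, by definition, the ones induced through $\Psi$, so that ``$L_A$ is $\cA(A_{2n-1})$-equivariant'' specializes to ``$L_A$ is $\cA(B_n)$-equivariant'' without any additional argument; likewise the vertical map $L_B$ uses the $\cA(B_n)$-action of \cref{Cat B action}. With these identifications in place the diagram and all four equivariances are immediate from the cited statements.

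\begin{proof}
Commutativity of the square in \cref{full picture} is precisely the content of \cref{diagram commutes}: for any trigraded admissible curve $\check c$ in $\D^B_{n+1}$ one has $\Aa_{2n-1}\otimes_{\Ba_n}L_B(\check c)\cong L_A(\mathfrak{m}(\check c))$ in $\Kom^b(\Aa_{2n-1}\text{-}p_rg_rmod)$. For the equivariance of the four maps: the lifting map $\mathfrak{m}$ is $\cA(B_n)$-equivariant by \cref{topological equivariant}; the functor $\Aa_{2n-1}\otimes_{\Ba_n}-$ is $\cA(B_n)$-equivariant by \cref{tensor equivariant}, where the $\cA(B_n)$-action on $\Kom^b(\Aa_{2n-1}\text{-}p_rg_rmod)$ is the one induced through the injection $\Psi$ of \cref{injB}; the map $L_A$ is $\cA(A_{2n-1})$-equivariant by \cref{L_A equivariant} (extended to admissible multicurves as explained immediately after that theorem), hence $\cA(B_n)$-equivariant after restriction along $\Psi$; and finally $L_B$ is $\cA(B_n)$-equivariant by \cref{L_B equivariant}. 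This establishes all the assertions of the theorem.
\end{proof}
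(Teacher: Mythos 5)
Your proposal matches the paper's approach exactly: the theorem is stated as a collection of the already-proved results (\cref{topological equivariant}, \cref{tensor equivariant}, \cref{L_A equivariant}, \cref{L_B equivariant}, and \cref{diagram commutes}), and the paper gives no further argument beyond the sentence "Collecting all the results." Your careful remark that the $\cA(B_n)$-actions on the right-hand column are those induced through $\Psi$, so that $\cA(A_{2n-1})$-equivariance of $L_A$ specializes correctly, is exactly the one subtlety worth spelling out.
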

%
%
\begin{figure}[H] 
\begin{tikzpicture} [scale = 0.9]
\node (tbB) at (-3,1.75) 
	{$\mathcal{A}(B_n)$};
\node (cbB) at (-3,-3.5) 
	{$\mathcal{A}(B_n)$};
\node (tbA) at (10.5,1.75) 
	{$\mathcal{A}(A_{2n-1}) \overset{\Psi}{\hookleftarrow} \mathcal{A}(B_n) $}; 
\node (cbA) at (10.5,-3.5) 
	{$\mathcal{A}(A_{2n-1}) \overset{\Psi}{\hookleftarrow} \mathcal{A}(B_n) $};

\node[align=center] (cB) at (0,0) 
	{Isotopy classes of admissible \\ trigraded curves $\check{\fC}^{adm}$  in $\D^A_{n+1}$};
\node[align=center] (cA) at (7,0) 
	{Isotopy classes of admissible \\ bigraded multicurves $\ddot{\udfC}^{adm}$  in $\D^B_{2n}$};
\node (KB) at (0,-2)
	{$\Kom^b(\Ba_n$-$\text{p$_{r}$g$_{r}$mod})$};
\node (KA) at (7,-2) 
	{$\Kom^b(\Aa_{2n-1}$-$\text{p$_{r}$g$_{r}$mod})$};

\coordinate (tbB') at ($(tbB.east) + (0,-1)$);
\coordinate (cbB') at ($(cbB.east) + (0,1)$);
\coordinate (tbA') at ($(tbA.west) + (0,-1)$);
\coordinate (cbA') at ($(cbA.west) + (0,1)$);

\draw [->,shorten >=-1.5pt, dashed] (tbB') arc (245:-70:2.5ex);
\draw [->,shorten >=-1.5pt, dashed] (cbB') arc (-245:70:2.5ex);
\draw [->, shorten >=-1.5pt, dashed] (tbA') arc (-65:250:2.5ex);
\draw [->,shorten >=-1.5pt, dashed] (cbA') arc (65:-250:2.5ex);

\draw[->] (cB) -- (KB) node[midway, left]{$L_B$};
\draw[->] (cB) -- (cA) node[midway,above]{$\mathfrak{m}$}; 
\draw[->] (cA) -- (KA) node[midway,right]{$L_A$};
\draw[->] (KB) -- (KA) node[midway,above]{$\Aa_{2n-1} \otimes_{\Ba_n} -$};
\end{tikzpicture}
\caption{The commutative diagram of \cref{fullmaintheorem}. The first row is from \cref{topological equivariant} and the second row is from \cref{tensor equivariant}.}
\label{full picture}
\end{figure}
\noindent In \cref{topology}, we introduced and showed that $\mathfrak{m}$ is $\cA(B_n)$-equivariant.
In \cref{relating categorical b a action}, we showed that the functor $\Aa_{2n-1} \otimes_{\Ba_n} -$ is also $\cA(B_n)$-equivariant.
Thus, the missing pieces are:
\begin{enumerate}
\item the definitions of the maps $L_B$ and $L_A$;
\item the fact that the maps $L_B$ and $L_A$ are $\cA(B_n)$-equivariant; and
\item the commutativity of the diagram.
\end{enumerate}
We shall start by recalling the definition of $L_A$ from \cite[Section 4a]{KhoSei} and their result that $L_A$ is $\cA(A_{2n-1})$-equivariant.
We then define $L_B$ in \cref{subsect: L_B}, where the (technical) proof that the diagram commutes and that $L_B$ is $\cA(B_n)$-equivariant are given in \cref{subsect: technical results}.

\subsection{Complexes associated to admissible multi-curves (Type $A$)}

Here we state the constructions and results shown in \cite[Section 4]{KhoSei}, which can be easily extended to admissible multicurves.
Note that we added a subscript $L_A$ instead of $L$ used in \cite{KhoSei} to differentiate between type $A$ and type $B$ later on.
Let $\ddot{c}$ be a bigraded admissible curve in normal form.
We associate to $\ddot{c}$ an object $L_A(\ddot{c})$ in the category $\Kom^b(\Aa_{2n-1}$-$\text{p$_{r}$g$_{r}$mod})$.
Start by defining $L_A(\ddot{c})$ as a bigraded $\Aa_{2n-1}$-module:
\begin{align}
L_A(\ddot{c}) = \bigoplus_{x \in cr(\ddot{c})}  P(x),
\end{align}
where $P(x) = P^A_{x_0}[-x_1]\{x_2\}$, with $[-]$ denoting the cohomological degree shift; see the paragraph after \cref{basic curves A} for the definition of $(x_0, x_1, x_2)$.
For every $x,y \in cr(\ddot{c})$ define
$
\partial_{yx}: P(x) \ra P(y)
$
by the following rules:
\begin{itemize}
\item If $x$ and $y$ are the endpoints of an essential segment and $y_1 = x_1 + 1$, then
\begin{enumerate}
\item If $x_0 = y_0$ (then it must be that $x_2 = y_2 + 1$) then
\[
\partial_{yx}: P(x) \ra P(y) \cong P(x)[-1]\{1\}
\]
is the multiplication on the right by $X_{x_0} \in \Aa_{2n-1}$.
\item If $x_0 = y_0 \pm 1$, then $\partial_{yx}$ is the right multiplication by $(x_0|y_0) \in \Aa_{2n-1}$;
\end{enumerate}
\item otherwise $\partial_{yx} = 0$.
\end{itemize}
We define the differential $\partial$ as
$
\partial := \sum_{x,y} \partial_{yx}.
$
See \cite[Lemma 4.1]{KhoSei} for a proof that this defines a complex.
Moreover, it follows easily that 
\begin{equation} \label{action shift comm A}
L_A(\chi_A(r_1, r_2)\ddot{c}) \cong L_A(\ddot{c})[-r_1]\{r_2\}.
\end{equation}
For $\ddot{g}$ a bigraded $j$-string of $\ddot{c}$, we can also assign a complex $L_A(\ddot{g})$ to $\ddot{g}$, where as a bigraded abelian group, $L_A(\ddot{g}) = \bigoplus_{x \in cr{(g)}} P^A(x)$ and the differentials are obtained from essential segments of $\ddot{g}$ the same way as for admissible curves.
We can easily extend this to define $L_A(\ddot{h})$ for $h \subseteq c$ a connected subset of $c$ such that $h = \cup g_{\alpha, j}$ with each $g_{\alpha, j}$ some bigraded $j$-string of $c$.
The following theorem is proven in \cite[Theorem 4.3]{KhoSei}:
\begin{theorem} \label{L_A equivariant}
For a braid $\sigma \in \cA(A_{m})$ and a bigraded admissible curve $\ddot{c}$ in $\D^A_{m+1}$, we have $\sigma L_A(\ddot{c}) \cong L_A(\sigma(\ddot{c}))$ in the category $\Kom^b(\Aa_m$-$p_rg_rmod)$, i.e. $L_A$ is $\cA(A_m)$-equivariant.
\end{theorem}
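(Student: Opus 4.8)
\textbf{Proof proposal for \cref{L_A equivariant}.}
The plan is to follow the strategy of Khovanov--Seidel \cite{KhoSei} closely, reducing the equivariance statement to a finite, local verification. The key reduction is that $\cA(A_m)$ is generated by the half-twists $\sigma^A_k$ (and their inverses), so it suffices to prove $\sigma^A_k L_A(\ddot{c}) \cong L_A(\sigma^A_k(\ddot{c}))$ for each standard generator $\sigma^A_k$, each of its inverses, and each bigraded admissible curve $\ddot{c}$ in normal form. Using the shift-compatibility relation \eqref{action shift comm A} together with property (T2)-type behaviour of gradings, one may moreover work up to overall grading shifts, so that only the underlying (ungraded) pictures, decorated with relative local indices, need to be tracked.

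First I would recall from \cref{normal DB}-style analysis (the type $A$ version in \cite[Section 3e]{KhoSei}) the decomposition of an admissible curve in normal form into $k$-strings, and the bijection between $j$-crossings of $c$ and of $\sigma^A_k(c)$ for $j \ne k$, with the normal form of $\sigma^A_k(c)$ differing from $c$ only inside $\cD_{k-1} \cup \cD_k$. Because both $L_A(\ddot{c})$ and the complex $\sigma^A_k \otimes_{\Aa} L_A(\ddot{c})$ decompose as direct sums indexed by crossings/strings, and the differential is a sum of contributions of essential segments each lying in a single region, the comparison localises: it is enough to check, for each of the finitely many isotopy types of $k$-strings $\check{g}$ (the six types for $1 \le k < m$, the two types for $k = m$, in the $A$-picture), that $\sigma^A_k \otimes_\Aa L_A(\check{g}) \cong L_A(\sigma^A_k(\check{g}))$ as complexes of projective $\Aa$-modules, compatibly with the gradings induced from the ends of $\check{g}$. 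This is exactly \cite[Theorem 4.3]{KhoSei}; since the only change we have made to $\Aa_{2n-1}$ relative to \cite{KhoSei} is the choice of grading (see the remark after \cref{basic curves A}), I would point out that the homotopy equivalences produced by the Gaussian-elimination arguments there are grading-homogeneous for our conventions as well, so the proof carries over verbatim after bookkeeping the degree shifts.

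The key computational step, for each $k$-string type, is to write down the double complex $\cR_k \otimes_\Aa L_A(\check{g})$ (with $\cR_k = 0 \to P^A_k \otimes_\C {}_kP^A \xra{\beta_k} \Aa \to 0$), take its total complex, and simplify by iterated Gaussian elimination of the contractible summands coming from the identity components of $\beta_k$ and its matrix entries; the resulting reduced complex is then matched termwise with $L_A(\sigma^A_k(\check{g}))$, the matching of differentials being forced by the zigzag relations $(j|j+1|j) = (j|j-1|j)$ and $(j|j+1|j+2) = 0$. One also treats $\sigma^{A,-1}_k$ using $\cR'_k$ in the analogous way.

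The main obstacle I anticipate is purely organisational rather than conceptual: keeping the local indices $(x_1, x_2)$ consistent across the identifications, since the normal-form isotopy inside $\cD_{k-1}\cup \cD_k$ reindexes crossings and shifts some of the bigradings (the ``$(r_1+1, r_2-1, r_3)$'' versus ``$(r_1, r_2, r_3)$'' bookkeeping visible in the $B$-picture figures has a simpler $A$-analogue), and one must be careful that the Gaussian-elimination homotopy equivalences respect exactly these shifts. Beyond that, the only genuine verification particular to our setup is confirming that our modified grading on $\Aa_{2n-1}$ does not disturb the homogeneity of $\beta_k$, $\gamma_k$, and the elimination maps; this is immediate from the fact that our grading still assigns degree $1$ to precisely the ``downward'' arrows, so $\beta_k$ is homogeneous of the same degree as in \cite{KhoSei}. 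Hence the whole argument reduces to invoking \cite[Theorem 4.3]{KhoSei} with the dictionary provided by \cref{basic curves A} and its remark.
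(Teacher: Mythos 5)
Your proposal is essentially the same as the paper's: the paper does not re-prove this theorem at all, but simply cites \cite[Theorem~4.3]{KhoSei} and records the grading difference in the remark following the choice of basic bigraded curves. Your sketch of the reduction to generators, localisation to $k$-strings, and Gaussian elimination is a faithful recounting of the cited argument, and your observation that the only thing to check in the present setup is compatibility with the modified grading is exactly the right point.

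However, the specific justification you give for that compatibility is factually wrong. You write that ``our grading still assigns degree $1$ to precisely the `downward' arrows.'' Under the grading actually used in this paper, the arrows of degree $1$ are $(j|j-1)$ for $j>n$ and $(j|j+1)$ for $j<n$, i.e.\ precisely the arrows pointing \emph{toward} the middle vertex $n$; the arrows pointing away from $n$ have degree $0$. So the degree-$1$ arrows are not all of one direction, and in particular $X_j = (j|j-1|j)$ has degree $1$ here rather than degree $2$ as in \cite{KhoSei}. The conclusion you want still holds — the zigzag relations, the maps $\beta_j$ and $\gamma_j$, and the differentials appearing in the $k$-string reductions remain homogeneous, and the altered shifts are absorbed by the corresponding change in the bigradings of the basic curves recorded in Remark~\ref{basic curves A} (replacing $I^{bigr}(\ddot{\varrho}_{j-1},\ddot{\varrho}_{j})=1$ with $I^{bigr}(\ddot{\varrho}_{j},\ddot{\varrho}_{j-1})=1$ for $2\le j\le n$) — but the reasoning should rest on that consistency between the algebra grading and the local-index conventions, not on a claim that the degree pattern on arrows is unchanged from \cite{KhoSei}.
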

We extend $L_A$ to admissible multicurves as follows:
given bigraded multicurves $\coprod_j \ddot{c}_j$,
\[
L_A \bigg(  \coprod_j \ddot{c}_j \bigg) := \bigoplus_j L_A(\ddot{c}_j).
\]
It follows easily that this defines a complex, and both \eqref{action shift comm A} and \cref{L_A equivariant} still hold for admissible multicurves.

\subsection{Complexes associated to admissible curves (Type $B$)} \label{subsect: L_B}
Consider a trigraded admissible  curve $\check{c}$.
We associate to $\check{c}$ an object $L_B(\check{c})$ in the category $\Kom^b(\Ba_n$-$p_rg_rmod)$.
Start by defining $L_B(\check{c})$ as a trigraded $\Ba_n$-module:
\[
L_B(\check{c}) = \bigoplus_{y \in cr(\check{c})}  P(y)
\]
where $P(y) = P^B_{y_0}[-y_1]\{y_2\}\<y_3\>$ (see the second paragraph after \cref{injBA} in \cref{local index function} for definition of $y_0, y_1, y_2$ and $y_3$).

We now define maps $\partial_{zy}: P(y) \ra P(z)$ for each $y,z \in cr(\check{c})$ using the following rules (note that these are \emph{\textbf{not}} the differentials yet):
\begin{itemize}
\item If $y$ and $z$ are the endpoints of an essential segment in $D_j$ for $j \geq 1$ and $z_1 = y_1 + 1$, then
	\begin{enumerate}[1.]
 	\item If $y_0 = z_0 $ (then also $y_2 = z_2 + 1$ and $y_3 = z_3$), then
 	$$ \partial_{zy}: P(y) \ra P(z) \cong P(y)[-1]\{1\} $$
 	is the right multiplication by the element $X_{y_0} \in \Ba_n.$
 	\item If  $y_0 = z_0 \pm 1$, then $\partial_{zy}$
  	is the right multiplication by $(y_0 | z_0) \in \Ba_n$.
\end{enumerate}
\item Otherwise, $\partial_{yz} = 0.$
\end{itemize}
We will modify some of these maps before using them as differentials.
Define the following equivalence relation on the set of 1-crossings:
\[
y \sim y' \iff y \text{ and } y' \text{ are connected by an essential segment in } D_0.
\]
Consider the partitioning of the set of 1-crossings using the equivalence relation above.
Referring to the possible normal forms in $D_0$ given by \Cref{2types0}, every equivalence classes under this relation consists of either one element (Type 3')  or two elements (Type 2'').
For each equivalence class $[y]$ of 1-crossings, we modify the some of the maps given previously by the following rule:
\begin{itemize}
\item If $[y] = \{y \}$, we modify nothing;
\item otherwise, $[y] = \{y,y'\}$ has two distinct 1-crossings. 
Note that at least one of the 1-crossings must be an endpoint of some essential segment in $D_1$ (otherwise $c$ is clearly not be admissible).
Up to relabelling, we may assume that $y$ is always a 1-crossing that is an endpoint of an essential segment $\gamma$ in $D_1$.
Note that $\gamma$ cannot have both $y$ and $y'$ as endpoints (this will imply that $c$ is a simple closed curve, which is not admissible).
As such, let us label the other endpoint of $\gamma$ connecting $y$ as $z \neq y'$.
Consider the two possible cases for $z$:
	\begin{enumerate}[1.]
	\item $z$ is a 2-crossing:
		\begin{enumerate}[(a)]
		\item if $y_1 = z_1 + 1$, then we have that $\partial_{yz}: P(z) \ra P(y)$ is given by the right multiplication by $(2|1)$. 
		We modify 
		$\partial_{y'z}: P(z) \ra P(y')\cong P(y)\<1\>$ (which was necessarily 0 previously) so that it is now the right multiplication by $-i(2|1)$;
		\item otherwise, we have instead $z_1 = y_1 + 1$.
		In this case, $\partial_{zy}: P(y) \ra P(z)$ is given by the right multiplication by $(1|2)$. 
		We modify $\partial_{zy'}: P(y') \cong P(y)\<1\> \ra P(z)$ (which was necessarily 0 previously) so that it is now the right multiplication by $(1|2)i$.
		\end{enumerate}
	\item $z$ is a 1-crossing: 
		\begin{enumerate}[(a)]
		\item if $y_1 = z_1 + 1$, we modify nothing;
		\item otherwise, we have instead $z_1 = y_1 + 1$.
		In this case $\partial_{zy}: P(y) \ra P(z)$ is given by the right multiplication by $X_1$.
		Once again consider the two possible cases of the equivalence class $[z]$:  
  			\begin{enumerate}[(i)] 
 			\item If $[z] = \{z \}$, we modify nothing;
			\item otherwise $[z] = \{z, z'\}$ with $z\neq z'$.
 			We then modify $\partial_{z'y'}: P(y)\<1\> \cong P(y') \ra P(z') \cong P(z)\<1\>$ (which was necessarily $0$ previously) so that it is now the right multiplication by $X_1$;
			\end{enumerate}
		\end{enumerate}
	\end{enumerate} 
	Similarly, if the other 1-crossing $y'$ in $[y]$ is also an endpoint of an essential segment in $D_1$ (distinct from $\gamma$), we shall repeat the same process above for $y'$.
\end{itemize}
 %
 %
 %
%
%
%
Finally, we define the differential as
$
\partial = \sum_{x,y \in cr(\check{c})} \partial_{xy},
$
where $\partial_{xy}$ are the \emph{modified} version above.

\begin{lemma}\label{check complex}
$(L_B(\check{c}), \partial)$ is a complex of projective  graded $\Ba_n$-modules with a grading-preserving differential.
\end{lemma}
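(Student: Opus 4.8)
The statement to prove is \cref{check complex}: that $(L_B(\check c),\partial)$ is a complex of projective graded $\Ba_n$-modules with a grading-preserving differential. The plan is to split the verification into two independent parts: (1) that $\partial$ is grading-preserving (with respect to all three gradings, cohomological, $\{-\}$ and $\<-\>$), and (2) that $\partial^2 = 0$. The projectivity is immediate since each $P(y)=P^B_{y_0}[-y_1]\{y_2\}\<y_3\>$ is an indecomposable projective up to shifts, and a finite direct sum of projectives is projective; this needs no work.

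For the grading-preserving claim, I would go segment by segment through the normal-form picture. The unmodified maps $\partial_{zy}$ attached to essential segments in $D_j$ for $j\ge 1$ are exactly of the same shape as in Khovanov--Seidel, so the cohomological and $\{-\}$ bookkeeping is identical to \cite{KhoSei} (using \cref{action shift comm A}-style shift conventions and the local-index labels from \cref{sixtype}, \cref{2types0}, \cref{2typesn}); one checks that multiplication by $X_{y_0}$ raises the $\{-\}$-degree by $1$ and the $[-y_1]$ compensates for $y_2=z_2+1$, etc. The genuinely new points are the modified maps coming from the $D_0$-equivalence-relation rule: I would verify for each of the cases (1a), (1b), (2a), (2b)(i), (2b)(ii) that the element of $\Ba_n$ used as the new map — namely $-i(2|1)$, $(1|2)i$, or $X_1$ — lands in the correct bidegree so that, together with the $\<1\>$ shift introduced when passing from $P(y)$ to $P(y')\cong P(y)\<1\>$, the map is homogeneous of degree zero. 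The role of \cref{2types0} here is precisely that it records the $\Z/2\Z$-index jump across the $D_0$-segments, which is what the $\<1\>$ shifts track; I would cite it and the trigrading tables to pin down the degrees rather than recompute from scratch.

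For $\partial^2=0$, the argument is local: $(\partial^2)_{zx}=\sum_y \partial_{zy}\partial_{yx}$, and a nonzero contribution requires $x,y$ and $y,z$ to each bound an essential segment with $y_1=x_1+1$ and $z_1=y_1+1$, so $x,y,z$ all lie in a single $j$-string or in a pair of adjacent regions. Away from the region $D_0\cup D_1$ the computation is verbatim that of \cite[Lemma 4.1]{KhoSei} and reduces to the zigzag relations $(j|j+1|j)=(j|j-1|j)$ and $(j|j+1|j+2)=0=(j+2|j+1|j)$ in $\Aa_{2n-1}$, which hold in $\Ba_n$ by the defining relations of \cref{B zigzag}. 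The cases that must be checked by hand are those where one of $x,y,z$ is a $1$-crossing: here the possible $j$-string types are enumerated in \cref{B 1-string} (types $II'_w,II'_{w+\frac12},III'_w,III'_{w+\frac12},V',VI$), and for each one I would trace through the modified maps and check that the two-step compositions cancel or vanish, using the new relations $(ie_2)(ie_2)=-e_2$, $(1|2)(ie_2)(2|1)=0$, $(ie_2)X_2=X_2(ie_2)$, and the "complex symmetry" relations \cref{complex symmetry 1}, \cref{complex symmetry 2}. Concretely: in case (1a)/(1b) a composition through a $2$-crossing produces a term in $(1|2)(ie_2)(2|1)$ which is zero; in case (2b)(ii) a composition of two $X_1$-type maps must match between the $y\to z$ and $y'\to z'$ strands, which is forced by $X_1$ being central on the relevant idempotent block; and the sign $-i$ in (1a) versus $+i$ in (1b) is exactly what makes a potential square-zero failure across a $D_0$ bigon cancel, mirroring the sign in $\gamma_1$ from \cref{B zigzag}.

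The main obstacle I expect is the bookkeeping in the last part: there are several configurations near vertex $1$ (a $1$-crossing may or may not have a partner $y'$, its neighbour $z$ may be a $1$- or $2$-crossing, and if a $1$-crossing $z$ it may again have a partner $z'$), and each configuration mixes the cohomological, $\{-\}$, and $\<-\>$ shifts with a specific algebra element, so one must be careful that the modified differentials are consistently oriented and that no $\partial_{zy}\partial_{yx}$ term is overlooked when two distinct intermediate crossings $y$ both contribute. I would organize this by listing, for each $1$-string type in \cref{B 1-string}, its full set of crossings with their $4$-tuples, writing out $\partial$ explicitly as a small matrix, and squaring it; the identities invoked will in every case be one of the $\Ba_n$-relations listed above. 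Since this is exactly parallel to \cite[Lemma 4.1]{KhoSei} with the extra $\Z/2\Z$-layer, I would present the type-$A$-analogous cases by citation and write out in detail only the genuinely new $D_0$/vertex-$1$ computations.
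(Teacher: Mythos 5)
Your overall strategy matches the paper's: reduce to the type-$A$ zigzag relations when all three crossings are at vertices $\geq 2$, then handle the genuinely new compositions arising from the modified $D_0$-segment maps. You correctly identify the $\pm i$ sign in cases (1a)/(1b) as the key cancellation mechanism, which is exactly what the paper uses: the unique configuration producing a nonzero two-step composition is a $1$-string crossing $d_1$ at two $1$-crossings $c,d$ (joined by a $D_0$-segment) and $d_2$ at two $2$-crossings $a,b$, and there the two nonzero compositions $\partial_{ad}\partial_{db}=-X_2(ie_2)$ and $\partial_{ac}\partial_{cb}=X_2(ie_2)$ cancel.

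Two of the auxiliary mechanisms you invoke are misidentified, however. First, the claim that "a composition through a $2$-crossing produces a term in $(1|2)(ie_2)(2|1)$" does not occur: a $2$-crossing lies on exactly one essential segment inside $D_1$, so for a chain $P^B_1 \to P^B_2 \to P^B_1$ to have both steps nonzero one of them would have to be a modified map, and the degree constraints $y_1 = x_1+1$, $z_1 = y_1 + 1$ then force a contradiction. The relation $(1|2)(ie_2)(2|1)=0$ is used elsewhere in the paper (for instance to check that $\gamma_1$ is a bimodule map), but it plays no role in $\partial^2 = 0$. Second, your remark that case (2b)(ii) is "forced by $X_1$ being central on the relevant idempotent block" is off the mark: the new differential $\partial_{z'y'}=X_1$ simply never composes nontrivially with an adjacent map, because $X_1^2 = 0$, $(2|1)X_1=0$ and $X_1(1|2)=0$ all follow from the zigzag relations already, so no cancellation or matching is needed there at all. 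Your plan of exhaustively listing the $1$-string types from \cref{B 1-string} and squaring the matrices would certainly reach the right conclusion, but the paper's proof is more economical precisely because all compositions outside the one pictured configuration vanish outright from the ordinary zigzag relations, without needing the special $B_n$-relations.
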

\begin{proof}
For $x, y, z \in cr(\check{c})$ with $x_0,y_0,z_0 \geq 2$, the same argument as in the type $A$ shows that the product of $\partial_{zy} \partial_{yx}:P_x \ra P_z$ is always 0.
%
%
The only occurrence of $\partial_{zy} \partial_{yx} \neq 0$ is when $\partial_{zy} = \partial_{ac}, \partial_{ad}$ and $\partial_{yx} = \partial_{db}, \partial_{cb}$ with $a,b,c,d$ the crossings of the following type of 1-string labelled below:
\begin{figure}[H]
\begin{tikzpicture}  [scale=1]
\draw[thick]  (5.475,1)--(5.7,1);
\draw[thick]  (5.475,3)--(5.7,3);  
\draw[thick]  (5.475,1)--(5.475,3);
\draw[thick,red]  (3.25,2.5)--(5.475,2.5);
\draw[thick,red]  (3.25,1.5)--(5.475,1.5);
\draw[thick]  (3.25,3)--(5.7,3);  
\draw[thick]  (3.25,1)--(5.7,1);
\draw[thick]  (4,1)--(4,3);
\draw[thick,green, dashed]  (3.25,3)--(3.25,2.5);
\draw[thick, green, dashed]   (3.25,1)--(3.25,1.5);
\draw[thick, green, dashed, ->]   (3.25,2)--(3.25,2.6);
\draw[thick,  green, dashed, ->]   (3.25,2)--(3.25,1.4);\filldraw[color=black!, fill=yellow!, very thick] (3.25,2) circle [radius=0.1]  ;
\draw[fill] (4.75,2) circle [radius=0.1]  ;

\node [above right] at (5.475,2.5) {{\small $a$}};
\node [below right] at (5.475,1.5) {{\small $b$}};
\node [below right] at (4,1.5) {{\small $c$}};
\node [above right] at (4,2.5) {{\small $d$}};
\end{tikzpicture}.
\end{figure}
Note that the two non-zero composition $\partial_{ad}\partial_{db}$ and $\partial_{ac}\partial_{cb}$ always occur as a pair.
Moreover, we see that their sum is equal to 0: $\partial_{ad}\partial_{db} + \partial_{ac}\partial_{cb} = X_2i - X_2i = 0$, thus showing that $\partial^2 = 0$ as required.
%
%
%
%
%
%
%
%
\end{proof}

\begin{lemma}

 For any triple $(r_1,r_2,r_3)$ of integers and any  trigraded admissible curve $\check{c}$ we have:
 $$ L_B(\chi(r_1,r_2,r_3)\check{c}) \cong L_B(\check{c})[-r_1]\{r_2\}\<r_3\>. $$

\end{lemma}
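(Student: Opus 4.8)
The plan is to unwind how the shift action $\chi(r_1,r_2,r_3)$ on a trigraded curve affects the data that goes into $L_B$, and to check that it has precisely the effect of shifting each indecomposable projective summand. Recall from \cref{tribundle} that $\chi^B(r_1,r_2,r_3)$ acts on the covering space $\wRDAz$, and hence on trigraded curves, by deck transformations; composing a trigrading $\check{c}$ with this action produces a new trigraded curve $\chi(r_1,r_2,r_3)\check{c}$ with the \emph{same} underlying curve $c$ (so the same crossings $cr(\check{c}) = cr(\chi(r_1,r_2,r_3)\check{c})$ as a set), but with every local index shifted: if a crossing $y$ has $4$-tuple $(y_0,y_1,y_2,y_3)$ for $\check{c}$, then the corresponding crossing for $\chi(r_1,r_2,r_3)\check{c}$ has $4$-tuple $(y_0, y_1 + r_1, y_2 + r_2, y_3 + r_3)$. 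This is the key bookkeeping fact and follows directly from the definition of the local index function (see the paragraph labelled \cref{local index function}) together with the fact that $\chi^B$ acts by the free $\Z\times\Z\times\Z/2\Z$-action on fibres.

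Granting that, the module-level statement is immediate: since $P(y) = P^B_{y_0}[-y_1]\{y_2\}\<y_3\>$, shifting the tuple as above replaces each summand $P(y)$ by $P^B_{y_0}[-(y_1+r_1)]\{y_2+r_2\}\<y_3+r_3\> = P(y)[-r_1]\{r_2\}\<r_3\>$, and summing over $y \in cr(\check{c})$ gives the claimed isomorphism of graded $\Ba_n$-modules $L_B(\chi(r_1,r_2,r_3)\check{c}) \cong L_B(\check{c})[-r_1]\{r_2\}\<r_3\>$. What remains is to verify that this identification is compatible with the differentials, i.e.\ that it is an isomorphism of \emph{complexes}. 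First I would observe that the differential $\partial_{zy}$ is defined entirely in terms of (a) whether $y$ and $z$ bound an essential segment, (b) the relative values $z_1 = y_1 + 1$, and (c) the values $y_0, z_0$ (plus the $D_0$-modification rules, which again only depend on which crossings are connected by essential segments and on the relevant $y_0$'s). All of these are manifestly invariant under a simultaneous shift of all $4$-tuples by $(r_1,r_2,r_3)$ in the last three slots: the condition $z_1 = y_1 + 1$ is preserved, the first coordinate $y_0$ is untouched, and the essential-segment combinatorics of the underlying curve $c$ is unchanged. Hence the matrix of $\partial$ is literally the same after relabelling, and the degree shift $[-r_1]\{r_2\}\<r_3\>$ applied uniformly to source and target commutes with it.

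The cleanest way to present this is: fix the identity map on underlying modules $\phi: L_B(\check c)[-r_1]\{r_2\}\<r_3\> \to L_B(\chi(r_1,r_2,r_3)\check c)$ sending the summand $P(y)[-r_1]\{r_2\}\<r_3\>$ identically to the summand $P(y')$ (where $y'$ is the crossing $y$ with shifted tuple), and check $\phi \circ \partial = \partial' \circ \phi$ componentwise using the three cases in the definition of $\partial_{zy}$ and then the $D_0$-modification cases. I do not expect any genuine obstacle here — the proof is essentially the observation that $L_B$ is built functorially from shift-equivariant data, exactly parallel to \cref{action shift comm A} in the type $A$ case. If anything, the only point requiring a moment's care is the $\langle - \rangle$ (i.e.\ $\Z/2\Z$) coordinate in the $D_0$-modification rules, where some of the modified differentials (e.g.\ the right multiplication by $-i(2|1)$ or by $(1|2)i$ or by $X_1$ landing in a $\<1\>$-shifted copy) involve an interplay between the third grading and the $\langle 1 \rangle$ shift; but since $r_3$ is added uniformly to $y_3$ for \emph{every} crossing, these modified arrows are shifted consistently on both ends, so compatibility still holds. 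One can simply remark that the verification is identical in spirit to \cite[proof of equation following Lemma 4.1]{KhoSei} and to \cref{check complex}, and conclude.
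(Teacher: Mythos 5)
Your proof is correct and is precisely the unwinding of "this follows directly from the definition" that the paper itself gives as its (one-line) proof: the deck transformation $\chi(r_1,r_2,r_3)$ shifts every crossing's local index uniformly, hence shifts every summand $P(y)$ uniformly, and the differential depends only on shift-invariant combinatorics. No gap; same approach.
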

\begin{proof}
This follows directly from the definition.
\end{proof}

\subsection{Some rather technical results} \label{subsect: technical results}
This subsection is where we complete the proof of \cref{fullmaintheorem} by proving that the diagram commutes (\cref{diagram commutes}) and that $L_B$ is $\cA(B_n)$-equivariant (\cref{L_B equivariant}).
The proof of these two statements, first of which is technical, will occupy the rest of this section.

\begin{proposition} \label{diagram commutes}
The diagram in \cref{full picture} commutes, i.e. for each trigraded admissible curve $\check{c}$ in $\D^B_{n+1}$ we have that 
$
\Aa_{2n-1}\otimes_{\Ba_n} L_B(\check{c})\cong L_A (\mathfrak{m}(\check{c}))
$
in $\Kom^b(\Aa_{2n-1}$-$p_rg_rmod)$.
\end{proposition}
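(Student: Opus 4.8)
The plan is to verify the isomorphism $\Aa_{2n-1}\otimes_{\Ba_n} L_B(\check{c})\cong L_A(\mathfrak{m}(\check{c}))$ at the level of underlying graded modules first and then at the level of differentials, reducing the whole statement to a local check on the $j$-strings that were classified in \cref{B j-string}, \cref{B 1-string}, and \cref{B n-string}. First I would establish the module-level statement: by definition $L_B(\check{c}) = \bigoplus_{y\in cr(\check{c})} P^B_{y_0}[-y_1]\{y_2\}\<y_3\>$, so applying the exact functor $\Aa_{2n-1}\otimes_{\Ba_n}-$ and using \cref{B tensor to A} (more precisely the refined $(\Aa_{2n-1},\R)$- and $(\Aa_{2n-1},\C)$-bimodule statements in \cref{lemma bimod iso}) turns each summand $P^B_1$ into $P^A_n$ and each $P^B_j$ ($j\ge 2$) into $P^A_{n-(j-1)}\oplus P^A_{n+(j-1)}$. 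On the other side, by \cref{tribilocin} and the definition of $\mathfrak{m}$, a $j$-crossing $y$ of $\check{c}$ with local index $(r_1,r_2,r_3)$ lifts to the crossings $\{\wt{z},\ut{z}\}$ (for $j>1$) carrying the same local index $(r_1,r_2)$ on the curves $\varrho_{n+(j-1)}$ and $\varrho_{n-(j-1)}$ respectively, and to the single crossing $\wt{\ut{z}}$ on $\theta_n$ for $j=1$; thus $L_A(\mathfrak{m}(\check{c})) = \bigoplus_{y} \bigl(\bigoplus_{w\in q_{br}^{-1}(y)} P^A_{w_0}[-w_1]\{w_2\}\bigr)$ matches the module above summand by summand (the $\<-\>$ grading being forgotten by $\mathfrak{F}$).

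Next I would match the differentials. Both differentials decompose as sums of components $\partial_{zy}$ indexed by essential segments, and essential segments of $\check{c}$ inside $D_j$ correspond under $q_{br}$ to pairs of essential segments of $\mathfrak{m}(\check{c})$ inside the two regions of $\DA$ lying over $D_j$ (for $j\ge 1$), while essential segments inside $D_0$ correspond to a single essential segment of $\mathfrak{m}(\check{c})$ spanning the central region of $\DA$ around the marked points $\{-1,1\}$ (equivalently $\cD_{n-1}\cup\cD_n$). Under the isomorphism $\Phi\colon \C\otimes_\R\ddot\Ba_n\xrightarrow{\cong}\Aa_{2n-1}$ of \cref{isomorphic algebras}, right multiplication by $(y_0|z_0)\in\Ba_n$ becomes right multiplication by $((n\pm(y_0-1))\mid(n\pm(z_0-1)))\in\Aa_{2n-1}$ on the corresponding projective summand, and multiplication by $X_{y_0}$ becomes multiplication by the corresponding $X$; this is exactly the rule defining the type $A$ differential $\partial_{yx}$ in $L_A$. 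The only subtle point is the region $D_0$: there the $\Z/2\Z$-grading and the twisted conjugate actions ${}_{\bar\C}(-)$ enter, and the \emph{modified} maps in the definition of $L_B$ (the ones built from $(1|2)i$, $-i(2|1)$, and the extra $X_1$ between the second copies of the $1$-crossings) are precisely what is needed to reproduce, after $\Phi$, the two parallel differentials of $L_A$ across the two crossings of $\mathfrak{m}$ over a single $1$-crossing. I would check this by going through the finite list of $1$-string types in \cref{B 1-string} (types $II'_w$, $II'_{w+\frac12}$, $III'_w$, $III'_{w+\frac12}$, $V'$, $VI$) and confirming that $\Aa_{2n-1}\otimes_{\Ba_n}$ applied to the local complex $L_B(\check g)$ agrees with $L_A$ of the lifted $1$-string $\mathfrak{m}(\check g)$, which under the branched cover is a pair of $n$-strings (or one exceptional string) in $\DA$; the computation of $R_2R_1R_2(P_1^B)$ inside the proof of \cref{type B relation} already illustrates the kind of bookkeeping involved.

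To organize this cleanly I would state and prove a lemma asserting that $L_B$, like $L_A$, is \emph{local}: $L_B(\check c)$ is glued from the local pieces $L_B(\check g)$ over $j$-strings $\check g$ of $\check c$, with the gluing data (which crossings are shared, and along which projective summand) determined combinatorially by the normal form. The same locality holds for $L_A\circ\mathfrak{m}$ because $\mathfrak m$ is defined region-by-region via $q_{br}^{-1}$. Granting locality, the proposition follows from the finitely many local identities: for each admissible $j$-string type ($1<j<n$ in \cref{B j-string}, $j=n$ in \cref{B n-string}, $j=1$ in \cref{B 1-string}) one verifies $\Aa_{2n-1}\otimes_{\Ba_n}L_B(\check g)\cong L_A(\mathfrak m(\check g))$ compatibly with the boundary-crossing identifications. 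The main obstacle, as anticipated, is the $D_0$/$1$-string case: one must track the forgetful functor $\mathfrak F$, the right $\C$-module structures twisted by complex conjugation (the ${}_{\bar\C}(-)$ appearing in \cref{lemma bimod iso}), and the ad hoc modification of the differentials near $D_0$, and show they all conspire so that the two lifted strands in $\DA$ receive exactly the maps $(2|1)$, $(1|2)$, $X_n$ dictated by $L_A$ and the basic-curve choices of \cref{NormalformA}. Everything away from $D_0$ is a direct translation through $\Phi$ and the matching of local indices from \cref{tribilocin}, so once the $D_0$ bookkeeping is pinned down the proof is complete.
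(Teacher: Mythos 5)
Your overall strategy---match modules via \cref{B tensor to A}, then match differentials by a local check on $j$-strings, with the only subtlety concentrated near $D_0$---is essentially the right skeleton, and it is close to the paper's approach. But as written there is a genuine gap: you assert that after applying $\Phi$, ``right multiplication by $(y_0|z_0)\in\Ba_n$ becomes right multiplication by $((n\pm(y_0-1))\mid(n\pm(z_0-1)))$ \dots and multiplication by $X_{y_0}$ becomes multiplication by the corresponding $X$; this is exactly the rule defining the type $A$ differential.'' This is \emph{false} at vertex $1$: by the explicit formula for $\Phi$ in \cref{isomorphic algebras}, $\Phi(1\otimes X_1)=2X_n$, not $X_n$, and the modified maps $(1|2)i$, $-i(2|1)$ translate (after the conjugation twist of \cref{lemma bimod iso}) into the \emph{non-diagonal} $2\times 2$ matrices $E$ and $F$ built from $\pm i$, which are not what $L_A$ uses. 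So $\Aa_{2n-1}\otimes_{\Ba_n}L_B(\check c)$ and $L_A(\mathfrak m(\check c))$ have the same underlying graded modules but genuinely different differentials, and the claim ``once the $D_0$ bookkeeping is pinned down the proof is complete'' hides the hard part.

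Because of this mismatch, the step you describe as ``granting locality'' does not go through as stated: the two complexes must be related by an \emph{explicit, non-identity} chain isomorphism, and that isomorphism cannot be constructed purely locally on each $1$-string. When you conjugate the differential on one $1$-string to the $L_A$ form (using, e.g., the diagonalizing matrix $M=\left[\begin{smallmatrix}1&-i\\1&i\end{smallmatrix}\right]$ and powers of $2$), you change the differential on the neighbouring $R_j$-crossings, which are shared with the adjacent $C_j$-segment, and those corrections must be carried along the entire rest of the curve (including scalar factors like $\pm 2i$ or $2^{-k}$ applied to all of $\nabla$). The paper's proof handles this by slicing $c$ into $c_j$'s and $g_{j'}$'s, observing $\d_{0_{C_j}}=\d'_{C_j}$, and then building a \emph{chain} of global automorphisms $\mu_0,\mu_1,\dots,\mu_{s-1}$ inductively, one per $1$-string, each satisfying a compatibility condition $(*)$ that guarantees it fixes the already-corrected $G_j$-components and only modifies $G_i$. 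Your proposal should either prove that compatibility (which is the real content) or, as the paper does, construct the $\mu_i$ explicitly for all $1$-string types and check $(*)$; without that, matching each $\Aa_{2n-1}\otimes_{\Ba_n}L_B(\check g)$ with $L_A(\mathfrak m(\check g))$ separately does not yield an isomorphism of the glued complexes.
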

\begin{remark}
The theorem is stated in the context of homotopy category since the group actions of $\cA(A_{2n-1})$ and $\cA(B_n)$ live there. 
In fact, we will show that
$
\Aa_{2n-1}\otimes_{\Ba_n} L_B(\check{c})\cong L_A (\mathfrak{m}(\check{c}))
$
in the category $\Com^b(\Ba_n$-$\text{p$_{r}$g$_{r}$mod})$ of bounded complexes  (no homotopy will be required).
\end{remark}
\begin{proof}[Proof of \cref{diagram commutes}]
Let $x$ be any $j$-crossing of $\check{c}$.
If $j \geq 2$, we have that $\mathfrak{m}(x)$ consists of a $(n+(j-1))$-crossing $\wt{x}$ and a $(n-(j-1))$-crossing $\undertilde{x}$ of $\mathfrak{m}(\check{c})$; 
if $j=1$, $\mathfrak{m}(x)$ consists of a single $n$-crossing $\undertilde{\wt{x}}$ of $\mathfrak{m}(\check{c})$.
In either cases, we have isomorphisms $\Phi_j: \Aa_{2n-1} \otimes_{\Ba_n} P^B(x) \ra P^A(\wt{x}) \oplus P^A(\undertilde{x}) = P^A_{n+(j-1)} \oplus P^A_{n-(j-1)}$ or $\Phi_1: \Aa_{2n-1} \otimes_{\Ba_n} P^B(x) \ra P^A(\undertilde{\wt{x}}) = P^A_n$ given in the proof of \cref{B tensor to A}.
Putting together these isomorphisms for each crossing $x$ of $\check{c}$, we obtain a cohomological and internal grading preserving isomorphism of $\Aa_{2n-1}$-modules between the underlying modules of $\Aa_{2n-1} \otimes_{\Ba_n} L_B(\check{c})$ and $L_A (\mathfrak{m}(\check{c}))$; denote this isomorphism by $\eta$.
Denoting the complexes $\Aa_{2n-1} \otimes_{\Ba_n} L_B(\check{c})$ as $(Q, \d)$ and $L_A (\mathfrak{m}(\check{c}))$ as $(Q', \d')$ (so $\eta$ is an isomorphism from $Q$ to $Q'$), it follows that $\eta$ induces an isomorphism of complexes:
\[
(Q, \d) \cong (Q',\d_0),
\]
with $\d_0 = \eta \d \eta^{-1}$.
We now aim to show that $(Q', \d_0) \cong (Q', \d')$ in $\Kom^b(\Aa_{2n-1}$-$\text{p$_{r}$g$_{r}$mod})$.
In fact, we will show that they are isomorphic in the ordinary category $\Com^b(\Aa_{2n-1}$-$\text{p$_{r}$g$_{r}$mod})$ of complexes in $\Aa_{2n-1}$-$\text{p$_{r}$g$_{r}$mod}$.
Before we proceed with the proof, we will need to introduce some (substantial amount of) notation.

\paragraph{(\textbf{Slicing $c$})}\label{slicing}
Recall that $c$, being admissible, must have both of its end points at two \emph{distinct} marked points, so at least one of its end points is not $0$ and let $m$ be such an end point.
Orient the curve $c$ so that it starts from $m$ and ends at its other end point.
Following this orientation, we can slice $c$ into distinct connected components $c_j \subset c \cap \left( \cup_{j \geq 2} D_j \right)$ and $g_{j'} \subset c \cap \left( D_0 \cup D_1 \right)$ (note that $g_{j'}$ are the 1-strings of $c$) enumerated as follows:
following the orientation of $c$, $g_j$ denotes the $(j-1)$-th 1-string component of $c$; whereas $c_i$ is the component of $c$ that has a total of $i$ 1-strings components before $c_i$.
In other words, if $m \neq 1$, we start from $c_0$ to $g_0$ to $c_1$ and so on;
otherwise $m = 1$ and we start from $g_0$ to $c_0$ to $g_1$ and so on. 
Note that if $c$ has no 1-string component, $c=c_0$.
Following the same orientation, we shall also enumerate the $2$-crossings $r_t$ of $c$ (if any), starting the enumeration with $t=0$ if $m=1$; otherwise we start with $t=1$.
The figure below illustrates two examples with different starting point $m$.
\begin{figure}[H]  
\begin{subfigure}{.45 \textwidth}
\centering
\begin{tikzpicture} [scale=.6]

\draw[thick,green, dashed]  (0,5.5)--(0,3.875);
\draw[thick, green, dashed]   (0,-.5)--(0,1.125);
\draw[thick, green, dashed, ->]   (0,2.5)--(0,4);
\draw[thick,  green, dashed, ->]   (0,2.5)--(0,1);
\draw[fill] (3,2.5) circle [radius=0.1]  ;
\draw[fill] (6,2.5) circle [radius=0.1]  ;
\draw[fill] (9,2.5) circle [radius=0.1]  ;
\draw[thick, blue] (4.5,-.5) -- (4.5,5.5);

\draw  plot[smooth, tension=1]coordinates { (0,4.8) (2.25, 4.9) (4.5,4.8)  };
\draw  plot[smooth, tension=1.3]coordinates { (0, 3.65) (2.25, 3.75) (4.5,3.65) };
\draw  plot[smooth, tension=1]coordinates { (0, 1.35) (1.5, 1.65) (3, 2.5)};
\draw  plot[smooth, tension=1]coordinates { (0, .2) (3.1, .65) (6, 2.5)};
\draw   plot[smooth, tension=3]coordinates { (8, 2.5) (9.8, 1.9)  (9, 3.5)};
\draw   plot[smooth, tension=1]coordinates { (9, 3.5) (7, 4.3) (4.5,4.8) };
\draw   plot[smooth, tension=1]coordinates { (8, 2.5) (6.75, 3.2) (4.5,3.65)};

\filldraw[color=black!, fill=yellow!, thick]  (0,2.5) circle [radius=0.1];

\node [above, blue] at (7, 4.3) {$c_1$};
\node [below, red] at (2.25, .45) {$g_0$};
\node [above, red] at (2.25,3.75) {$g_1$};
\node [below] at (0.2, 2.5) {$0$};
\node [below, blue] at (5.5, 2) {$c_0$};
\node [right] at (4.5, 5) {$r_1$};
\node [right] at (4.5, 3.9) {$r_2$};
\node [right] at (4.5, 1.1) {$r_0$};

  \node[rotate=135] at (5.5,2.075) {{\tiny $\wedge$}};
\node[rotate=90] at (0.75, .175){{\tiny $\wedge$}};
\node[rotate=-90] at (0.75, 4.875){{\tiny $\wedge$}};
\node[rotate=90] at (0.75, 3.725){{\tiny $\wedge$}};
\node[rotate=-75] at (0.75, 1.425){{\tiny $\wedge$}};
  \node[rotate=135] at (9.8,1.9) {{\tiny $\wedge$}};

\end{tikzpicture}
\caption{{\small A slicing with starting point $m=2\neq 1$.}} \label{notend01}
\end{subfigure}
\quad
\begin{subfigure}{.45 \textwidth}
\centering
\begin{tikzpicture} [scale=.6]

\draw[thick,green, dashed]  (12,5.5)--(12,3.875);
\draw[thick, green, dashed]   (12,-.5)--(12,1.125);
\draw[thick, green, dashed, ->]   (12,2.5)--(12,4);
\draw[thick,  green, dashed, ->]   (12,2.5)--(12,1);
\draw[fill] (15,2.5) circle [radius=0.1]  ;
\draw[fill] (18,2.5) circle [radius=0.1]  ;
\draw[fill] (21,2.5) circle [radius=0.1]  ;
\draw[thick, blue] (16.5,-.5) -- (16.5,5.5);

\draw  plot[smooth, tension=1]coordinates { (12,4.5) (14.25, 4.9) (16.5,5)  };
\draw  plot[smooth, tension=1.3]coordinates { (12, 3.65) (14.25, 4.1) (16.5,4.2) };
\draw  plot[smooth, tension=1]coordinates { (12, 1.35) (13.5, 1.65) (15, 2.5)};
\draw  plot[smooth, tension=1]coordinates { (12,.5 ) (13.8, .7) (16.5, 1.2)};
\draw   plot[smooth, tension=3]coordinates { (20.25, 2.5) (21.8, 1.9)  (21, 3.5)};
\draw   plot[smooth, tension=1]coordinates { (21, 3.5) (19, 4.5) (16.5,5) };
\draw   plot[smooth, tension=1]coordinates { (20.25, 2.5) (18.75, 3.7) (16.5,4.2)};
\draw  plot[smooth, tension=1.3]coordinates { (12, 2.5) (13.8, 3.1) (16.5,3.4) };
\draw  plot[smooth, tension=1.5]coordinates { (16.5, 3.4) (19, 2.5) (16.5,1.2) };

\filldraw[color=black!, fill=yellow!, thick]  (12,2.5) circle [radius=0.1];

\node [above, blue] at (19, 3.7) {$c_1$};
\node [above, red] at (13.8, 1.8) {$g_0$};
\node [above, red] at (14.25,4.9) {$g_1$};
\node [above, red] at (14.25,3.2) {$g_2$};
\node [below] at (12.2, 2.5) {$0$};
\node [below, blue] at (18, 1.5) {$c_2$};
\node [right] at (16.5, 5.2) {$r_2$};
\node [right] at (16.5, 4.4) {$r_1$};
\node [right] at (16.5, 3.6) {$r_4$};
\node [right] at (16.5, 1) {$r_3$};

  \node[rotate=0] at (19,2.5) {{\tiny $\wedge$}};

\node[rotate=-90] at (12.75, .55){{\tiny $\wedge$}};
\node[rotate=105] at (12.75, 4.675){{\tiny $\wedge$}};

\node[rotate=105] at (12.75, 2.865){{\tiny $\wedge$}};

\node[rotate=-85] at (12.75, 3.85){{\tiny $\wedge$}};

\node[rotate=105] at (12.75, 1.425){{\tiny $\wedge$}};
  \node[rotate=315] at (21.8,1.9) {{\tiny $\wedge$}};

\end{tikzpicture}
\caption{{\small A slicing with starting point $m=1$.}} \label{end01}
\end{subfigure}
\caption{Examples of slicings of curves.}
\end{figure}
Now consider the following subsets of (graded) crossings of $\mathfrak{m}(\check{c})$:
\begin{equation} \label{subset crossing}
\begin{cases}
C_j := \mathfrak{m}(\check{c}_j) \cap (\bigcup_i \ddot{\theta}_i); \\
\bar{C}_j := \mathfrak{m}(\check{c}_j) \cap (\bigcup_{i\neq n-1, n+1} \ddot{\theta}_i);\\
G_{j'} := \mathfrak{m}(\check{g}_{j'}) \cap (\ddot{\theta}_{n-1} \cup \ddot{\theta}_n \cup \ddot{\theta}_{n+1}); \\
\bar{G}_{j'} := \mathfrak{m}(\check{g}_{j'}) \cap \ddot{\theta}_n; \text{ and} \\
R_j := \mathfrak{m}(\check{r}_j).
\end{cases}
\end{equation}
Note that by definition, the subsets of crossings $\bar{C}_i, R_j$ and $\bar{G}_k$ are pairwise disjoint, and  $\left( \coprod_j \bar{C}_j\right) \amalg \left( \coprod_j R_j \right) \amalg \left( \coprod_j \bar{G}_j \right)$ is the set of all crossings of $\mathfrak{m}(\check{c})$.
On the other hand, $C_j$ and $G_{j'}$ contains all the crossings of $\mathfrak{m}(c_j)$ and $\mathfrak{m}(g_{j'})$ respectively, which may contain common crossings $R_i$.

For $K$ a subset of crossings of $\mathfrak{m}(\check{c})$, we define 
\[
Q'_K := \bigoplus_{x \in K} P^A(x) \subseteq Q'.
\]
If $K$ is empty, $Q'_K$ will be the $0$ module by convention. 
Using this, we can decompose $Q'$ as follow:
when $m \in \Lambda \setminus \{0, 1\}$, we have
\begin{equation}\label{R decomp}
Q' = 
\rlap{$\underbrace{\phantom{Q'_{\bar{C}_0} \oplus Q'_{R_0}}}_{Q'_{C_0}}$} 
	Q'_{\bar{C}_0} \oplus 
\rlap{$\overbrace{\phantom{Q'_{R_0} \oplus Q'_{\bar{G}_0} \oplus Q'_{R_1}}}^{Q'_{G_0}}$}
	Q'_{R_0} \oplus Q'_{\bar{G}_0} \oplus 
\rlap{$\underbrace{\phantom{Q'_{R_1} \oplus Q'_{\bar{C}_1} \oplus Q'_{R_2}}}_{Q'_{C_1}}$}
	Q'_{R_1} \oplus Q'_{\bar{C}_1} \oplus Q'_{R_2} \oplus \dots \quad ;
\end{equation}
whereas when $m =1$, we have instead
\begin{equation}\label{R' decomp}
Q' = 
\rlap{$\underbrace{\phantom{Q'_{\bar{G}_0} \oplus Q'_{R_1}}}_{Q'_{G_0}}$} 
	Q'_{\bar{G}_0} \oplus 
\rlap{$\overbrace{\phantom{Q'_{R_1} \oplus Q'_{\bar{C}_1} \oplus Q'_{R_2}}}^{Q'_{C_1}}$}
	Q'_{R_1} \oplus Q'_{\bar{C}_1} \oplus 
\rlap{$\underbrace{\phantom{Q'_{R_2} \oplus Q'_{\bar{G}_2} \oplus Q'_{R_3}}}_{Q'_{G_2}}$}
	Q'_{R_2} \oplus Q'_{\bar{G}_2} \oplus Q'_{R_3} \oplus \dots \quad .
\end{equation}

In general, given a decomposition of modules $M = \oplus_{i\in Y} M_{K_i}$ and a complex $(M, \partial)$, we can then write $\partial$ as a block matrix.
We will use the notation $\partial_{K_i K_j}$ to denote the block of $\partial$ that maps from $M_{K_j}$ to $M_{K_i}$, where we use the shorthand notation $\partial_{K_i}$ for the block of $\partial$ that maps from $M_{K_i}$ to itself.
We will also use the notation $\partial_{\oplus_{i \in X \subseteq Y} K_i}$ for the block of $\partial$ that maps from $\oplus_{i \in X} M_{K_i}$ to itself.
To illustrate, consider the decomposition of $Q'$ as in \eqref{R decomp} and let $(Q', \partial)$ be a cochain complex with differential $\partial$.
We will then write the differential $\partial$ as the matrix
\begin{center}
\begin{tikzpicture}
\matrix (m) [mymatrix, inner sep = 2pt, column sep = 7pt, row sep = 5pt] {
\partial_{\bar{C}_0} & \partial_{R_0 \bar{C}_0} & \partial_{\bar{G}_0 \bar{C}_0} & \partial_{R_1 \bar{C}_0}  & \partial_{\bar{C}_1 \bar{C}_0} & \dots \\
 \partial_{\bar{C}_0 R_0}& \partial_{R_0} & \partial_{\bar{G}_0 R_0} & \partial_{R_1 R_0} &  \partial_{\bar{C}_1 R_0} & \dots \\
\partial_{\bar{C}_0 \bar{G}_0}  & \partial_{R_0 \bar{G}_0}  & \partial_{\bar{G}_0} & \partial_{R_1 \bar{G}_0} &  \partial_{\bar{C}_1 \bar{G}_0} & \dots \\
\partial_{\bar{C}_0 R_1} & \partial_{R_0 R_1}  & \partial_{\bar{G}_0 R_1}   & \partial_{R_1} & \partial_{\bar{C}_1 R_1} & \dots \\
\partial_{\bar{C}_0 \bar{C}_1}  & \partial_{R_0 \bar{C}_1}&\partial_{\bar{G}_0 \bar{C}_1} &\partial_{R_1 \bar{C}_1}  & \partial_{\bar{C}_1} & \dots \\
\vdots & \vdots & \vdots & \vdots & \vdots & \ddots \\
};

\draw[red] (m-2-2.east|-m-2-2.south)++(3pt,-2pt) rectangle (m-1-1.north-|m-2-1.west);
\draw[blue] (m-4-4.east|-m-4-4.south)++(3pt,-2pt) rectangle (m-2-2.north-|m-4-2.west);
\draw[red] (m-4-5.north east)-|(m-5-4.south west);
\end{tikzpicture},
\end{center}
where the blocks $\partial_{C_j}$ corresponding to the summands $Q'_{C_j}$ are the blocks in {\color{red} red} and similarly blocks $\partial_{G_j}$ corresponding to the summands $Q'_{G_j}$ are the blocks in {\color{blue} blue}.

After our lengthy notational digression, let us return to the proof and analyse the difference between the matrices of the two differentials $\d_0$ and $\d'$ corresponding to the two possible decomposition of $Q'$ as in \eqref{R decomp} and \eqref{R' decomp}.
By looking at how the components $c_i$ and $g_j$ are connected, it follows that the non-zero entries of the matrix of $\d_0$ are all contained in the block matrices $(\d_0)_{C_k}$ and $(\d_0)_{G_k}$ for all $k$;
similarly the connection between the components $\mathfrak{m}(c_i)$ and $\mathfrak{m}(g_j)$ dictates that the non-zero entries of the matrix of $\d'$ are all contained in the block matrices $\d'_{C_k}$ and $\d'_{G_k}$ for all $k$.
One can show from a direct computation that
\begin{equation} \label{C diff component}
(\d_0)_{C_k} = \d'_{C_k}, \quad \text{for all } k.
\end{equation}
So between the differentials $\d_0$ and $\d'$, only the block matrices $(\d_0)_{G_k}$ and $\d'_{G_k}$ may differ.
As such, if we were in the case where $c$ has no 1-string, i.e. $c = c_0$, then we are done.
In the rest of the proof, we shall assume otherwise and treat the rest of the cases.

As we will only need to focus on the module summand $Q'_{G_j}$ later on, for each $j$ let us simplify both the decompositions of $Q'$ in \eqref{R decomp} and \eqref{R' decomp} into
\begin{equation} \label{new R decomp}
Q' = Q'_{V_j} \oplus 
	\rlap{$\underbrace{
		\phantom{
			Q'_{R_j} \oplus Q'_{\bar{G_j}} \oplus Q'_{R_{j+1}}
		}
	}_{Q'_{G_j}}$}
	Q'_{R_j} \oplus Q'_{\bar{G_j}} \oplus Q'_{R_{j+1}} \oplus Q'_{W_j},
\end{equation}
where $Q'_{V_j}$ (resp. $Q'_{W_j}$) consists of all the module summands before $Q'_{R_j}$ (resp. after $Q'_{R_{j+1}}$) for both decompositions \eqref{R decomp} and \eqref{R' decomp}.
From here on we will use this simplified decomposition for the matrix of any differential on $Q'$.

Let $g_0, g_1,... , g_{s-1}$ be the 1-strings in $c$.
To show $(Q', \d_0) \cong (Q', \d')$, by \eqref{C diff component} and \eqref{new R decomp} it is sufficient to construct a cohomological and internal grading preserving isomorphism of modules $\mu_j: Q' \ra Q'$ for each $ 0 \leq j \leq s-1$, so that we have an induced chain of isomorphisms in $\Com^b(\Aa_{2n-1}\text{-p$_{r}$g$_{r}$mod})$:
$$
(Q', \d_0)  \cong (Q', \d_1) \cong \cdots  \cong (Q', \d_{s-1}) \cong (Q', \d_s) = (Q', \d');
$$
with $\d_{j+1} := \mu_j \d_j \mu_j^{-1}$, where each $\d_j$ for $1 \leq j \leq s$ satisfies the following property:

\begin{equation} \label{induction} \tag{*}
\begin{cases}
(\d_{j})_{G_{j-1}} = \d'_{G_{j-1}}, &\text{{\small and}} \\
(\d_{j})_{XY} = (\d_{j-1})_{XY}, &\text{{\small for all $X,Y\in \{V_{j-1},G_{j-1},W_{j-1}\}$ such that $(X,Y) \neq (G_{j-1},G_{j-1})$}}.
\end{cases}
\end{equation}

In other words, each $\mu_j$ will be constructed in a way that the conjugation of $\d_j$ by $\mu_j^{-1}$ \emph{only} alters the differential component $(\d_j)_{G_j}$, so that $(\d_{j+1})_{G_j} = \mu_j(\d_j)_{G_j}\mu_j^{-1} = \d'_{G_j}$ without affecting the rest of the differential components.
In particular, if $\d_\ell$ satisfy \eqref{induction} for all $1 \leq \ell \leq j-1$, then we have that $(\d_j)_{G_j} = (\d_0)_{G_j}$.
Moreover, property \eqref{induction} will guarantee that $\d_s = \d'$.

What remains is to define the required $\mu_j : Q' \ra Q'$. 
We will define $\mu_j$ according to the type of 1-string $\check{g}_j$.
Within each possible types of 1-string $\check{g}_j$, we will show the following:
\begin{enumerate}
\item For $j=0$, we show that we can always construct $\mu_0$ to get $(Q'_0, \d_0) \cong (Q'_1, \d_1)$ such that $\d_1 = \mu_0\d_0 \mu_0^{-1}$ satisfies \eqref{induction}.
\item For $j\geq 1$, we show that given $(Q', \d_0) \cong \cdots \cong (Q'_j,\d_j)$ with $\d_1, ..., \d_j$ satisfying \eqref{induction} for $j \geq 1$, we can construct $ \mu_j: Q' \ra Q' $ such that $\d_{j+1} = \mu_j \d_j \mu_j^{-1}$ satisfies \eqref{induction}.
\end{enumerate}
By an induction on the total number of 1-strings components (over $j$), we can always construct a chain of isomorphisms $\mu_j$ with property \eqref{induction} and hence complete the proof of this theorem.

The rest of the proof is an extensive case-by-case analysis.
The list below shows that given each type of $\check{g}_j$, we can construct $ \mu_j: Q' \ra Q' $  satisfying \eqref{induction}.
We shall start with the simple cases: types IV, III$'_k$ and II$'_k$, followed by the types V$''$, III$'_{k + \frac{1}{2}}$ and II$'_{k + \frac{1}{2}}$ that requires some further analysis; refer to \cref{B 1-string} for the list of possible types of 1-strings.
Within the list, we will omit the gradings when writing out the modules in $Q'$ as it will be obvious from construction that $\mu_j$ preserves both the cohomological and internal gradings.
We will also use solid arrows for differentials and dashed arrows for the isomorphism $\mu_j : Q' \ra Q'$.
\begin{enumerate} 
\item $\check{g}_j$ is of Type VI: \\
This case is only possible when $j=0$ and $g_j = c$.
But in this case $(Q', \d_0)$ is given by $0 \ra P_n^A \ra 0$, which is already the complex $(Q', \d')$ as required ($\d_0 = 0 = \d')$.

\item $\check{g}_j$ is of Type III$'_k$ for $k \in \Z$: \\
The case when $k=0$ is straightforward, where we just pick $\mu_j$ to be the identity.

Now consider the case when $k > 0$.
If $j=0$, we get that $(\d_j)_{G_j} = (\d_0)_{G_j}$.
For $j \geq 1$, $\d_j$ satisfies \eqref{induction} by our inductive assumption, so we have that $(\d_j)_{G_j} = (\d_0)_{G_j}$.
Thus for all $j$, we can draw the part of $(Q', \d_j)$ that contains $(Q'_{G_i}, (\d_j)_{G_j}) = (Q'_{G_j}, (\d_0)_{G_j})$ as follows:
\begin{center}
\begin{tikzpicture}[>=stealth, baseline]
\matrix (M) [matrix of math nodes, column sep=7mm]
{
P_n^A  & P_n^A  & \cdots & P_n^A  & P_n^A  & P_{n-1}^A & \nabla\\
\oplus & \oplus &        & \oplus & \oplus & \oplus    &       & = (Q', \d_j),\\
P_n^A  & P_n^A  & \cdots & P_n^A  & P_n^A  & P_{n+1}^A &       \\
};

\draw[-> ,font=\small](M-1-1.east |- M-1-2) -- (M-1-2) 
	node[midway,above] {$2X_n$};
\draw[-> ,font=\small](M-1-2.east |- M-1-3) -- (M-1-3)
	;
\draw[-> ,font=\small](M-1-3.east |- M-1-4) -- (M-1-4)
	;
\draw[-> ,font=\small](M-1-4.east |- M-1-5) -- (M-1-5)
	node[midway,above] {$2X_n$}; 
\draw[-> ,font=\small](M-1-5) -- (M-1-6)
	node[midway,above] {$F$}; 
\draw[-> ,font=\small](M-1-5) -- (M-3-6)
	;
\draw[<->](M-1-6.east |- M-1-7) -- (M-1-7)
	;
\draw[<->](M-3-6) -- (M-1-7)
	;

\draw[-> ,font=\small](M-3-1.east |- M-3-2) -- (M-3-2) 
	node[midway,above] {$2X_n$};
\draw[-> ,font=\small](M-3-2.east |- M-3-3) -- (M-3-3)
	;
\draw[-> ,font=\small](M-3-3.east |- M-3-4) -- (M-3-4)
	;
\draw[-> ,font=\small](M-3-4.east |- M-3-5) -- (M-3-5)
	node[midway,above] {$2X_n$};
\draw[-> ,font=\small](M-3-5) -- (M-1-6)
	; 
\draw[-> ,font=\small](M-3-5) -- (M-3-6)
	;
\end{tikzpicture}
\end{center}
where $ F =
\begin{bsmallmatrix}
(n|n-1) & -(n|n-1)i \\
(n|n+1) &  (n|n+1)i
\end{bsmallmatrix}
=
\begin{bsmallmatrix}
(n|n-1) & 0 \\
0       & (n|n+1)
\end{bsmallmatrix}
\begin{bsmallmatrix}
1 & -i \\
1 &  i
\end{bsmallmatrix}
$
and where $\nabla$ denotes the rest of the complex $(Q', \d_j)$ containing the modules complement to $Q'_{G_j}$.
In particular, for $j=0$, $\nabla$ is the part of $(Q', \d_0)$ that contains the module $Q'_{W_0}$;
if $j\geq 1$, then this case is only possible when $j=s-1$ and $\nabla$ is the part of $(Q', \d_0)$ that contains the module $Q'_{V_{s-1}}$.
Nevertheless, the construction of $\mu_j$ below depends only on the form above, so the construction will work for all $j$.
Denote 
\begin{equation}\label{eqn: matrix M}
M :=
\begin{bmatrix}
1 & -i \\
1 &  i
\end{bmatrix}
\end{equation}
and $I$ as the $2\times 2$ identity matrix.
We define $\mu_j|_{Q'_{G_j}}$ to be the following dashed map, with $\mu_j$ acting as the identity map on the rest of the modules in $\nabla$:
\begin{center}
\begin{tikzpicture}[scale = 0.8][>=stealth, baseline]
\matrix (M) [matrix of math nodes, column sep=7mm]
{
P_n^A  & P_n^A  & \cdots & P_n^A  & P_n^A  & P_{n-1}^A & \nabla\\
\oplus & \oplus &        & \oplus & \oplus & \oplus    &       & = (Q', \d_j)\\
P_n^A  & P_n^A  & \cdots & P_n^A  & P_n^A  & P_{n+1}^A &       \\
{}     &        &        &        &        &           &       \\
{}     &        &        &        &        &           &       \\
{}     &        &        &        &        &           &       \\
P_n^A  & P_n^A  & \cdots & P_n^A  & P_n^A  & P_{n-1}^A & \nabla\\
\oplus & \oplus &        & \oplus & \oplus & \oplus    &       & =: (Q', \d_{j+1}).\\
P_n^A  & P_n^A  & \cdots & P_n^A  & P_n^A  & P_{n+1}^A &       \\
};

\draw[-> ,font=\small](M-1-1.east |- M-1-2) -- (M-1-2) 
	node[midway,above] {$2X_n$};
\draw[-> ,font=\small](M-1-2.east |- M-1-3) -- (M-1-3)
	;
\draw[-> ,font=\small](M-1-3.east |- M-1-4) -- (M-1-4)
	;
\draw[-> ,font=\small](M-1-4.east |- M-1-5) -- (M-1-5)
	node[midway,above] {$2X_n$}; 
\draw[-> ,font=\small](M-1-5) -- (M-1-6)
	node[midway,above] {$F$}; 
\draw[-> ,font=\small](M-1-5) -- (M-3-6)
	;
\draw[<->](M-1-6.east |- M-1-7) -- (M-1-7)
	;
\draw[<->](M-3-6) -- (M-1-7)
	;

\draw[-> ,font=\small](M-3-1.east |- M-3-2) -- (M-3-2) 
	node[midway,above] {$2X_n$};
\draw[-> ,font=\small](M-3-2.east |- M-3-3) -- (M-3-3)
	;
\draw[-> ,font=\small](M-3-3.east |- M-3-4) -- (M-3-4)
	;
\draw[-> ,font=\small](M-3-4.east |- M-3-5) -- (M-3-5)
	node[midway,above] {$2X_n$};
\draw[-> ,font=\small](M-3-5) -- (M-1-6)
	; 
\draw[-> ,font=\small](M-3-5) -- (M-3-6)
	;

\draw[-> ,font=\small](M-7-1.east |- M-7-2) -- (M-7-2) 
	node[midway,above] {$X_n$};
\draw[-> ,font=\small](M-7-2.east |- M-7-3) -- (M-7-3)
	;
\draw[-> ,font=\small](M-7-3.east |- M-7-4) -- (M-7-4)
	;
\draw[-> ,font=\small](M-7-4.east |- M-7-5) -- (M-7-5)
	node[midway,above] {$X_n$}; 
\draw[-> ,font=\small](M-7-5) -- (M-7-6)
	node[midway,above, scale=0.7] {$(n|n-1)$};
\draw[<->](M-7-6.east |- M-7-7) -- (M-7-7)
	;
\draw[<->](M-9-6) -- (M-7-7)
	;

\draw[-> ,font=\small](M-9-1.east |- M-9-2) -- (M-9-2) 
	node[midway,above] {$X_n$};
\draw[-> ,font=\small](M-9-2.east |- M-9-3) -- (M-9-3)
	;
\draw[-> ,font=\small](M-9-3.east |- M-9-4) -- (M-9-4)
	;
\draw[-> ,font=\small](M-9-4.east |- M-9-5) -- (M-9-5)
	node[midway,above] {$X_n$};
\draw[-> ,font=\small](M-9-5) -- (M-9-6)
	node[midway,above, scale=0.7] {$(n|n+1)$};

\draw[-> ,font=\small, dashed](M-3-1) -- (M-7-1) 
	node[midway,left,scale=0.8] {$2^{k-1}M$};
\draw[-> ,font=\small, dashed](M-3-2) -- (M-7-2) 
	node[midway,right,scale=0.8] {$2^{k-2}M$};
\draw[-> ,font=\small, dashed](M-3-4) -- (M-7-4) 
	node[midway,left,scale=0.8] {$2M$};
\draw[-> ,font=\small, dashed](M-3-5) -- (M-7-5) 
	node[midway,left,scale=0.8] {$M$};
\draw[-> ,font=\small, dashed](M-3-6) -- (M-7-6) 
	node[midway,left,scale=0.8] {$I$};
\end{tikzpicture}
\end{center}
The arrows in the last two rows shows the differential component $(\d_{j+1})_{G_j}$ in $\d_{j+1}$, induced by the conjugation of $\mu_j^{-1}$.
Hence, the required condition \eqref{induction} follows directly.

Now consider when $k<0$.
As before, we have that $(\d_j)_{G_j} = (\d_0)_{G_j}$ for all $j$, so the analysis of the part of $(Q', \d_j)$ that contains $(Q'_{G_j}, (\d_j)_{G_j})$ will be the same. 
Similarly the construction of $\mu_i$ below will work for both cases.
We draw the part of $(Q', \d_j)$ that contains $(Q'_{G_j}, (\d_j)_{G_j}) = (Q'_{G_j}, (\d_0)_{G_j})$ as follows:
\begin{center}
\begin{tikzpicture} [scale= 0.8][>=stealth, baseline]
\matrix (M) [matrix of math nodes, column sep=7mm]
{
       & P_{n+1}^A & P_n^A  & P_n^A  & \cdots & P_n^A  & P_n^A  \\
       & \oplus    & \oplus & \oplus &        & \oplus & \oplus & =(Q', \d_j)\\
\nabla & P_{n-1}^A & P_n^A  & P_n^A  & \cdots & P_n^A  & P_n^A  \\
};

\draw[-> ,font=\small](M-1-2.east |- M-1-3) -- (M-1-3) 
	node[midway,above] {$E$};
\draw[-> ,font=\small](M-1-2) -- (M-3-3) 
	;
\draw[-> ,font=\small](M-1-3.east |- M-1-4) -- (M-1-4) 
	node[midway,above] {$2X_n$};
\draw[-> ,font=\small](M-1-4.east |- M-1-5) -- (M-1-5)
	;
\draw[-> ,font=\small](M-1-5.east |- M-1-6) -- (M-1-6)
	;
\draw[-> ,font=\small](M-1-6.east |- M-1-7) -- (M-1-7)
	node[midway,above] {$2X_n$};
\draw[<->](M-3-1) -- (M-1-2)
	;

\draw[-> ,font=\small](M-3-2) -- (M-1-3)
	;
\draw[-> ,font=\small](M-3-2) -- (M-3-3)
	;
\draw[-> ,font=\small](M-3-3.east |- M-3-4) -- (M-3-4) 
	node[midway,above] {$2X_n$};
\draw[-> ,font=\small](M-3-4.east |- M-3-5) -- (M-3-5)
	;
\draw[-> ,font=\small](M-3-5.east |- M-3-6) -- (M-3-6)
	;
\draw[-> ,font=\small](M-3-6.east |- M-3-7) -- (M-3-7)
	node[midway,above] {$2X_n$};
\draw[<->](M-3-1) -- (M-3-2)
	;

\end{tikzpicture}
\end{center}
with $
E = 
\begin{bsmallmatrix}
-(n+1|n)i &  (n-1|n)i \\
(n+1|n)   &  (n-1|n)
\end{bsmallmatrix}
$ 
and where $\nabla$ denotes the rest of the complex $(Q', \d_j)$ containing the modules complement to $Q'_{G_j}$.
Denote 
\begin{equation} \label{eqn: matrix N}
N :=
\begin{bmatrix}
i  & 1 \\
-i & 1
\end{bmatrix}
\end{equation} 
and $I$ as the $2\times 2$ identity matrix.
Note that 
$
N
\begin{bsmallmatrix}
-(n+1|n)i &  (n-1|n)i \\
(n+1|n)   &  (n-1|n)
\end{bsmallmatrix}
=
2
\begin{bsmallmatrix}
(n+1|n) &   0      \\
0       &  (n-1|n)
\end{bsmallmatrix}.
$
We define $\mu_j|_{Q'_{G_j}}$ to be the following dashed map, with $\mu_j$ the identity map on all modules in $\nabla$:
\begin{center}
\begin{tikzpicture}[scale=0.8][>=stealth, baseline]
\matrix (M) [matrix of math nodes, column sep=7mm]
{
       & P_{n+1}^A & P_n^A  & P_n^A  & \cdots & P_n^A  & P_n^A  \\
       & \oplus    & \oplus & \oplus &        & \oplus & \oplus & =(Q', \d_j)\\
\nabla & P_{n-1}^A & P_n^A  & P_n^A  & \cdots & P_n^A  & P_n^A  \\
{}     &           &        &        &        &        &        \\
{}     &           &        &        &        &        &        \\
{}     &           &        &        &        &        &        \\
       & P_{n+1}^A & P_n^A  & P_n^A  & \cdots & P_n^A  & P_n^A  \\
       & \oplus    & \oplus & \oplus &        & \oplus & \oplus & =:(Q', \d_{j+1}).\\
\nabla & P_{n-1}^A & P_n^A  & P_n^A  & \cdots & P_n^A  & P_n^A \\
};

\draw[-> ,font=\small](M-1-2.east |- M-1-3) -- (M-1-3) 
	node[midway,above] {$E$};
\draw[-> ,font=\small](M-1-2) -- (M-3-3) 
	;
\draw[-> ,font=\small](M-1-3.east |- M-1-4) -- (M-1-4) 
	node[midway,above] {$2X_n$};
\draw[-> ,font=\small](M-1-4.east |- M-1-5) -- (M-1-5)
	;
\draw[-> ,font=\small](M-1-5.east |- M-1-6) -- (M-1-6)
	;
\draw[-> ,font=\small](M-1-6.east |- M-1-7) -- (M-1-7)
	node[midway,above] {$2X_n$};

\draw[-> ,font=\small](M-3-2) -- (M-1-3)
	;
\draw[-> ,font=\small](M-3-2) -- (M-3-3)
	;
\draw[-> ,font=\small](M-3-3.east |- M-3-4) -- (M-3-4) 
	node[midway,above] {$2X_n$};
\draw[-> ,font=\small](M-3-4.east |- M-3-5) -- (M-3-5)
	;
\draw[-> ,font=\small](M-3-5.east |- M-3-6) -- (M-3-6)
	;
\draw[-> ,font=\small](M-3-6.east |- M-3-7) -- (M-3-7)
	node[midway,above] {$2X_n$};
\draw[<->](M-3-1.east |- M-3-2) -- (M-3-2);
\draw[<->](M-3-1) -- (M-1-2); 

\draw[-> ,font=\small](M-7-2.east |- M-7-3) -- (M-7-3) 
	node[midway,above,scale=0.7] {$(n+1|n)$};
\draw[-> ,font=\small](M-7-3.east |- M-7-4) -- (M-7-4) 
	node[midway,above] {$X_n$};
\draw[-> ,font=\small](M-7-4.east |- M-7-5) -- (M-7-5)
	;
\draw[-> ,font=\small](M-7-5.east |- M-7-6) -- (M-7-6)
	;
\draw[-> ,font=\small](M-7-6.east |- M-7-7) -- (M-7-7)
	node[midway,above] {$X_n$};

\draw[-> ,font=\small](M-9-2) -- (M-9-3)
	node[midway,above,scale=0.7] {$(n-1|n)$};
\draw[-> ,font=\small](M-9-3.east |- M-9-4) -- (M-9-4) 
	node[midway,above] {$X_n$};
\draw[-> ,font=\small](M-9-4.east |- M-9-5) -- (M-9-5)
	;
\draw[-> ,font=\small](M-9-5.east |- M-9-6) -- (M-9-6)
	;
\draw[-> ,font=\small](M-9-6.east |- M-9-7) -- (M-9-7)
	node[midway,above] {$X_n$};
\draw[<->](M-9-1.east |- M-9-2) -- (M-9-2);
\draw[<->](M-9-1) -- (M-7-2); 

\draw[-> ,font=\small, dashed](M-3-2) -- (M-7-2) 
	node[midway,left,scale=0.8] {$I$};
\draw[-> ,font=\small, dashed](M-3-3) -- (M-7-3) 
	node[midway,right,scale=0.8] {$2^{-1}N$};
\draw[-> ,font=\small, dashed](M-3-4) -- (M-7-4) 
	node[midway,right,scale=0.8] {$2^{-2}N$};
\draw[-> ,font=\small, dashed](M-3-6) -- (M-7-6) 
	node[midway,left,scale=0.8] {$2^{k+1}N$};
\draw[-> ,font=\small, dashed](M-3-7) -- (M-7-7) 
	node[midway,left,scale=0.8] {$2^{k}N$};

\end{tikzpicture}
\end{center}
The arrows in the last two rows shows the differential component $(\d_{j+1})_{G_j}$ in $\d_{j+1}$, induced by the conjugation of $\mu_j^{-1}$.
It follows directly that the required condition \eqref{induction} is satisfied.

\item $\check{g}_j$ is of Type II$'_k$ for $k \in \Z$: \\
As before, we have that $(\d_j)_{G_j} = (\d_0)_{G_j}$ for all $j$ by our inductive assumption, so the part of $(Q', \d_j)$ that contains $(Q'_{G_j}, (\d_j)_{G_j})$ will be of the same form. 
As such, the construction of $\mu_j$ below will work for all $j$.

We shall start with $k=0$.
We draw the part of $(Q', \d_j)$ that contains $(Q'_{G_j}, (\d_j)_{G_j}) = (Q'_{G_j}, (\d_0)_{G_j})$ as follows:
\begin{center}
\begin{tikzpicture}[>=stealth, baseline]
\matrix (M) [matrix of math nodes, column sep=7mm]
{
\nabla & P_{n+1}^A & P_{n+1}^A & \nabla' \\
       & \oplus    & \oplus    &       & = (Q', \d_j), \\
       & P_{n-1}^A & P_{n-1}^A &        \\
};

\draw[<->](M-1-1) -- (M-1-2); 
\draw[-> ,font=\small](M-1-2.east |- M-1-3) -- (M-1-3)
	node[midway,above] {$X_{n+1}$};
\draw[<->](M-1-3) -- (M-1-4); 

\draw[<->](M-1-1) -- (M-3-2); 
\draw[-> ,font=\small](M-1-2.east |- M-3-3) -- (M-3-3)
	node[midway,above] {$X_{n-1}$};
\draw[<->](M-3-3) -- (M-1-4); 
\end{tikzpicture}
\end{center}
where either $\nabla$ or $\nabla'$ is the part of $(Q', \d_j)$ that contains the module $Q'_{V_j}$, whereas the other contains $Q'_{W_j}$.
However, in this case we already have that $(\d_j)_{G_j} = \d'_{G_j}$, thus we just choose $\mu_j$ to be the identity map.

For $k > 0$, the part of $(Q', \d_j)$ containing $(Q'_{G_j}, (\d_j)_{G_j}) = (Q'_{G_j}, (\d_0)_{G_j})$ is as follows:
\begin{center}
\begin{tikzpicture}[>=stealth, baseline]
\matrix (M) [matrix of math nodes, column sep=7mm]
{
P_n^A  & P_n^A  & P_n^A  & \cdots & P_n^A  & P_n^A  & P_{n-1}^A & \nabla \\
\oplus & \oplus & \oplus &        & \oplus & \oplus & \oplus    & \\
P_n^A  & P_n^A  & P_n^A  & \cdots & P_n^A  & P_n^A  & P_{n+1}^A &        \\
       & \oplus & \oplus &        & \oplus & \oplus &       &  &=(Q', \d_j),\\ 
       & P_n^A  & P_n^A  & \cdots & P_n^A  & P_{n-1}^A & \nabla' \\
       & \oplus & \oplus &        & \oplus & \oplus    &        \\
       & P_n^A  & P_n^A  & \cdots & P_n^A  & P_{n+1}^A &        \\
};

\draw[-> ,font=\small](M-1-1) -- (M-1-2) 
	node[midway,above] {$2X_n$};
\draw[-> ,font=\small](M-1-1) -- (M-5-2) 
	node[midway,below] {$2X_n$};
\draw[-> ,font=\small](M-1-2) -- (M-1-3) 
	node[midway,above] {$2X_n$};
\draw[-> ,font=\small](M-1-3) -- (M-1-4) 
	;
\draw[-> ,font=\small](M-1-4) -- (M-1-5)
	;
\draw[-> ,font=\small](M-1-5) -- (M-1-6)
	node[midway,above] {$2X_n$};
\draw[-> ,font=\small](M-1-6) -- (M-1-7)
	node[midway, above] {$F$};
\draw[-> ,font=\small](M-1-6) -- (M-3-7)
	;
\draw[<-> ,font=\small](M-1-7) -- (M-1-8)
	;
\draw[<-> ,font=\small](M-3-7) -- (M-1-8)
	;

\draw[-> ,font=\small](M-3-1) -- (M-3-2)
	node[midway,above] {$2X_n$};
\draw[-> ,font=\small](M-3-2) -- (M-3-3)
	node[midway,above] {$2X_n$};
\draw[-> ,font=\small](M-3-1) -- (M-7-2)
	node[midway,above] {$2X_n$};
\draw[-> ,font=\small](M-3-3) -- (M-3-4) 
	;
\draw[-> ,font=\small](M-3-4) -- (M-3-5)
	;
\draw[-> ,font=\small](M-3-5) -- (M-3-6)
	node[midway,above] {$2X_n$};
\draw[-> ,font=\small](M-3-6) -- (M-1-7)
	;
\draw[-> ,font=\small](M-3-6) -- (M-3-7)
	;

\draw[-> ,font=\small](M-5-2) -- (M-5-3)
	node[midway,above] {$2X_n$};
\draw[-> ,font=\small](M-5-3) -- (M-5-4) 
	;
\draw[-> ,font=\small](M-5-4) -- (M-5-5)
	;
\draw[-> ,font=\small](M-5-5) -- (M-5-6)
	node[midway, above] {$F$};
\draw[-> ,font=\small](M-5-5) -- (M-7-6)
	;
\draw[<-> ,font=\small](M-5-6) -- (M-5-7)
	;
\draw[<-> ,font=\small](M-7-6) -- (M-5-7)
	;

\draw[-> ,font=\small](M-7-2) -- (M-7-3)
	node[midway,above] {$2X_n$};
\draw[-> ,font=\small](M-7-3) -- (M-7-4) 
	;
\draw[-> ,font=\small](M-7-4) -- (M-7-5)
	;
\draw[-> ,font=\small](M-7-5) -- (M-5-6)
	;
\draw[-> ,font=\small](M-7-5) -- (M-7-6)
	;
\end{tikzpicture}
\end{center}
where either $\nabla$ or $\nabla'$ is the part of $(Q', \d_j)$ that contains the module $Q'_{V_j}$ and the other contains $Q'_{W_j}$.
We define $\mu_j|_{Q'_{G_j}}$ to be the following dashed map:
\begin{center}
\begin{tikzpicture}[>=stealth, baseline]
\matrix (M) [matrix of math nodes, column sep=7mm]
{
P_n^A  & P_n^A  & P_n^A  & \cdots & P_n^A  & P_n^A  & P_{n-1}^A & \nabla \\
\oplus & \oplus & \oplus &        & \oplus & \oplus & \oplus    \\
P_n^A  & P_n^A  & P_n^A  & \cdots & P_n^A  & P_n^A  & P_{n+1}^A &        \\
       & \oplus & \oplus &        & \oplus & \oplus &       & & =(Q', \d_j)\\ 
       & P_n^A  & P_n^A  & \cdots & P_n^A  & P_{n-1}^A & \nabla'\\
       & \oplus & \oplus &        & \oplus & \oplus    &       \\
       & P_n^A  & P_n^A  & \cdots & P_n^A  & P_{n+1}^A &       \\
{}     &        &        &        &        &        &          \\
{}     &        &        &        &        &        &          \\ {}     &        &        &        &        &        &          \\
P_n^A  & P_n^A  & P_n^A  & \cdots & P_n^A  & P_n^A  & P_{n-1}^A & \nabla\\
\oplus & \oplus & \oplus &        & \oplus & \oplus & \oplus    \\
P_n^A  & P_n^A  & P_n^A  & \cdots & P_n^A  & P_n^A  & P_{n+1}^A &       \\
       & \oplus & \oplus &        & \oplus & \oplus &       & & =: (Q', \d_{j+1}),\\ 
       & P_n^A  & P_n^A  & \cdots & P_n^A  & P_{n-1}^A & \nabla'\\
       & \oplus & \oplus &        & \oplus & \oplus    &       \\
       & P_n^A  & P_n^A  & \cdots & P_n^A  & P_{n+1}^A &       \\
};

\draw[-> ,font=\small](M-1-1) -- (M-1-2) 
	node[midway,above] {$2X_n$};
\draw[-> ,font=\small](M-1-1) -- (M-5-2) 
	node[midway,below] {$2X_n$};
\draw[-> ,font=\small](M-1-2) -- (M-1-3) 
	node[midway,above] {$2X_n$};
\draw[-> ,font=\small](M-1-3) -- (M-1-4) 
	;
\draw[-> ,font=\small](M-1-4) -- (M-1-5)
	;
\draw[-> ,font=\small](M-1-5) -- (M-1-6)
	node[midway,above] {$2X_n$};
\draw[-> ,font=\small](M-1-6) -- (M-1-7)
	node[midway, above] {$F$};
\draw[-> ,font=\small](M-1-6) -- (M-3-7)
	;
\draw[<-> ,font=\small](M-1-7) -- (M-1-8)
	;
\draw[<-> ,font=\small](M-3-7) -- (M-1-8)
	;

\draw[-> ,font=\small](M-3-1) -- (M-3-2)
	node[midway,above] {$2X_n$};
\draw[-> ,font=\small](M-3-2) -- (M-3-3)
	node[midway,above] {$2X_n$};
\draw[-> ,font=\small](M-3-1) -- (M-7-2)
	node[midway,above] {$2X_n$};
\draw[-> ,font=\small](M-3-3) -- (M-3-4) 
	;
\draw[-> ,font=\small](M-3-4) -- (M-3-5)
	;
\draw[-> ,font=\small](M-3-5) -- (M-3-6)
	node[midway,above] {$2X_n$};
\draw[-> ,font=\small](M-3-6) -- (M-1-7)
	;
\draw[-> ,font=\small](M-3-6) -- (M-3-7)
	;

\draw[-> ,font=\small](M-5-2) -- (M-5-3)
	node[midway,above] {$2X_n$};
\draw[-> ,font=\small](M-5-3) -- (M-5-4) 
	;
\draw[-> ,font=\small](M-5-4) -- (M-5-5)
	;
\draw[-> ,font=\small](M-5-5) -- (M-5-6)
	node[midway, above] {$F$};
\draw[-> ,font=\small](M-5-5) -- (M-7-6)
	;
\draw[<-> ,font=\small](M-5-6) -- (M-5-7)
	;
\draw[<-> ,font=\small](M-7-6) -- (M-5-7)
	;

\draw[-> ,font=\small](M-7-2) -- (M-7-3)
	node[midway,above] {$2X_n$};
\draw[-> ,font=\small](M-7-3) -- (M-7-4) 
	;
\draw[-> ,font=\small](M-7-4) -- (M-7-5)
	;
\draw[-> ,font=\small](M-7-5) -- (M-5-6)
	;
\draw[-> ,font=\small](M-7-5) -- (M-7-6)
	;

\draw[-> ,font=\small](M-11-1) -- (M-11-2) 
	node[midway,above] {$X_n$};
\draw[-> ,font=\small](M-11-1) -- (M-15-2) 
	node[midway,below] {$X_n$};
\draw[-> ,font=\small](M-11-2) -- (M-11-3) 
	node[midway,above] {$X_n$};
\draw[-> ,font=\small](M-11-3) -- (M-11-4) 
	;
\draw[-> ,font=\small](M-11-4) -- (M-11-5)
	;
\draw[-> ,font=\small](M-11-5) -- (M-11-6)
	node[midway,above] {$X_n$};
\draw[-> ,font=\small](M-11-6) -- (M-11-7)
	node[midway,above,scale=0.7] {$(n|n-1)$};
\draw[<-> ,font=\small](M-11-7) -- (M-11-8)
	;
\draw[<-> ,font=\small](M-13-7) -- (M-11-8)
	;

\draw[-> ,font=\small](M-13-1) -- (M-13-2)
	node[midway,above] {$X_n$};
\draw[-> ,font=\small](M-13-2) -- (M-13-3)
	node[midway,above] {$X_n$};
\draw[-> ,font=\small](M-13-1) -- (M-17-2)
	node[midway,left] {$X_n$};
\draw[-> ,font=\small](M-13-3) -- (M-13-4) 
	;
\draw[-> ,font=\small](M-13-4) -- (M-13-5)
	;
\draw[-> ,font=\small](M-13-5) -- (M-13-6)
	node[midway,above] {$X_n$};
\draw[-> ,font=\small](M-13-6) -- (M-13-7)
	node[midway,above,scale=0.7] {$(n|n+1)$};

\draw[-> ,font=\small](M-15-2) -- (M-15-3)
	node[midway,above] {$X_n$};
\draw[-> ,font=\small](M-15-3) -- (M-15-4) 
	;
\draw[-> ,font=\small](M-15-4) -- (M-15-5)
	;
\draw[-> ,font=\small](M-15-5) -- (M-15-6)
	node[midway,above,scale=0.7] {$(n|n-1)$};
\draw[<-> ,font=\small](M-15-6) -- (M-15-7)
	;
\draw[<-> ,font=\small](M-17-6) -- (M-15-7)
	;

\draw[-> ,font=\small](M-17-2) -- (M-17-3)
	node[midway,above] {$X_n$};
\draw[-> ,font=\small](M-17-3) -- (M-17-4) 
	;
\draw[-> ,font=\small](M-17-4) -- (M-17-5)
	;
\draw[-> ,font=\small](M-17-5) -- (M-17-6)
	node[midway,above,scale=0.7] {$(n|n+1)$};

\draw[-> ,font=\small, dashed](M-3-1) -- (M-11-1)
	node[midway,left] {$2M$};
\draw[-> ,font=\small, dashed](M-7-2) -- (M-11-2)
	node[midway,left,scale=0.6] {$
		\begin{bmatrix}
		M & 0 \\
		0 & M \\
		\end{bmatrix}
		$};
\draw[-> ,font=\small, dashed](M-7-3) -- (M-11-3)
	node[midway, left,scale=0.6] {$ 2^{-1}
		\begin{bmatrix}
		M & 0 \\
		0 & M \\
		\end{bmatrix}
		$};
\draw[-> ,font=\small, dashed](M-7-5) -- (M-11-5)
	node[midway, left,scale=0.6] {$2^{-(k-2)}
		\begin{bmatrix}
		M & 0 \\
		0 & M \\
		\end{bmatrix}
		$};
\draw[-> ,font=\small, dashed](M-7-6) -- (M-11-6)
	node[midway, right,scale=0.6] {$
		\begin{bmatrix}
		2^{-(k-1)}M & 0 \\
		0 & 2^{-(k-2)}I \\
		\end{bmatrix}
		$};
\draw[-> ,font=\small, dashed](M-3-7) to [out=-30,in=30]node[midway,right] {$2^{-(k-1)}I$}
	(M-11-7);

\end{tikzpicture}
\end{center}
where $M$ is as in \eqref{eqn: matrix M}.
For the rest of the modules in $Q'$, $\mu_j$ sends $v$ to $2^{-(k-1)}v$ (resp. $v$ to $2^{-(k-2)}v$) for any $v$ belonging to the modules in $\nabla$ (resp. $\nabla'$).
The black arrows in the last four rows show the differential component $(\d_{j+1})_{G_j}$ in $\d_{j+1}$, induced by the conjugation of $\mu_j^{-1}$.
It is easy to see that the required condition \eqref{induction} is satisfied.

The construction for $k < 0$ is similar, using the map $N$ (from \eqref{eqn: matrix N}) in place of $M$.

\item $\check{g}_j$ is of Type V'': \\
Recall the definitions of $g_j, c_j$ and $r_j$ from the paragraph  \nameref{slicing} and recall the subsets of crossings of $\mathfrak{m}(\check{c})$ as defined in \eqref{subset crossing}.
Let $h_j$ be the connected component of $(c\setminus g_j) \cup (g_j \cap d_2)$ that contains the point $m$, so that $g_j$ intersects $h_j$ at the point $r_j$.
To illustrate, $h_0 = c_0$ and $h_1 = c_0 \cup g_0 \cup c_1$ in \cref{notend01}; whereas $h_0 = \emptyset,$ $h_1 = g_0 \cup c_1,$ and $h_2 = g_0 \cup c_1 \cup g_1 \cup c_2$ in \cref{end01}. 
Let $\mathfrak{m}(\check{h}_j) =  \wt{h}_j \coprod \undertilde{h}_j$ and $\mathfrak{m}(r_j) = \wt{r}_j \coprod \undertilde{r}_j$ so that the curves of $\mathfrak{m}(\check{g}_j)$ and $ \wt{h_j}$ intersect at the point $\wt{r}_j \in \theta_{n+1}$ and the curves of $\mathfrak{m}(\check{g}_j)$ and $\undertilde{h_j}$ intersect at the point $\undertilde{r_j}\in \theta_{n-1}$.

Now recall the decomposition of $Q'$ given in \eqref{new R decomp}.
By definition, $\{ \wt{r}_j, \undertilde{r}_j \}$ is the subset of crossings $R_j \subseteq G_j$.
We get that
\[
P^A(\wt{r_j}) \oplus P^A(\undertilde{r_j}) = Q'_{R_j}.
\]
First consider the case when $j=0$.
Then $g_j=g_0$ is of this type only when $m \in \Lambda\setminus \{0, 1\}$ since $g_0 \cap \{1 \} = \emptyset$, so we have $Q'_{V_0 \oplus R_0} = Q'_{C_0}$.
Furthermore, \eqref{C diff component} implies that $(\d_0)_{C_0} = \d'_{C_0}$, giving us
\[
(Q'_{V_0}\oplus Q'_{R_0}, (\d_0)_{V_0 \oplus R_0}) 
= (Q'_{C_0}, (\d_0)_{C_0}) 
= (Q'_{C_0}, \d'_{C_0}) 
= L_A(\mathfrak{m}(\check{c}_0))
\]
with the last equality following from the definition of $(Q', \d') = L_A(\mathfrak{m}(\check{c}))$.
By definition of $h_j$, it follows that $c_0 = h_0$, so we can conclude that
\[
(Q'_{V_0}\oplus Q'_{R_0}, (\d_j)_{V_0 \oplus R_0}) = L_A(\mathfrak{m}(\check{h}_0))
= L_A(\wt{h_0}) \oplus L_A(\undertilde{h_0}),
\]
where the last equality follows from the fact that $\mathfrak{m}(\check{h}_0) =  \wt{h_0} \coprod \undertilde{h_0}$.

Now consider when $j \geq 1$.
In this case $\d_j$ satisfies property \eqref{induction} by our induction hypothesis, allowing us to conclude that
\begin{align*}
(\d_j)_{V_j} = \d'_{V_j}, \quad
(\d_j)_{V_j R_j} = \d'_{V_j R_j}, \text{ and }
(\d_j)_{R_j V_j} = \d'_{R_j V_j}.
\end{align*}
Thus we have that $(\d_j)_{V_j \oplus R_j} = \d'_{V_j \oplus R_j}$, giving us
$$
(Q'_{V_j}\oplus Q'_{R_j}, (\d_j)_{V_j \oplus R_j}) 
= (Q'_{V_j}\oplus Q'_{R_j}, \d'_{V_j \oplus R_j}) 
= L_A(\mathfrak{m}(\check{h}_j)) 
= L_A(\wt{h_j}) \oplus L_A(\undertilde{h_j}),
$$
where the second equality follows from the definition of $(Q', \d') = L_A(\mathfrak{m}(\check{c}))$ and the third equality follows from the fact that $\mathfrak{m}(\check{h}_j) =  \wt{h_j} \coprod \undertilde{h_j}$.

Thus for all $j$, we obtain
\begin{equation} \label{VR splits}
(Q'_{V_j}\oplus Q'_{R_j}, (\d_j)_{V_j \oplus R_j}) = L_A(\wt{h_j}) \oplus L_A(\undertilde{h_j}).
\end{equation}
Furthermore, note that $\wt{h_j}$ and $\undertilde{h_j}$ contain the points $\wt{r}_j$ and $\undertilde{r_j}$ respectively, so $L_A(\wt{h}_j)$ contains $P^A(\wt{r_j})$ as a submodule and $L_A(\undertilde{h_j})$ contains $P^A(\undertilde{r_j})$ as a submodule.
Let us now understand the relation between $(Q'_{G_j}, (\d_j)_{G_j})$, $(Q'_{V_j}\oplus Q'_{R_j}, (\d_j)_{V_j \oplus R_j})$ and $(Q', \d_j)$.
As before our inductive hypothesis gives us $(\d_j)_{G_j} = (\d_0)_{G_j}$ for all $j$.
So the part of $(Q', \d_j)$ that contains $(Q'_{G_j}, (\d_j)_{G_j})$ will be the same for all $j$, and it has either of the following two forms:
\begin{center}
\begin{tikzpicture}[>=stealth, baseline]
\matrix (M) [matrix of math nodes, column sep=7mm]
{
       & P^A(\wt{r_j}) & P_n^A  & P_{n-1}^A & \nabla' \\
       & \oplus    & \oplus & \oplus    &        & =(Q', \d_j)\\
\nabla & P^A(\undertilde{r_j}) & P_n^A  & P_{n+1}^A &        \\
};

\draw[-> ,font=\small](M-1-2) -- (M-1-3) 
	node[midway,above] {$E$};
\draw[-> ,font=\small](M-1-2) -- (M-3-3) 
	;
\draw[-> ,font=\small](M-1-3) -- (M-1-4) 
	node[midway,above] {$F$};
\draw[-> ,font=\small](M-1-3) -- (M-3-4) 
	;
\draw[<->](M-1-4) -- (M-1-5)
	;
\draw[<-> ,font=\small](M-3-4) -- (M-1-5)
	;

\draw[<->](M-3-1) -- (M-1-2)
	;
\draw[<->](M-3-1) -- (M-3-2)
	;
\draw[-> ,font=\small](M-3-2) -- (M-3-3) 
	;
\draw[-> ,font=\small](M-3-2) -- (M-1-3) 
	;
\draw[-> ,font=\small](M-3-3) -- (M-3-4)
	;
\draw[-> ,font=\small](M-3-3) -- (M-1-4)
	;

\draw[BurntOrange] (M-3-1.south west)++(0,-5pt) rectangle (M-1-2.north east)
	node[midway, xshift = -3.5cm] 
  	{$(Q'_{V_j}\oplus Q'_{R_j}, (\d_j)_{V_j \oplus R_j})$};
  	
\draw[OliveGreen] (M-1-2.north west)++(0,5pt) rectangle 
	(M-3-4.south east)
		node[midway, xshift=3.5cm , yshift = -22pt] 
  		{$(Q'_{G_j}, (\d_j)_{G_j})$};
\end{tikzpicture}

\vspace*{1mm}
{\LARGE or}
\vspace*{1mm}

\begin{tikzpicture}[>=stealth, baseline]
\matrix (M) [matrix of math nodes, column sep=7mm]
{
       & P^A_{n+1} & P_n^A  & P^A(\undertilde{r_j}) & \nabla \\
       & \oplus    & \oplus & \oplus    &   &  & & &   & =(Q', \d_j),\\
\nabla' & P^A_{n-1} & P_n^A  & P^A(\wt{r_j}) &        \\
};

\draw[-> ,font=\small](M-1-2) -- (M-1-3) 
	node[midway,above] {$E$};
\draw[-> ,font=\small](M-1-2) -- (M-3-3) 
	;
\draw[-> ,font=\small](M-1-3) -- (M-1-4) 
	node[midway,above] {$F$};
\draw[-> ,font=\small](M-1-3) -- (M-3-4) 
	;
\draw[<->](M-1-4) -- (M-1-5)
	;
\draw[<-> ,font=\small](M-3-4) -- (M-1-5)
	;

\draw[<->](M-3-1) -- (M-1-2)
	;
\draw[<->](M-3-1) -- (M-3-2)
	;
\draw[-> ,font=\small](M-3-2) -- (M-3-3) 
	;
\draw[-> ,font=\small](M-3-2) -- (M-1-3) 
	;
\draw[-> ,font=\small](M-3-3) -- (M-3-4)
	;
\draw[-> ,font=\small](M-3-3) -- (M-1-4)
	;

\draw[BurntOrange] (M-3-4.south west) rectangle (M-1-5.north east)
	node[midway, xshift = 3.2cm, yshift = 26pt] 
  	{$(Q'_{V_j}\oplus Q'_{R_j}, (\d_j)_{V_j \oplus R_j})$};
  	
\draw[OliveGreen] (M-1-2.north west)++(0,5pt) rectangle 
	(M-3-4.south east)
		node[midway, xshift=-3.5cm, yshift = 22pt] 
  		{$(Q'_{G_j}, (\d_j)_{G_j})$};
\end{tikzpicture}
\end{center}
where $\nabla$ denotes the rest of the complex $(Q', \d_j)$ that contains the module $Q'_{V_j}$ and $\nabla'$ denotes the rest of the complex $(Q', \d_j)$ that contains the module $Q'_{W_j}$.
%
%
%
%
%

Using \eqref{VR splits}, the part of $(Q', \d_j)$ that contains $(Q'_{G_j}, (\d_j)_{G_j})$, $L_A(\wt{h_j})$ and $L_A(\undertilde{h_j})$ has either of the following two forms:
\begin{center}
\begin{tikzpicture}[>=stealth, baseline]
\matrix (M) [matrix of math nodes, column sep=7mm]
{
\wt{\nabla} & P^A(\wt{r_j}) & P_n^A  & P_{n-1}^A & \nabla' \\
       & \oplus    & \oplus & \oplus    &        & =(Q', \d_j)\\
\undertilde{\nabla} & P^A(\undertilde{r_j}) & P_n^A  & P_{n+1}^A &        \\
};

\draw[<->](M-1-1) -- (M-1-2)
	;
\draw[-> ,font=\small](M-1-2) -- (M-1-3) 
	node[midway,above] {$E$};
\draw[-> ,font=\small](M-1-2) -- (M-3-3) 
	;
\draw[-> ,font=\small](M-1-3) -- (M-1-4) 
	node[midway,above] {$F$};
\draw[-> ,font=\small](M-1-3) -- (M-3-4) 
	;
\draw[<->](M-1-4) -- (M-1-5)
	;
\draw[<-> ,font=\small](M-3-4) -- (M-1-5)
	;

\draw[<->](M-3-1) -- (M-3-2)
	;
\draw[-> ,font=\small](M-3-2) -- (M-3-3) 
	;
\draw[-> ,font=\small](M-3-2) -- (M-1-3) 
	;
\draw[-> ,font=\small](M-3-3) -- (M-3-4)
	;
\draw[-> ,font=\small](M-3-3) -- (M-1-4)
	;
	
\draw[red] (M-1-1.north west) rectangle (M-1-2.south east)
	node[midway, xshift = -2cm] 
  	{$L_A\left(\wt{h_j}\right)$};
\draw[Brown] (M-3-1.north west) rectangle (M-3-2.south east)
	node[midway, xshift = -2cm] 
  	{$L_A\left(\undertilde{h_j}\right)$};
  	
\draw[OliveGreen] (M-1-2.north west)++(0,4pt) rectangle 
	(M-3-4.south east)
		node[midway, xshift= 3.5cm, yshift = -23pt] 
  		{$(Q'_{G_j}, (\d_j)_{G_j})$};
\end{tikzpicture}

\vspace*{1mm}
{\LARGE or}
\vspace*{1mm}

\begin{tikzpicture}[>=stealth, baseline]
\matrix (M) [matrix of math nodes, column sep=7mm]
{
       & P^A_{n+1} & P_n^A  & P^A(\undertilde{r_j}) & \undertilde{\nabla} \\
       & \oplus    & \oplus & \oplus    &    &    & =(Q', \d_j).\\
\nabla' & P^A_{n-1} & P_n^A  & P^A(\wt{r_j}) & \wt{\nabla} \\
};

\draw[<->](M-3-1) -- (M-1-2)
	;
\draw[-> ,font=\small](M-1-2) -- (M-1-3) 
	node[midway,above] {$E$};
\draw[-> ,font=\small](M-1-2) -- (M-3-3) 
	;
\draw[-> ,font=\small](M-1-3) -- (M-1-4) 
	node[midway,above] {$F$};
\draw[-> ,font=\small](M-1-3) -- (M-3-4) 
	;
\draw[<->](M-1-4) -- (M-1-5)
	;
\draw[<-> ,font=\small](M-3-4) -- (M-3-5)
	;

\draw[<->](M-3-1) -- (M-3-2)
	;
\draw[-> ,font=\small](M-3-2) -- (M-3-3) 
	;
\draw[-> ,font=\small](M-3-2) -- (M-1-3) 
	;
\draw[-> ,font=\small](M-3-3) -- (M-3-4)
	;
\draw[-> ,font=\small](M-3-3) -- (M-1-4)
	;

\draw[Brown] (M-1-4.south west) rectangle (M-1-5.north east)
	node[midway, xshift = 2cm] 
  	{$L_A\left(\undertilde{h_j}\right)$};
\draw[red] (M-3-4.north west)++(0,1pt) rectangle (M-3-5.south east)
	node[midway, xshift = 2cm] 
  	{$L_A\left(\wt{h_j}\right)$};
  	
\draw[OliveGreen] (M-1-2.north west)++(0,4pt) rectangle 
	(M-3-4.south east)
		node[midway, xshift = -3.3cm, yshift= 0.8cm] 
  		{$(Q'_{G_j}, (\d_j)_{G_j})$};
\end{tikzpicture}
\end{center}
Thus for all $j$, we conclude that $(Q', \d_j)$ must be of the above two possible forms.
It is now sufficient to give a construction of $\mu_j$ for each of the two possible forms.

We begin with the construction of $\mu_j$ for the first possible form of $(Q', \d_j)$.
Firstly, recall $M$ from \eqref{eqn: matrix M} and note that 
$ME = 2i
	\begin{bsmallmatrix}
	(n+1|n) & 0 \\
	0       & (n-1|n)
	\end{bsmallmatrix}
	\begin{bsmallmatrix}
	-1 & 0 \\
	0  & 1
	\end{bsmallmatrix}.
$
We define $\mu_j|_{Q'_{G_j}}$ to be the following dashed map:
\begin{center}
\begin{tikzpicture}[>=stealth, baseline]
\matrix (M) [matrix of math nodes, column sep=7mm]
{
\wt{\nabla} & P^A(\wt{r_j}) & P_n^A  & P_{n-1}^A & \nabla' \\
       & \oplus       & \oplus & \oplus    &        & =(Q', \d_j) \\
\undertilde{\nabla} & P^A(\undertilde{r_j}) & P_n^A  & P_{n+1}^A &        \\
{}     &           &        &           &        \\
{}     &           &        &           &        \\
{}     &           &        &           &        \\
\wt{\nabla} & P^A(\wt{r_j}) & P_n^A  & P_{n-1}^A & \nabla' \\
       & \oplus       & \oplus & \oplus    &        & =:(Q', \d_{j+1}).\\
\undertilde{\nabla} & P^A(\undertilde{r_j}) & P_n^A  & P_{n+1}^A &        \\
};

\draw[<->](M-1-1) -- (M-1-2)
	;
\draw[-> ,font=\small](M-1-2) -- (M-1-3) 
	node[midway,above] {$E$};
\draw[-> ,font=\small](M-1-2) -- (M-3-3) 
	;
\draw[-> ,font=\small](M-1-3) -- (M-1-4) 
	node[midway,above] {$F$};
\draw[-> ,font=\small](M-1-3) -- (M-3-4) 
	;
\draw[<->](M-1-4) -- (M-1-5)
	;
\draw[<-> ,font=\small](M-3-4) -- (M-1-5)
	;

\draw[<->](M-3-1) -- (M-3-2)
	;
\draw[-> ,font=\small](M-3-2) -- (M-3-3) 
	;
\draw[-> ,font=\small](M-3-2) -- (M-1-3) 
	;
\draw[-> ,font=\small](M-3-3) -- (M-3-4)
	;
\draw[-> ,font=\small](M-3-3) -- (M-1-4)
	;
	
\draw[<->](M-7-1) -- (M-7-2)
	;
\draw[-> ,font=\small](M-7-2) -- (M-7-3) 
	node[midway,above,scale=0.7] {$(n+1|n)$};
\draw[-> ,font=\small](M-7-3) -- (M-7-4) 
	node[midway,above,scale=0.7] {$(n|n-1)$};
\draw[<->](M-7-4) -- (M-7-5)
	;
\draw[<-> ,font=\small](M-9-4) -- (M-7-5)
	;

\draw[<->](M-9-1) -- (M-9-2)
	;
\draw[-> ,font=\small](M-9-2) -- (M-9-3) 
	node[midway,above,scale=0.7] {$(n-1|n)$};
\draw[-> ,font=\small](M-9-3) -- (M-9-4)
	node[midway,above,scale=0.7] {$(n|n+1)$};
	
\draw[-> ,font=\small, dashed](M-3-2) -- (M-7-2)
	node[midway,left,scale=0.7] {$
		2i
		\begin{bmatrix}
		-1 & 0 \\
		0  & 1
		\end{bmatrix}
		$};
\draw[-> ,font=\small, dashed](M-3-3) -- (M-7-3)
	node[midway,left,scale=0.7] {$M$};
\draw[-> ,font=\small, dashed](M-3-4) -- (M-7-4)
	node[midway,left,scale=0.7] {$I$};
\end{tikzpicture}
\end{center}
For the rest of the modules in $Q'$, we define $\mu_j$ as the identity for modules contained in $\nabla'$, and $\mu_j$ sends $v$ to $-2iv$ (resp. $v$ to $2iv$) for any $v$ belonging to the modules in $\wt{\nabla}$ (resp. $\undertilde{\nabla}$).
The black arrows in the last two rows shows the differential component $(\d_{j+1})_{G_j}$ in $\d_{j+1}$, induced by the conjugation of $\mu_j^{-1}$.
It is easy to see that $\d_{j+1}$ does indeed satisfy the required property \eqref{induction}.

The construction of $\mu_j$ for the second form is similar, changing $\mu_j|_{P^A_n \oplus P^A_n}$ to $N$ from \eqref{eqn: matrix N} instead of $M$.

\item $\check{g}_j$ is of Type III$'_{k+\frac{1}{2}}$ for $k\in Z$:\\
Note that the case for $k=0$ is straightforward: $\mu_j$ is just the identity.
We will provide the analysis and construction of $\mu_j$ for $k>0$ and $k <0$.

As in other types, the equations $(\d_j)_{G_j} = (\d_0)_{G_j}$ holds for all $j$ by the induction hypothesis.
Using the same argument in Type V$''$, one can show that \eqref{VR splits} hold for all $j$ in this type as well.
So the part of $(Q', \d_j)$ that contains $(Q'_{G_j}, (\d_j)_{G_j})$, $L_A(\wt{h_j})$ and $L_A(\undertilde{h_j})$ will be of the same form for all $j$.
It follows that the construction of $\mu_j$ below will work for all $j$.

Let us start with $k > 0$.
Using the same notation as in the analysis of Type V$''$ we can draw the part of $(Q', \d_j)$ that contains $(Q'_{G_j}, (\d_j)_{G_j}) = (Q'_{G_j}, (\d_0)_{G_j})$, $L_A(\wt{h}_j)$ and $L_A(\undertilde{h_j})$ as either of the two forms:
\begin{figure}[H]
\begin{tikzpicture}[>=stealth, baseline]
\matrix (M) [matrix of math nodes, column sep=7mm]
{
       & P_n^A  & \cdots & P_n^A  & P_n^A  & P^A(\undertilde{r_j}) & \undertilde{\nabla}\\
       & \oplus &        & \oplus & \oplus & \oplus  &   & & =(Q', \d_j)\\
P_n^A  & P_n^A  & \cdots & P_n^A  & P_n^A  & P^A(\wt{r_j}) & \wt{\nabla}\\
};

\draw[-> ,font=\small](M-1-2.east |- M-1-3) -- (M-1-3)
	;
\draw[-> ,font=\small](M-1-3.east |- M-1-4) -- (M-1-4)
	;
\draw[-> ,font=\small](M-1-4.east |- M-1-5) -- (M-1-5)
	node[midway,above] {$2X_n$}; 
\draw[-> ,font=\small](M-1-5) -- (M-1-6)
	node[midway,above] {$F$}; 
\draw[-> ,font=\small](M-1-5) -- (M-3-6)
	;
\draw[<->](M-1-6.east |- M-1-7) -- (M-1-7)
	;

\draw[-> ,font=\small](M-3-1.east |- M-3-2) -- (M-3-2) 
	node[midway,above] {$2X_n$};
\draw[-> ,font=\small](M-3-2.east |- M-3-3) -- (M-3-3)
	;
\draw[-> ,font=\small](M-3-3.east |- M-3-4) -- (M-3-4)
	;
\draw[-> ,font=\small](M-3-4.east |- M-3-5) -- (M-3-5)
	node[midway,above] {$2X_n$};
\draw[-> ,font=\small](M-3-5) -- (M-1-6)
	; 
\draw[-> ,font=\small](M-3-5) -- (M-3-6)
	;
\draw[<->](M-3-6.east |- M-3-7) -- (M-3-7)
	;

\draw[Brown] (M-1-6.north west) rectangle (M-1-7.south east)
	node[midway, xshift = 2cm] 
  	{$L_A\left(\undertilde{h_j}\right)$};
\draw[red] (M-3-6.north west) rectangle (M-3-7.south east)
	node[midway, xshift = 2cm] 
  	{$L_A\left(\wt{h_j}\right)$};
  	
\draw[OliveGreen] (M-1-6.north east)++(0,4pt) rectangle (M-3-1.south west)
		node[midway,  xshift = -5.4cm] 
  		{$(Q'_{G_j}, \; (\d_j)_{G_j})$};
\end{tikzpicture}

\vspace*{1mm}
{\LARGE or}
\vspace*{1mm}

\begin{tikzpicture}[>=stealth, baseline]
\matrix (M) [matrix of math nodes, column sep=7mm]
{
P_n^A  & P_n^A  & \cdots & P_n^A  & P_n^A  & P^A(\undertilde{r_j}) & \undertilde{\nabla}\\
       & \oplus &        & \oplus & \oplus & \oplus   & & & =(Q', \d_j).\\
       & P_n^A  & \cdots & P_n^A  & P_n^A  & P^A(\wt{r_j}) & \wt{\nabla}\\
};

\draw[-> ,font=\small](M-1-1.east |- M-1-2) -- (M-1-2) 
	node[midway,above] {$2X_n$};
\draw[-> ,font=\small](M-1-2.east |- M-1-3) -- (M-1-3)
	;
\draw[-> ,font=\small](M-1-3.east |- M-1-4) -- (M-1-4)
	;
\draw[-> ,font=\small](M-1-4.east |- M-1-5) -- (M-1-5)
	node[midway,above] {$2X_n$}; 
\draw[-> ,font=\small](M-1-5) -- (M-1-6)
	node[midway,above] {$F$}; 
\draw[-> ,font=\small](M-1-5) -- (M-3-6)
	;
\draw[<->](M-1-6.east |- M-1-7) -- (M-1-7)
	;

\draw[-> ,font=\small](M-3-2.east |- M-3-3) -- (M-3-3)
	;
\draw[-> ,font=\small](M-3-3.east |- M-3-4) -- (M-3-4)
	;
\draw[-> ,font=\small](M-3-4.east |- M-3-5) -- (M-3-5)
	node[midway,above] {$2X_n$};
\draw[-> ,font=\small](M-3-5) -- (M-1-6)
	; 
\draw[-> ,font=\small](M-3-5) -- (M-3-6)
	;
\draw[<->](M-3-6.east |- M-3-7) -- (M-3-7)
	;
	
\draw[Brown] (M-1-6.north west) rectangle (M-1-7.south east)
	node[midway, xshift = 2cm] 
  	{$L_A\left(\undertilde{h_j}\right)$};
\draw[red] (M-3-6.north west) rectangle (M-3-7.south east)
	node[midway, xshift = 2cm] 
  	{$L_A\left(\wt{h_j}\right)$};
  	
\draw[OliveGreen] (M-1-1.north west)++(0,4pt) rectangle (M-3-6.south east)
		node[midway, xshift = -5.4cm] 
  		{$(Q'_{G_j}, \; (\d_j)_{G_j})$};
\end{tikzpicture}
\end{figure}
We will construct $\mu_j$ for the first form; the second form is a mirrored construction.
Define $\mu_j|_{Q'_{G_j}}$ to be the following dashed map:
\begin{center}
\begin{tikzpicture}[>=stealth, baseline]
\matrix (M) [matrix of math nodes, column sep=7mm]
{
       & P_n^A  & \cdots & P_n^A  & P_n^A  & P^A(\undertilde{r_j}) & \undertilde{\nabla}\\ [-5pt]
       & \oplus &        & \oplus & \oplus & \oplus    & & =(Q', \d_j)\\[-5pt]
P_n^A  & P_n^A  & \cdots & P_n^A  & P_n^A  & P^A(\wt{r_j}) & \wt{\nabla}\\
{}     &        &        &        &        &           &       \\
{}     &        &        &        &        &           &       \\
{}     &        &        &        &        &           &       \\
       & P_n^A  & \cdots & P_n^A  & P_n^A  & P^A(\undertilde{r_j}) & \undertilde{\nabla}\\[-5pt]
       & \oplus &        & \oplus & \oplus & \oplus    & & =:(Q', \d_{j+1})\\[-5pt]
P_n^A  & P_n^A  & \cdots & P_n^A  & P_n^A  & P^A(\wt{r_j}) & \wt{\nabla}\\
};

\draw[-> ,font=\small](M-1-2.east |- M-1-3) -- (M-1-3)
	;
\draw[-> ,font=\small](M-1-3.east |- M-1-4) -- (M-1-4)
	;
\draw[-> ,font=\small](M-1-4.east |- M-1-5) -- (M-1-5)
	node[midway,above] {$2X_n$}; 
\draw[-> ,font=\small](M-1-5) -- (M-1-6)
	node[midway,above] {$F$}; 
\draw[-> ,font=\small](M-1-5) -- (M-3-6)
	;
\draw[<->](M-1-6.east |- M-1-7) -- (M-1-7);

\draw[-> ,font=\small](M-3-1.east |- M-3-2) -- (M-3-2) 
	node[midway,above] {$2X_n$};
\draw[-> ,font=\small](M-3-2.east |- M-3-3) -- (M-3-3)
	;
\draw[-> ,font=\small](M-3-3.east |- M-3-4) -- (M-3-4)
	;
\draw[-> ,font=\small](M-3-4.east |- M-3-5) -- (M-3-5)
	node[midway,above] {$2X_n$};
\draw[-> ,font=\small](M-3-5) -- (M-1-6)
	; 
\draw[-> ,font=\small](M-3-5) -- (M-3-6)
	;
\draw[<->](M-3-6.east |- M-3-7) -- (M-3-7)
	;

\draw[-> ,font=\small](M-7-2.east |- M-7-3) -- (M-7-3)
	node[midway,above] {$X_n$};
\draw[-> ,font=\small](M-7-3.east |- M-7-4) -- (M-7-4)
	;
\draw[-> ,font=\small](M-7-4.east |- M-7-5) -- (M-7-5)
	node[midway,above] {$X_n$}; 
\draw[-> ,font=\small](M-7-5) -- (M-7-6)
	;
\draw[<->](M-7-6.east |- M-7-7) -- (M-7-7)
	;

\draw[-> ,font=\small](M-9-1) -- (M-7-2) 
	node[midway,above,transform canvas={xshift=-2mm}] {$X_n$};
\draw[-> ,font=\small](M-9-1.east |- M-9-2) -- (M-9-2) 
	node[midway,below] {$X_n$};
\draw[-> ,font=\small](M-9-2.east |- M-9-3) -- (M-9-3)
	node[midway,above] {$X_n$};
\draw[-> ,font=\small](M-9-3.east |- M-9-4) -- (M-9-4)
	;
\draw[-> ,font=\small](M-9-4.east |- M-9-5) -- (M-9-5)
	node[midway,above] {$X_n$};
\draw[-> ,font=\small](M-9-5) -- (M-9-6)
	; 
\draw[<->](M-9-6.east |- M-9-7) -- (M-9-7)
	;

\draw[-> ,font=\small, dashed](M-3-1) -- (M-9-1) 
	node[midway,left,scale=0.8, dashed] {$2^{k-1}i$};
\draw[-> ,font=\small, dashed](M-3-2) -- (M-7-2) 
	node[midway,left,scale=0.8, dashed] 
		{$2^{k-2}JM$};
\draw[-> ,font=\small, dashed](M-3-4) -- (M-7-4) 
	node[midway,left,scale=0.8, dashed] 
		{$2JM$};
\draw[-> ,font=\small, dashed](M-3-5) -- (M-7-5) 
	node[midway,left,scale=0.8, dashed] {$JM$};
\draw[-> ,font=\small, dashed](M-3-6) -- (M-7-6) 
	node[midway,left,scale=0.8, dashed] {$J$};
\end{tikzpicture}
\end{center}
with $J = \begin{bsmallmatrix}
			-1 & 0 \\
			0  & 1
			\end{bsmallmatrix}$.
For the rest of the modules in $Q'$, $\mu_j$ sends $v$ to $-v$ (resp. $v$ to $v$) for any $v$ belonging to the modules in $\undertilde{\nabla}$ (resp. $\wt{\nabla}$).
The black arrows in the last two rows shows the differential component $(\d_{j+1})_{G_j}$ in $\d_{j+1}$, induced by the conjugation of $\mu_j^{-1}$.
It is easy to see that $\mu_j$ does indeed satisfy the required property \eqref{induction}.

Similarly for $k < 0$, we draw the part of $(Q', \d_j)$ that contains $(Q'_{G_j}, (\d_j)_{G_j}) = (Q'_{G_j}, (\d_0)_{G_j})$, $L_A(\wt{h}_j)$ and $L_A(\undertilde{h_j})$ as either of the two forms:
\begin{figure}[H]
\begin{tikzpicture}[>=stealth, baseline]
\matrix (M) [matrix of math nodes, column sep=7mm]
{
\wt{\nabla} & P^A(\wt{r_j}) & P_n^A  & P_n^A  & \cdots & P_n^A  &        \\
       & \oplus    & \oplus & \oplus &        & \oplus &  & &  &=(Q', \d_j)\\
\undertilde{\nabla} & P^A(\undertilde{r_j}) & P_n^A  & P_n^A  & \cdots & P_n^A  & P_n^A  \\
};

\draw[-> ,font=\small](M-1-2.east |- M-1-3) -- (M-1-3) 
	node[midway,above] {$E$};
\draw[-> ,font=\small](M-1-2) -- (M-3-3) 
	;
\draw[-> ,font=\small](M-1-3.east |- M-1-4) -- (M-1-4) 
	node[midway,above] {$2X_n$};
\draw[-> ,font=\small](M-1-4.east |- M-1-5) -- (M-1-5)
	;
\draw[-> ,font=\small](M-1-5.east |- M-1-6) -- (M-1-6)
	;

\draw[-> ,font=\small](M-3-2) -- (M-1-3)
	;
\draw[-> ,font=\small](M-3-2) -- (M-3-3)
	;
\draw[-> ,font=\small](M-3-3.east |- M-3-4) -- (M-3-4) 
	node[midway,above] {$2X_n$};
\draw[-> ,font=\small](M-3-4.east |- M-3-5) -- (M-3-5)
	;
\draw[-> ,font=\small](M-3-5.east |- M-3-6) -- (M-3-6)
	;
\draw[-> ,font=\small](M-3-6.east |- M-3-7) -- (M-3-7)
	node[midway,above] {$2X_n$};
\draw[<->](M-3-1.east |- M-3-2) -- (M-3-2);
\draw[<->](M-1-1) -- (M-1-2); 

\draw[red] (M-1-1.north west) rectangle (M-1-2.south east)
	node[midway, xshift = -2cm] 
  	{$L_A\left(\wt{h_j}\right)$};
\draw[Brown] (M-3-1.north west) rectangle (M-3-2.south east)
	node[midway, xshift = -2cm] 
  	{$L_A\left(\undertilde{h_j}\right)$};
  	
\draw[OliveGreen] (M-1-2.north west)++(0,4pt) rectangle 
	(M-3-7.south east)
		node[midway, xshift=5.3cm, yshift = 22pt] 
  		{$(Q'_{G_j}, (\d_j)_{G_j})$};
\end{tikzpicture}

\vspace*{1mm}
{\LARGE or}
\vspace*{1mm}

\begin{tikzpicture}[>=stealth, baseline]
\matrix (M) [matrix of math nodes, column sep=7mm]
{
\wt{\nabla} & P^A(\wt{r_j}) & P_n^A  & P_n^A  & \cdots & P_n^A  & P_n^A \\
       & \oplus    & \oplus & \oplus &        & \oplus & & & & = (Q', \d_j).\\
\undertilde{\nabla} & P^A(\undertilde{r_j}) & P_n^A  & P_n^A  & \cdots & P_n^A  &       \\
};

\draw[-> ,font=\small](M-1-2.east |- M-1-3) -- (M-1-3) 
	node[midway,above] {$E$};
\draw[-> ,font=\small](M-1-2) -- (M-3-3) 
	;
\draw[-> ,font=\small](M-1-3.east |- M-1-4) -- (M-1-4) 
	node[midway,above] {$2X_n$};
\draw[-> ,font=\small](M-1-4.east |- M-1-5) -- (M-1-5)
	;
\draw[-> ,font=\small](M-1-5.east |- M-1-6) -- (M-1-6)
	;
\draw[-> ,font=\small](M-1-6.east |- M-1-7) -- (M-1-7)
	node[midway,above] {$2X_n$};

\draw[-> ,font=\small](M-3-2) -- (M-1-3)
	;
\draw[-> ,font=\small](M-3-2) -- (M-3-3)
	;
\draw[-> ,font=\small](M-3-3.east |- M-3-4) -- (M-3-4) 
	node[midway,above] {$2X_n$};
\draw[-> ,font=\small](M-3-4.east |- M-3-5) -- (M-3-5)
	;
\draw[-> ,font=\small](M-3-5.east |- M-3-6) -- (M-3-6)
	;
\draw[<->](M-3-1.east |- M-3-2) -- (M-3-2);
\draw[<->](M-1-1) -- (M-1-2); 

\draw[red] (M-1-1.north west) rectangle (M-1-2.south east)
	node[midway, xshift = -2cm] 
  	{$L_A\left(\wt{h_j}\right)$};
\draw[Brown] (M-3-1.north west) rectangle (M-3-2.south east)
	node[midway, xshift = -2cm] 
  	{$L_A\left(\undertilde{h_j}\right)$};
  	
\draw[OliveGreen] (M-1-2.north west)++(0,4pt) rectangle 
	(M-3-7.south east)
		node[midway, xshift=5.3cm, yshift = -.8cm] 
  		{$(Q'_{G_j}, (\d_j)_{G_j})$};
\end{tikzpicture}
\end{figure}
Once again we construct $\mu_j$ for the first form; the second form is a mirrored construction.
Note that 
$
NE =
2
\begin{bsmallmatrix}
(n+1|n) &   0      \\
0       &  (n-1|n)
\end{bsmallmatrix}.
$
We define $\mu_j|_{Q'_{G_j}}$ to be the following dashed map, with $\mu_j$ acting as the identity on the rest of the modules in both $\wt{\nabla}$ and $\undertilde{\nabla}$:
\begin{center}
\begin{tikzpicture}[>=stealth, baseline]
\matrix (M) [matrix of math nodes, column sep=7mm]
{
\wt{\nabla} & P^A(\wt{r_j}) & P_n^A  & P_n^A  & \cdots & P_n^A  &        \\
       & \oplus    & \oplus & \oplus &        & \oplus & & =(Q', \d_j) \\
\undertilde{\nabla} & P^A(\undertilde{r_j}) & P_n^A  & P_n^A  & \cdots & P_n^A  & P_n^A  \\
{}     &           &        &        &        &        &        \\
{}     &           &        &        &        &        &        \\
{}     &           &        &        &        &        &        \\
\wt{\nabla} & P^A(\wt{r_j}) & P_n^A  & P_n^A  & \cdots & P_n^A  &        \\
       & \oplus    & \oplus & \oplus &        & \oplus & & =:(Q', \d_{j+1}). \\
\undertilde{\nabla} & P^A(\undertilde{r_j}) & P_n^A  & P_n^A  & \cdots & P_n^A  & P_n^A. \\
};

\draw[-> ,font=\small](M-1-2.east |- M-1-3) -- (M-1-3) 
	node[midway,above] {$E$};
\draw[-> ,font=\small](M-1-2) -- (M-3-3) 
	;
\draw[-> ,font=\small](M-1-3.east |- M-1-4) -- (M-1-4) 
	node[midway,above] {$2X_n$};
\draw[-> ,font=\small](M-1-4.east |- M-1-5) -- (M-1-5)
	;
\draw[-> ,font=\small](M-1-5.east |- M-1-6) -- (M-1-6)
	;

\draw[-> ,font=\small](M-3-2) -- (M-1-3)
	;
\draw[-> ,font=\small](M-3-2) -- (M-3-3)
	;
\draw[-> ,font=\small](M-3-3.east |- M-3-4) -- (M-3-4) 
	node[midway,above] {$2X_n$};
\draw[-> ,font=\small](M-3-4.east |- M-3-5) -- (M-3-5)
	;
\draw[-> ,font=\small](M-3-5.east |- M-3-6) -- (M-3-6)
	;
\draw[-> ,font=\small](M-3-6.east |- M-3-7) -- (M-3-7)
	node[midway,above] {$2X_n$};
\draw[<->](M-3-1.east |- M-3-2) -- (M-3-2);
\draw[<->](M-1-1) -- (M-1-2); 

\draw[-> ,font=\small](M-7-2.east |- M-7-3) -- (M-7-3) 
	node[midway,above,scale=0.7] {$(n+1|n)$};
\draw[-> ,font=\small](M-7-3.east |- M-7-4) -- (M-7-4) 
	node[midway,above] {$X_n$};
\draw[-> ,font=\small](M-7-4.east |- M-7-5) -- (M-7-5)
	;
\draw[-> ,font=\small](M-7-5.east |- M-7-6) -- (M-7-6)
	;
\draw[-> ,font=\small](M-7-6) -- (M-9-7)
	node[midway, above, transform canvas={xshift=2mm}] {$X_n$};

\draw[-> ,font=\small](M-9-2) -- (M-9-3)
	node[midway,above,scale=0.7] {$(n-1|n)$};
\draw[-> ,font=\small](M-9-3.east |- M-9-4) -- (M-9-4) 
	node[midway,above] {$X_n$};
\draw[-> ,font=\small](M-9-4.east |- M-9-5) -- (M-9-5)
	;
\draw[-> ,font=\small](M-9-5.east |- M-9-6) -- (M-9-6)
	;
\draw[-> ,font=\small](M-9-6.east |- M-9-7) -- (M-9-7)
	node[midway,above] {$X_n$};
\draw[<->](M-9-1.east |- M-9-2) -- (M-9-2);
\draw[<->](M-7-1) -- (M-7-2); 

\draw[-> ,font=\small, dashed](M-3-2) -- (M-7-2) 
	node[midway,left,scale=0.8] {$I$};
\draw[-> ,font=\small, dashed](M-3-3) -- (M-7-3) 
	node[midway,right,scale=0.8] {$2^{-1}N$};
\draw[-> ,font=\small, dashed](M-3-4) -- (M-7-4) 
	node[midway,right,scale=0.8] {$2^{-2}N$};
\draw[-> ,font=\small, dashed](M-3-6) -- (M-7-6) 
	node[midway,left,scale=0.8] {$2^{k+1}N$};
\draw[-> ,font=\small, dashed](M-3-7) -- (M-9-7) 
	node[midway,right,scale=0.8] {$2^{k+1}$};
\end{tikzpicture}
\end{center}
The black arrows in the last two rows shows the differential component $(\d_{j+1})_{G_j}$ in $\d_{j+1}$, induced by the conjugation of $\mu_j^{-1}$.
The required condition \eqref{induction} follows directly.

\item $\check{g}_j$ is of Type II$'_{k+\frac{1}{2}}$ for $k \in \Z$: \\
As in other types, the equations $(\d_j)_{G_j} = (\d_0)_{G_j}$ holds for all $j$ by the induction hypothesis.
Using the same argument in Type V$''$, one can show that \eqref{VR splits} hold for all $j$ in this type as well.
So the part of $(Q', \d_j)$ that contains $(Q'_{G_j}, (\d_j)_{G_j})$, $L_A(\wt{h_j})$ and $L_A(\undertilde{h_j})$ will be of the same form for all $j$.
It follows that the construction of $\mu_j$ below will work for all $j$.

Let us start with $k=0$.
With the same notation as in the analysis of Type V$''$, we can draw the part of $(Q', \d_j)$ that contains $(Q'_{G_j}, (\d_j)_{G_j}) = (Q'_{G_j}, (\d_0)_{G_j})$, $L_A(\wt{h}_j)$ and $L_A(\undertilde{h_j})$ as follows:
\begin{center}
\begin{tikzpicture}[>=stealth, baseline]
\matrix (M) [matrix of math nodes, column sep=7mm]
{
{}       & P_{n-1}^A & \nabla' \\
         & \oplus    &        \\
 P_{n}^A & P_{n+1}^A &        \\
 \oplus  & \oplus    & & & &  =(Q', \d_j), \\
 P_{n}^A & P^A(\wt{r_j}) & \wt{\nabla} \\
         & \oplus    &        \\
         & P^A(\undertilde{r_j}) & \undertilde{\nabla} \\
};

\draw[<->](M-1-2) -- (M-1-3)
	;
\draw[<->](M-3-2) -- (M-1-3)
	;
\draw[<->](M-5-2) -- (M-5-3)
	; 
\draw[<->](M-7-2) -- (M-7-3)
	; 

\draw[-> ,font=\small](M-3-1) -- (M-1-2.west)
	;
\draw[-> ,font=\small](M-3-1) -- (M-3-2)
	;
\draw[-> ,font=\small](M-5-1) -- (M-1-2)
	;
\draw[-> ,font=\small](M-5-1) -- (M-3-2)
	;
	
\draw[-> ,font=\small](M-3-1) -- (M-5-2)
	;
\draw[-> ,font=\small](M-3-1) -- (M-7-2)
	;
\draw[-> ,font=\small](M-5-1) -- (M-5-2)
	;
\draw[-> ,font=\small](M-5-1) -- (M-7-2.west)
	;

\draw[red] (M-5-2.north west) rectangle (M-5-3.south east)
	node[midway, xshift = 2.3cm] 
  	{$L_A\left(\wt{h_j}\right)$};
\draw[Brown] (M-7-2.north west) rectangle (M-7-3.south east)
	node[midway, xshift = 2.3cm] 
  	{$L_A\left(\undertilde{h_j}\right)$};
  	
\draw[OliveGreen] (M-1-1.north west) ++(-10pt,10pt) rectangle 
	(M-7-2.south east)
		node[midway, xshift =  -2.5cm] 
  		{$(Q'_{G_j}, (\d_j)_{G_j})$};
\end{tikzpicture}
\end{center}
where the first map is given by $
	\begin{bsmallmatrix}
	F \\
	F'
	\end{bsmallmatrix}
	$,
with $F'$ defined by $\begin{bsmallmatrix}
	-i & 0 \\
	 0 & i 
	\end{bsmallmatrix}
	F' =
	\begin{bsmallmatrix}
	(n|n+1) & 0 \\
	 0      & (n|n-1) 
	\end{bsmallmatrix}
	M
	$.
We define $\mu_j|_{Q'_{G_j}}$ to be the following dashed map:
\begin{center}
\begin{tikzpicture}[>=stealth, baseline]
\matrix (M) [matrix of math nodes, column sep=7mm]
{
         & P_{n-1}^A & \nabla' \\
         & \oplus    &        \\
 P_{n}^A & P_{n+1}^A &        \\
 \oplus  & \oplus    & & =(Q', \d_j)  \\
 P_{n}^A & P^A(\wt{r_j}) & \wt{\nabla} \\
         & \oplus    &        \\
         & P^A(\undertilde{r_j}) & \undertilde{\nabla} \\
{}       &           &        \\
{}       &           &        \\
{}       &           &        \\         
         & P_{n-1}^A & \nabla' \\
         & \oplus    &        \\
 P_{n}^A & P_{n+1}^A &        \\
 \oplus  & \oplus    & & =: (Q', \d_{j+1}), \\
 P_{n}^A & P^A(\wt{r_j}) & \wt{\nabla} \\
         & \oplus    &        \\
         & P^A(\undertilde{r_j}) & \undertilde{\nabla} \\
};

\draw[<->](M-1-2) -- (M-1-3)
	;
\draw[<->](M-3-2) -- (M-1-3)
	;
\draw[<->](M-5-2) -- (M-5-3)
	; 
\draw[<->](M-7-2) -- (M-7-3)
	; 

\draw[-> ,font=\small](M-3-1) -- (M-1-2.west)
	;
\draw[-> ,font=\small](M-3-1) -- (M-3-2)
	;
\draw[-> ,font=\small](M-5-1) -- (M-1-2)
	;
\draw[-> ,font=\small](M-5-1) -- (M-3-2)
	;
	
\draw[-> ,font=\small](M-3-1) -- (M-5-2)
	;
\draw[-> ,font=\small](M-3-1) -- (M-7-2)
	;
\draw[-> ,font=\small](M-5-1) -- (M-5-2)
	;
\draw[-> ,font=\small](M-5-1) -- (M-7-2.west)
	;

\draw[<->](M-11-2) -- (M-11-3)
	;
\draw[<->](M-13-2) -- (M-11-3)
	;
\draw[<->](M-15-2) -- (M-15-3)
	; 
\draw[<->](M-17-2) -- (M-17-3)
	; 

\draw[-> ,font=\small](M-13-1) -- (M-11-2)
	;
\draw[-> ,font=\small](M-15-1) -- (M-13-2)
	;
\draw[-> ,font=\small](M-13-1) -- (M-15-2)
	;
\draw[-> ,font=\small](M-15-1) -- (M-17-2)
	;
	
\draw[-> ,font=\small, dashed](M-5-1) -- (M-13-1)
	node[midway, left,scale=0.6] {$M$};
\draw[-> ,font=\small, dashed](M-7-2) -- (M-11-2)
	node[midway, left,scale=0.6] {$
	\begin{bmatrix}
	I & 0 \\
	0 & iJ
	\end{bmatrix}		
	$};	
\end{tikzpicture}
\end{center}
where $J = 
	\begin{bsmallmatrix}
	-1 & 0 \\
	 0 & 1 
	\end{bsmallmatrix}
	$, $\mu_j$ is the identity on all modules contained in $\nabla'$ and $\mu_j$ sends $v$ to $-iv$ (resp. $iv$) for any $v$ belonging to the modules in $\wt{\nabla}$ (resp. $\undertilde{\nabla}$).
The black arrows in the last four rows shows the differential component $(\d_{j+1})_{G_j}$ in $\d_{j+1}$, induced by the conjugation of $\mu_j^{-1}$.
Thus, the required condition \eqref{induction} is easily satisfied.

For $k > 0$, we can draw the part of $(Q', \d_j)$ that contains $(Q'_{G_j}, (\d_j)_{G_j}) = (Q'_{G_j}, (\d_0)_{G_j})$, $L_A(\wt{h}_j)$ and $L_A(\undertilde{h_j})$ as follows:
\begin{center}
\begin{tikzpicture}[>=stealth, baseline]
\matrix (M) [matrix of math nodes, column sep=7mm]
{
P_n^A  & P_n^A  & P_n^A  & \cdots & P_n^A  & P_n^A  & P_{n-1}^A & \nabla' \\
\oplus & \oplus & \oplus &        & \oplus & \oplus & \oplus    & \\
P_n^A  & P_n^A  & P_n^A  & \cdots & P_n^A  & P_n^A  & P_{n+1}^A &        \\
       & \oplus & \oplus &        & \oplus & \oplus & \oplus & & & =(Q', \d_j).\\ 
       & P_n^A  & P_n^A  & \cdots & P_n^A  & P_n^A  & P^A(\wt{r_j})& \wt{\nabla} \\
       & \oplus & \oplus &        & \oplus & \oplus & \oplus    & \\
       & P_n^A  & P_n^A  & \cdots & P_n^A  & P_n^A  & P^A(\undertilde{r_j})& \undertilde{\nabla} \\
};

\draw[-> ,font=\small](M-1-1) -- (M-1-2) 
	node[midway,above] {$2X_n$};
\draw[-> ,font=\small](M-1-1) -- (M-5-2) 
	node[midway,above] {$2X_n$};
\draw[-> ,font=\small](M-1-2) -- (M-1-3) 
	node[midway,above] {$2X_n$};
\draw[-> ,font=\small](M-1-3) -- (M-1-4) 
	;
\draw[-> ,font=\small](M-1-4) -- (M-1-5)
	;
\draw[-> ,font=\small](M-1-5) -- (M-1-6)
	node[midway,above] {$2X_n$};
\draw[-> ,font=\small](M-1-6) -- (M-1-7)
	node[midway,above] {$F$};
\draw[-> ,font=\small](M-1-6) -- (M-3-7)
	;
\draw[<-> ,font=\small](M-1-7) -- (M-1-8)
	;
\draw[<-> ,font=\small](M-3-7) -- (M-1-8)
	;

\draw[-> ,font=\small](M-3-1) -- (M-3-2)
	node[midway,below] {$2X_n$};
\draw[-> ,font=\small](M-3-2) -- (M-3-3)
	node[midway,above] {$2X_n$};
\draw[-> ,font=\small](M-3-1) -- (M-7-2)
	node[midway,above] {$2X_n$};
\draw[-> ,font=\small](M-3-3) -- (M-3-4) 
	;
\draw[-> ,font=\small](M-3-4) -- (M-3-5)
	;
\draw[-> ,font=\small](M-3-5) -- (M-3-6)
	node[midway,above] {$2X_n$};
\draw[-> ,font=\small](M-3-6) -- (M-1-7)
	;
\draw[-> ,font=\small](M-3-6) -- (M-3-7)
	;

\draw[-> ,font=\small](M-5-2) -- (M-5-3)
	node[midway,above] {$2X_n$};
\draw[-> ,font=\small](M-5-3) -- (M-5-4) 
	;
\draw[-> ,font=\small](M-5-4) -- (M-5-5) 
	;
\draw[-> ,font=\small](M-5-5) -- (M-5-6)
	node[midway,above] {$2X_n$};
\draw[-> ,font=\small](M-5-6) -- (M-5-7)
	node[midway,above] {$F'$};
\draw[-> ,font=\small](M-5-6) -- (M-7-7)
	;
\draw[<-> ,font=\small](M-5-7) -- (M-5-8)
	;

\draw[-> ,font=\small](M-7-2) -- (M-7-3)
	node[midway,above] {$2X_n$};
\draw[-> ,font=\small](M-7-3) -- (M-7-4) 
	;
\draw[-> ,font=\small](M-7-4) -- (M-7-5) 
	;
\draw[-> ,font=\small](M-7-5) -- (M-7-6)
	node[midway,above] {$2X_n$};
\draw[-> ,font=\small](M-7-6) -- (M-5-7)
	;
\draw[-> ,font=\small](M-7-6) -- (M-7-7)
	;
\draw[<-> ,font=\small](M-7-7) -- (M-7-8)
	;
	
\draw[red] (M-5-7.north west) rectangle (M-5-8.south east)
	node[midway, above, xshift = 1cm, yshift = 5pt] 
  	{$L_A\left(\wt{h_j}\right)$};
\draw[Brown] (M-7-7.north west) rectangle (M-7-8.south east)
	node[midway, xshift = 2cm ] 
  	{$L_A\left(\undertilde{h_j}\right)$};
  	
\draw[OliveGreen] (M-1-1.north west)++(0,5pt) rectangle 
	(M-7-7.south east)
		node[midway, above, yshift = 2.1cm] 
  		{$(Q'_{G_j}, (\d_j)_{G_j})$};
\end{tikzpicture}
\end{center}
Similarly, we define $\mu_j|_{Q'_{G_j}}$ to be the following dashed map:
\begin{center}
\begin{tikzpicture}[>=stealth, baseline]
\matrix (M) [matrix of math nodes, column sep=7mm]
{
P_n^A  & P_n^A  & P_n^A  & \cdots & P_n^A  & P_n^A  & P_{n-1}^A & \nabla' \\
\oplus & \oplus & \oplus &        & \oplus & \oplus & \oplus    & \\
P_n^A  & P_n^A  & P_n^A  & \cdots & P_n^A  & P_n^A  & P_{n+1}^A &        \\
       & \oplus & \oplus &        & \oplus & \oplus & \oplus & & =(Q', \d_j)\\ 
       & P_n^A  & P_n^A  & \cdots & P_n^A  & P_n^A  & P^A(\wt{r_j})& \wt{\nabla} \\
       & \oplus & \oplus &        & \oplus & \oplus & \oplus    & \\
       & P_n^A  & P_n^A  & \cdots & P_n^A  & P_n^A  & P^A(\undertilde{r_j})&\undertilde{\nabla} \\
{}     &        &        &        &        &        &          \\
{}     &        &        &        &        &        &          \\ 
{}     &        &        &        &        &        &          \\
P_n^A  & P_n^A  & P_n^A  & \cdots & P_n^A  & P_n^A  & P_{n-1}^A & \nabla' \\
\oplus & \oplus & \oplus &        & \oplus & \oplus & \oplus    & \\
P_n^A  & P_n^A  & P_n^A  & \cdots & P_n^A  & P_n^A  & P_{n+1}^A &        \\
       & \oplus & \oplus &        & \oplus & \oplus & \oplus & & =:(Q', \d_{j+1}),\\ 
       & P_n^A  & P_n^A  & \cdots & P_n^A  & P_n^A  & P^A(\wt{r_j})& \wt{\nabla} \\
       & \oplus & \oplus &        & \oplus & \oplus & \oplus    & \\
       & P_n^A  & P_n^A  & \cdots & P_n^A  & P_n^A  & P^A(\undertilde{r_j})&\undertilde{\nabla} \\
};

\draw[-> ,font=\small](M-1-1) -- (M-1-2) 
	node[midway,above] {$2X_n$};
\draw[-> ,font=\small](M-1-1) -- (M-5-2) 
	node[midway,above] {$2X_n$};
\draw[-> ,font=\small](M-1-2) -- (M-1-3) 
	node[midway,above] {$2X_n$};
\draw[-> ,font=\small](M-1-3) -- (M-1-4) 
	;
\draw[-> ,font=\small](M-1-4) -- (M-1-5)
	;
\draw[-> ,font=\small](M-1-5) -- (M-1-6)
	node[midway,above] {$2X_n$};
\draw[-> ,font=\small](M-1-6) -- (M-1-7)
	node[midway,above] {$F$};
\draw[-> ,font=\small](M-1-6) -- (M-3-7)
	;
\draw[<-> ,font=\small](M-1-7) -- (M-1-8)
	;
\draw[<-> ,font=\small](M-3-7) -- (M-1-8)
	;

\draw[-> ,font=\small](M-3-1) -- (M-3-2)
	node[midway,below] {$2X_n$};
\draw[-> ,font=\small](M-3-2) -- (M-3-3)
	node[midway,above] {$2X_n$};
\draw[-> ,font=\small](M-3-1) -- (M-7-2)
	node[midway,above] {$2X_n$};
\draw[-> ,font=\small](M-3-3) -- (M-3-4) 
	;
\draw[-> ,font=\small](M-3-4) -- (M-3-5)
	;
\draw[-> ,font=\small](M-3-5) -- (M-3-6)
	node[midway,above] {$2X_n$};
\draw[-> ,font=\small](M-3-6) -- (M-1-7)
	;
\draw[-> ,font=\small](M-3-6) -- (M-3-7)
	;

\draw[-> ,font=\small](M-5-2) -- (M-5-3)
	node[midway,above] {$2X_n$};
\draw[-> ,font=\small](M-5-3) -- (M-5-4) 
	;
\draw[-> ,font=\small](M-5-4) -- (M-5-5) 
	;
\draw[-> ,font=\small](M-5-5) -- (M-5-6)
	node[midway,above] {$2X_n$};
\draw[-> ,font=\small](M-5-6) -- (M-5-7)
	node[midway,above] {$F'$};
\draw[-> ,font=\small](M-5-6) -- (M-7-7)
	;
\draw[<-> ,font=\small](M-5-7) -- (M-5-8)
	;

\draw[-> ,font=\small](M-7-2) -- (M-7-3)
	node[midway,above] {$2X_n$};
\draw[-> ,font=\small](M-7-3) -- (M-7-4) 
	;
\draw[-> ,font=\small](M-7-4) -- (M-7-5) 
	;
\draw[-> ,font=\small](M-7-5) -- (M-7-6)
	node[midway,above] {$2X_n$};
\draw[-> ,font=\small](M-7-6) -- (M-5-7)
	;
\draw[-> ,font=\small](M-7-6) -- (M-7-7)
	;
\draw[<-> ,font=\small](M-7-7) -- (M-7-8)
	;
\draw[-> ,font=\small](M-11-1) -- (M-11-2) 
	node[midway,above] {$X_n$};
\draw[-> ,font=\small](M-11-1) -- (M-15-2) 
	node[midway,below] {$X_n$};
\draw[-> ,font=\small](M-11-2) -- (M-11-3) 
	node[midway,above] {$X_n$};
\draw[-> ,font=\small](M-11-3) -- (M-11-4) 
	;
\draw[-> ,font=\small](M-11-4) -- (M-11-5)
	;
\draw[-> ,font=\small](M-11-5) -- (M-11-6)
	node[midway,above] {$X_n$};
\draw[-> ,font=\small](M-11-6) -- (M-11-7)
	node[midway,above,scale=0.7] {$(n|n-1)$};
\draw[<-> ,font=\small](M-11-7) -- (M-11-8)
	;
\draw[<-> ,font=\small](M-13-7) -- (M-11-8)
	;

\draw[-> ,font=\small](M-13-1) -- (M-13-2)
	node[midway,above] {$X_n$};
\draw[-> ,font=\small](M-13-2) -- (M-13-3)
	node[midway,above] {$X_n$};
\draw[-> ,font=\small](M-13-1) -- (M-17-2)
	node[midway,left] {$X_n$};
\draw[-> ,font=\small](M-13-3) -- (M-13-4) 
	;
\draw[-> ,font=\small](M-13-4) -- (M-13-5)
	;
\draw[-> ,font=\small](M-13-5) -- (M-13-6)
	node[midway,above] {$X_n$};
\draw[-> ,font=\small](M-13-6) -- (M-13-7)
	node[midway,above,scale=0.7] {$(n|n+1)$};

\draw[-> ,font=\small](M-15-2) -- (M-15-3)
	node[midway,above] {$X_n$};
\draw[-> ,font=\small](M-15-3) -- (M-15-4) 
	;
\draw[-> ,font=\small](M-15-4) -- (M-15-5)
	;
\draw[-> ,font=\small](M-15-5) -- (M-15-6)
	node[midway,above] {$X_n$};
\draw[-> ,font=\small](M-15-6) -- (M-15-7)
	node[midway,above,scale=0.7] {$(n|n+1)$};
\draw[<-> ,font=\small](M-15-7) -- (M-15-8)
	;

\draw[-> ,font=\small](M-17-2) -- (M-17-3)
	node[midway,above] {$X_n$};
\draw[-> ,font=\small](M-17-3) -- (M-17-4) 
	;
\draw[-> ,font=\small](M-17-4) -- (M-17-5)
	;
\draw[-> ,font=\small](M-17-5) -- (M-17-6)
	node[midway,above] {$X_n$};
\draw[-> ,font=\small](M-17-6) -- (M-17-7)
	node[midway,above,scale=0.7] {$(n|n-1)$};
\draw[<-> ,font=\small](M-17-7) -- (M-17-8)
	;

\draw[-> ,font=\small, dashed](M-3-1) -- (M-11-1)
	node[midway,left] {$2^{k}M$};
\draw[-> ,font=\small, dashed](M-7-2) -- (M-11-2)
	node[midway,left,scale=0.6, dashed] {$2^{k-1}
		\begin{bmatrix}
		M & 0 \\
		0 & M \\
		\end{bmatrix}
		$};
\draw[-> ,font=\small, dashed](M-7-3) -- (M-11-3)
	node[midway, right,scale=0.6] {$ 2^{k-2}
		\begin{bmatrix}
		M & 0 \\
		0 & M \\
		\end{bmatrix}
		$};
\draw[-> ,font=\small, dashed](M-7-5) -- (M-11-5)
	node[midway, right,scale=0.6] {$2
		\begin{bmatrix}
		M & 0 \\
		0 & M \\
		\end{bmatrix}
		$};
\draw[-> ,font=\small, dashed](M-7-6) -- (M-11-6)
	node[midway, right,scale=0.6] {$
		\begin{bmatrix}
		M & 0 \\
		0 & M \\
		\end{bmatrix}
		$};
\draw[-> ,font=\small, dashed](M-7-7) -- (M-11-7)
	node[midway, right,scale=0.6]{$
	\begin{bmatrix}
	I & 0 \\
	0 & iJ 
	\end{bmatrix}		
	$};
\end{tikzpicture}
\end{center}
where $\mu_j$ is the identity on all modules contained in $\nabla'$ and $\mu_j$ sends $v$ to $-iv$ (resp. $iv$) for any $v$ belonging to the modules in $\wt{\nabla}$ (resp. $\undertilde{\nabla}$).
The black arrows in the last four rows shows the differential component $(\d_{j+1})_{G_j}$ in $\d_{j+1}$, induced by the conjugation of $\mu_j^{-1}$.
Thus, the required condition \eqref{induction} is easily satisfied.
The construction for $k < 0$ is similar, using the map $N$ in place of $M$.
\end{enumerate}
This completes the list of $\mu_j$ for all possible types of 1-string $\check{g}_j$ for all $j$, and thus completing the proof.
\end{proof}
Using this result, we can now deduce a type $B_n$ version of \cref{L_A equivariant}:
\begin{theorem} \label{L_B equivariant}
For $\sigma^B \in \cA({B_n})$ and an admissible trigraded curve $\check{c}$ in $\D^B_{n+1}$, we have that $$\sigma^B(L_B(\check{c})) \cong L_B(\sigma^B(\check{c})),$$ in the category of $\Kom^b(\Ba_n$-$p_rg_rmod)$, i.e. the map $L_B$ is $\mathcal{A}(B_n)$-equivariant.
\end{theorem}
\begin{proof}
Let $\check{c}$ be a trigraded curve in $\D^B_{n+1}$ and $\sigma^B$ be an element of $\mathcal{A}(B_n)$.
By \cref{diagram commutes}, the diagram in \cref{full picture} commutes.
Together with the three other maps being equivariant, we can conclude that
\begin{equation} \label{commute iso}
\Aa_{2n-1} \otimes_{\Ba_n} \left( L_B(\sigma^B(\check{c}))\right) \cong \Aa_{2n-1} \otimes_{\Ba_n} (\sigma^B(L_B(\check{c}))).
\end{equation}
Recall that the functor $\Aa_{2n-1} \otimes_{\Ba_n} -$ was defined as $\Aa_{2n-1} \otimes_{\ddot{\Ba}_n} \mathfrak{F}(-)$ (see paragraph after \cref{isomorphic algebras} for the definitions).
Since $\Aa_{2n-1} \cong \C\otimes_\R \ddot{\Ba}_n$ as graded $\C$-algebras, \eqref{commute iso} together with the fact that both categories $\Kom^b(\ddot{\Ba}_n\text{-}p_rg_rmod)$ and $\Kom^b(\Aa_{2n-1}\text{-}p_rg_rmod)$ are Krull-Schmidt implies that
\begin{equation}\label{iso shift}
L_B(\sigma^B(\check{c})) \cong \sigma^B (L_B(\check{c}))\<s\>,
\end{equation}
with $s = 0$ or $1$.

We aim to show that $s = 0$ for all cases.
First consider the case when $c \cap \{0\} = \emptyset$.
As $\check{c} \simeq \chi(r_1,r_2,r_3)\beta(\check{b}_2)$ and $P_2^B \cong P_2^B \<1\>$, it follows easily that $(L_B(\check{c}))\<0\> \cong (L_B(\check{c}))\<1\>$, and so we are done.
Now consider when $c$ has one of its end points at $0$.
Note that it is sufficient to prove the statement for $\sigma^B = \sigma^B_j$ for each $j$.
We assign to each complex $C$ in $\Kom^b(\Ba_n$-$\text{p$_{r}$g$_{r}$mod})$ an element of $\Z/2\Z$ denoted by $\sgn(C)$, by taking the sum of the third grading $\<-\>$ over all modules $P_1$ in $C$.
One can easily show that $\sgn$ is invariant under isomorphisms in  $\Kom^b(\Ba_n$-$\text{p$_{r}$g$_{r}$mod})$, where using \cref{gradinginvariant} below, we get that $s$ must be 0 as required.
   \end{proof}
%
\begin{lemma} \label{gradinginvariant}
For any trigraded curve $\check{c}$ with one of its endpoint at $\{0\}$ and any generating braid $\sigma^B_j,$  we have $\sgn \left( \sigma^B_j (L_B(\check{c})) \right) = \sgn (L_B(\sigma^B_j(\check{c}))).$
\end{lemma}

\begin{proof}
For $j \geq 2,$ it is clear that
$\sgn\left( \sigma^B_j ( L_B(\check{c}) ) \right) = \sgn \left( L_B(\check{c}) \right) = \sgn (L_B(\sigma^B_j(\check{c}))).$

  Now fix $j=1.$ 
  First consider the case when $\check{c}$ is of type VI, i.e.  $\check{c} = \chi^B(r_1, r_2, r_3)\check{b}_1$.
  Then $\sigma^B_1(\check{c}) = \chi^B(r_1-1, r_2+1, r_3+1)\check{b}_1 $ by \cref{action}(2). 
  So, $L_B(\sigma^B_1(\check{c})) = P_1^B [-r_1+1] \{r_2 +1\} \< r_3+1\>$.
  On the other hand, 
\begin{align*}
\sigma^B_1 (L_B((\check{c}))) 
&\cong  \left(P_1^B [1] \{1 \} \oplus P_1^B [1]  \xra{[X_1 \ id]} P_1^B \right) [-r_1] \{r_2 \} \< r_3 + 1\> \\
&\cong  P_1^B [-r_1+1] \{r_2 +1 \} \< r_3+1\>. 
\end{align*}  
\noindent Thus, $\sgn \left( \sigma^B_j (L_B(\check{c})) \right) = r_3 +1 = \sgn (L_B(\sigma^B_j(\check{c}))).$
  
  Otherwise, we analyse $\sgn$ based on the number of $2$-crossing in $\check{c}.$
  Note that,  for $1$-strings $\check{g},$
 \[\sgn \left( L_B(\sigma^B_1 (\check{g})) \right)  =  \begin{cases} 
     \sgn \left( L_B(\check{g}) \right),  & \text{when $\check{g}$ is of type $II'_w$, $II'_{w+ \frac{1}{2}}$, $III'_{w+ \frac{1}{2}}$ and $V''$;} \\
     \sgn \left( L_B(\check{g}) \right) + 1, & \text{when $\check{g}$ is of type $III'_w$.}\\       
   \end{cases}
    \]  
   Note that $\sigma^B_1$ won't change the number of $2$-crossings of $\check{c}$ and as $\check{c}\cap \{0\} = \{0\}$, $\check{c}$ contains $1$-string of type  $III'_w$ if and only if $\check{c}$ has even number of 2-crossings.
   Since $\sgn \left(  L_B(\sigma^B_1 (\check{c})) \right)$ can be computed by summing over all 1-strings of $\check{c}$, we conclude that
    \[\sgn \left(  L_B(\sigma^B_1 (\check{c})) \right)  =  \begin{cases} 
     \sgn \left( L_B(\check{c}) \right),  & \text{if $\check{c}$ has odd number of $2$-crossings;} \\
     \sgn \left( L_B(\check{c}) \right) + 1, & \text{if $\check{c}$ has even number of $2$-crossings.}\\       
   \end{cases}
    \]
 \noindent 
 On the other hand, note that $\check{c}$ and $\sigma_1^B(\check{c})$ both have an odd number of $1$-crossings. 
 Moreover, $\sgn(C\<1\>)= \sgn(C)+1$ if and only if $C$ has an odd number of underlying $P_1^B$'s. As such, we have that 
  \[\sgn \left( \sigma^B_1 ( L_B(\check{c}) ) \right)  =  \begin{cases} 
     \sgn \left( L_B(\check{c}) \right),  & \text{if $L_B(\check{c}$) has odd number of modules $P^B_2$;} \\
     \sgn \left( L_B(\check{c}) \right) + 1, & \text{if $L_B(\check{c}$) has even number of modules $P^B_2$,}\\       
   \end{cases}
    \]
    since $\sigma_1^B = R_1\<1\>$, ${}_1P^B\otimes_{\Ba_n} P^B_1 \cong \R \oplus \R\{1\}$, ${}_1P^B\otimes_{\Ba_n} P^B_2 \cong \R \oplus \R \<1\>$ and ${}_1P^B \otimes_{\Ba_n} P^B_j = 0$ for all $j \geq 3$ (see \cref{bimodule isomorphism} and \cref{rmk: Z/2Z grading bimodule iso}).
 Thus, we get $\sgn \left( \sigma^B_1 ( L_B(\check{c}) )\right) = \sgn \left(   L_B(\sigma^B_1(\check{c})) \right). $                                                                                                                                                                                                                                                                                                                                                                                                                                                                                                                                                                                                                                                                                                                                                                                                                                                                                                                                                           
  \end{proof}

\section{Categorification of Homological Representations}\label{categorification hom rep}
In this section, we shall relate the categorical representations of type $A_{2n-1}$ and type $B_n$ Artin groups to their representations on the first homology of surfaces.

Throughout this section we will use $\cK_A := \Kom^b(\Aa_{2n-1}$-$\text{p$_{r}$g$_{r}$mod})$ and $\cK_B := \Kom^b(\Ba_n$-$\text{p$_{r}$g$_{r}$mod})$ as shorthand notations.
We will also use $\cZ_A$, $\cZ_{B,s}$ and $\cZ_{B,r}$ to denote the rings $\Z[q^{\pm 1}]$, $\Z[q^{\pm 1}, s]/\<s^2 - 1\>$ and $\Z[q^{\pm 1}, r]/\<r^2 - 1\>$ respectively.
We denote the Grothendieck group of $\cK_A$ and $\cK_B$ as $K_0(\cK_A)$ and $K_0(\cK_B)$ respectively; recall that they are the abelian groups freely generated by the isomorphism classes of objects, quotient by the relation
$
[\cone(A \xra{f} B)] = [B] - [A]
$.

\subsection{Representations on Grothendieck groups}

First, consider the Grothendieck group $K_0(\cK_A)$.
The functor $\{1\}$ makes $K_0(\cK_A)$ into a $\mathcal{Z}_A$-module defined by $[X\{1\}] = q[X]$.
Note that $K_0(\cK_A)$ is isomorphic to the split Grothendieck group $K^\oplus_0(\Aa_{2n-1}$-$p_rg_r$mod) (see \cite[Theorem 1.1]{Rose_Gro}), so $K_0(\cK_A)$ is a free module over $\mathcal{Z}_A$ of rank $2n-1$, generated by $\{[P^A_j] \mid 1 \leq j \leq 2n-1\}$.
The action of $\cA(A_{2n-1})$ on $\cK_A$ preserves cones; namely for all $\sigma \in \cA(A_{2n-1})$,
\[
\sigma(\cone(A \xra{f} B)) \cong \cone(\sigma(A) \xra{\sigma(f)} \sigma(B)).
\]
Moreover, the action commutes with the grading shift functor, so we have an induced $\cZ_A$-linear representation of $\cA(A_{2n-1})$ on $K_0(\cK_A)$, which we denote by $\rho_{KA}: \cA(A_{2n-1}) \ra \text{Aut}_{\cZ_A}(K_0(\cK_A))$.

Now consider the Grothendieck group $K_0(\cK_B)$.
The functors $\{1\}$ and $\<1\>$ make $K_0(\cK_B)$ into a module over $\mathcal{Z}_{B,s}$.
As $P^B_j\<1\> \cong P^B_j$ for all $j \geq 2$, we have that $s[P_j] = [P_j]$ for all $j\geq 2$.
It is easy to see now that
\[
K_0(\cK_B) \cong \mathcal{Z}_{B,s} \oplus ( \mathcal{Z}_{B,s}/\<s-1\> )^{\oplus n-1}
\]
as $\mathcal{Z}_B$-modules, generated by $\{[P^B_j] \ | \ 1 \leq j \leq n\}$.
As before, the action of $\cA(B_n)$ on $\cK_B$ preserves cones and it commutes with the grading shift functors, so we have an induced $\cZ_{B,s}$-linear representation of $\cA(B_n)$ on $K_0(\cK_B)$, which we denote by $\rho_{KB}: \cA(B_n) \ra \text{Aut}_{\cZ_{B,s}}(K_0(\cK_B))$.

\subsection{Homological representations}
It is well-known that the reduced Burau representation of $\cA(A_{2n-1})$ can be realised as a homological representation (see for example \cite[Theorem 3.7 and Theorem 3.9]{BraidGroups}).
Nonetheless, we shall spell out the construction here as it will shed some light on the construction of the homological representation for type $B_n$ and also clarify the relationship between them.

Consider the covering space $\cD_{2n}$ classified by the cohomology class $C_{\cD} \in H^1(\DA \setminus \Delta, \Z)$ defined by
$
[\lambda_k] \mapsto 1, \text{for all } k \in \Delta = \{-n, ..., -1, 1, ..., n\},
$
where each $\lambda_k$ is a closed loop around the puncture $k$.
It is easy to see that the space $\cD_{2n}$ is homotopy equivalent to the infinite graph $\Gamma_{\Z}$ given below:
\begin{figure}[H]  

\begin{tikzpicture} [scale=0.7]

\draw[fill] (3,2.5) circle [radius=0.08] ;
\draw[fill] (6,2.5) circle [radius=0.08]  ;
\draw[fill] (9,2.5) circle [radius=0.08]  ;
\draw[fill] (12,2.5) circle [radius=0.08]  ;
\draw[fill] (0,2.5) circle [radius=0.08];

\draw  [cyan, dashed] plot[smooth, tension=1]coordinates { (0,2.5) (-.55, 3) (-1.25, 3.2)  };
\draw  [cyan, dashed] plot[smooth, tension=1]coordinates { (0,2.5) (-.55, 2.85) (-1.25, 3)  };
\draw  [cyan, dashed] plot[smooth, tension=1]coordinates { (0,2.5) (-.55, 2.75) (-1.25, 2.8)  };
\draw  [cyan, dashed] plot[smooth, tension=1]coordinates { (0,2.5) (-.55, 2) (-1.25, 1.8)  };
\draw  [cyan, dashed] plot[smooth, tension=1]coordinates { (0,2.5) (-.55, 2.15) (-1.25, 2)  };
\draw  [cyan, dashed] plot[smooth, tension=1]coordinates { (0,2.5) (-.55, 2.25) (-1.25, 2.2)  };

\draw  [cyan, dashed] plot[smooth, tension=1]coordinates { (12,2.5) (12.55, 3) (13.25, 3.2)  };
\draw  [cyan, dashed] plot[smooth, tension=1]coordinates { (12,2.5) (12.55, 2.85) (13.25, 3)  };
\draw  [cyan, dashed] plot[smooth, tension=1]coordinates { (12,2.5) (12.55, 2.75) (13.25, 2.8)  };
\draw  [cyan, dashed] plot[smooth, tension=1]coordinates { (12,2.5) (12.55, 2) (13.25, 1.8)  };
\draw  [cyan, dashed] plot[smooth, tension=1]coordinates { (12,2.5) (12.55, 2.15) (13.25, 2)  };
\draw  [cyan, dashed] plot[smooth, tension=1]coordinates { (12,2.5) (12.55, 2.25) (13.25, 2.2)  };

\draw  [cyan] plot[smooth, tension=1]coordinates { (0,2.5) (1.5, 3.2) (3,2.5)};
\draw  [cyan] plot[smooth, tension=1]coordinates { (0,2.5) (1.5, 3) (3,2.5)};
\draw  [cyan] plot[smooth, tension=1]coordinates { (0,2.5) (1.5, 2.8) (3,2.5)};
\draw  [cyan] plot[smooth, tension=1]coordinates { (0,2.5) (1.5, 2) (3,2.5)};
\draw  [cyan] plot[smooth, tension=1]coordinates { (0,2.5) (1.5, 2.2) (3,2.5)};
\draw  [cyan] plot[smooth, tension=1]coordinates { (0,2.5) (1.5, 1.8) (3,2.5)};

\draw  [cyan] plot[smooth, tension=1]coordinates { (3,2.5) (4.5, 3.2) (6,2.5)};
\draw  [cyan] plot[smooth, tension=1]coordinates { (3,2.5) (4.5, 3) (6,2.5)};
\draw  [cyan] plot[smooth, tension=1]coordinates { (3,2.5) (4.5, 2.8) (6,2.5)};
\draw  [cyan] plot[smooth, tension=1]coordinates { (3,2.5) (4.5, 2) (6,2.5)};
\draw  [cyan] plot[smooth, tension=1]coordinates { (3,2.5) (4.5, 2.2) (6,2.5)};
\draw  [cyan] plot[smooth, tension=1]coordinates { (3,2.5) (4.5, 1.8) (6,2.5)};

\draw  [cyan] plot[smooth, tension=1]coordinates { (6,2.5) (7.5, 3.2) (9,2.5)};
\draw  [cyan] plot[smooth, tension=1]coordinates { (6,2.5) (7.5, 3) (9,2.5)};
\draw  [cyan] plot[smooth, tension=1]coordinates { (6,2.5) (7.5, 2.8) (9,2.5)};
\draw  [cyan] plot[smooth, tension=1]coordinates { (6,2.5) (7.5, 2) (9,2.5)};
\draw  [cyan] plot[smooth, tension=1]coordinates { (6,2.5) (7.5, 2.2) (9,2.5)};
\draw  [cyan] plot[smooth, tension=1]coordinates { (6,2.5) (7.5, 1.8) (9,2.5)};

\draw  [cyan] plot[smooth, tension=1]coordinates { (9,2.5) (10.5, 3.2) (12,2.5)};
\draw  [cyan] plot[smooth, tension=1]coordinates { (9,2.5) (10.5, 3) (12,2.5)};
\draw  [cyan] plot[smooth, tension=1]coordinates { (9,2.5) (10.5, 2.8) (12,2.5)};
\draw  [cyan] plot[smooth, tension=1]coordinates { (9,2.5) (10.5, 2) (12,2.5)};
\draw  [cyan] plot[smooth, tension=1]coordinates { (9,2.5) (10.5, 2.2) (12,2.5)};
\draw  [cyan] plot[smooth, tension=1]coordinates { (9,2.5) (10.5, 1.8) (12,2.5)};

\node [cyan] at (1.5,3.2) {$>$};
\node [cyan] at (1.5,3) {$>$};
\node [cyan] at (1.5,2.8) {$>$};
\node [cyan] at (1.5,2.2) {$>$};
\node [cyan] at (1.5,2) {$>$};
\node [cyan] at (1.5,1.8) {$>$};

\node [cyan] at (4.5,3.2) {$>$};
\node [cyan] at (4.5,3) {$>$};
\node [cyan] at (4.5,2.8) {$>$};
\node [cyan] at (4.5,2.2) {$>$};
\node [cyan] at (4.5,2) {$>$};
\node [cyan] at (4.5,1.8) {$>$};

\node [cyan] at (7.5,3.2) {$>$};
\node [cyan] at (7.5,3) {$>$};
\node [cyan] at (7.5,2.8) {$>$};
\node [cyan] at (7.5,2.2) {$>$};
\node [cyan] at (7.5,2) {$>$};
\node [cyan] at (7.5,1.8) {$>$};

\node [cyan] at (10.5,3.2) {$>$};
\node [cyan] at (10.5,3) {$>$};
\node [cyan] at (10.5,2.8) {$>$};
\node [cyan] at (10.5,2.2) {$>$};
\node [cyan] at (10.5,2) {$>$};
\node [cyan] at (10.5,1.8) {$>$};

\node  at (1.5,2.6) {$ \boldsymbol{\vdots}$};
\node  at (4.5,2.6) {$ \boldsymbol{\vdots}$};
\node  at (10.5,2.6) {$ \boldsymbol{\vdots}$};
\node  at (7.5,2.6) {$ \boldsymbol{\vdots}$};

\node[above,cyan] at (1.5,3.2) {\scriptsize {$q^{-1}\lambda_n$}};
\node[above,cyan] at (4.5,3.2) {\scriptsize {$\lambda_n$}};
\node[above,cyan] at (7.5,3.2) {\scriptsize {$q\lambda_n$}};
\node[above,cyan] at (10.5,3.2) {\scriptsize {$q^{2}\lambda_n$}};
\node[below,cyan] at (10.5,1.8) {\scriptsize {$q^{2}\lambda_{-n}$}};
\node[below,cyan] at (1.5,1.8) {\scriptsize {$q^{-1}\lambda_{-n}$}};
\node[below,cyan] at (4.5,1.8) {\scriptsize {$\lambda_{-n}$}};
\node[below,cyan] at (7.5,1.8) {\scriptsize {$q\lambda_{-n}$}};

\node[below] at  (0,2.5) {\scriptsize {$q^{-1}p$}};
\node[below] at  (3,2.5) {\scriptsize {$p$}};
\node[below] at  (6,2.5) {\scriptsize {$qp$}};
\node[below] at  (9,2.5) {\scriptsize {$q^2p$}};
\node[below] at  (12,2.5) {\scriptsize {$q^3p$}};

\end{tikzpicture}

\caption{{\small The infinite graph $\Gamma_\Z$ homotopy equivalent to $\cD_{2n}$.} } 
\end{figure}
\noindent The action of $\cA(A_{2n-1})$ on $\DA \setminus \Delta$ lifts to an action on $\cD_{2n}$ that commutes with the deck transformation group $\Z$, so it induces a $\Z[\Z]$-linear action of $\cA(A_{2n-1})$ on $H_1(\cD_{2n}, \Z)$, which we denote by
$
\rho_{RHA} : \cA(A_{2n-1}) \ra \text{Aut}_{\Z[\Z]}(H_1(\cD_{2n}, \Z)).
$

Now let $\Delta_0 := \Delta \cup \{0\}$ and consider the covering space $\sD_{2n+1}$ of $\DA \setminus \Delta_0$ classified by the cohomology class $C_\sD \in H^1(\DA \setminus \Delta_0, \Z)$ defined by
\[
[\lambda_j] \mapsto 
\begin{cases}
1, &\text{ for } j \neq 0; \\
0, &\text{ for } j = 0,
\end{cases}
\]
where each $\lambda_j$ is a closed loop around the puncture $j$.
Note that the composition of coverings
\[
\sD_{2n+1} \ra \DA \setminus \Delta_0 \ra \DB \setminus \Lambda
\]
is a normal covering space of $\DB \setminus \Lambda$, with its group of deck transformation isomorphic to $\Z \times \Z/2\Z$ (cf. \cref{finalcovering}).

Let $l_j$ be a closed loop around each puncture $j \in \Lambda$ of $\DB$.
Note that in $H_1(\DA \setminus \Delta_0, \Z)$ we have that
\begin{equation} \label{loops in B and A}
\begin{cases}
[\lambda_0] = [l_0] + r[l_0]; \\
[\lambda_{-j}] = r[l_j], &\text{for } j > 0; \\
[\lambda_{j}]  =  [l_j], &\text{for } j > 0.
\end{cases}
\end{equation}
As such, 
the space $\sD_{2n+1}$ is homotopy equivalent to the space given below:

\begin{figure}[H] 

\begin{tikzpicture}  [scale=0.8]

\draw[fill] (3,2.5) circle [radius=0.08] ;
\draw[fill] (6,2.5) circle [radius=0.08]  ;
\draw[fill] (9,2.5) circle [radius=0.08]  ;
\draw[fill] (12,2.5) circle [radius=0.08]  ;
\draw[fill] (0,2.5) circle [radius=0.08];

\draw[fill] (3,5) circle [radius=0.08] ;
\draw[fill] (6,5) circle [radius=0.08]  ;
\draw[fill] (9,5) circle [radius=0.08]  ;
\draw[fill] (12,5) circle [radius=0.08]  ;
\draw[fill] (0,5) circle [radius=0.08];

\draw  [green] plot[smooth, tension=1]coordinates { (0,2.5) (0.25, 3.75) (0,5)};
\draw  [green] plot[smooth, tension=1]coordinates { (0,2.5) (-0.25, 3.75) (0,5)};
\node [green] at  (-0.25, 3.75) {$\wedge$};
\node [green] at  (0.25, 3.75) {$\vee$};

\draw  [green] plot[smooth, tension=1]coordinates { (3,2.5) (2.75, 3.75) (3,5)};
\draw  [green] plot[smooth, tension=1]coordinates { (3,2.5) (3.25, 3.75) (3,5)};
\node [green] at  (2.75, 3.75) {$\wedge$};
\node [green] at  (3.25, 3.75) {$\vee$};

\draw  [green] plot[smooth, tension=1]coordinates { (6,2.5) (5.75, 3.75) (6,5)};
\draw  [green] plot[smooth, tension=1]coordinates { (6,2.5) (6.25, 3.75) (6,5)};
\node [green] at  (5.75, 3.75) {$\wedge$};
\node [green] at  (6.25, 3.75) {$\vee$};

\draw  [green] plot[smooth, tension=1]coordinates { (9,2.5) (8.75, 3.75) (9,5)};
\draw  [green] plot[smooth, tension=1]coordinates { (9,2.5) (9.25, 3.75) (9,5)};
\node [green] at  (8.75, 3.75) {$\wedge$};
\node [green] at  (9.25, 3.75) {$\vee$};

\draw  [green] plot[smooth, tension=1]coordinates { (12,2.5) (11.75, 3.75) (12,5)};
\draw  [green] plot[smooth, tension=1]coordinates { (12,2.5) (12.25, 3.75) (12,5)};
\node [green] at  (11.75, 3.75) {$\wedge$};
\node [green] at  (12.25, 3.75) {$\vee$};

\node [green, left] at  (-0.25, 3.75) {\scriptsize {$q^{-1}l_0$}};
\node [green, right] at  (0.25, 3.75) {\scriptsize {$rq^{-1}l_0$}};
\node [green, left] at  (2.75, 3.75) {\scriptsize {$l_0$}};
\node [green, right] at  (3.25, 3.75) {\scriptsize {$rl_0$}};
\node [green, left] at  (5.75, 3.75) {\scriptsize {$ql_0$}};
\node [green, right] at  (6.25, 3.75) {\scriptsize {$rql_0$}};
\node [green, left] at  (8.75, 3.75) {\scriptsize {$q^2l_0$}};
\node [green, right] at  (9.25, 3.75) {\scriptsize {$rq^2l_0$}};
\node [green, left] at  (11.75, 3.75) {\scriptsize {$q^3l_0$}};
\node [green, right] at  (12.25, 3.75) {\scriptsize {$rq^3l_0$}};

\node [cyan] at (1.5,5.3) {$>$};
\node [cyan] at (1.5,4.7) {$>$};

\node [cyan] at (4.5,5.3) {$>$};
\node [cyan] at (4.5,4.7) {$>$};

\node [cyan] at (7.5,5.3) {$>$};
\node [cyan] at (7.5,4.7) {$>$};

\node [cyan] at (10.5,5.3) {$>$};
\node [cyan] at (10.5,4.7) {$>$};

\draw  [cyan, dashed] plot[smooth, tension=1]coordinates { (0,5) (-.55, 5.2) (-1.25, 5.3)  };
\draw  [cyan, dashed] plot[smooth, tension=1]coordinates { (0,5) (-.55, 4.8) (-1.25, 4.7)  };

\draw  [cyan, dashed] plot[smooth, tension=1]coordinates { (0,2.5) (-.55, 2.7) (-1.25, 2.8)  };
\draw  [cyan, dashed] plot[smooth, tension=1]coordinates { (0,2.5) (-.55, 2.3) (-1.25, 2.2)  };

\draw  [cyan, dashed] plot[smooth, tension=1]coordinates { (12,5) (12.55, 5.2) (13.25, 5.3)  };
\draw  [cyan, dashed] plot[smooth, tension=1]coordinates { (12,5) (12.55, 4.8) (13.25, 4.7)  };

\draw  [cyan, dashed] plot[smooth, tension=1]coordinates { (12,2.5) (12.55, 2.7) (13.25, 2.8)  };
\draw  [cyan, dashed] plot[smooth, tension=1]coordinates { (12,2.5) (12.55, 2.3) (13.25, 2.2)  };

\draw  [cyan] plot[smooth, tension=1]coordinates { (0,5) (1.5, 5.3) (3,5)};
\draw  [cyan] plot[smooth, tension=1]coordinates { (0,5) (1.5, 4.7) (3,5)};

\draw  [cyan] plot[smooth, tension=1]coordinates { (3,5) (4.5, 5.3) (6,5)};
\draw  [cyan] plot[smooth, tension=1]coordinates { (3,5) (4.5, 4.7) (6,5)};

\draw  [cyan] plot[smooth, tension=1]coordinates { (6,5) (7.5, 5.3) (9,5)};
\draw  [cyan] plot[smooth, tension=1]coordinates { (6,5) (7.5, 4.7) (9,5)};

\draw  [cyan] plot[smooth, tension=1]coordinates { (9,5) (10.5, 5.3) (12,5)};
\draw  [cyan] plot[smooth, tension=1]coordinates { (9,5) (10.5, 4.7) (12,5)};

\draw  [cyan] plot[smooth, tension=1]coordinates { (0,2.5) (1.5, 2.8) (3,2.5)};
\draw  [cyan] plot[smooth, tension=1]coordinates { (0,2.5) (1.5, 2.2) (3,2.5)};

\draw  [cyan] plot[smooth, tension=1]coordinates { (3,2.5) (4.5, 2.8) (6,2.5)};
\draw  [cyan] plot[smooth, tension=1]coordinates { (3,2.5) (4.5, 2.2) (6,2.5)};

\draw  [cyan] plot[smooth, tension=1]coordinates { (6,2.5) (7.5, 2.8) (9,2.5)};
\draw  [cyan] plot[smooth, tension=1]coordinates { (6,2.5) (7.5, 2.2) (9,2.5)};

\draw  [cyan] plot[smooth, tension=1]coordinates { (9,2.5) (10.5, 2.8) (12,2.5)};
\draw  [cyan] plot[smooth, tension=1]coordinates { (9,2.5) (10.5, 2.2) (12,2.5)};

\node [cyan] at (1.5,2.8) {$>$};
\node [cyan] at (1.5,2.2) {$>$};

\node [cyan] at (4.5,2.8) {$>$};
\node [cyan] at (4.5,2.2) {$>$};

\node [cyan] at (7.5,2.8) {$>$};
\node [cyan] at (7.5,2.2) {$>$};

\node [cyan] at (10.5,2.8) {$>$};
\node [cyan] at (10.5,2.2) {$>$};

\node  at (1.5,2.6) {$ \boldsymbol{\vdots}$};
\node  at (4.5,2.6) {$ \boldsymbol{\vdots}$};
\node  at (10.5,2.6) {$ \boldsymbol{\vdots}$};
\node  at (7.5,2.6) {$ \boldsymbol{\vdots}$};

\node  at (1.5,5.1) {$ \boldsymbol{\vdots}$};
\node  at (4.5,5.1) {$ \boldsymbol{\vdots}$};
\node  at (10.5,5.1) {$ \boldsymbol{\vdots}$};
\node  at (7.5,5.1) {$ \boldsymbol{\vdots}$};

\node[above,cyan] at (1.5,5.3) {\scriptsize {$rq^{-1}l_1$}};
\node[above,cyan] at (4.5,5.3) {\scriptsize {$rl_1$}};
\node[above,cyan] at (7.5,5.3) {\scriptsize {$rql_1$}};
\node[above,cyan] at (10.5,5.3) {\scriptsize {$rq^{2}l_1$}};
\node[below,cyan] at (10.5,4.7) {\scriptsize {$rq^{2}l_{n}$}};
\node[below,cyan] at (1.5,4.7) {\scriptsize {$rq^{-1}l_{n}$}};
\node[below,cyan] at (4.5,4.7) {\scriptsize {$rl_{n}$}};
\node[below,cyan] at (7.5,4.7) {\scriptsize {$rql_{n}$}};

\node[above,cyan] at (1.5,2.8) {\scriptsize {$q^{-1}l_1$}};
\node[above,cyan] at (4.5,2.8) {\scriptsize {$l_1$}};
\node[above,cyan] at (7.5,2.8) {\scriptsize {$ql_1$}};
\node[above,cyan] at (10.5,2.8) {\scriptsize {$q^{2}l_1$}};
\node[below,cyan] at (10.5,2.2) {\scriptsize {$q^{2}l_{n}$}};
\node[below,cyan] at (1.5,2.2) {\scriptsize {$q^{-1}l_{n}$}};
\node[below,cyan] at (4.5,2.2) {\scriptsize {$l_{n}$}};
\node[below,cyan] at (7.5,2.2) {\scriptsize {$ql_{n}$}};

\node[below] at  (0,2.5) {\scriptsize {$q^{-1}p$}};
\node[below] at  (3,2.5) {\scriptsize {$p$}};
\node[below] at  (6,2.5) {\scriptsize {$qp$}};
\node[below] at  (9,2.5) {\scriptsize {$q^2p$}};
\node[below] at  (12,2.5) {\scriptsize {$q^3p$}};

\node[above] at  (0,5) {\scriptsize {$rq^{-1}p$}};
\node[above] at  (3,5) {\scriptsize {$rp$}};
\node[above] at  (6,5) {\scriptsize {$rqp$}};
\node[above] at  (9,5) {\scriptsize {$rq^2p$}};
\node[above] at  (12,5) {\scriptsize {$rq^3p$}};

\end{tikzpicture}

\caption{{\small The infinite graph $\Gamma_{\Z \times \Z/ 2\Z}$ homotopy equivalent to $\sD_{2n+1}$.} }
\end{figure}
\noindent The action of $\cA(B_n)$ on $\DB \setminus \Lambda$ lifts to an action on $\sD_{2n+1}$ that commutes with deck transformation group $\Z[\Z\times \Z/2\Z] \cong \cZ_{B,r}$, so it induces a $\cZ_{B,r}$-linear action on $H_1(\sD_{2n+1}, \Z)$, which we denote by
$
\wt{\rho_{RHB}}: \cA(B_n) \ra \text{Aut}_{\mathcal{Z}_{B,r}} ( H_1(\sD_{2n+1}, \Z) ).
$

\subsection{Relating the representations on Grothedieck groups and homology groups}
It has been shown in \cite[Section 2e.1]{KhoSei} that the induced action of $\cA(A_{2n-1})$ on the Grothendieck group $K_0(\cK_A)$ is isomorphic to the reduced Burau representation, but we shall spell it out here before dealing with the type $B$ case.
Recall that $\sH_n$ and $K_0(\cK_B)$ are modules over $\cZ_{B,r}$ and $\mathcal{Z}_{B,s}$ respectively, whereas $H_1(\cD_{2n}, \Z)$ and $K_0(\cK_A)$ are modules over $\mathcal{Z}_A$.

\begin{proposition}\label{Hom and cat rep A}
The two $\cZ_A$-linear representations 
$
\rho_{KA}: \cA(A_{2n-1}) \ra \text{Aut}_{\cZ_A}(K_0(\cK_A))
$ 
and 
$
\rho_{RHA} : \cA(A_{2n-1}) \ra \text{Aut}_{\Z[\Z]}(H_1(\cD_{2n}, \Z))
$ 
are isomorphic.
In particular, the categorical action of $\cA(A_{2n-1})$ on $\cK_A$ categorifies the reduced Burau representation.
\end{proposition}
\begin{proof}

One can check that $H_1(\cD_{2n}, \Z)$ is a free module over $\Z[\Z] \cong \mathcal{Z}_A$ of rank $2n-1$, with basis $\{[\gamma_1], ..., [\gamma_{2n-1}] \}$ defined by:
\begin{equation} \label{eqn: gamma loops defn}
[\gamma_{j}] := 
\begin{cases}
[\lambda_{-1}] - [\lambda_{1}], &\text{ for } j = n; \\
(-1)^{n-j}([\lambda_{-n+j-1}] - [\lambda_{-n+j}]), &\text{ for } j \leq n-1; \\
(-q)^{n-j}([\lambda_{-n+j}] - [\lambda_{-n+j+1}], &\text{ for } j \geq n+1.
\end{cases}
\end{equation}
Similarly, $\{P^A_1, P^A_2, ..., P^A_{2n-1}\}$ is a $\cZ_A$-basis for $K_0(\cK_A)$. 

Under both of these bases, both $\rho_{KA}$ and $\rho_{RHA}$ are given by the same matrices for each generators of $\cA(A_{2n-1})$ as follows:
\begin{align*}
\sigma^A_1 &\mapsto 
\begin{bmatrix}
-q & -q & 0 \\
0  &  1 & 0 \\
0  &  0 & I_{n-2}
\end{bmatrix}, \qquad 
\sigma^A_{2n-1} \mapsto
\begin{bmatrix}
I_{n-2} & 0  & 0 \\
0       & 1  & 0 \\
0       & -q & -q
\end{bmatrix},
\\
\sigma^A_j &\mapsto
\begin{cases}
\vspace{3mm}
\begin{bmatrix}
I_{j-2} & 0  & 0  & 0  & 0 \\
0       & 1  & 0  & 0  & 0 \\
0       & -1 & -q & -q & 0 \\
0       & 0  & 0  & 1  & 0 \\
0       & 0  & 0  & 0  & I_{n-j-1}
\end{bmatrix}, \qquad \text{for } 2 \leq j \leq n-1; \\
\vspace{3mm}
\begin{bmatrix}
I_{j-2} & 0  & 0  & 0  & 0 \\
0       & 1  & 0  & 0  & 0 \\
0       & -1 & -q & -1 & 0 \\
0       & 0  & 0  & 1  & 0 \\
0       & 0  & 0  & 0  & I_{n-j-1}
\end{bmatrix}, \qquad \text{for }  j = n; \\
\begin{bmatrix}
I_{j-2} & 0  & 0  & 0  & 0 \\
0       & 1  & 0  & 0  & 0 \\
0       & -q& -q & -1 & 0 \\
0       & 0  & 0  & 1  & 0 \\
0       & 0  & 0  & 0  & I_{n-j-1}
\end{bmatrix}, \qquad \text{for } n+1 \leq j \leq 2n-2.
\end{cases} 
\end{align*}
It follows that $\rho_{KA}$ and $\rho_{RHA}$ are isomorphic representations.
\end{proof}

\begin{proposition}\label{Hom and cat rep B}
Under the identification $\mathcal{Z}_{B,s} \cong \mathcal{Z}_{B,r}$ given by $s \mapsto -r$ and $q \mapsto q$, the  $\cZ_{B,s}$-linear representation 
$
\rho_{KB}: \cA(B_n) \ra \text{Aut}_{\cZ_{B,s}}(K_0(\cK_B))
$ 
is isomorphic to a $\cZ_{B,s}$-linear subrepresentation of 
$
\wt{\rho_{RHB}}: \cA(B_n) \ra \text{Aut}_{\mathcal{Z}_{B,s}} ( H_1(\sD_{2n+1},\Z) )
$.
\end{proposition}
\begin{proof}
Consider the sub $\mathcal{Z}_{B,r}$-module $\sH_n \subseteq H_1(\sD_{2n+1}, \Z)$ generated by $\{ [\xi_1], ... , [\xi_n] \}$, where:
\[
[\xi_j] = \begin{cases}
(1-q)[l_0] - (1-r)[l_1] &\text{ for } j =1; \\
(-q)^{1-j}(1-r)([l_{j-1}] - [l_j]) &\text{ for } j \geq 2.
\end{cases}
\]
Note that $[\xi_j] = -r[\xi_j]$ for all $j \geq 2$, so that
\[
\sH_n \cong \mathcal{Z}_{B,r} \oplus \left( \mathcal{Z}_{B,r}/\<r+1\> \right)^{\oplus n-1}
\]
as $\mathcal{Z}_{B,r}$-modules.

It is easy to verify on the generators that $\sH_n \subseteq H_1(\sD_{2n+1}, \Z)$ is closed under the action of $\cA(B_n)$, so we obtain a $\cZ_{B,r}$-linear subrepresentation on $\sH_n$, which we denote by
$
\rho_{RHB}: \cA(B_n) \ra \text{Aut}_{\mathcal{Z}_{B,r}} ( \sH_n ).
$

Using the set of generators $\{P^B_1, P^B_2, ..., P^B_n\}$ for $\cK_B$, $\rho_{KB}$ is given by the following matrices for each generators of $\cA(B_n)$:
\begin{align*}
\rho_{KB}(\sigma^B_1) &= 
\begin{bmatrix}
-sq & -(1+s) & 0 \\
0  &  1     & 0 \\
0  &  0     & I_{n-2}
\end{bmatrix}; 
\qquad
\rho_{KB}(\sigma^B_n) = 
\begin{bmatrix}
I_{n-2} & 0  & 0 \\
0       & 1  & 0 \\
0       & -q & -q
\end{bmatrix};
\\
\rho_{KB}(\sigma^B_j) &=
\begin{bmatrix}
I_{j-2} & 0  & 0  & 0  & 0 \\
0       & 1  & 0  & 0  & 0 \\
0       & -q & -q & -1 & 0 \\
0       & 0  & 0  & 1  & 0 \\
0       & 0  & 0  & 0  & I_{n-j-1}
\end{bmatrix}, \qquad \text{for } j \neq 1, n.
\end{align*}
One can check that using the set of generators $\{ [\xi_1], ... , [\xi_n] \}$ for $\sH_n$, $\rho_{RHB}$ is defined by associating the above matrices to the generators of $\cA(B_n)$ with $s = -r$.
Thus, under the identification $\mathcal{Z}_{B,s} \cong \mathcal{Z}_{B,r}$ given by $s \mapsto -r$ and $q \mapsto q$, the two representations are isomorphic.
\end{proof}

Denote $ ev_{\pm 1} : \mathcal{Z}_{B,s} \ra \mathcal{Z}_A$ as the $\Z[q^{\pm 1}]$-linear evaluation maps defined by $s \mapsto \pm 1$.
Throughout the rest of this section, we shall view $H_1(\cD_{2n}, \Z)$ as a $\mathcal{Z}_{B,s}$-module through $ev_{-1}$ and $K_0(\cK_A)$ as a $\mathcal{Z}_{B,s}$-module through $ev_1$.

The functor $\Aa_{2n-1} \otimes_{\Ba_n} -$ as in \cref{B tensor to A} preserves cones, hence it induces a map on the Grothendieck groups $K_0(\Aa_{2n-1} \otimes_{\Ba_n} -): K_0(\cK_B) \ra K_0(\cK_A)$, given by
\[
K_0(\Aa_{2n-1} \otimes_{\Ba_n} -)([P^B_j]) = 
\begin{cases}
	[P^A_n], &\text{for } j = 1; \\
	[P^A_{n-(j-1)}] + [P^A_{n+(j-1)}], &\text{otherwise}.
\end{cases}
\]
On the other hand, the natural inclusion map $\iota : \DA \setminus \Delta_0 \ra \DA \setminus \Delta$ induces a map on the homology $\iota: H_1(\DA \setminus \Delta_0, \Z) \ra H_1(\DA \setminus \Delta,\Z)$ that sends
\[
[\lambda_j] \mapsto \begin{cases}
0, &\text{ for } j = 0; \\
[\lambda_j], &\text{ otherwise}.
\end{cases}
\]
Thus, $\iota$ lifts uniquely to $\wt{\iota} : \sD_{2n+1} \ra \cD_{2n}$, which induces a map on the homology $\wt{\iota}: H_1(\sD_{2n+1}, \Z) \ra H_1(\cD_{2n}, \Z)$.
We shall now show a ``decategorified'' analogue of \cref{fullmaintheorem}:
\begin{theorem}\label{decat main theorem}
Consider the action of $\cA(B_n)$ on $\sH_n$ and $K_0(\cK_B)$ given by $\rho_{RHB} = \wt{\rho_{RHB}}|_{\sH_n}$ and $\rho_{KB}$ respectively as in \cref{Hom and cat rep B}; similarly consider the action of $\cA(B_n)\overset{\Psi}{\hookrightarrow} \cA(A_{2n-1})$ on $H_1(\cD_{2n},\Z)$ and $K_0(\cK_A)$ given by $\rho_{RHA}$ and $\rho_{KA}$ respectively as in \cref{Hom and cat rep A}.
Then there exists $\cZ_{B,s}$-linear isomorphisms $\Theta_A$ and $\Theta_B$ such that following diagram is commutative with all four maps $\mathcal{Z}_{B,s}$-linear and $\cA(B_n)$-equivariant:
\begin{center}
\begin{tikzpicture} [scale= 0.8]
\node (tbB) at (-2,1.5) 
	{$\mathcal{A}(B_n)$};
\node (cbB) at (-2,-3.5) 
	{$\mathcal{A}(B_n)$};
\node (tbA) at (10.5,1.5) 
	{$\mathcal{A}(A_{2n-1}) \overset{\Psi}{\hookleftarrow} \mathcal{A}(B_n) $}; 
\node (cbA) at (10.5,-3.5) 
	{$\mathcal{A}(A_{2n-1}) \overset{\Psi}{\hookleftarrow} \mathcal{A}(B_n) $};

\node[align=center] (cB) at (0,0) 
	{$\sH_n$};
\node[align=center] (cA) at (7,0) 
	{$H_1(\cD_{2n}, \Z)$};
\node (KB) at (0,-2)
	{$K_0(\cK_B)$};
\node (KA) at (7,-2) 
	{$K_0(\cK_A)$};
	
\coordinate (tbB') at ($(tbB.east) + (0,-1)$);
\coordinate (cbB') at ($(cbB.east) + (0,1)$);
\coordinate (tbA') at ($(tbA.west) + (0,-1)$);
\coordinate (cbA') at ($(cbA.west) + (0,1)$);

\draw [->,shorten >=-1.5pt, dashed] (tbB') arc (245:-70:2.5ex);
\draw [->,shorten >=-1.5pt, dashed] (cbB') arc (-245:70:2.5ex);
\draw [->, shorten >=-1.5pt, dashed] (tbA') arc (-65:250:2.5ex);
\draw [->,shorten >=-1.5pt, dashed] (cbA') arc (65:-250:2.5ex);

\draw[->] (cB) -- (KB) node[midway, left]{$\cong$} node[midway, right]{$\Theta_B$};
\draw[->] (cB) -- (cA) node[midway,above]{$\wt{\iota}$}; 
\draw[->] (cA) -- (KA) node[midway,left]{$\cong$} node[midway, right]{$\Theta_A$};
\draw[->] (KB) -- (KA) node[midway,above]{$K_0(\Aa_{2n-1} \otimes_{\Ba_n} -)$};
\end{tikzpicture}.
\end{center}
\end{theorem}
\begin{proof}
We use isomorphisms $\Theta_A$ and $\Theta_B$ which identify the respective generators chosen in the proof of \cref{Hom and cat rep A} and \cref{Hom and cat rep B} respectively.

The functor $\Aa_{2n-1} \otimes_{\Ba_n} -$ commutes with the grading functor, so $K_0(\Aa_{2n-1} \otimes_{\Ba_n} -)$ is $\cZ_{B,s}$ linear.
The fact that it is $\cA(B_n)$-equivariant follows from \cref{tensor equivariant}.
On the other hand, it follows immediately from the construction of the covering spaces $\sD_{2n+1}$ and $\cD_{2n}$ that $\wt{\iota}$ is a $\mathcal{Z}_{B,s}$-linear map and is $\cA(B_n)$-equivariant.

Using \eqref{loops in B and A} and \eqref{eqn: gamma loops defn}, the restriction of $\wt{\iota}$ to $\sH_n$ is given by
\[
\wt{\iota}([\xi_j]) = \begin{cases}
[\gamma_n], &\text{for } j=1; \\
[\gamma_{n-(j-1)}] + [\gamma_{n+(j-1)}], &\text{for } j \geq 2.
\end{cases}
\]
It now follows immediately that $\Theta_A \circ \wt{\iota} = K_0(\Aa_{2n-1} \otimes_{\Ba_n} -) \circ \Theta_B$.
\end{proof}

\section{Trigraded Intersection Numbers, Graded Dimensions of Homomorphism Spaces}\label{int num and hom}
In this section, we shall relate the trigraded intersection number and the Hom spaces between the corresponding complexes.
Throughout this section, we will fix the following shorthand notations: $\cK_B := \Kom^b(\sB_n$-$\text{p$_{r}$g$_{r}$mod})$, $\cK_A := \Kom^b(\Aa_{2n-1}$-$\text{p$_{r}$g$_{r}$mod})$, $\Ba m := \Ba_n$-$\text{mod}$ and $\Aa m := \Aa_{2n-1}$-$\text{mod}$.

For $V = \oplus_{(r,s) \in \Z \times \Z/2\Z} V_{(r,s)}\{r\}\<s\>$ a $(\Z\times\Z/2\Z)$-graded $\R$-vector space, we denote its \emph{bigraded dimension} as
\[
\bigrdim(V) := \sum_{(r,s) \in \Z \times \Z/2\Z} \dim(V_{(r,s)})q_2^r q_3^s.
\]

Recall that for each pair of objects $(C^*, \partial_C), (D^*, \partial_D)$ in  $\cK_B$, one can consider the internal (bigraded) Hom complex $\HOM^*_{\cK_B}(C,D)$ defined as follows:
for each cohomological degree $s_1 \in \Z$,
\[
\HOM^{s_1}_{\cK_B}(C,D) := \bigoplus_{\substack{(s_2,s_3) \in \Z \times \Z /2\Z, \\ m + n = s_1}} \Hom_{\Ba m}(C^m,D^n\{s_2\}\<s_3\>)\{-s_2\}\<s_3\>
\]
is a $\Z \times \Z/2\Z$-graded $\R$-vector space and the differentials are given by 
\[
d(f) = \partial_D \circ f - (-1)^{s_1}f \circ \partial_C,
\]
for each $f \in \HOM^{s_1}_{\cK_B}(C,D)$.
It follows that each $H^{n}\left( \HOM^*_{\cK_B}(C,D) \right)$ is a ($\Z \times \Z/2\Z$)-graded $\R$-vector space.
We define the \emph{Poincar\'e polynomial} $\mathfrak{P}(C,D) \in \Z[q_1, q_1^{-1}, q_2, q_2^{-1}, q_3]/\<q_3^2 -1\>$ of $\HOM^*_{\cK_B}(C,D)$ as 
\[ 
\mathfrak{P}(C,D) := \sum_{s_1 \in \Z} q_1^{s_1}\bigrdim_\R\left( H^{s_1}\left( \HOM^*_{\cK_B}(C,D) \right) \right).
\]

\begin{lemma}\label{poly c g}
For any trigraded admissible curve $\check{c},$ the following internal Hom complexes are quasi-isomorphic:
$$ (\HOM^*_{\cK_B}(P^B_j,L_B(\check{c})), d_C^*) \cong  \bigoplus_{\check{g} \in st(\check{c}, j)} (\HOM^*_{\cK_B}(P^B_j,L_B(\check{g})), d_G^*), $$
for all $1 \leq j \leq n$ and  $(s_1,s_2,s_3) \in \Z \times \Z \times \Z/2.$  
\end{lemma}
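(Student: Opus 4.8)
The statement is the type $B$ analogue of the corresponding computation in \cite{KhoSei} relating $\HOM$ complexes to contributions of $j$-strings, so the plan is to mimic that argument while keeping careful track of the extra $\Z/2\Z$-grading and the special behaviour at the marked point $\{0\}$. The starting point is to decompose the trigraded curve $\check{c}$ (in normal form) according to its intersections with the vertical curves $d_{j-1}$ and $d_j$, so that $\check{c}$ is built from the trigraded $j$-strings $\check{g}\in st(\check c, j)$ glued along segments living in the regions $D_i$ with $i\neq j-1,j$. Correspondingly, $L_B(\check{c})$ decomposes, as a graded $\Ba_n$-module, as a direct sum indexed by crossings, and this direct sum can be grouped into the submodules $L_B(\check{g})$ for $\check{g}\in st(\check c, j)$ together with the modules coming from crossings in $cr(\check c)\setminus \bigcup_{\check g} cr(\check g)$.

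First I would observe that, by definition of $L_B$ on crossings, a module $P^B(y)=P^B_{y_0}[-y_1]\{y_2\}\<y_3\>$ appearing in $L_B(\check c)$ contributes to $\HOM^*_{\cK_B}(P^B_j, L_B(\check c))$ only when $y_0=j$, since $\Hom_{\Ba_n\text{-mod}}(P^B_j, P^B_k)\cong {}_jP^B_k$ is nonzero only for $|j-k|\leq 1$, but a nonzero graded component in the relevant degree forces $k=j$ (the off-diagonal ${}_jP^B_k$ sit in degree $\pm 1$ and do not survive after the shifts dictated by $L_B$; this is exactly the input from \cref{bimodule isomorphism}). Hence only $j$-crossings of $\check c$ contribute, and every $j$-crossing of $\check c$ is a $j$-crossing of exactly one $j$-string $\check g$. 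So as a bigraded complex, $\HOM^*_{\cK_B}(P^B_j, L_B(\check c))$ has the same underlying graded vector space as $\bigoplus_{\check g} \HOM^*_{\cK_B}(P^B_j, L_B(\check g))$; what must be checked is that the differential $d_C^*$ agrees with $\bigoplus_{\check g} d_G^*$ up to a quasi-isomorphism. Here one must account for the modified differentials in $L_B(\check c)$ coming from essential segments in $D_0$ (the extra maps with coefficients $-i(2|1)$, $(1|2)i$ and $X_1$ introduced in the construction of $L_B$): these connect $j$-crossings lying in distinct $j$-strings only through intermediate modules $P^B_{k}$ with $k\neq j$, which are killed by $\Hom_{\Ba_n\text{-mod}}(P^B_j,-)$ in the appropriate internal degree, so they contribute zero to the induced differential on $\HOM^*$. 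The differentials internal to each $\HOM^*_{\cK_B}(P^B_j, L_B(\check g))$ then match those of $d_G^*$ by construction.

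The remaining point, and the one I expect to be the main obstacle, is that the decomposition of $\HOM^*_{\cK_B}(P^B_j, L_B(\check c))$ need not split \emph{on the nose} as a direct sum of complexes: there can be ``connecting'' differentials between the $\HOM^*$-summands of different $j$-strings, mediated by chains of essential segments passing through regions $D_i$ with $i\neq j-1, j$ and through modules $P^B_k$ with $k\neq j$. As in the type $A$ argument, these connecting pieces assemble into acyclic subcomplexes (mapping cones of identity maps between shifted copies of $P^B_j$, arising from pairs of segments of the non-$j$ strings that traverse a region and return), so that Gaussian elimination removes them and produces the claimed quasi-isomorphism. Making this precise requires identifying, for each connected ``bridge'' of $\check c$ between two $j$-strings, the corresponding contractible complex — this is the type $B$ analogue of \cite[Lemma 4.?]{KhoSei} (the analogue of the lemma used there for $L_A$), and the verification is essentially the same, with the only genuinely new feature being bridges that pass through the marked point $\{0\}$, where one uses the relation $(1|2)(ie_2)(2|1)=0$ and the structure of ${}_1P^B_1\cong \R\oplus\R\<1\>$ to confirm contractibility. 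I would therefore: (1) set up the bridge decomposition of $\check c$ relative to $j$; (2) show each non-trivial bridge contributes an acyclic direct summand to $\HOM^*_{\cK_B}(P^B_j,L_B(\check c))$; (3) apply Gaussian elimination; (4) check the $\{0\}$-bridge case separately using the $B_n$ zigzag relations. Steps (1)--(3) are formally identical to \cite{KhoSei}; step (4) is where the type $B$ subtleties concentrate, but it reduces to a short finite check.
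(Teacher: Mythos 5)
Your proposal contains two related errors that together bypass the real content of the lemma, so it would not succeed as written.

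\textbf{First}, the claim in your opening paragraph that only $j$-crossings contribute to $\HOM^*_{\cK_B}(P^B_j, L_B(\check{c}))$ is false. The bigraded Hom $\HOM^{s_1}_{\cK_B}$ is a direct sum over \emph{all} internal shifts $\{s_2\}\<s_3\>$, so the nonzero spaces ${}_jP^B_{j\pm1}$ from \cref{bimodule isomorphism} absolutely do contribute (they sit in a single internal degree, but that degree is summed over). What is true — and what the paper actually uses — is that $\Hom_{\Ba m}(P^B_j, P^B_k) = 0$ for $|j-k|>1$, so it is precisely the crossings contributing $P^B_{j-1}, P^B_j, P^B_{j+1}$ that survive; these are exactly the crossings of the $j$-strings of $\check{c}$. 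That is how the underlying graded vector spaces of the two sides of the lemma are identified, not by restricting to $j$-crossings alone.

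\textbf{Second}, and this is the fatal gap: your assertion that the modified differentials $(1|2)i$ and $-i(2|1)$ ``contribute zero to the induced differential on $\HOM^*$'' because they pass through $P^B_k$ with $k\neq j$ is incorrect for $j=2$. These maps go between $P^B_1$ and $P^B_2$, both of which survive the application of $\Hom_{\Ba m}(P^B_2, -)$; postcomposition with $(1|2)i$ or $-i(2|1)$ gives genuinely nonzero maps ${}_2P^B_1 \to {}_2P^B_2$ and ${}_2P^B_2 \to {}_2P^B_1$. Consequently $d_C^*$ and $\bigoplus_{\check g} d_G^*$ are honestly \emph{different} differentials on the same underlying graded vector space, and there is no acyclic summand to Gaussian-eliminate. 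Your proposed step (2) ``show each non-trivial bridge contributes an acyclic direct summand'' has nothing to contract: the difference is concentrated on modules that survive. The paper's proof instead observes that for $j\neq 2$ the discrepancy is only right-multiplication by $X_{j\pm1}$, which does induce the zero map on ${}_jP^B_{j\pm1}$ (a degree-mismatch argument, not a vanishing of the ambient Hom space), so $d_C = d_G$ on the nose; and for $j=2$ it performs a careful block decomposition of $G^m$ adapted to the occurrences of the type-(ii) maps and verifies by explicit matrix computation, block by block, that $d_C$ and $d_G$ have isomorphic kernels and isomorphic images. That finite but nontrivial case analysis — tracking how an essential segment in $D_0$ couples a $P^B_1$-crossing to a $P^B_2$-crossing, together with any adjacent $X_2$ or $(3|2)$ arrows — is the actual content of the lemma, and your outline skips it entirely on the strength of a vanishing claim that does not hold.
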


\begin{proof}
To simplify notation, denote
$
(C^*, \partial_C^*) := L_B(\check{c})
$
and
$
(G^*, \partial_G^*) := \bigoplus_{\check{g} \in st(\check{c}, j)} L_B(\check{g}).
$
Note that $G^*$ can be obtained from $C^*$ by discarding the modules $P^B_k$ in $L_B(\check{c})$ for $| k - j| > 1$.
In particular, for all $m \in \Z$, 
$
C^m = G^m \oplus U^m
$
where $U^m$ consists of all indecomposable $P^B_k$ in $C^m$ with $|k - j|>1$.
Using the decomposition above, let us write $\partial^m_C: C^m = G^m \oplus U^m \ra G^{m+1} \oplus U^{m+1} = C^{m+1}$ as:
\[
\partial_C^m = \begin{bmatrix}
	\tau^m & * \\
	*      & *
	\end{bmatrix}
\]
so that $\tau^m : G^m \ra G^{m+1}$.
Also note that the differential $\partial_G^m: G^m \ra G^{m+1}$ can be obtained from $\tau^m$ by modifying the differentials
$
P^B(x) \xra{\partial_{yx}} P^B(y)
$
to $0$ whenever $x$ and $y$ are crossings of two different $j$-strings of $\check{c}$.
Since 
\begin{equation} \label{Hom zero}
\bigoplus_{(s_1, s_2, s_3) \in \Z \times \Z \times \Z/2\Z} \Hom_{\Ba m}(P^B_j, P^B_k[s_1]\{s_2\}\<s_3\>) = 0,
\end{equation}
for all $k$ such that $|j - k|>1$, it follows that
\begin{equation}\label{Hom zero U}
\bigoplus_{(s_1, s_2, s_3) \in \Z \times \Z \times \Z/2\Z} \Hom_{\Ba m}(P^B_j, U^{s_1}\{s_2\}\<s_3\>) = 0,
\end{equation}
and thus
\begin{equation} \label{Hom zero conc}
\HOM^m_{\cK_B}(P_j^B, C^*)= \HOM^m_{\cK_B}(P_j^B, G^*),
\end{equation}
for each $m \in \Z$ as underlying graded vector space.
Moreover, we know that $d^m_G = (\partial^m_G\circ -)$ and $d^m_C = (\partial_C^m \circ -)$ by definition of the HOM complex.
But \eqref{Hom zero U} allows us to conclude that $d_C^m = (\tau^m\circ -)$.
Therefore to prove the proposition, it is sufficient to show that $d_C^m = (\tau^m\circ -)$ and $d^m_G = (\partial^m_G\circ -)$ have isomorphic kernels and isomorphic images for each $m \in \Z$.
For the rest of the proof, let $m \in \Z$ be arbitrary.

Let us first consider the simple case when $j \neq 2$.
We claim that $d_C^m = d_G^m$.
Note that when $j\neq 2$, $\partial_{yx}$ in $\tau^m$ that are modifed to $0$ in $\partial_G^m$ are always right multiplication by loops $X_{j-1}$ or $X_{j+1}$.
But for such maps, the corresponding induced maps on the HOM complex $(\partial_{yx} \circ -)$ are always 0, so $(\tau^m\circ -) = (\partial^m_G \circ -)$ as required.

Now let us consider the case when $j = 2$.
The types of maps $\partial_{yx} : P(x) \ra P(y)$ in $\tau^m$ that are modifed to $0$ in $\partial_G^m$ are of the following types:
\begin{enumerate}[(i)]
\item $\partial_{yx}=X_1$ or $X_3$;
\item $\partial_{yx}=(1|2)i$ or $\partial_{yx}=-i(2|1)$.
\end{enumerate}
Moreover, $\partial_{yx}$ of type (ii) does not exist in $\partial_G^m$ by definition of $L_B$.
By the same argument in the case $j \neq 2$, the induced differential in the HOM complex by $\partial_{yx}$ of type (i) is 0.
So can relate $d_C^m$ and $d_G^m$ as follows:
\begin{equation} \label{d_C in d_G}
d_C^m = (\tau^m \circ -) = (\partial^m_G\circ -) + (\d\circ -) = d_G^m + (\d\circ -),
\end{equation}
where $\d := \sum \partial_{yx}$, summing over all $\partial_{yx}$ in $\tau^m$ that are of type (ii).

Before we analyse the kernel and image of both $d^m_C$ and $d^m_G$, we shall consider a decomposition of $G^m$ and $G^{m+1}$ using $\tau^m$.
Denote $\mathfrak{G}^m$ and $\mathfrak{G}^{m+1}$ as the subset of all crossings of $\check{c}$ such that $G^m = \bigoplus_{z \in \mathfrak{G}^m} P(z)$ and $G^{m+1} = \bigoplus_{z \in \mathfrak{G}^{m+1}} P(z)$.
We shall reorganise the direct summands of $G^m$ and $G^{m+1}$ in the following way:
\begin{enumerate}
\item Set 
\begin{itemize}
\item $\alpha = 1$,
\item $\epsilon := \d$,
\item $X := \mathfrak{G}^m$,
\item $H^m := G^m$, 
\item $Y = \mathfrak{G}^{m+1}$, and 
\item $H^{m+1} = G^{m+1}$.
\end{itemize}
\item If $\epsilon = 0$, then skip to step (3);
otherwise let $\partial_{yx}$ be one of the summands in $\d$. 
Consider the smallest subset $X' \subseteq X$ and $Y' \subseteq Y$ such that 
\begin{itemize}
\item $x \in X'$, 
\item $y \in Y'$, and 
\item $\partial_{zw} = 0, X_1$ or $X_3$ whenever $w \in (X')^c, z\in Y'$ or $w\in X', z\in (Y')^c$.
\end{itemize}
We organise the direct summands of $H^m$ in the following way:
\[
H^m = Q^m_{\alpha} \oplus \left( \bigoplus_{x\in (X')^c} P(x) \right),
\text{ and }
H^{m+1} = Q^{m+1}_{\alpha} \oplus \left( \bigoplus_{y \in (Y')^c} P(y) \right).
\]
where $Q^m_{\alpha} := \bigoplus_{x \in X'} P(x)$ and $Q^{m+1}_{\alpha} :=  \bigoplus_{y \in Y'} P(y)$.
Let $e = \sum \partial_{yx}$, summing over all $\partial_{yx} = (1|2)i, -i(2|1)$  with $x \in X'$ and $y \in Y'$. \\
Redefine 
\begin{itemize}
\item $\alpha := \alpha +1$,
\item $\epsilon:= \epsilon - \gamma$, 
\item $H^m := \bigoplus_{x\in (X')^c} P(x)$, and 
\item $H^{m+1} := \bigoplus_{y \in (Y')^c} P(y)$.
\end{itemize} 
Repeat step (2).
\item If $H^m \neq 0$, then set $Q^m_{\alpha} := H^m$, else if $H^{m+1} \neq 0$, then set $Q^{m+1}_{\alpha} := H^{m+1}$.
\item Output $G^m = \bigoplus_{s \in S} Q^m_s$ and $G^{m+1} = \bigoplus_{s' \in S'} Q^m_{s'}$ with the appropriate index sets $S = \{1, ..., M\}$ and $S' = \{1, ..., M'\}$.
\end{enumerate}
Now consider $\tau^m$ and $\partial_G^m$ as block matrices corresponding to the decomposition obtained above:
\[
\tau^m = [(\tau^m)_{s',s}]_{(s',s) \in S'\times S}, \quad
\partial_G^m = [(\partial_G^m)_{s',s}]_{(s',s) \in S'\times S}.
\]
Note that by the construction of the decomposition we have that the block $(\tau^m)_{s',s}$ only have entries $X_1, X_3$ or 0 for all $s \neq s'$.
On the HOM complexes, the decompositions also give us
\begin{align*}
\HOM^m_{\cK_B}(P_j^B, C^*) = \HOM^m_{\cK_B}(P_j^B, G^*) = \bigoplus_{s \in S} \Hom_{\Ba m} (P_j^B, Q^m_s), \text{ and }
\\
\HOM^{m+1}_{\cK_B}(P_j^B, C^*) = \HOM^{m+1}_{\cK_B}(P_j^B, G^*) = \bigoplus_{s' \in S'} \Hom_{\Ba m} (P_j^B, Q^{m+1}_{s'}).
\end{align*}
Similarly consider the two differentials $d^m_C$ and $d^m_G$ written as block matrices corresponding to the decompositions:
$
d^m_C = [(d^m_{C})_{s',s}]_{(s',s) \in S' \times S}, \quad
d^m_G = [(d^m_{G})_{s',s}]_{(s', s) \in S'\times S}.
$
The construction of the decomposition guarantees the property that $(d^m_{C})_{s',s} = (d^m_{G})_{s',s}= 0$ whenever $s \neq s'$ (recall that the induced maps $(X_1\circ -)$ and $(X_3\circ -)$ are 0).
So to show that $d^m_C = (\tau^m \circ -)$ and $d^m_G = (\partial_G^m \circ -)$ have isomorphic images and isomorphic kernels, it is now sufficient to show them for each block $(d^m_{C})_{s',s}= ((\tau^m)_{s',s}\circ -)$ and $(d^m_{G})_{s',s} = ((\partial_G^m)_{s',s} \circ -)$  where $s = s'$.

To simplify notation, for the rest of this proof we shall drop the subscript $s$ and denote: 
\[
Q^m := Q^m_s; \quad
Q^{m+1} := Q^{m+1}_s;\quad
d_C := (d^m_{C})_{s,s};\quad
d_G := (d^m_{G})_{s,s};\quad
\tau := \tau^m_{s,s} ; \text{ and }
\partial_G := (\partial^m_G)_{s,s}.
\]
We shall now look at the possible types of maps $\tau: Q^m \ra Q^{m+1}$ which gives us all possible $d_{C} = (\tau \circ -)$, where $d_{G} = (\partial_G \circ -)$ can be obtained by $d_G = (\tau\circ -) - (\d \circ -)$ (following from \eqref{d_C in d_G}).

If $d_C = d_G$, i.e. $\tau$ has no entry of type (ii) so that $\d = 0$, then there is nothing left to show.
Otherwise, $\tau$ contains at least one entry $\partial_{yx}$ of type (ii).
The two possibilities of $\partial_{yx}$ of type (ii) are $(1|2)i$ and $-i(2|1)$.
We will only explicitly show the classification method used to obtain all possible types of $Q^m \xra{\tau} Q^{m+1}$ when $\partial_{yx} = (1|2)i$, where the same method can be applied to the case when $\partial_{yx} = -i(2|1)$.

So let us consider the case when $\tau$ has an entry with $\partial_{yx} = (1|2)i$.
Recall that by the definition of $L_B$, for any $\partial_{yx}$ of type (ii), there must be a corresponding 1-crossing $x'$ of $\check{c}$ such that 
\begin{itemize}
\item $x'$ and $x$ are connected by an essential segment in $D_0$, and
\item $x'$ and $y$ are endpoints of an essential segment of $\check{c}$ in $D_1$.  
\end{itemize}
So in the case $\partial_{yx} = (1|2)i$, $x'$ and $y$ are connected through an essential segment in $D_1$ of type 1 (refer to \cref{sixtype}) and the map $\partial_{yx'}: P(x') \ra P(y)$ is the right multiplication by $(1|2)$.
By the construction of $Q^m$ and $Q^{m+1}$, $Q^m$ must then at least contain the direct summands $P(x)$ and $P(x')$ and $Q^{m+1}$ must at least contain the direct summand $P(y)$, so the differential $Q^m \xra{\tau} Q^{m+1}$ must contain at least two entries $\partial_{yx} = (1|2)i$ and $\partial_{yx'} = (1|2)$.
Thus the crossings $x,x'$ and $y$ must be contained in the corresponding partial curve of $\check{c}$ below:
\begin{figure}[H]
\begin{tikzpicture} [scale=.9]

\draw[thick]  (13.475,8)--(13.7,8);
\draw[thick]  (13.475,10)--(13.7,10); 
\draw[thick, blue]  (13.475,8)--(13.475,10);
\draw[thick,red]  (11.25,9.65)--(13.475,9.65);
\draw[thick]  (11.25,10)--(13.7,10);  
\draw[thick]  (11.25,8)--(13.7,8);
\draw[thick,green, dashed]  (11.25,10)--(11.25,9.5);
\draw[thick, green, dashed]   (11.25,8)--(11.25,8.5);
\draw[thick, green, dashed, ->]   (11.25,9)--(11.25,9.6);
\draw[thick,  green, dashed, ->]   (11.25,9)--(11.25,8.4);
\filldraw[color=black!, fill=yellow!, very thick]  (11.25,9) circle [radius=0.1]  ;
\draw[fill] (12.75,9) circle [radius=0.1]  ;

\draw[blue] (12,8) -- (12,10);
\draw[thick, red] (11.25,8.35) -- (12,8.35) ;

\node[below right] at (13.475,9.65) {y};
\node[below right] at (12,9.65) {x'};
\node[right] at (12,8.35) {x};
\end{tikzpicture}
\caption{{\small The crossings $x,x'$ and $y$ of the partial curve of $\check{c}$.}} \label{must have form}
\end{figure}
As seen from \cref{must have form} above, $x$ and $y$ are the only crossings that can be in a another distinct essential segment of $\check{c}$.

Let us now first consider the subcase when $x$ is not in another distinct essential segment.
If $y$ is also not in another distinct essential segment, then we have that $Q^m \xra{\tau} Q^{m+1}$ is of the form:
\begin{figure}[H]
\begin{tikzcd}
P(x') \ar[r, "(1|2)"  ] & P(y) \\
P(x ) \ar[ru,"(1|2)i",swap] \ar[u, phantom, "\oplus" ]
\end{tikzcd}.
\end{figure}
If instead $y$ is part of another essential segment of $\check{c}$ with its other endpoint some crossing $w$, then the essential segment must be in $D_2$.
Since $P(y)$ is a direct summand of $Q^{m+1}$, $w$ must have the property $w_1 = y_1 - 1$ so that $P(w)$ is an entry of $Q^m$ and $\partial_{yw}$ is an entry of $\tau$.
The only two such possibilities are:
\begin{figure}[H]
\begin{tikzpicture} [scale=.9]

\draw[thick]  (13.475,8)--(15,8);
\draw[thick]  (13.475,10)--(15,10); 
\draw[thick, blue]  (13.475,8)--(13.475,10);
\draw[thick,red]  (11.25,9.65)--(13.475,9.65);
\draw[thick]  (11.25,10)--(13.7,10);  
\draw[thick]  (11.25,8)--(13.7,8);
\draw[thick,green, dashed]  (11.25,10)--(11.25,9.5);
\draw[thick, green, dashed]   (11.25,8)--(11.25,8.5);
\draw[thick, green, dashed, ->]   (11.25,9)--(11.25,9.6);
\draw[thick,  green, dashed, ->]   (11.25,9)--(11.25,8.4);
\filldraw[color=black!, fill=yellow!, very thick]  (11.25,9) circle [radius=0.1]  ;
\draw[fill] (12.75,9) circle [radius=0.1]  ;
\draw[fill] (14.25,9) circle [radius=0.1]  ;

\draw[blue] (12,8) -- (12,10);
\draw[thick, red] (11.25,8.35) -- (12,8.35) ;

\node[below left] at (13.475,9.65) {y};
\node[below right] at (12,9.65) {x'};
\node[below right] at (12,8.45) {x};
\node[left] at (13.475,9.85)  {w};
\draw[thick,red] plot[smooth,tension=.8] coordinates { (13.475,9.65)  (13.9,9.45)  (14.25, 8.7)  ( 14.625, 9 )  (14.35,9.5) (13.475,9.85) };
\draw[thick,red] plot[smooth,tension=.8] coordinates { (12,8.35) (12.4, 8.5)   (12.75,9)  };

\draw[thick]  (18.475,8)--(20.2,8);
\draw[thick]  (18.475,10)--(20.2,10); 
\draw[thick, blue]  (18.475,8)--(18.475,10);
\draw[thick,red]  (16.25,9.65)--(18.475,9.65);
\draw[thick]  (16.25,10)--(18.7,10);  
\draw[thick]  (16.25,8)--(18.7,8);
\draw[thick,green, dashed]  (16.25,10)--(16.25,9.5);
\draw[thick, green, dashed]   (16.25,8)--(16.25,8.5);
\draw[thick, green, dashed, ->]   (16.25,9)--(16.25,9.6);
\draw[thick,  green, dashed, ->]   (16.25,9)--(16.25,8.4);
\filldraw[color=black!, fill=yellow!, very thick]  (16.25,9) circle [radius=0.1]  ;
\draw[fill] (17.75,9) circle [radius=0.1]  ;
\draw[fill] (19.25,9) circle [radius=0.1]  ;

\draw[blue] (17,8) -- (17,10);
\draw[blue] (20,8) -- (20,10);
\draw[thick, red] (16.25,8.35) -- (17,8.35) ;

\node[above right] at (18.475,9.5) {{ y}};
\node[below right] at (17,9.65) {x'};
\node[below right] at (17,8.45) {x};
\node[right] at (20,8.35) {w};
\draw[thick,red] plot[smooth,tension=.5] coordinates { (18.475,9.65)  (18.75, 9.5) (19.25,8.4) (20,8.35)   };
\draw[thick,red] plot[smooth,tension=1] coordinates { (17,8.35) (17.4, 8.5) (17.75,9)  };
\end{tikzpicture}
\caption{{\small The two possible essential segments from $y$.}} \label{two ess y}
\end{figure}

%
%
Now note that if $w$ is a 2-crossing (left picture in \cref{two ess y}, then one sees that $w$ can not be connected to any other crossing $z$ through another distinct essential segment in $\check{c}$ with $P(z)$ a direct summand of $Q^{m+1}$;
if instead $w$ a 3-crossing (right picture in \cref{two ess y}, then the only possibility for $P(z)$ to be a direct summand of $Q^{m+1}$ is when $w$ is also a 3-crossing, with $w$ and $z$ endpoints of an essential segment of $\check{c}$ in $D_3$ of type 2, giving us $\partial_{wz} = X_3$.
Recall the chosen decomposition of $G^m$ and $G^{m+1}$, where $Q^{m+1} \subseteq G^{m+1}$ corresponds to the smallest subset of crossings in $\mathfrak{G}^{m+1}$ which contains $y$, with the property that maps between the direct summands of the decompositions of $G^m$ and $G^{m+1}$ are either 0 or $X_1$ or $X_3$.
Thus $P(z)$ must be excluded from $Q^{m+1}$.
We can therefore conclude that for the subcase when $x$ is not connected to any other distinct essential segments, we have 3 possible forms for $Q^m \xra{\tau} Q^{m+1}$:
\begin{center}
\begin{tikzcd}
P(x') \ar[r, "(1|2)"  ] & P(y) \\
P(x ) \ar[ru,"(1|2)i",swap] \ar[u, phantom, "\oplus" ]
\end{tikzcd} 
 or
\begin{tikzcd}
P(w) \ar[rd, "X_2 \text{ or } (3|2)"] \ar[d, phantom, "\oplus"]\\
P(x') \ar[r, "(1|2)"  ] & P(y) \\
P(x ) \ar[ru,"(1|2)i",swap] \ar[u, phantom, "\oplus" ]
\end{tikzcd}.
\end{center}
To analyse the maps $d_C$ and $d_G$, let us identify the morphism spaces as
\begin{align*}
\HOM^{m}_{\cK_B}(P_2^B, G^*) = & 
\bigoplus_{(s, t) \in \Z \times \Z/\Z}\Hom_{\Ba m}(P_2^B, (P(x')\oplus P(x) \oplus P(w) )\{s\}\<t\>)\{s\}\<t\> \\ 
\cong & \left(\R\{(2|1)\}\<x_3\> \oplus \R\{i(2|1)\}\<x_3+1\>\right)\{x_2+1\} 
	\\ &\oplus \left(\R\{(2|1)\}\<x'_3\> \oplus \R\{i(2|1)\}\<x'_3+1\>\right)\{x'_2\}
	\\ &\oplus Z,
\end{align*}
where $Z = 0$ for the first type of $Q^m \xra{\tau} Q^{m+1}$, and
\begin{align*}
\HOM^{m+1}_{\cK_B}(P_2^B, G^*) = & 
\bigoplus_{(s, t) \in \Z \times \Z/\Z}\Hom_{\Ba m}(P_2^B, P(y)\{s\}\<t\>)\{s\}\<t\> \\ 
\cong & \left(\R\{X_2\}\<y_3\> \oplus \R\{X_2 i\}\<y_3+1\>\right)\{y_2+1\} 
	\\ &\oplus \left(\R\{\id\}\<y_3\> \oplus \R\{i\}\<y_3+1\>\right)\{y_2\}.
\end{align*}
Using this identification, we can write $d_C$ and $d_G$  as the corresponding matrices:

\[
d_C =
\begin{bmatrix}
1 &  0 & 0 & -1 & e \\
0 &  1 & 1 & 0  & f \\
0 &  0 & 0 & 0  & 0 \\
0 &  0 & 0 & 0  & 0 \\
\end{bmatrix}
\quad
\text{ and }
\quad
d_G =
\begin{bmatrix}
1 &  0 & 0 & 0  & e \\
0 &  1 & 0 & 0  & f \\
0 &  0 & 0 & 0  & 0 \\
0 &  0 & 0 & 0  & 0 \\
\end{bmatrix},
\]

\noindent where $d_G$ is obtained from $d_C$ by removing maps that were induced by $(1|2)i$.
It follows that $d_C$ and $d_G$ have the same image and have isomorphic kernels.

Now consider the other subcase where $x$ is in another essential segment of $\check{c}$ with its other endpoint some crossing $y'$.
Note that since $x$ is already part of an essential segment in $D_0$, the essential segment connecting $x$ and $y'$ can only be in $D_1$.
As before, we must have $y'_1 = x_1 - 1$ so that $P(y')$ is a direct summand of $Q^{m+1}$ and that $\partial_{y'x}$ is an entry of $\tau$.
Furthermore, if $x$ and $y'$ is connected by the essential segment of Type 2 in \cref{sixtype}, then $\partial_{y'x} = X_1$.
Therefore such $P(y')$ is excluded from $Q^{m+1}$.
Collecting the results, the only possible essential segment connecting $x$ and $y'$ with $y'_1 = x_1 - 1$ and $\partial_{y'x} \neq X_1$ is the essential segment of Type 1.
Thus the crossings $x,x',y$ and $y'$ must be contained in the corresponding partial curve of $\check{c}$ below:
\begin{figure}[H]
\begin{tikzpicture} [scale=1]

\draw[thick]  (13.475,8)--(13.7,8);
\draw[thick]  (13.475,10)--(13.7,10); 
\draw[thick, blue]  (13.475,8)--(13.475,10);
\draw[thick,red]  (11.25,9.65)--(13.475,9.65);
\draw[thick]  (11.25,10)--(13.7,10);  
\draw[thick]  (11.25,8)--(13.7,8);
\draw[thick,green, dashed]  (11.25,10)--(11.25,9.5);
\draw[thick, green, dashed]   (11.25,8)--(11.25,8.5);
\draw[thick, green, dashed, ->]   (11.25,9)--(11.25,9.6);
\draw[thick,  green, dashed, ->]   (11.25,9)--(11.25,8.4);
\filldraw[color=black!, fill=yellow!, very thick]  (11.25,9) circle [radius=0.1]  ;
\draw[fill] (12.75,9) circle [radius=0.1]  ;

\draw[blue] (12,8) -- (12,10);
\draw[thick, red] (11.25,8.35) -- (12,8.35) ;

\node[right] at (13.475,9.75) {y};
\node[below right] at (12,9.65) {x'};
\node[below right] at (12,8.45) {x};
\node[right] at (13.475,9.45) {y'};

\draw[thick,red] plot[smooth,tension=.8] coordinates { (12,8.35) (12.2,8.5)  (12.45, 9.1) (12.9, 9.35) (13.475,9.45)};
\end{tikzpicture}
\caption{The crossings $x,x',y$ and $y'$ when $x$ is in another essential segment.} 
\end{figure}
The same analysis in the previous subcase on the possible essential segments connected to $y$ can be applied similarly to the crossings $y$ and $y'$ here.
Thus we conclude that for this subcase, $Q^m \xra{\partial^m_C} Q^{m+1}$ is equal to one of the following 6 types (there are 3 possible combinations of $X_2$ and $(3|2)$ maps in the rightmost diagram):
\begin{center}
\begin{tikzcd}
P(x') \ar[rd," " description] \ar[r, "(1|2)"  ] & P(y) \\
P(x ) \ar[ru,"(1|2)i" description] \ar[r,"(1|2)", swap]
	\ar[u, phantom, "\oplus" ] 
	& P(y') \ar[u, phantom, "\oplus"] 
\end{tikzcd}
, \quad
\begin{tikzcd}
P(z) \ar[rd, "X_2 \text{ or } (3|2)"] \ar[d, phantom, "\oplus"]\\
P(x') \ar[rd," " description] \ar[r, "(1|2)"  ] & P(y) \\
P(x ) \ar[ru,"(1|2)i" description] \ar[r,"(1|2)", swap]
	\ar[u, phantom, "\oplus" ] 
	& P(y') \ar[u, phantom, "\oplus"] 
\end{tikzcd}
or
\begin{tikzcd}
P(z) \ar[rd, "X_2 \text{ or } (3|2)"] \ar[d, phantom, "\oplus"]\\
P(x') \ar[rd," " description] \ar[r, "(1|2)"  ] & P(y) \\
P(x ) \ar[ru,"(1|2)i" description] \ar[r,"(1|2)", swap]
	\ar[u, phantom, "\oplus" ] 
	& P(y') \ar[u, phantom, "\oplus"] \\
P(z') \ar[ru, "X_2 \text{ or } (3|2)", swap] \ar[u, phantom, "\oplus"]
\end{tikzcd}
\end{center}
swapping $x$ with $x'$ (and correspondingly $y$ with $y'$) if necessary.
Let us again identify the morphism spaces as:
\begin{align*}
\HOM^{m}_{\cK_B}(P_2^B, G^*) = & 
\bigoplus_{(s, t) \in \Z \times \Z/\Z}\Hom_{\Ba m}(P_2^B, (P(x')\oplus P(x) \oplus P(z) )\{s\}\<t\>)\{s\}\<t\> \\ 
\cong & \left(\R\{(2|1)\}\<x_3\> \oplus \R\{i(2|1)\}\<x_3+1\>\right)\{x_2+1\} 
	\\ &\oplus \left(\R\{(2|1)\}\<x'_3\> \oplus \R\{i(2|1)\}\<x'_3+1\>\right)\{x'_2\}
	\\ &\oplus Z,	
\end{align*}
where $Z = 0$ for the first type of $Q^m \xra{\tau} Q^{m+1}$, and
\begin{align*}
\HOM^{m+1}_{\cK_B}(P_2^B, G^*) = & 
\bigoplus_{(s, t) \in \Z \times \Z/\Z}\Hom_{\Ba m}(P_2^B, P(y)\{s\}\<t\>)\{s\}\<t\> \\ 
\cong & \left(\R\{X_2\}\<y_3\> \oplus \R\{X_2 i\}\<y_3+1\>\right)\{y_2+1\}
	\\ &\oplus \left(\R\{X_2\}\<y'_3\> \oplus \R\{X_2 i\}\<y'_3+1\>\right)\{y'_2+1\}
	\\ &\oplus \left(\R\{\id\}\<y_3\> \oplus \R\{i\}\<y_3+1\>\right)\{y_2\}
	\\ &\oplus \left(\R\{\id\}\<y'_3\> \oplus \R\{i\}\<y'_3+1\>\right)\{y'_2\}
	\\
	= & \left(\R\{X_2\}\<y_3\> \oplus \R\{X_2 i\}\<y_3+1\>\right)\{y_2+1\}
	\\ &\oplus \left(\R\{X_2\}\<y'_3\> \oplus \R\{X_2 i\}\<y'_3+1\>\right)\{y'_2+1\}
	\\ &\oplus V.
\end{align*}
Writing $d_C$ and $d_G$  as the corresponding matrix, we get
\[
d_C =
\begin{bmatrix}
1 &  0  & 0 & -1 & e' \\
0 &  1  & 1 & 0  & f' \\
0 &  -1 & 1 & 0  & g' \\
1 &  0  & 0 & 1  & h' \\
0 &  0  & 0 & 0  & 0 
\end{bmatrix}
\quad 
\text{ and }
\quad
d_G =
\begin{bmatrix}
1 &  0 & 0 & 0  & e' \\
0 &  1 & 0 & 0  & f' \\
0 &  0 & 1 & 0  & g' \\
0 &  0 & 0 & 1  & h' \\
0 &  0 & 0 & 0  & 0 
\end{bmatrix},
\]
which also have the same image and same kernel.
Thus for $\partial_{yx} = (1|2)i$, all possible cases of $d_C$ and $d_G$ have isomorphic images and isomorphic kernels as required.

Applying the same classification method to the case when $\partial_{yx} = -i(2|1)$, the posssible types of $Q^m \xra{\tau} Q^{m+1}$ is given by:
\begin{center}
\begin{tikzcd}
 & P(y) \\
P(x ) \ar[ru,"-i(2|1)"] \ar[r, "(2|1)"]& P(y') \ar[u, phantom, "\oplus" ]
\end{tikzcd} 
\quad
or
\quad
\begin{tikzcd}
 	& P(y) \\
P(x ) \ar[ru,"-i(2|1)"] \ar[r, "(2|1)"] \ar[rd, "X_2 \text{ or } (2|3)", swap]
	& P(y') \ar[u, phantom, "\oplus" ] \\
	& P(z) \ar[u, phantom, "\oplus"]
\end{tikzcd}
\end{center}
when $y$ is not part of another distinct essential segment of $\check{c}$, and
\begin{center}
\begin{tikzcd}
P(x')\ar[r, "(2|1)"] \ar[dr, " " description]
	& P(y) \\
P(x ) \ar[ru,"-i(2|1)" description] \ar[r, "(2|1)"] \ar[u, phantom, "\oplus"]
	& P(y') \ar[u, phantom, "\oplus" ]
\end{tikzcd} 
,
\begin{tikzcd}
P(x')\ar[r, "(2|1)"] \ar[dr, " " description]
 	& P(y) \\
P(x ) \ar[ru,"-i(2|1)" description] \ar[r, "(2|1)"] \ar[rd, "X_2 \text{ or } (2|3)", swap] \ar[u, phantom, "\oplus"]
	& P(y') \ar[u, phantom, "\oplus" ] \\
	& P(z) \ar[u, phantom, "\oplus"]
\end{tikzcd} 
or
\begin{tikzcd}
	& P(z') \ar[d, "\oplus"] \\
P(x')\ar[r, "(2|1)"] \ar[dr, " " description] \ar[ru, "X_2 \text{ or } (2|3)"]
 	& P(y) \\
P(x ) \ar[ru,"-i(2|1)" description] \ar[r, "(2|1)"] \ar[rd, "X_2 \text{ or } (2|3)", swap] \ar[u, phantom, "\oplus"]
	& P(y') \ar[u, phantom, "\oplus" ] \\
	& P(z) \ar[u, phantom, "\oplus"]
\end{tikzcd} 
\end{center}
when $y$ is part of another distinct essential segment of $\check{c}$, where as before we swap $y$ with $y'$ (correspondingly $x$ with $x'$) if neccessary.
By identifying the morphism spaces and comparing the corresponding matrices of $d_C$ and $d_G$ as before, it follows that $d_C$ and $d_G$ have isomorphic images and isomorphic kernels.
This covers all cases of $\d_C$ and $\d_G$, completing the proof.
\end{proof}

\begin{lemma}\label{poly tri int b g}
The Poincar\'e polynomial $\mathfrak{P}(P^B_j, L_B(\check{g}))$  of
$\HOM^*_{\cK_B}(P^B_j,L_B(\check{g}))$ is equal to the trigraded intersection number $I^{trigr}(\check{b}_j, \check{g})$ for any trigraded $j$-string $\check{g}$.
\end{lemma}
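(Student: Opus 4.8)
The plan is to follow the scheme of \cite[Section~4]{KhoSei}: reduce, by exploiting the two symmetries that both sides carry, to a finite list of base cases, and then compute those directly. First I would dispose of the grading shift. On the topological side, (T2) of \cref{triintpro} gives $I^{trigr}(\check b_j,\chi(r_1,r_2,r_3)\check g)=q_1^{r_1}q_2^{r_2}q_3^{r_3}\,I^{trigr}(\check b_j,\check g)$, and on the algebraic side $L_B(\chi(r_1,r_2,r_3)\check g)\cong L_B(\check g)[-r_1]\{r_2\}\<r_3\>$ multiplies $\mathfrak{P}(P^B_j,-)$ by the same monomial, so one may assume the integer parameters of $\check g$ are zero. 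Next I would dispose of the family index: within each infinite family the $(w{+}1)$-st string is obtained from the $w$-th by $[t^B_{b_j}]$ (or $[(t^B_{b_1})^2]$ when $j=1$), which by the equivariance of $L_B$ (a $j$-string version of \cref{L_B equivariant}, obtained by the same argument together with the splitting of \cref{poly c g}) replaces $L_B(\check g)$ by $\sigma^B_j\,L_B(\check g)$. Since $\sigma^B_j$ is invertible in $\cK_B$ with inverse $R'_j$ (with an extra $\<1\>$ when $j=1$), we get
\[
\HOM_{\cK_B}\big(P^B_j,\sigma^B_j\,L_B(\check g)\big)\;\cong\;\HOM_{\cK_B}\big((\sigma^B_j)^{-1}P^B_j,\,L_B(\check g)\big),
\]
and a one-step Gaussian elimination using ${}_jP^B_j\cong \F_j\oplus\F_j\{1\}$ from \cref{bimodule isomorphism} identifies $(\sigma^B_j)^{-1}P^B_j$ with $P^B_j[-1]\{-1\}$ for $j\geq2$ and with $P^B_1[-1]\{-1\}\<-1\>$ for $j=1$; feeding this in multiplies $\mathfrak{P}$ by exactly $q_1^{-1}q_2$ (resp.\ $q_1^{-1}q_2q_3$), matching the family rule in \cref{compute tri int}. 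This leaves only the exceptional types $IV,IV',V,V',VI(0,0,0)$ together with the base members $I_0(0,0,0),\,II_0(0,0,0),\,II'_0(0,0,0),\,III_0(0,0,0),\,III'_0(0,0,0)$ for $1<j<n$, and the analogous short lists for $j=1$ and $j=n$.

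For the finitely many remaining strings I would argue by inspection: from \cref{B j-string}, \cref{B 1-string} and \cref{B n-string} one reads off the crossings and essential segments, writes down $L_B(\check g)$ with its differential (including the modifications near $D_0$), forms $\HOM^*_{\cK_B}(P^B_j,L_B(\check g))$ using that $\Hom_{\Ba_n}(P^B_j,P^B_k\{s\}\<t\>)$ is the appropriate graded piece of ${}_jP^B_k$ from \cref{bimodule isomorphism}, takes cohomology, and reads off the bigraded dimensions; comparison with the tables in \cref{compute tri int} then closes each case. For the four ``$0$-contribution'' types $IV,IV',V,V'$ the relevant Hom complex turns out to be acyclic --- this is the one genuine cohomology computation among the short strings --- while the others reduce to two- or three-term complexes on which the induced differential on $\HOM(P^B_j,-)$ vanishes, so their Poincar\'e polynomials are read off immediately.

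I expect the main obstacle to be the base-case bookkeeping for $j=1$ and $j=2$. For $j\geq3$ these computations are essentially those of type $A$, but for $j=2$ the space $\Hom_{\Ba_n}(P^B_2,P^B_2\{s\}\<t\>)$ is built from $\C$ rather than $\R$ and the extra differentials $(1|2)i$ and $-i(2|1)$ feed into the Hom complex, while for $j=1$ one must use ${}_1P^B\otimes_\R P^B_2\cong\R\oplus\R\<1\>$ and ${}_1P^B\otimes_\R P^B_k=0$ for $k\geq3$, so the powers of $q_3$ have to be tracked carefully. One useful shortcut is available here: applying $\Aa_{2n-1}\otimes_{\Ba_n}-$ and using \cref{diagram commutes}, \cref{B tensor to A}, the type $A$ version of this statement from \cite{KhoSei}, and the corollary relating $I^{trigr}$ to $I^{bigr}$ via $\m$, already yields the identity after setting $q_3=1$; so the residual content of the base-case check is exactly the $q_3$-dependence, which can be pinned down by a parity invariant analogous to $\sgn$ from \cref{gradinginvariant}.
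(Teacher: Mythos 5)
Your plan is essentially the one the paper has in mind when it writes ``follows exactly as in \cite{KhoSei}'': reduce to a finite list of $j$-strings and check them, using the two natural symmetries (the $\chi$-action and the Dehn-twist action) to normalise. The overall architecture is sound and the computations you sketch (acyclicity for types $IV,IV',V,V'$, the $q_1^{-1}q_2$ and $q_1^{-1}q_2q_3$ scaling factors, the identification of $\Hom(P_j^B, P_k^B\{s\}\<t\>)$ with graded pieces of ${}_jP^B_k$) are correct and consistent with the family rule in \cref{compute tri int}.

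Two points deserve a caveat. First, the ``$j$-string version of \cref{L_B equivariant}'' that you invoke to pass from $\check g_w$ to $\check g_{w+1}$ is not actually established in the paper: \cref{L_B equivariant} concerns admissible curves, and $j$-strings are not admissible curves. The statement $L_B(\check t^B_{b_j}(\check g))\cong\sigma^B_j\,L_B(\check g)$ for a $j$-string $\check g$ is plausible since $t^B_{b_j}$ is supported in $D_{j-1}\cup D_j$, but it should be established separately (alternatively, one can sidestep equivariance entirely and directly verify, by inspecting the complexes $L_B(\check g_w)$ and $L_B(\check g_{w+1})$, that adding the two new $j$-crossings multiplies $\mathfrak P(P^B_j,-)$ by $q_1^{-1}q_2$). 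Second, the shortcut via $\Aa_{2n-1}\otimes_{\Ba_n}-$ silently crosses a base change: the type $A$ Poincar\'e polynomials on the $\Aa$-side are computed with $\dim_\C$, whereas $\mathfrak P$ on the $\Ba$-side uses $\dim_\R$, and $\mathrm{Res}(\Aa\otimes_{\Ba}M)\cong M^{\oplus 2}$ as $\Ba$-modules. These two factors of $2$ cancel, which is why your shortcut gives the correct answer at $q_3=1$, but this cancellation should be noted, and the ``parity invariant'' you propose to recover the $q_3$-dependence is left vague; the direct base-case computation determines it unambiguously and is probably the cleaner route.
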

\begin{proof}
This follows exactly as in \cite[Lemma 4.11]{KhoSei}.
\end{proof}

\begin{proposition} \label{poin poly equals tri int}
For any $\sigma$ and  $\tau$ in $\cA(B_n),$ and any $1 \leq j,k \leq n,$ the Poincar\'e polynomial of 
$$HOM^*_{\cK_B}( \sigma(P^B_j), \tau(P^B_k)) $$
is equal to the trigraded intersection number $I^{trigr}(\check{\sigma}(\check{b}_j), \check{\tau}(\check{b}_k)).$
\end{proposition}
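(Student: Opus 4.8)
The plan is to reduce the statement to the already-established building blocks: \cref{poly c g}, \cref{poly tri int b g}, the equivariance of $L_B$ (\cref{L_B equivariant}), and the invariance properties (T1) and (T2) of the trigraded intersection number (\cref{triintpro}). First I would note that $\sigma(P^B_j) \cong L_B(\check{\sigma}(\check{b}_j))$ and $\tau(P^B_k)\cong L_B(\check{\tau}(\check{b}_k))$ in $\cK_B$, which follows from \cref{L_B equivariant} together with the fact that $L_B(\check{b}_j) = P^B_j$ (up to a grading shift determined by the chosen trigradings $\check b_j$; the normalization in \eqref{basic trigrading condition} is made precisely so that $L_B(\check b_j)=P^B_j$ on the nose, or else one absorbs the shift using the grading-shift compatibility of both $L_B$ and $I^{trigr}$). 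Since the Poincar\'e polynomial $\mathfrak{P}(-,-)$ depends only on the isomorphism classes in $\cK_B$, we get
\[
\mathfrak{P}(\sigma(P^B_j), \tau(P^B_k)) = \mathfrak{P}(L_B(\check{\sigma}(\check{b}_j)), L_B(\check{\tau}(\check{b}_k))).
\]
By \cref{triintpro}(T1), $I^{trigr}(\check{\sigma}(\check{b}_j),\check{\tau}(\check{b}_k)) = I^{trigr}(\check{b}_j, \check{\sigma}^{-1}\check{\tau}(\check{b}_k))$, so applying equivariance of $L_B$ once more it suffices to prove the special case $\sigma = \id$: namely, for any trigraded admissible curve $\check{c}$ (here $\check c = \check\sigma^{-1}\check\tau(\check b_k)$),
\[
\mathfrak{P}(P^B_j, L_B(\check{c})) = I^{trigr}(\check{b}_j, \check{c}).
\]

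To prove this special case, I would bring $\check{c}$ into normal form (always possible by an isotopy, and both sides are isotopy invariants — $\mathfrak P$ because $L_B$ is well-defined on isotopy classes up to isomorphism, $I^{trigr}$ by construction). Then by \cref{poly c g} the internal Hom complex $\HOM^*_{\cK_B}(P^B_j, L_B(\check{c}))$ is quasi-isomorphic to $\bigoplus_{\check{g}\in st(\check c,j)} \HOM^*_{\cK_B}(P^B_j, L_B(\check{g}))$, hence
\[
\mathfrak{P}(P^B_j, L_B(\check{c})) = \sum_{\check{g}\in st(\check{c},j)} \mathfrak{P}(P^B_j, L_B(\check{g})).
\]
On the topological side, \cref{compute tri int} expresses $I^{trigr}(\check{b}_j,\check{c})$ as the sum over the trigraded $j$-strings of $\check{c}$ of the tabulated contributions. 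By \cref{poly tri int b g}, $\mathfrak{P}(P^B_j, L_B(\check{g})) = I^{trigr}(\check{b}_j,\check{g})$ for every trigraded $j$-string $\check g$, so the two sums agree term by term, giving the claim. Finally, unwinding the reductions yields the statement for general $\sigma,\tau$; one must also check the case $j=1$ requires the half/integer-indexed $1$-strings, but these are all covered by \cref{poly tri int b g} and the $j=0$ portion of \cref{compute tri int} combined with \cref{action}, exactly as in the rest of the argument.

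The main obstacle is entirely contained in \cref{poly c g} and \cref{poly tri int b g}, both of which are stated (and proved) earlier in the excerpt — so in the present proposition the only real work is the bookkeeping of grading shifts and the careful invocation of (T1)/(T2) to pass from the two-sided statement to the one-sided one. I would expect the trickiest purely formal point to be making sure the normalization $L_B(\check b_j)=P^B_j$ is exactly what \eqref{basic trigrading condition} guarantees (rather than $P^B_j$ shifted), since an unnoticed shift would propagate a spurious monomial factor into $\mathfrak P$; but this is checked directly from the definition of $L_B$ on the single-crossing curve $\check b_j$, which has exactly one crossing of type determined by \eqref{basic trigrading condition}, contributing $P^B_j[0]\{0\}\<0\>$. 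Everything else is a routine reduction.
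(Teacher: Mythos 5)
Your proposal is correct and takes essentially the same route as the paper: reduce to the one-sided case $\sigma=\id$ by equivariance, then match term by term over $j$-strings using \cref{poly c g}, \cref{compute tri int}, and \cref{poly tri int b g}. The only addition you make is the explicit care about the normalization $L_B(\check b_j)=P^B_j$, which the paper leaves implicit but which is indeed a correct point to verify.
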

\begin{proof}
By \cref{compute tri int}, we get that
$
I^{trigr}(\check{b}_j, \check{c}) = \sum_{\check{g} \in st(\check{c},j)} I^{trigr}(\check{b}_j, \check{g}).
$
Using \cref{poly c g}, we instead get that
$
\mathfrak{P}(P^B_j, L_B(\check{c})) = \sum_{\check{g} \in st(\check{c},j)}\mathfrak{P}(P^B_j, L_B(\check{g})).
$
By \cref{poly tri int b g}, each $\mathfrak{P}(P^B_j, L_B(\check{g})) = I^{tri}(\check{b}_j, \check{g})$, thus we can conclude that
$
I^{trigr}(\check{b}_j, \check{c}) =  \mathfrak{P}(P^B_j, L_B(\check{c})).
$
The proposition now follows from the fact that the categorical action of $\cA(B_n)$ respects morphism spaces and similarly the topological action of $\cA(B_n)$ respects trigraded intersection number.
\end{proof}
\begin{remark}
Note that we can also use \cref{poin poly equals tri int} to prove the faithfulness of the $\cA(B_n)$ categorical action.
The proof is similar to \cite[the paragraph before Section 5]{KhoSei} modulo the center of $\cA(B_n)$, which is an easy check that elements of the centre act by shifting degrees and therefore are not isomorphic to the identity functor.
\end{remark}

\newpage

\printbibliography

@book {Hatcher,
    AUTHOR = {Hatcher, Allen},
     TITLE = {Algebraic topology},
 PUBLISHER = {Cambridge University Press, Cambridge},
      YEAR = {2002},
     PAGES = {xii+544},
      ISBN = {0-521-79160-X; 0-521-79540-0},
   MRCLASS = {55-01 (55-00)},
  MRNUMBER = {1867354},
MRREVIEWER = {Donald W. Kahn},
}

@incollection {GabIndecompII,
    AUTHOR = {Gabriel, Peter},
     TITLE = {Indecomposable representations. {II}},
 BOOKTITLE = {Symposia {M}athematica, {V}ol. {XI} ({C}onvegno di {A}lgebra
              {C}ommutativa, {INDAM}, {R}ome, 1971)},
     PAGES = {81--104},
      YEAR = {1973},
   MRCLASS = {18E05 (16A64)},
  MRNUMBER = {0340377},
MRREVIEWER = {C. M. Ringel},
}

@article {SeiTho,
    AUTHOR = {Seidel, Paul and Thomas, Richard},
     TITLE = {Braid group actions on derived categories of coherent sheaves},
   JOURNAL = {Duke Math. J.},
  FJOURNAL = {Duke Mathematical Journal},
    VOLUME = {108},
      YEAR = {2001},
    NUMBER = {1},
     PAGES = {37--108},
      ISSN = {0012-7094},
   MRCLASS = {14F05 (14J32 18E30 20F36 53D40)},
  MRNUMBER = {1831820},
MRREVIEWER = {Mikhail G. Khovanov},
       DOI = {10.1215/S0012-7094-01-10812-0},
       URL = {https://doi.org/10.1215/S0012-7094-01-10812-0},
}

@book {SlowFour,
    AUTHOR = {Slodowy, Peter},
     TITLE = {Four lectures on simple groups and singularities},
    SERIES = {Communications of the Mathematical Institute,
              Rijksuniversiteit Utrecht},
    VOLUME = {11},
 PUBLISHER = {Rijksuniversiteit Utrecht, Mathematical Institute, Utrecht},
      YEAR = {1980},
     PAGES = {ii+64},
   MRCLASS = {14B05 (14J17 17B20 20C30 32B30)},
  MRNUMBER = {563725},
MRREVIEWER = {I. Dolgachev},
}

@book {SlowSim,
    AUTHOR = {Slodowy, Peter},
     TITLE = {Simple singularities and simple algebraic groups},
    SERIES = {Lecture Notes in Mathematics},
    VOLUME = {815},
 PUBLISHER = {Springer, Berlin},
      YEAR = {1980},
     PAGES = {x+175},
      ISBN = {3-540-10026-1},
   MRCLASS = {14J17 (20G15 32B30)},
  MRNUMBER = {584445},
MRREVIEWER = {Jonathan M. Wahl},
}

@book {Arn2,
    AUTHOR = {Arnold, V. I. and Gusein-Zade, S. M. and Varchenko, A. N.},
     TITLE = {Singularities of differentiable maps. {V}olume 2},
    SERIES = {Modern Birkh\"{a}user Classics},
      NOTE = {Monodromy and asymptotics of integrals,
              Translated from the Russian by Hugh Porteous and revised by
              the authors and James Montaldi,
              Reprint of the 1988 translation},
 PUBLISHER = {Birkh\"{a}user/Springer, New York},
      YEAR = {2012},
     PAGES = {x+492},
      ISBN = {978-0-8176-8342-9},
   MRCLASS = {58K55 (32B15 32C05 32G10 32S40 58K10)},
  MRNUMBER = {2919697},
}

@book {Arn1,
    AUTHOR = {Arnold, V. I. and Goryunov, V. V. and Lyashko, O. V. and
              Vasil\cprime ev, V. A.},
     TITLE = {Singularity theory. {I}},
      NOTE = {Translated from the 1988 Russian original by A. Iacob,
              Reprint of the original English edition from the series
              Encyclopaedia of Mathematical Sciences [{{\i}t Dynamical
              systems. VI}, Encyclopaedia Math. Sci., 6, Springer, Berlin,
              1993;  MR1230637 (94b:58018)]},
 PUBLISHER = {Springer-Verlag, Berlin},
      YEAR = {1998},
     PAGES = {iv+245},
      ISBN = {3-540-63711-7},
   MRCLASS = {58C27 (32Sxx)},
  MRNUMBER = {1660090},
       DOI = {10.1007/978-3-642-58009-3},
       URL = {https://doi.org/10.1007/978-3-642-58009-3},
}

@book {Sei,
    AUTHOR = {Seidel, Paul},
     TITLE = {Fukaya categories and {P}icard-{L}efschetz theory},
    SERIES = {Zurich Lectures in Advanced Mathematics},
 PUBLISHER = {European Mathematical Society (EMS), Z\"{u}rich},
      YEAR = {2008},
     PAGES = {viii+326},
      ISBN = {978-3-03719-063-0},
   MRCLASS = {53D40 (16E45 32Q65 53D12)},
  MRNUMBER = {2441780},
MRREVIEWER = {Timothy Perutz},
       DOI = {10.4171/063},
       URL = {https://doi.org/10.4171/063},
}

@article {KhoSei,
    AUTHOR = {Khovanov, Mikhail and Seidel, Paul},
     TITLE = {Quivers, {F}loer cohomology, and braid group actions},
   JOURNAL = {J. Amer. Math. Soc.},
  FJOURNAL = {Journal of the American Mathematical Society},
    VOLUME = {15},
      YEAR = {2002},
    NUMBER = {1},
     PAGES = {203--271},
      ISSN = {0894-0347},
   MRCLASS = {53D40 (18E30 20F36 57R58)},
  MRNUMBER = {1862802},
MRREVIEWER = {Alexander E. Polishchuk},
       DOI = {10.1090/S0894-0347-01-00374-5},
       URL = {https://doi.org/10.1090/S0894-0347-01-00374-5},
}

@article {GTW,
    AUTHOR = {Gadbled, Agn\`es and Thiel, Anne-Laure and Wagner, Emmanuel},
     TITLE = {Categorical action of the extended braid group of affine type
              {$A$}},
   JOURNAL = {Commun. Contemp. Math.},
  FJOURNAL = {Communications in Contemporary Mathematics},
    VOLUME = {19},
      YEAR = {2017},
    NUMBER = {3},
     PAGES = {1650024, 39},
      ISSN = {0219-1997},
   MRCLASS = {57M07 (18E30 20F36)},
  MRNUMBER = {3631925},
MRREVIEWER = {Eric C. Rowell},
       DOI = {10.1142/S0219199716500243},
       URL = {https://doi.org/10.1142/S0219199716500243},
}

@article {BH,
    AUTHOR = {Birman, Joan S. and Hilden, Hugh M.},
     TITLE = {On isotopies of homeomorphisms of {R}iemann surfaces},
   JOURNAL = {Ann. of Math. (2)},
  FJOURNAL = {Annals of Mathematics. Second Series},
    VOLUME = {97},
      YEAR = {1973},
     PAGES = {424--439},
      ISSN = {0003-486X},
   MRCLASS = {30A48},
  MRNUMBER = {325959},
MRREVIEWER = {William Harvey},
       DOI = {10.2307/1970830},
       URL = {https://doi.org/10.2307/1970830},
}

@book {ComCox,
    AUTHOR = {Bj\"{o}rner, Anders and Brenti, Francesco},
     TITLE = {Combinatorics of {C}oxeter groups},
    SERIES = {Graduate Texts in Mathematics},
    VOLUME = {231},
 PUBLISHER = {Springer, New York},
      YEAR = {2005},
     PAGES = {xiv+363},
      ISBN = {978-3540-442387; 3-540-44238-3},
   MRCLASS = {05-01 (05E15 20F55)},
  MRNUMBER = {2133266},
MRREVIEWER = {Jian-yi Shi},
}

@book {BraidGroups,
    AUTHOR = {Kassel, Christian and Turaev, Vladimir},
     TITLE = {Braid groups},
    SERIES = {Graduate Texts in Mathematics},
    VOLUME = {247},
      NOTE = {With the graphical assistance of Olivier Dodane},
 PUBLISHER = {Springer, New York},
      YEAR = {2008},
     PAGES = {xii+340},
      ISBN = {978-0-387-33841-5},
   MRCLASS = {20F36 (20C08 20F10 20F60 20M05 55R80 57M25 57M50)},
  MRNUMBER = {2435235},
MRREVIEWER = {Stephen P. Humphries},
       DOI = {10.1007/978-0-387-68548-9},
       URL = {https://doi.org/10.1007/978-0-387-68548-9},
}

@book {PrimerMCG,
    AUTHOR = {Farb, Benson and Margalit, Dan},
     TITLE = {A primer on mapping class groups},
    SERIES = {Princeton Mathematical Series},
    VOLUME = {49},
 PUBLISHER = {Princeton University Press, Princeton, NJ},
      YEAR = {2012},
     PAGES = {xiv+472},
      ISBN = {978-0-691-14794-9},
   MRCLASS = {57M50 (20F36 20F65 57M07 57N05)},
  MRNUMBER = {2850125},
MRREVIEWER = {Stephen P. Humphries},
}

@article {Piek,
    AUTHOR = {Piekosz, Artur},
     TITLE = {Basic definitions and properties of topological branched
              coverings},
   JOURNAL = {Topol. Methods Nonlinear Anal.},
  FJOURNAL = {Topological Methods in Nonlinear Analysis},
    VOLUME = {8},
      YEAR = {1996},
    NUMBER = {2},
     PAGES = {359--370 (1997)},
      ISSN = {1230-3429},
   MRCLASS = {57M12 (05C10 57M15)},
  MRNUMBER = {1483634},
MRREVIEWER = {Neal Brand},
       DOI = {10.12775/TMNA.1996.039},
       URL = {https://doi.org/10.12775/TMNA.1996.039},
}

@article {DR,
    AUTHOR = {Dlab, Vlastimil and Ringel, Claus Michael},
     TITLE = {On algebras of finite representation type},
   JOURNAL = {J. Algebra},
  FJOURNAL = {Journal of Algebra},
    VOLUME = {33},
      YEAR = {1975},
     PAGES = {306--394},
      ISSN = {0021-8693},
   MRCLASS = {16A64 (15A30)},
  MRNUMBER = {357506},
MRREVIEWER = {I. Reiner},
       DOI = {10.1016/0021-8693(75)90125-8},
       URL = {https://doi.org/10.1016/0021-8693(75)90125-8},
}

@article {Green,
    AUTHOR = {Green, R. M.},
     TITLE = {Generalized {T}emperley-{L}ieb algebras and decorated tangles},
   JOURNAL = {J. Knot Theory Ramifications},
  FJOURNAL = {Journal of Knot Theory and its Ramifications},
    VOLUME = {7},
      YEAR = {1998},
    NUMBER = {2},
     PAGES = {155--171},
      ISSN = {0218-2165},
   MRCLASS = {57M25},
  MRNUMBER = {1618912},
MRREVIEWER = {Lowell Abrams},
       DOI = {10.1142/S0218216598000103},
       URL = {https://doi.org/10.1142/S0218216598000103},
}

@article {KhoBiadj,
    AUTHOR = {Khovanov, Mikhail},
     TITLE = {Categorifications from planar diagrammatics},
   JOURNAL = {Jpn. J. Math.},
  FJOURNAL = {Japanese Journal of Mathematics},
    VOLUME = {5},
      YEAR = {2010},
    NUMBER = {2},
     PAGES = {153--181},
      ISSN = {0289-2316},
   MRCLASS = {18A40 (16T99)},
  MRNUMBER = {2747932},
       DOI = {10.1007/s11537-010-0925-x},
       URL = {https://doi.org/10.1007/s11537-010-0925-x},
}

@article {LicQuef,
    AUTHOR = {Licata, Anthony M. and Queffelec, Hoel},
     TITLE = {Braid groups of type {ADE}, {G}arside monoids, and the
              categorified root lattice},
   JOURNAL = {Ann. Sci. \'{E}c. Norm. Sup\'{e}r. (4)},
  FJOURNAL = {Annales Scientifiques de l'\'{E}cole Normale Sup\'{e}rieure. Quatri\`eme
              S\'{e}rie},
    VOLUME = {54},
      YEAR = {2021},
    NUMBER = {2},
     PAGES = {503--548},
      ISSN = {0012-9593},
   MRCLASS = {20F36 (16G20 18G80 20F55 20M10)},
  MRNUMBER = {4258168},
       DOI = {10.24033/asens.246},
       URL = {https://doi.org/10.24033/asens.246},
}

@article {MackTubb,
    AUTHOR = {Mackaaij, Marco and Tubbenhauer, Daniel},
     TITLE = {Two-color {S}oergel calculus and simple transitive
              2-representations},
   JOURNAL = {Canad. J. Math.},
  FJOURNAL = {Canadian Journal of Mathematics. Journal Canadien de
              Math\'{e}matiques},
    VOLUME = {71},
      YEAR = {2019},
    NUMBER = {6},
     PAGES = {1523--1566},
      ISSN = {0008-414X},
   MRCLASS = {20C08 (17B10 18M15 18N10 20F55)},
  MRNUMBER = {4028468},
MRREVIEWER = {Anton Cox},
       DOI = {10.4153/cjm-2017-061-2},
       URL = {https://doi.org/10.4153/cjm-2017-061-2},
}

@article {HueKho,
    AUTHOR = {Huerfano, Ruth Stella and Khovanov, Mikhail},
     TITLE = {A category for the adjoint representation},
   JOURNAL = {J. Algebra},
  FJOURNAL = {Journal of Algebra},
    VOLUME = {246},
      YEAR = {2001},
    NUMBER = {2},
     PAGES = {514--542},
      ISSN = {0021-8693},
   MRCLASS = {17B37 (16G10)},
  MRNUMBER = {1872113},
MRREVIEWER = {Iain G. Gordon},
       DOI = {10.1006/jabr.2001.8962},
       URL = {https://doi.org/10.1006/jabr.2001.8962},
}

@phdthesis{Heng_PhDthesis,
  author       = {Edmund Heng}, 
  title        = {Categorification and Dynamics in Generalised Braid Groups},
  doi		   = {10.25911/FD1C-FA86},
  url		   = {http://hdl.handle.net/1885/258534},
  school       = {Australian National University},
}

@incollection {brieskorn_1973,
    AUTHOR = {Brieskorn, Egbert},
     TITLE = {Sur les groupes de tresses [d'apr\`es {V}. {I}. {A}rnol\cprime d]},
 BOOKTITLE = {S\'{e}minaire {B}ourbaki, 24\`eme ann\'{e}e (1971/1972), {E}xp. {N}o.
              401},
     PAGES = {21--44. Lecture Notes in Math., Vol. 317},
      YEAR = {1973},
   MRCLASS = {32C40 (57C45)},
  MRNUMBER = {0422674},
MRREVIEWER = {Alan H. Durfee},
}

@article {Rose_Gro,
    AUTHOR = {Rose, D. E. V.},
     TITLE = {A note on the {G}rothendieck group of an additive category},
   JOURNAL = {Vestn. Chelyab. Gos. Univ. Mat. Mekh. Inform.},
  FJOURNAL = {Vestnik Chelyabinskogo Gosudarstvennogo Universiteta.
              Matematika, Mekhanika, Informatika. Nauchny\u{\i} Zhurnal},
      YEAR = {2015},
    NUMBER = {3(17)},
     PAGES = {135--139},
      ISSN = {1994-2796},
   MRCLASS = {18F30 (18E30 57R20)},
  MRNUMBER = {3586623},
MRREVIEWER = {Marius Thaule},
}

@book {Frob_alg,
    AUTHOR = {Skowro\'{n}ski, Andrzej and Yamagata, Kunio},
     TITLE = {Frobenius algebras. {I}},
    SERIES = {EMS Textbooks in Mathematics},
      NOTE = {Basic representation theory},
 PUBLISHER = {European Mathematical Society (EMS), Z\"{u}rich},
      YEAR = {2011},
     PAGES = {xii+650},
      ISBN = {978-3-03719-102-6},
   MRCLASS = {16-01 (16Dxx 16E30 16Gxx 16K20 16T05)},
  MRNUMBER = {2894798},
MRREVIEWER = {J\"{o}rg Feldvoss},
       DOI = {10.4171/102},
       URL = {https://doi.org/10.4171/102},
}

\end{document}